\title{Harmonic maps to Hadamard spaces and a universal higher Teichm\"{u}ller space}
\author{J. Maxwell Riestenberg and Peter Smillie}
\date{\today}
\begin{document}

\maketitle

\begin{abstract}
    We give a sufficient criterion, which we call stability, for a coarse Lipschitz map $f$ from a complete manifold $X$ with Ricci curvature bounded below to a proper Hadamard space $Y$ to be within bounded distance of a harmonic map. We prove uniqueness of the harmonic map under additional assumptions on $X$ and $Y$.

    Using this criterion, we prove a significant generalization of the Schoen-Li-Wang conjecture on quasi-isometric embeddings between rank 1 symmetric spaces. In particular, under a natural generalization of the quasi-isometric condition, we remove the assumption that the target has rank 1. 
    This allows us to define a universal Hitchin component for each $\PGL_d(\R)$, generalizing universal Teichm\"uller space, and show that it can be described both as a space of quasi-symmetric positive maps from $\RP^1$ to the flag variety, and as a space of harmonic maps.

\end{abstract}

\tableofcontents

\section{Introduction}

Let $X$ and $Y$ be complete non-compact Riemannian manifolds. A natural problem is the following, posed for instance by Gromov in \cite[Section 2.B]{GromovFoliatedPlateau}:

\begin{problem} \label{pseudoproblem}
    Given a smooth map $f: X \to Y$, does there exist a harmonic map $h: X \to Y$ which is at bounded distance from $f$, i.e.
    \[
        d_Y(f(x),h(x)) \leq C < \infty \textrm{ ?}
    \]
\end{problem}   

A very reasonable additional condition to impose is that $f$ be coarse Lipschitz, meaning for some $L$, and all $x,z \in X$,
\begin{equation} \label{eqn: coarse Lipschitz}
    d_Y(f(x),f(z)) \leq L(d_X(x,z) + 1).
\end{equation}
Indeed, coarse Lipschitz maps are exactly those which preserve the relation of bounded distance in the sense that $f \circ g \sim f \circ g'$ whenever $g \sim g'$. In addition, if $X$ is the universal cover of a compact manifold $\mathcal{X}$, any locally bounded map $f: X \to Y$ that is equivariant under an action of $\pi_1(\mathcal{X})$ on $Y$ is coarse Lipschitz.

It is also natural to require that the Ricci curvature of $X$ is bounded below,
\begin{equation}
    \mathrm{Ric}(X) \geq -Kg_X.
\end{equation}
Besides the fact that it holds when $X$ is the universal cover of a compact manifold, it is also necessary for any uniformity of estimates on harmonic maps.

More restrictively, we require that the sectional curvature of $Y$ is non-positive. This still includes the important case of symmetric spaces of non-compact type, and it is also the setting that Gromov works in. 

Finally, for us it is not terribly important that $Y$ be a manifold. 
Using the theory of harmonic maps to Hadamard spaces of Korevaar and Schoen \cite{Korevaar:1993aa}, we assume only that $Y$ is a proper Hadamard space, i.e.\ a complete, locally compact CAT(0) metric space. 
In this context, we consider measurable maps instead of smooth ones; we will use map throughout this paper to mean a measurable map. 
All told, we have refined Problem \ref{pseudoproblem} to:

\begin{problem} \label{problem}
    Let $X$ be a complete manifold with Ricci curvature bounded below, and $Y$ a proper Hadamard space. Given a coarse Lipschitz map $f: X \to Y$, when does there exist a harmonic map at bounded distance from $f$?
\end{problem}

Some partial answers to this problem are understood under the assumption of some negative upper bound on the sectional curvature of $Y$ or strong assumptions on the derivative of $f$, e.g.\ \cite[Section 2.B]{GromovFoliatedPlateau}. More recently, a sufficiently strong condition in the setting of strictly negative curvature led to the resolution by Markovic \cite{Markovic} of the well-known Schoen conjecture, and in a more general form by Benoist and Hulin \cite{BH1,BH2}. 
We will return to this story in Section \ref{sec: intro schoen}.

In the present work, we propose a condition on $f$, which we call stability, and prove that it is sufficient for a positive answer to Problem \ref{problem}. Crucially, it is applicable even without any strict negative curvature assumption on $Y$, and so we are able to apply it to produce new classes of harmonic maps into higher rank symmetric spaces.

To motivate the definition of stability, we recall three basic facts. First, since $Y$ is a Hadamard space, the distance $d_y$ to any point $y\in Y$ is a convex function on $Y$. Second, if $h: X \to Y$ is a harmonic map, it pulls back convex functions to subharmonic functions. Third, subharmonic functions on $X$ are those that are weakly increasing under the heat flow operator
\[
P_t u (x) = \int_X u(z) p_t(x,z)dz
\]
where $p_t(x,z)$ is the heat kernel. 

\begin{definition} \label{def: intro stable}
    A coarse Lipschitz  map $f: X \to Y$ is \emph{stable} if there exists $t > 0$ such that for all $x \in X$ and $y \in Y$,
    \begin{equation}
        P_t (d_y \circ f) (x) - d_y \circ f(x) \geq 1
    \end{equation}
\end{definition}

This is a variant of the Definition \ref{def: stable at scale r} of stability below. We find Definition \ref{def: stable at scale r} to be more practical, and it may be more general, but it is slightly harder to appreciate at first blush. We remark that stability is a coarse notion, in the sense that if $f$ is stable, then so is any map at bounded distance from $f$.

Our main theorem is

\begin{theorem}[Theorem \ref{thm: Donalson-Corlette}] \label{thm: main Donaldson-Corlette}
     Let $X$ be a complete manifold with Ricci curvature bounded below, and $Y$ a proper Hadamard space. Every stable coarse Lipschitz map from $X$ to $Y$ is at bounded distance from a harmonic map.
\end{theorem}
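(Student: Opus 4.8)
The strategy is an exhaustion argument: solve the Dirichlet problem on a compact exhaustion of $X$ with boundary values coming from $f$, obtain uniform distance estimates using stability, and extract a harmonic map on all of $X$ by taking a limit.

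Let me think about what the key steps would be.

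Step 1: Setup the exhaustion. Let $\Omega_1 \subset \Omega_2 \subset \cdots$ be a smooth exhaustion of $X$ by relatively compact domains. On each $\Omega_n$, use Korevaar–Schoen to solve the Dirichlet problem: there's a unique harmonic map $h_n: \Omega_n \to Y$ with boundary values agreeing with $f$ on $\partial \Omega_n$ (more precisely, $h_n$ is the energy minimizer in the space of finite-energy maps with the same trace as $f|_{\Omega_n}$ — here we need $f$ to have locally finite energy, which coarse Lipschitz plus boundedness should give, or we approximate).

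Step 2: Uniform distance bound. This is the heart of the matter and where stability is used. Fix a basepoint; I want to show $d_Y(h_n(x), f(x)) \le C$ for a constant $C$ independent of $n$. The idea: consider the function $u_n(x) = d_{y}(h_n(x))$ for a cleverly chosen $y$, or rather work with $u_n(x) = d_Y(h_n(x), f(x))$ directly. Since $h_n$ is harmonic and $d_y$ is convex on $Y$, the composition $d_y \circ h_n$ is subharmonic on $\Omega_n$. Meanwhile $d_y \circ f$ is "superharmonic up to the stability defect" — that's exactly what Definition \ref{def: intro stable} encodes via the heat semigroup: $P_t(d_y \circ f) - d_y \circ f \ge 1$ says $d_y\circ f$ increases under heat flow at a definite rate. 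I expect the argument to compare $h_n$ with $f$ by a maximum-principle / barrier argument: suppose the distance $d_Y(h_n, f)$ achieves a large maximum at some interior point $x_0$; pick $y$ to be a point such that $d_y$ separates $h_n(x_0)$ from $f(x_0)$ optimally (e.g. $y$ far out along the geodesic ray, or use the Busemann-type function — actually in a general Hadamard space one should use the convex function $z \mapsto d_Y(z, f(x_0))$ itself, which is convex but not smooth at $f(x_0)$). Then $d_Y(h_n(x), f(x_0))$ is subharmonic in $x$, hence $P_t$-increasing, while comparing with the heat-flow behavior of $d_Y(f(x), f(x_0)) = d_{f(x_0)} \circ f (x)$ controlled by stability, the defect "$\ge 1$" per time-$t$ step forces the maximum to be bounded. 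Iterating the heat flow: after $k$ steps the subharmonic function $d_y \circ h_n$ has grown by at least... no — I need this the other way. Let me reconsider: stability gives that $f$ "wants to spread out" faster than any harmonic map can, so a harmonic map can't stay far from $f$ without violating subharmonicity. Concretely, I expect: $P_t(d_Y(h_n(\cdot), f(\cdot)))(x) \ge$ something, using the triangle inequality $d_Y(h_n(z), f(z)) \ge d_Y(h_n(z), f(x)) - d_Y(f(x), f(z))$, subharmonicity of the first term and the stability lower bound controlling $P_t$ of the second. This should yield $P_t u_n - u_n \ge 1 - (\text{bounded error from coarse Lipschitz of } f)$... this needs care, and I think the actual Definition \ref{def: stable at scale r} (ball averages rather than heat kernel) is cleaner for making the triangle-inequality step work. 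Then a function $u$ with $P_t u - u \ge c > 0$ and $u$ bounded on $\partial\Omega_n$ (equal to $0$ there, since $h_n = f$ on the boundary) must be bounded: otherwise iterating $P_t$ drives $u$ to $+\infty$ somewhere while it stays sub-something — more precisely $\sup_{\Omega_n} u_n$ is attained (compactness), and at the max the inequality $P_t u_n - u_n \ge 1$ combined with $P_t u_n \le \sup u_n + (\text{boundary leakage})$ gives a bound once we control how much heat escapes through $\partial \Omega_n$ in time $t$ — and this escape is controlled uniformly by the Ricci lower bound (Li–Yau / Gaussian heat kernel estimates). The bound $C$ comes out independent of $n$.

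Step 3: Passing to the limit. With uniform $C^0$ bounds $d_Y(h_n(x), f(x)) \le C$, hence locally uniform bounds on the image of $h_n$, interior energy estimates for harmonic maps to Hadamard spaces (Korevaar–Schoen) give uniform local energy bounds, and then interior Lipschitz/Hölder estimates give equicontinuity on compact subsets. Since $Y$ is proper, Arzelà–Ascoli applies: a subsequence $h_n$ converges locally uniformly to a map $h: X \to Y$. The limit is harmonic (harmonicity — local energy minimality or the Korevaar–Schoen Euler–Lagrange condition — is closed under local uniform limits with bounded energy), and it satisfies $d_Y(h(x), f(x)) \le C$, so it is at bounded distance from $f$.

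**Main obstacle.** The crux is Step 2: extracting the uniform distance bound from the stability hypothesis via a maximum-principle argument in the singular (possibly non-uniquely-geodesic-free, non-smooth) Hadamard target. The difficulties are (a) $d_y$ is only convex, not smooth, so "subharmonic" must be interpreted distributionally/via comparison and one must be careful pulling back along the possibly-branching target; (b) matching the discrete heat-semigroup inequality in the stability definition with the continuous-time or elliptic maximum principle for $h_n$, and in particular handling the coarse (additive) error in the coarse-Lipschitz bound, which is presumably why the paper's working definition uses ball averages at a fixed scale $r$; (c) controlling boundary effects on $\partial\Omega_n$ uniformly in $n$, which is exactly where $\mathrm{Ric} \ge -Kg_X$ enters through uniform heat kernel bounds. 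Everything else (the exhaustion, Korevaar–Schoen Dirichlet solvability, and the Arzelà–Ascoli limit) I expect to be standard given the cited theory.
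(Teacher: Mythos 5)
Your overall architecture — mollify/exhaust, solve Dirichlet problems with boundary data from $f$, prove a uniform $C^0$ bound on $d_Y(h_n,f)$ at an interior maximum using stability, handle the boundary separately, then pass to a limit via properness of $Y$ and interior estimates — is exactly the paper's skeleton (Propositions \ref{prop: Lipchitz}, \ref{prop: dirichlet problem}, \ref{prop: interior estimate}, \ref{prop: boundary estimate}, \ref{prop: asymptotic Dirichlet}). But the heart of the matter, your Step 2, is not carried out, and the mechanism you sketch does not close. Write $D=d_Y(h(x),f(x))$ at the interior maximum and let $\mu_x$ be a harmonic measure of propagation $r$. If you use the triangle inequality in the form $d_Y(h(z),f(z))\geq d_Y(h(z),f(x))-d_Y(f(x),f(z))$, then stability enters through $P(d_{f(x)}\circ f)$, which appears with a minus sign, so the stability \emph{lower} bound points the wrong way and you only get $Pu-u\geq -L(r+1)$, which is vacuous. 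If instead you use $d_Y(h(z),f(z))\geq d_Y(f(z),h(x))-d_Y(h(x),h(z))$, stability (applied with $y=h(x)$) does give $+1$, but you must now bound $\int d_Y(h(x),h(z))\,\mu_x(z)$ from \emph{above}, and subharmonicity gives no upper bound; the only available control is a gradient estimate for $h$ on $B(x,r)$, which is of order $D$ (crude estimate, Proposition \ref{prop: gradient estimate}) or at best of order $\sqrt{D}$, so this error term grows with $D$ and swamps the gain of $1$. No contradiction for large $D$ results, and "$P_t u - u \geq 1 - (\text{bounded error})$" is never actually established.

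What is missing are the two ingredients the paper uses to beat this: (i) the Ptolemy inequality applied to the quadrilateral $f(x),f(z),h(x),h(z)$ (Lemma \ref{lem: cross ratio bound frfr}), which replaces the naive comparison by $F-D+F'-D'\leq 2EE'/D$, so the troublesome term $E'=d_Y(h(x),h(z))$ enters only after division by $D$; and (ii) the refined Harnack-type estimate $|\nabla h|\leq C\sqrt{D}$ (Lemma \ref{lem: harnack estimate}), which exploits that $d_{f(x)}\circ h$ takes the value $D$ at $x$ but is at most $D+E$ on the whole ball with $E\leq L(r+3)$. Combining these gives an error $C/\sqrt{D}$, which the stability gain of $1$ (for $d_{h(x)}\circ f$) plus subharmonicity of $d_{f(x)}\circ h$ finally contradicts once $D>C^2$. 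Without (i) and (ii) your maximum-principle comparison cannot produce any bound, so the proposal has a genuine gap precisely at the step where stability is supposed to do its work. (Your boundary/leakage discussion is also only a sketch — the paper avoids leakage in the interior estimate by using measures of finite propagation and handles near-boundary maxima with an exterior-ball barrier argument, Proposition \ref{prop: boundary estimate} — but that part is repairable; the interior estimate is the real obstruction.)
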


One of the key ideas which lets us avoid a strict negative curvature assumptions on $Y$ is the Ptolemy inequality of planar geometry, which we now recall. Given any four points in the Euclidean plane, let $D,D'$, $E,E'$, and $F,F'$ be the pairwise distances between them, grouped in opposite pairs. The following two inequalities always hold:
\begin{equation}
    F^2 + (F')^2 \leq D^2 + (D')^2 + E^2 + (E')^2 
\end{equation}
and
\begin{equation}
    FF' \leq DD' + EE'.
\end{equation}
The first is called the parallelogram inequality, and the second the Ptolemy inequality. Equality for the first is obtained, naturally, for parallelograms, and equality for the second is obtained for quadrilaterals inscribed in the circle, where in both cases $F$ and $F'$ are the diagonals. Both inequalities hold also for any four points in a CAT(0) space \cite{Reshetnyak1968non-expansive}.

In the regularity theory for harmonic maps to Hadamard spaces developed by Korevaar and Schoen, the parallelogram inequality plays an essential role. 
To prove regularity, they compare the harmonic map $h$ with the nearly-harmonic map $h'= h \circ \phi$ for a small diffeomorphism $\phi$, and apply the inequality to the four points $h(x),h(z),h'(x),h'(z)$. The symmetries of the parallelogram suggest that this is the right inequality for comparing two of the same sort of map, for example two nearly-harmonic maps. On the other hand, in the present work we need to compare a harmonic map with a coarse-Lipschitz map, and this asymmetry is reflected in the asymmetry of inscribed quadrilaterals. Indeed the quadrilateral that we study looks somewhat more like an inscribed quadrilateral than a parallelogram (see Figure \ref{fig:thinking diagram}).

Although we do not explicitly assume the presence of strict negative curvature in $Y$, there is a weak type of non-flatness implicit in the definition of stable. 
Namely, if $Y$ is isometric to $Y_0 \times \R$, then no map to $Y$ can be stable, which one can see by considering $d_y$ for $y \to \pm \infty$ in the $\R$ direction. 
There is also a type of weak negative curvature condition on $X$ implicit in stability, in the sense that unless $\lambda_1(X) > 0$, there can be no stable maps from $X$. Indeed, this follows from the existence of any coarse Lipschitz function $u$ on $X$ satisfying $P_t u - u \geq 1$ for some $t$.

\subsection{Uniqueness, and equivariant maps} \label{sec: intro uniqueness}

We now restrict to the case that $X$ is connected with a cocompact isometry group, and that $Y$ is a symmetric space of non-compact type. 
For the following uniqueness theorem, we emphasize that we assume no equivariance of the stable map.

\begin{theorem}[Theorem \ref{thm: uniqueness}] \label{thm: main uniqueness}
    Suppose that $X$ is connected and cocompact and $Y$ is a symmetric space. 
    Every stable map from $X$ to $Y$ is at bounded distance from a \emph{unique} harmonic map.
\end{theorem}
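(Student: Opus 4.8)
The plan is to get existence for free from Theorem~\ref{thm: main Donaldson-Corlette} --- a connected cocompact manifold is complete with Ricci curvature bounded below, and a symmetric space of non-compact type is a proper Hadamard space --- and to spend all the effort on uniqueness. Suppose $h_0,h_1\colon X\to Y$ are harmonic maps each at bounded distance from $f$. Then $(h_0,h_1)\colon X\to Y\times Y$ is harmonic into the Hadamard space $Y\times Y$, and since $d_Y$ is convex on the product, $\rho(x):=d_Y(h_0(x),h_1(x))$ is a bounded, continuous, subharmonic function on $X$. Writing $M:=\sup_X\rho$, I want to show $M=0$, arguing by contradiction from $M>0$.

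First I would promote the supremum to an attained interior maximum, using cocompactness but crucially not equivariance. Choose $x_n$ with $\rho(x_n)\to M$ and isometries $\gamma_n$ of $X$ carrying $x_n$ into a fixed compact set, so that along a subsequence $\gamma_n x_n\to x_\infty$. The maps $h_i\circ\gamma_n^{-1}$ are again harmonic, stay at bounded distance from each other, and remain stable, since stability is a statement about heat kernels and the functions $d_y$ and is therefore invariant under precomposition by isometries of $X$ and postcomposition by isometries of $Y$. By the interior gradient estimate for harmonic maps to Hadamard spaces and the bounded geometry of $X$, the whole family is uniformly Lipschitz, with oscillation bounded in terms of the coarse-Lipschitz constant of $f$. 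Normalizing by isometries $g_n$ of $Y$ so that $g_n h_0(\gamma_n^{-1}x_\infty)$ is a fixed basepoint, Arzel\`a--Ascoli extracts locally uniform limits $H_0,H_1\colon X\to Y$ which are harmonic (harmonicity survives locally uniform limits with uniform energy bounds), uniformly Lipschitz, and still stable (the defining heat-kernel inequality passes to the limit by dominated convergence, using the Gaussian decay of $p_t$ against the linear growth of $d_y\circ H_0$). By construction $\widetilde\rho:=d_Y(H_0,H_1)$ is subharmonic, is $\le M$ everywhere, and equals $M$ at $x_\infty$, so the strong maximum principle on the connected manifold $X$ forces $\widetilde\rho\equiv M$.

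It then remains to rule out the existence of two harmonic maps $H_0,H_1$ into a symmetric space, one of them stable, at constant distance $M>0$. For this I would invoke the equality case of Hartman's classical uniqueness theorem for harmonic maps into nonpositive curvature: constancy of $d_Y(H_0,H_1)$ forces the connecting geodesics $s\mapsto H_s(x)$ to be parallel and the geodesic homotopy to be totally geodesic. Because $Y$ is a symmetric space this flat strip extends, so that $Y$ contains a totally geodesic isometric copy of $Z\times\R$ whose image contains $\mathrm{im}(H_0)$, where $Z$ is the smallest totally geodesic subspace of $Y$ through $\mathrm{im}(H_0)$. But no coarse-Lipschitz map into a space carrying such a splitting can be stable, by exactly the mechanism noted in the introduction for $Y_0\times\R$: taking $y=(y_0,v)$ with $v\to\infty$ in the $\R$ factor gives $d_Y(H_0(z),y)=\sqrt{v^2+d_Y(H_0(z),y_0)^2}=v+O(1/v)$ with the error term of bounded oscillation in $z$ over any $\sqrt t$-ball (as $H_0$ is Lipschitz), so $P_t(d_y\circ H_0)(x)-d_y\circ H_0(x)\to 0$, contradicting stability of $H_0$. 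Hence $M=0$, that is $\rho\equiv 0$ and $h_0=h_1$.

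I expect the main obstacles to be the two steps stated most glibly. For the limiting argument, one must verify that both harmonicity and the heat-kernel inequality defining stability genuinely survive the Arzel\`a--Ascoli limit, and that the interior estimates furnish a Lipschitz bound uniform over the whole orbit $\{h_i\circ\gamma_n^{-1}\}$. For the rigidity, one must make the equality case of Hartman's theorem rigorous in our low-regularity setting --- harmonic maps from a manifold with $\mathrm{Ric}$ only bounded below, which are a priori merely $C^{1,\alpha}$ --- and extract the totally geodesic $\R$-splitting of a subspace of $Y$ meeting $\mathrm{im}(H_0)$. Everything else --- the invariance of stability under the relevant isometries, the subharmonicity and strong maximum principle, and the closing asymptotic estimate against stability --- should be routine.
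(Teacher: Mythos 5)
Your first half — existence from Theorem \ref{thm: main Donaldson-Corlette}, renormalizing a maximizing sequence by isometries of $X$ and $Y$, extracting limits $H_0,H_1$ via Proposition \ref{prop: asymptotic Dirichlet}, and using the strong maximum principle to make $d_Y(H_0,H_1)$ constant — is exactly the paper's argument. The gap is in your rigidity step. From the equality case of the constant-distance (flat strip / Hartman) rigidity, what you actually get is that $\mathrm{im}(H_0)$ lies in the \emph{parallel set} $P$ of a family of bi-infinite parallel geodesics, $P \cong P_0 \times \R$ (using extendability of parallel Jacobi fields in the symmetric space, Propositions \ref{prop: constant distance} and \ref{prop: Y has condition E}). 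It does \emph{not} follow that $\mathrm{im}(H_0)$ lies in a slice $Z \times \{pt\}$ of a totally geodesic product, nor that the smallest totally geodesic subspace $Z$ through $\mathrm{im}(H_0)$ splits off an extra $\R$-factor. Concretely, in $Y=\H^2\times\R$ the maps $H_0=(\mathrm{id},u)$ and $H_1=(\mathrm{id},u+c)$, with $u$ a bounded non-constant harmonic function, are harmonic and at constant distance; the strip direction is vertical, $\mathrm{im}(H_0)$ is the graph of $u$, it lies in no horizontal slice, and the minimal totally geodesic subspace containing it is all of $Y$, so no further $\R$ splits off. Your distance formula $d_Y(H_0(z),y)=\sqrt{v^2+d_Y(H_0(z),y_0)^2}$ silently assumes the slice property; in general one has $d_y\circ H_0(z) = v - \beta(z) + O(1/v)$ where $\beta = -b_{\eta^+}\circ H_0$ (up to sign and constant) is the $\R$-coordinate of $H_0$ in the splitting of $P$. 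Hence $P_t(d_y\circ H_0)(x)-d_y\circ H_0(x)$ does not tend to $0$; it tends to $P_t(b_{\eta^+}\circ H_0)(x)-b_{\eta^+}\circ H_0(x)$, which a priori is only $\geq 0$, and your contradiction evaporates.

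The repair is the two-ended argument the paper uses: since $b_{\eta^+}+b_{\eta^-}$ is constant on the parallel set and both $b_{\eta^\pm}\circ H_0$ are subharmonic, $b_{\eta^+}\circ H_0$ is harmonic; then the stability inequality transferred to Busemann functions (Proposition \ref{prop: Busemann for free}) forces $\int (b_{\eta^+}\circ H_0(z)-b_{\eta^+}\circ H_0(x))\,\mu_x(z)\geq 1$ while harmonicity makes this integral zero. Equivalently, in your language, you must take $y\to\infty$ in \emph{both} directions $\eta^\pm$ and add the resulting inequalities, rather than claiming a single limit is zero. A secondary point: your contradiction is tested against the heat-kernel form of stability (Definition \ref{def: intro stable}), whereas the theorem's hypothesis is the scale-$r$ harmonic-measure form (Definition \ref{def: stable at scale r}); either contradict the harmonic-measure inequality directly, as the paper does, or supply the (nontrivial, and not stated by the paper) comparison between the two for the subharmonic functions $d_y\circ H_0$.
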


Here the assumption that $X$ admits a cocompact action by isometries is not very important, but allows us to avoid discussing metric-measure spaces with lower Ricci curvature bounds. 
The assumption on $Y$ is also stronger than it needs to be, but some extra condition on $Y$ is essential, analogous to Labourie's condition `sans demies-bandes plates' in \cite{Labourie1991existenceharmoniques}.

We now turn to equivariant maps from $X$ to $Y$. 
Let $\Gamma$ act cocompactly by isometries on $X$, and fix an action $\rho$ of $\Gamma$ on $Y$. 
One can construct a locally bounded $\rho$-equivariant map from $X$ to $Y$, and it is easy to see that such a map will be coarse Lipschitz and that any two $\rho$-equivariant locally bounded maps will be at bounded distance from each other.

\begin{theorem}[Theorem \ref{thm: equivariant}] \label{thm: main equivariant}
    Suppose that $G$ is a semisimple Lie group and $Y$ is the associated symmetric space of non-compact type. Suppose $X$ is connected and $\Gamma$ acts cocompactly by isometries on $X$.
    If $f\colon X \to Y$ is equivariant under a Zariski-dense action $\rho$ of $\Gamma$ on $Y$, then $f$ is stable.
\end{theorem}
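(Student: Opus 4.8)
The plan is to verify the stability condition (Definition \ref{def: intro stable}, or its working variant Definition \ref{def: stable at scale r}) directly, using the Zariski-density of $\rho$ to rule out the degenerate behavior that stability forbids. Fix a fundamental domain for the cocompact $\Gamma$-action on $X$; since everything in sight is $\Gamma$-equivariant and the relevant quantities $P_t(d_y\circ f)(x) - d_y\circ f(x)$ transform appropriately under $\rho$ (the point $y$ gets moved by $\rho(\gamma)$), it suffices to establish the inequality for $x$ in a compact set and all $y \in Y$. The first step is therefore a \emph{compactness reduction}: I would show that the function $(x,y) \mapsto P_t(d_y\circ f)(x) - d_y\circ f(x)$ is continuous (or at least lower semicontinuous) in $x$ for fixed $y$, jointly controlled because $f$ is coarse Lipschitz, and then argue that if stability fails for every $t$, one can extract a sequence $x_n$ (lying in the compact fundamental domain up to the $\Gamma$-action) and $y_n \in Y$ along which $P_t(d_{y_n}\circ f)(x_n) - d_{y_n}\circ f(x_n) \to 0$ for a fixed (or all) $t$.

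The second, and central, step is to analyze this failure. The quantity $P_t(d_y\circ f)(x) - d_y\circ f(x)$ measures the ``average convexity defect'' of the convex function $d_y$ pulled back along $f$; it is small only if $f$, viewed at the heat-kernel scale around $x$, maps into a region where $d_y$ is nearly affine — i.e.\ $f$ nearly lands in a geodesic pointing at $y$, or $y$ is viewed in a nearly flat direction. Concretely, convexity of $d_y$ plus the coarse-Lipschitz bound should force: when the defect is below $\epsilon$, the image $f(B(x,R))$ lies within bounded distance of a single geodesic ray (or flat) aimed toward the boundary point $\xi = \lim y_n \in \partial_\infty Y$ (passing to a subsequence so $y_n$ or the direction to $y_n$ converges, using properness of $Y$). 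Chasing this through all the translates $\gamma x_n$, equivariance promotes this into a statement about the whole image: $f(X)$ would be within bounded distance of a $\rho(\Gamma)$-invariant flat (or at least the $\rho(\Gamma)$-orbit would preserve a point at infinity or a proper flat/parabolic subset). This is exactly the ``$Y \cong Y_0 \times \mathbb{R}$'' obstruction the introduction flags, now in $\Gamma$-equivariant form.

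The third step is to derive the contradiction with Zariski-density. If $\rho(\Gamma)$ preserves a point $\xi \in \partial_\infty Y$, it lies in a proper parabolic subgroup, contradicting Zariski-density (parabolics are Zariski closed and proper). If instead $\rho(\Gamma)$ preserves a proper totally geodesic flat or a splitting direction, it lies in the stabilizer of that flat, again a proper algebraic subgroup — contradiction. So stability cannot fail, and $f$ is stable; combined with Theorem \ref{thm: main Donaldson-Corlette} (and Theorem \ref{thm: main uniqueness} for uniqueness) one recovers an equivariant harmonic map, though the theorem as stated only asserts stability.

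The main obstacle I anticipate is the second step: quantitatively converting ``heat-flow convexity defect $< \epsilon$ at scale $t$, uniformly over a net of points and over $y \to \xi$'' into ``the image lies near a single flat/geodesic.'' One must handle the possibility that the limiting boundary points $\xi_n = \lim_m y_m$ depend on the base point $x_n$ and the flat directions vary; the equivariance has to be leveraged carefully — likely by a limiting/ultralimit argument on $Y$ (using properness) in which the rescaled or translated images converge, and the limit object is a harmonic-type or affine map into a flat factor that is $\rho$-equivariantly degenerate. Ensuring the algebraic conclusion (landing in a proper Zariski-closed subgroup) genuinely follows, rather than merely that some weak asymptotic invariance holds, is where the real care is needed; this is presumably why the hypothesis is stated with the full strength of Zariski-density rather than, say, unboundedness or non-elementarity.
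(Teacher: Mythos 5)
There is a genuine gap, and it sits exactly where you flagged it. Your plan works directly with the coarse Lipschitz map $f$, but for a merely measurable, coarse Lipschitz $f$ the function $d_y\circ f$ has no subharmonicity whatsoever: the defect $P_t(d_y\circ f)(x)-d_y\circ f(x)$ can be negative or small for completely unstructured reasons, and there is no way to convert ``defect $<\epsilon$'' into ``the image of $f$ near $x$ lies close to a geodesic or flat aimed at $\xi$.'' That implication is the heart of your step two and it is false at this level of generality. The paper avoids this by \emph{not} proving stability of $f$ directly: since $\rho$ is Zariski dense, its Zariski closure is reductive, so the Donaldson--Corlette theorem provides a $\rho$-equivariant \emph{harmonic} map $h$ at bounded distance from $f$, and since stability is a coarse property (Proposition \ref{prop: subharmonicity is coarse}) it suffices to prove $h$ is stable. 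For $h$, every $b_\eta\circ h$ is genuinely subharmonic, so the defects are nonnegative and monotone in the scale; failure of stability then produces (after translating by $\gamma_k\in\Gamma$ and $\rho(\gamma_k)$, using cocompactness and compactness of $\partial_\infty Y$, and passing to limits) a Busemann function $\eta$ with $b_\eta\circ h$ \emph{exactly} harmonic on all of $X$. This exact, not approximate, degeneracy is what feeds into the rigidity step: the negative gradient flow of $b_\eta$ gives a parallel variation through geodesics (Proposition \ref{prop: harmonic busemann function}), which extends to all of $\R$ because the symmetric space is real analytic (Proposition \ref{prop: Y has condition E}). Your proposal, which never invokes Donaldson--Corlette, would in effect be a new proof of equivariant existence, and the paper explicitly disclaims having one.

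Your algebraic endgame is also not quite what the argument delivers. Nothing in the limiting construction makes the boundary point $\eta$ (or any flat) invariant under $\rho(\Gamma)$, so the route through ``$\rho(\Gamma)$ lies in a proper parabolic or the stabilizer of a flat'' does not obviously close. The paper's contradiction is weaker and cleaner: the parallel family forces the whole image $h(X)$ into the parallel set $P_{\eta,\eta'}$ of a single pair of opposite ideal points, which is a proper real algebraic subset of $Y$; by equivariance $\rho(\gamma)h(x_0)=h(\gamma x_0)\in P_{\eta,\eta'}$ for all $\gamma$, so $\rho(\Gamma)$ satisfies a nontrivial real algebraic condition (membership in a proper subvariety of $G$, not a subgroup), already contradicting Zariski density. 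So the density hypothesis is used twice --- once through reductivity to get $h$ at all, and once at the end --- neither use matching the mechanism in your sketch.
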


Recall that the theorem of Donaldson \cite{Donaldson} and Corlette \cite{Corlette} gives in this setting
\begin{enumerate}
    \item There exists a $\rho$-equivariant harmonic map if and only if the Zariski closure of $\rho$ is reductive.
    \item The equivariant harmonic map is unique if and only if the centralizer of $\rho$ is compact.
\end{enumerate}

When the symmetric space is $\SL_n(\C)/\mathrm{SU}(n)$, the centralizer condition corresponds to the representation on $\C^n$ being irreducible. 
More generally, it is natural from the invariant-theoretic perspective to call this condition \emph{stable}. 
Since stability is a coarse notion, it therefore makes sense to talk about the stability of an action $\rho$ as the stability of such an equivariant map. 
Although it may appear unfortunate at first that this is weaker than our definition of stable, this issue is both minor and unavoidable, see Remark \ref{rmk: LK}.

We note that Labourie extended the definition of reductive to actions on an arbitrary Hadamard manifold $Y$, and proved an analog of the Donaldson-Corlette theorem in that setting \cite{Labourie1991existenceharmoniques}, see also \cite{jostyau1991harmonic}.

The proof of our Theorem \ref{thm: main equivariant} uses the Donaldson-Corlette theorem, so we are not giving a new proof of existence in the equivariant case. Instead, the main point of Theorem \ref{thm: main equivariant} is to give evidence that our notion of stability is a natural criterion in the context of Problem \ref{problem}.

\subsection{The Schoen-Li-Wang conjecture} \label{sec: intro schoen}

An asymptotic Dirichlet problem asks for a harmonic map between two spaces with prescribed boundary values `at infinity'. In general, it is not always clear what boundary values at infinity should mean. One way of making this precise is through the formulation in Problems \ref{pseudoproblem} and \ref{problem}. Of course, this is somewhat unsatisfying in the sense that the coarse map $f$ is certainly not itself `at infinity'.

There is a special setting in which one can formulate the asymptotic Dirichlet problem directly. The visual boundary $\partial_\infty Y$ of a non-positively curved space $Y$ is the space of geodesic rays in $Y$, in which two rays are equivalent if they lie at bounded distance. If $X$ is also non-positively curved, a coarse Lipschitz map from $X$ to $Y$ may or may not induce a map from $\partial_\infty X$ to $\partial_\infty Y$, but in the following setting it will.

A map $f: X \to Y$ between metric spaces is called a quasi-isometric embedding if in addition to being coarse Lipschitz \eqref{eqn: coarse Lipschitz} for some $L$,
\[
    d_X(x,z) \leq M(d_Y(f(x),f(z)) + 1)
\]
holds for some $M$. 
If $X$ and $Y$ are hyperbolic in the sense of Gromov, then quasi-isometric embeddings between $X$ and $Y$ extend continuously to the visual boundary \cite{Gromov1987hyperbolicgroups}, and moreover are determined up to bounded distance by their boundary maps \cite{BonkSchramm}.
Furthermore, the class of boundary maps $\phi\colon  \partial_\infty X \to \partial_\infty Y$ that one obtains can be characterized in terms of how they distort cross-ratios; they are called quasi-symmetric maps. 
We will return to the notion of quasisymmetry in section \ref{sec: intro uhteich}.

The Schoen conjecture \cite{Schoenconjecture} asserts that every quasi-isometric embedding from the hyperbolic plane $\H^2$ to itself is within bounded distance of a unique harmonic map. In other words, the conjecture is a positive answer to Problem \ref{problem}, together with a uniqueness statement, for this class of maps. By the above discussion, this can be reformulated as saying that every quasi-symmetric map from $\H^2$ to $\H^2$ has a unique quasi-isometric harmonic extension. Li and Wang \cite{liHarmonicRoughIsometries1998} conjectured the same when $\H^2$ is replaced by a symmetric space of rank 1.

The Schoen conjecture was one of the motivations behind Gromov's paper \cite{GromovFoliatedPlateau}, and it continued to provoke much work over the intervening 25 years \cite{Markovic2,Markovic3,liHarmonicRoughIsometries1998, HardtWolf,BonsanteSchlenker,Tam-Wan} until it was finally resolved in the affirmative by Markovic in 2015 \cite{Markovic}. Shortly after, Benoist and Hulin proved that every quasi-isometric embedding between (possibly different) rank 1 symmetric spaces is within bounded distance of a harmonic map \cite{BH1}. They later extended this further \cite{BH2}, replacing rank 1 symmetric spaces by pinched Hadamard manifolds, and relaxing somewhat the conditions on $f$. The theorem for quasi-isometric embeddings was later extended in \cite{SidlerWenger2021harmonicqie} to allow $Y$ to be a locally compact Gromov hyperbolic Hadamard space. Further extensions were made by To\v{s}i\'{c} \cite{Tosic,TosicII} and Pankka-Souto \cite{pankka2017harmonicextensionsquasiregularmaps}.

Although the strict negative curvature assumptions in the works of Markovic and Benoist-Hulin allow for more techniques than we have access to in our setting, our general methods recover most of their results. In particular, using Theorem \ref{thm: main Donaldson-Corlette} we simplify the proof of the main theorem of \cite{BH1}.

\begin{theorem}[{Corollary \ref{cor: benoist-hulin}, c.f.\ \cite[Theorem 1.1]{BH1} for existence, \cite[Theorem 2.3]{liHarmonicRoughIsometries1998} for uniqueness}]
    Let $X$ and $Y$ be rank 1 symmetric spaces, and $f: X \to Y$ a quasi-isometric embedding. 
    Then there is a unique harmonic map at bounded distance from $f$.
\end{theorem}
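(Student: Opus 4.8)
The plan is to deduce this corollary from Theorem~\ref{thm: main Donaldson-Corlette} (existence) together with Theorem~\ref{thm: main uniqueness} (uniqueness), so the work reduces to verifying the hypotheses of those two theorems for a quasi-isometric embedding $f\colon X \to Y$ between rank~1 symmetric spaces. Rank~1 symmetric spaces are complete manifolds with pinched negative sectional curvature, hence have Ricci curvature bounded below, and $Y$ is a proper Hadamard space; moreover $X$ is cocompact (its isometry group acts transitively) and $Y$ is a symmetric space, so once we know $f$ is stable, both theorems apply verbatim and give a unique harmonic map at bounded distance from $f$. Thus the heart of the matter is:

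\begin{quote}
\emph{Every quasi-isometric embedding between rank~1 symmetric spaces is stable.}
\end{quote}

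To prove stability I would work with the practical Definition~\ref{def: stable at scale r} rather than Definition~\ref{def: intro stable}, but the idea is the same: I need to show that for a suitable scale, $P_t(d_y \circ f)(x) - d_y\circ f(x) \geq 1$ uniformly in $x \in X$ and $y \in Y$. Fix $x$; after translating by isometries we may assume $x$ is a fixed basepoint. The key geometric input is that $Y$ is Gromov hyperbolic with pinched negative curvature, so the convex function $d_y$ is in fact \emph{uniformly strictly convex} along geodesics that are not pointed almost directly at $y$: there is a definite lower bound on its second derivative in terms of how far the geodesic direction is from the direction of $y$. Composing with $f$, and using that $f$ is a quasi-isometric embedding (so it moves points at controlled linear speed and cannot collapse a ball of radius $\sim\sqrt{t}$ around $x$ to something of bounded diameter), the function $u = d_y\circ f$ restricted to a metric ball $B(x,R)$ in $X$ behaves like a function whose ``coarse Laplacian'' is bounded below by a positive constant — except possibly in the bad directions where $f(B(x,R))$ points almost straight at $y$. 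In that bad case one argues instead directly: $u$ is then coarsely linear with slope close to $1$ on $B(x,R)$, and a function that is coarsely $\rho$-Lipschitz-from-below and linear still has $P_t u - u$ bounded below because the heat kernel on $X$ spreads mass symmetrically and $X$ has $\lambda_1(X) > 0$ (true for rank~1 symmetric spaces). Combining the two cases gives the uniform lower bound $P_t u - u \geq 1$ after choosing $t$ large enough depending only on $L$, $M$, and the curvature pinching constants of $X$ and $Y$.

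The main obstacle I anticipate is making the dichotomy ``$f(B(x,R))$ points almost at $y$ / or it does not'' quantitative and uniform, and in particular handling the regime where $y$ is very far from $f(x)$, since there the convexity of $d_y$ degenerates (the function becomes nearly affine with slope $1$ over bounded regions). This is exactly where the strict negative curvature of $Y$ is used: in a rank~1 symmetric space, two geodesic rays based at $f(x)$ with an angle bounded below diverge at a definite exponential rate, so $d_y \circ f$ cannot be too close to affine on a ball of radius $R$ unless the image is genuinely aligned with the geodesic to $y$, and in the aligned case the quasi-isometric lower bound forces the image to actually progress a definite distance toward (or away from) $y$ across $B(x,R)$. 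Once this quantitative trichotomy is set up, feeding it through the heat-kernel average — using the Gaussian decay estimates for $p_t(x,z)$ on a manifold with Ricci curvature bounded below — is routine. I would also remark that this argument is essentially a streamlined repackaging of the estimates of Benoist–Hulin \cite{BH1}, now organized around the single clean criterion of stability, which is what lets Theorem~\ref{thm: main Donaldson-Corlette} do the remaining analytic work of producing the harmonic map.
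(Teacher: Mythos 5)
Your reduction is exactly the paper's: verify stability of $f$, then quote Theorem \ref{thm: Donalson-Corlette} for existence and Theorem \ref{thm: uniqueness} for uniqueness (rank~1 symmetric spaces are cocompact, Ricci-bounded-below, and proper Hadamard, so the hypotheses are fine). The problem is the stability step, which is where your argument has a genuine gap. In the ``bad'' regime --- $y$ far away and $f(B(x,R))$ tracking the geodesic toward $y$ --- the assertion that $P_t u - u$ is bounded below for $u=d_y\circ f$ ``because the heat kernel spreads mass symmetrically and $\lambda_1(X)>0$'' is not a proof. Along directions in $X$ whose image heads toward $y$, $u$ \emph{decreases} at a rate that can be as large as the Lipschitz constant of $f$, while in the complementary directions it increases only at the rate $1/M$ coming from the quasi-isometric lower bound. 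So positivity of the heat-kernel average requires a quantitative, uniform-in-$(x,y)$ upper bound on the harmonic measure of the set of directions at scale $\sim r$ along which $f$ approaches $y$; this is precisely the ``quantitative trichotomy'' you defer as routine, and it is the entire content of the estimate. (A smaller inaccuracy: for curvature $\le -1$ the Hessian of $d_y$ in directions orthogonal to $\nabla d_y$ is bounded below by $\coth(d_y)\ge 1$, so convexity does not degenerate as $y\to\infty$; the only degeneration is along the gradient direction, i.e.\ exactly the aligned case above.)

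The paper resolves this uniformity by a soft mechanism rather than pointwise convexity estimates, and it is worth comparing. By Proposition \ref{prop: Busemann enough} it suffices to treat Busemann functions $b_\eta\circ f$; by the classical Morse lemma the boundary map of $f$, and of every recentered limit of $f$ (Proposition \ref{prop: uniform convergence}, Corollary \ref{cor: compactness of urqies}), is continuous and injective into $\partial_\infty Y$, and in rank~1 any two distinct boundary points are transverse, so all these boundary maps are finitely non-transverse, with $\epsilon(\mathfrak{a},\Pi)=1$. Theorem \ref{thm: finitely nontransverse urqies are stable on balls} (and Corollary \ref{cor: positive separation and finitely non-transverse limits}) then produces the uniform lower bound on $\int (b_\eta\circ f(z)-b_\eta\circ f(x))\,\mu_x^r(z)$ by a compactness/recentering argument: one pushes the rescaled hitting measures forward to $\partial_\infty Y$, uses that the limit measure is non-atomic and generically transverse to $\eta$, and applies lower semicontinuity of the slope pairing via Portmanteau. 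In other words, the uniform control of the ``bad directions'' that your sketch needs is obtained there by extracting limits, not by direct heat-kernel and curvature estimates. Your direct route is in the spirit of Benoist--Hulin and is plausibly completable in pinched negative curvature, but as written the decisive estimate is asserted, not proved; either supply it quantitatively or route the argument through the boundary-map criterion as the paper does.
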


\subsection{A higher rank Schoen conjecture}

The second main result of this paper, and our original inspiration, is an extension of the Schoen conjecture to the case that the target is a symmetric space of higher rank. Let us explain what we mean by this. 
Let $G$ be a semi-simple Lie group, and let $Y$ be the associated symmetric space of non-compact type.
Choose a Cartan subspace $\mathfrak{a}$, a set $\Pi$ of simple restricted roots and let $\mathfrak{a}^+ \subset \mathfrak{a}$ be the closed positive Weyl chamber.
There is a natural vector-valued distance $\vec{d}_Y\colon Y \times Y \to \mathfrak{a}^+$, from which we recover $d_Y$ by taking its Euclidean norm. 
For any nonempty subset $\Theta \subset \Pi$, we say a map $f\colon X \to Y$ is a $\Theta$-quasi-isometric embedding if it is coarse Lipschitz and satisfies for some $M$
\begin{equation}\label{eqn: intro theta-qie}
    d_X(x,z) \leq M(\alpha \circ \vec{d}_Y(f(x),f(z)) + 1) \quad \textrm{for all } \alpha \in \Theta.
\end{equation}
It is easy to see that this is equivalent to the definition of uniformly regular quasi-isometric embeddings of Kapovich-Leeb-Porti \cite{kapovich2018morse}. 
It should also be compared with the uniformly hyperbolic bundles of Bridgeman-Labourie \cite{bridgeman2025ghostpolygonspoissonbracket}. 
When $f$ is the orbit map of a representation $\rho$ of a finitely generated group, (\ref{eqn: intro theta-qie}) is equivalent to $\rho$ being $\Theta$-Anosov \cite{kapovich2018morse, bochi2019anosov}.

As a consequence of their higher rank Morse lemma, Kapovich, Leeb and Porti prove that every $\Theta$-quasi-isometric embedding from a proper geodesic metric space $X$ to $Y$ has Gromov hyperbolic domain and extends to a continuous boundary map from $\partial_\infty X$ to the partial flag variety $\mathcal{F}_\Theta$ corresponding to $\Theta$ \cite{kapovich2018morse}.

We give a criterion for stability in terms of the boundary map $\partial f$ induced by a $\Theta$-quasi-isometric embedding $f$. 
We call a limit of $f$ a locally uniform limit of $f$ under recentering, and we say that a boundary map is finitely non-transverse if it transverse to any given flag except at finitely many points. 
As an example of the criterion, when $\Theta = \Pi$ is the set of all simple roots, we obtain
\begin{theorem}[Corollary \ref{cor: positive separation and finitely non-transverse limits}] \label{thm: intro criterion for stability}
    If $f \colon X \to Y$ is a Lipschitz $\Pi$-quasi-isometric embedding such that every limit of $f$ has finitely non-transverse boundary map, then $f$ is stable.
\end{theorem}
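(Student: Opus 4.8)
The plan is to argue by contradiction, reducing a failure of stability to a degeneracy statement about a \emph{limit} of $f$ that is incompatible with the finitely-non-transverse hypothesis. Suppose $f$ is not stable, so by Definition \ref{def: intro stable} there are $t_n \to \infty$, points $x_n \in X$, and points $y_n \in Y$ with $P_{t_n}(d_{y_n}\circ f)(x_n) - d_{y_n}\circ f(x_n) < 1$. Recenter: set $o_n = f(x_n)$, precompose with isometries of $Y$ carrying $o_n$ to a fixed basepoint $o$, and use the Ricci lower bound (together with the bounded geometry available in our setting, or cocompactness) to extract from the recentered maps $f_n$ a locally uniform limit $f_\infty \colon X_\infty \to Y$. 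By hypothesis $f_\infty$ is a limit of $f$, hence again a Lipschitz $\Pi$-quasi-isometric embedding whose boundary map $\partial f_\infty \colon \partial_\infty X_\infty \to \mathcal{F}_\Pi$ is finitely non-transverse. Simultaneously I would track the recentered targets $y_n$: after passing to a subsequence either $d(o, y_n)$ stays bounded and $y_n \to y_\infty \in Y$ with $d_{y_n}\circ f_n \to d_{y_\infty}\circ f_\infty$, or $d(o,y_n) \to \infty$ and (using properness of $Y$ and the structure of its visual boundary) $y_n \to \eta \in \partial_\infty Y$ of regular Weyl-chamber type with $d_{y_n}\circ f_n - d(o,y_n) \to b_\eta \circ f_\infty$, where $b_\eta$ is the Busemann function. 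In both cases, writing $g$ for the limiting convex function on $Y$ and invoking Li--Yau-type heat-kernel estimates to pass the inequality to the limit, the function $u = g\circ f_\infty - g\circ f_\infty(x_\infty)$ satisfies $u(x_\infty) = 0$ and $P_t u(x_\infty) \leq 1$ for every $t > 0$; that is, $g \circ f_\infty$ increases only sublinearly under heat flow at $x_\infty$.

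The next step is to turn this analytic degeneracy into a statement about $\partial f_\infty$, using the structure theory of the symmetric space $Y$. The Hessian of $b_\eta$ (for $\eta$ regular) is positive semidefinite, and at each point $p$ its kernel is exactly the tangent space of the maximal flat through $p$ asymptotic to $\eta$; correspondingly $b_\eta$ is affine along a geodesic precisely when that geodesic lies in such a flat. A convex function whose composition with $f_\infty$ fails to increase under heat flow at $x_\infty$ must have $f_\infty$ trapped, up to bounded error, in a single parallel set $P$ of a geodesic asymptotic to $\eta$ — intuitively, the heat-flow average of $f_\infty$ started at $x_\infty$ cannot leave the flat directions of $b_\eta$. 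But then $\partial f_\infty$ maps \emph{all} of $\partial_\infty X_\infty$ into $\partial_\infty P$, i.e.\ into flags non-transverse to the flag opposite $\eta$, which contradicts finite non-transversality unless $\partial_\infty X_\infty$ is a single point — impossible since $X_\infty$ is unbounded. The bounded-target case is analogous: $d_{y_\infty}$ is affine only along geodesics through $y_\infty$ lying in a flat, so the image of $f_\infty$ is trapped in the union of such flats, whose boundary is again a proper subvariety of $\mathcal{F}_\Pi$ meeting the whole of $\partial f_\infty(\partial_\infty X_\infty)$ — the same contradiction.

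The main obstacle, and the reason a single limit is not quite enough, is twofold: the hypothesis allows finitely many genuinely non-transverse boundary points, and the failure of stability produces only an \emph{asymptotic} degeneracy, so one must rule out $f_\infty$ that escapes the flat directions of $g$ only in the limit or only near the exceptional points. I would handle both by a second recentering concentrated near where the degeneracy accumulates, so that in the new (iterated) limit the bad behavior — finitely many exceptional points, or asymptotic flat drift — becomes a global non-transversality; since a limit of a limit of $f$ is again a limit of $f$, the finitely-non-transverse hypothesis applies to this new map and yields the contradiction. Organizing this iterated compactness — simultaneously controlling basepoints in $X$, the escape direction $\eta$ and its chamber type, and the flag $\partial f_\infty(\xi)$ that degenerates — together with making the ``trapping'' argument of the previous paragraph quantitative enough to survive these limits, is the technical heart of the proof; effectively one is extracting from the finitely-non-transverse hypothesis the genuinely quantitative ``positive separation'' of all limiting boundary maps that a clean stability estimate requires.

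Two remarks on scope. First, no harmonic-map or Ptolemy input enters here: stability is a property of $f$ alone, and the Ptolemy inequality is used only afterwards, when one feeds this stability into Theorem \ref{thm: main Donaldson-Corlette}. Second, for a general $\Theta \subsetneq \Pi$ the same scheme runs with the partial flag variety $\mathcal{F}_\Theta$, the Busemann-type functions associated to the roots $\alpha \in \Theta$, and parallel sets of the corresponding singular geodesics; the case $\Theta = \Pi$ treated here is the cleanest because all flags are full and ``transverse'' is just the Zariski-open opposition condition.
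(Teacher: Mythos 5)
There is a genuine gap, and it occurs at the very first analytic step. Your contradiction hypothesis only gives the heat-flow defect bound at times $t_n \to \infty$: $P_{t_n}(d_{y_n}\circ f)(x_n) - d_{y_n}\circ f(x_n) < 1$. Since $f$ is merely a Lipschitz quasi-isometric embedding, $d_{y_n}\circ f$ is \emph{not} subharmonic, so $P_t u - u$ is not monotone in $t$ and the inequality at time $t_n$ gives no information at a fixed time $t < t_n$; the claimed conclusion ``$P_t u(x_\infty) \le 1$ for every $t>0$'' for the limit map does not follow from any recentering or heat-kernel estimate. (Contrast the equivariant case, Theorem \ref{thm: equivariant}, where the composition is with a \emph{harmonic} map, subharmonicity supplies exactly this monotonicity, and the limit is legitimately harmonic along Busemann functions.) The second step fails as well: even if one had a bounded heat increment for $b_\eta\circ f_\infty$, nothing traps a non-harmonic Lipschitz map, up to bounded error, in a parallel set --- that rigidity (strong maximum principle, parallel Jacobi fields) is a harmonic-map phenomenon and is exactly what is unavailable here. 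Moreover the boundary consequence you draw from it is false as stated: the visual boundary of the parallel set of a geodesic asymptotic to $\eta$ contains chambers transverse to the flag opposite $\eta$ (for instance the chamber of $\eta$ itself), and $\eta$ need not be regular in the first place, so no contradiction with finite non-transversality would result even if the trapping held.

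What the statement actually requires is a quantitative mechanism, not a rigidity/degeneration one, and this is where the paper goes a different way: one works with hitting measures on spheres of radius $r$ (Definition \ref{def: stable at scale r} and Proposition \ref{prop: Busemann enough}), divides the defect by $r$, and rewrites it as an integral of Busemann difference quotients against measures comparable to harmonic measure; as $r\to\infty$ these converge weak-* to a non-atomic measure at infinity, and --- crucially via the uniform convergence of the flag-compactified maps coming from the higher-rank Morse lemma (Proposition \ref{prop: uniform convergence}) --- its pushforward to $\mathcal{F}_\Pi$ is $(\partial f_\infty)_*$ of that non-atomic measure, hence generically transverse by your hypothesis. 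The lower bound then comes from Lemma \ref{lem: stability of generically transverse measures}, lower semicontinuity of slopes, and the nonnegativity of the separation constant $\epsilon(\mathfrak{a},\Pi)$ (Theorem \ref{thm: finitely nontransverse urqies are stable on balls}, Proposition \ref{prop: separation is nonnegative}), contradicting a vanishing normalized defect. None of these ingredients appear in your sketch, and your closing claim that the same scheme runs for general $\Theta \subsetneq \Pi$ is a symptom of the missing core: for proper $\Theta$ the separation constant is negative and the criterion is not asserted there; the case $\Theta=\Pi$ is not merely ``cleanest'' but is where the positivity that drives the whole argument lives.
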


Our criterion is fairly flexible, but is not satisfied by all boundary maps of $\Theta$-quasi-isometric embeddings.
It would be interesting to study in full generality when it holds. 
In this paper, we focus on a special case in which it does hold: when $Y = Y_d$, the symmetric space for $\mathrm{PGL}_d(\R)$, $\Theta = \Pi$, the full set of roots, and the boundary map $\RP^1 \to \mathrm{Flag}(\R^d)$ is positive in the sense Fock-Goncharov \cite{fock2006moduli}.
This means that every triple of flags in its image is equivalent, up to the action of $\mathrm{PGL}_d(\R)$ to the triple $\{\sigma_0, n \cdot \sigma_0, \sigma_\infty\}$ where $\sigma_0$ is the standard descending flag, $\sigma_\infty$ the standard ascending flag, and $n$ is an upper triangular unipotent matrix which is totally positive \cite{schoenberg1930variationsvermindernde, GantmacherKrein1935oscillatoires, lusztig1994positivity} i.e.\ all of the interesting minors of $n$ are positive. 
It would be natural to try to extend the theorem to the more general setting of $\Theta$-positive maps as introduced by Guichard-Wienhard \cite{gw2025positivity}.

This allows us to conclude our second main theorem

\begin{theorem}[Corollary \ref{cor: schoen}] \label{thm: main schoen}
    For $d \geq 2$, let $Y_d$ be the symmetric space of $\mathrm{PGL}_d(\R)$. 
    Every $\Pi$-quasi-isometric embedding $\H^2 \to Y_d$ with positive boundary map is within bounded distance of a unique harmonic map.
\end{theorem}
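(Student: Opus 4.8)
The plan is to deduce the statement from the three main results already in hand: the stability criterion for $\Pi$-quasi-isometric embeddings (Theorem \ref{thm: intro criterion for stability}), the existence theorem (Theorem \ref{thm: main Donaldson-Corlette}), and the uniqueness theorem (Theorem \ref{thm: main uniqueness}). Concretely, I want to show (i) that a $\Pi$-quasi-isometric embedding $f\colon \H^2 \to Y_d$ with positive boundary map is stable, and then (ii) invoke existence and uniqueness. The only genuinely new work lies in step (i), and within that, in verifying the hypothesis of Theorem \ref{thm: intro criterion for stability} — namely that every limit of $f$ under recentering has a finitely non-transverse boundary map.

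The first reduction is to assume $f$ is Lipschitz. A $\Pi$-quasi-isometric embedding is in particular coarse Lipschitz, so mollifying at a fixed scale (using the barycenter construction in the Hadamard manifold $Y_d$) produces a Lipschitz map at bounded distance from $f$; this map is again a $\Pi$-quasi-isometric embedding, and being at bounded distance from $f$ it induces the same continuous boundary map $\RP^1 \to \mathrm{Flag}(\R^d)$, which is therefore still positive. Since both stability and the conclusion of the theorem are invariant under bounded distance, it suffices to treat this Lipschitz representative.

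The core step is then to check that every limit $f_\infty$ of $f$ has a finitely non-transverse boundary map. Since $f$ is a Lipschitz $\Pi$-quasi-isometric embedding, any locally uniform limit under recentering is again a Lipschitz $\Pi$-quasi-isometric embedding with the same constants (this stability is part of the Kapovich--Leeb--Porti higher-rank Morse lemma package), so $f_\infty$ has its own continuous boundary map $\partial f_\infty \colon \RP^1 \to \mathrm{Flag}(\R^d)$. One identifies $\partial f_\infty$ as a limit of the positive curve $\partial f$, taken up to the M\"obius action on $\RP^1 = \partial_\infty \H^2$ and the (possibly degenerating) $\mathrm{PGL}_d(\R)$-action on $\mathrm{Flag}(\R^d)$; positivity of a curve is a closed condition, so $\partial f_\infty$ is at worst a limit of positive curves, and such curves remain ``weakly positive'' (monotone) in a way that suffices for what follows. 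Finally, a weakly positive curve in $\mathrm{Flag}(\R^d)$ meets the non-transversality locus of any fixed flag $\tau$ in only finitely many points — indeed in a number bounded solely in terms of $d$ — this being the geometric form of the variation-diminishing property of totally positive matrices (a Descartes rule of signs). Hence $\partial f_\infty$ is finitely non-transverse. I expect this to be the main obstacle: one must carefully say what the boundary map of a recentered limit is in higher rank and control the degeneration of the $\mathrm{PGL}_d(\R)$-component of the recentering, and one must pin down exactly how much positivity survives a limit and that it still yields the finite-intersection bound.

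Granting this, Theorem \ref{thm: intro criterion for stability} shows $f$ is stable. Theorem \ref{thm: main Donaldson-Corlette} then produces a harmonic map at bounded distance from $f$. For uniqueness, note $X = \H^2$ is connected and admits a cocompact isometric action (by any cocompact Fuchsian group), while $Y_d$ is a symmetric space of non-compact type; thus Theorem \ref{thm: main uniqueness} applies and the harmonic map is unique. This completes the argument. (One sees here why the domain is taken to be $\H^2$ rather than an arbitrary rank $1$ symmetric space: positivity requires the cyclic order on the circle $\partial_\infty \H^2 = \RP^1$.)
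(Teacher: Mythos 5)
Your outer architecture is the same as the paper's: mollify to a Lipschitz representative (Proposition \ref{prop: Lipchitz}), show that every recentered limit has finitely non-transverse boundary map, apply the stability criterion (Corollary \ref{cor: positive separation and finitely non-transverse limits}), and conclude with existence (Theorem \ref{thm: Donalson-Corlette}) and uniqueness (Theorem \ref{thm: uniqueness}); the uniqueness step as you state it is fine. The problem is the core step, which you defer and then patch with a claim that does not hold. You assert that a limit of positive curves is ``weakly positive (monotone)'' and that this alone yields the finite-intersection bound via variation diminishing. Without transversality this is false: a pointwise limit of positive curves can degenerate (for instance collapse to a constant map, or to a map constant on an interval), and such a curve is non-transverse to a suitable fixed flag along a whole interval of $\RP^1$, so finite non-transversality fails. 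So ``weak positivity'' is genuinely not enough, and the Descartes-rule heuristic cannot be applied at that level of generality.

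What closes the gap, and what the paper actually does, is the following chain. First, one must prove that if recentered $L$-Lipschitz $(M,\Pi)$-quasi-isometric embeddings converge locally uniformly, then their extensions converge uniformly on $\H^2 \cup \partial_\infty \H^2$, so the boundary map of the limit is the uniform limit of the recentered boundary maps; this is Proposition \ref{prop: uniform convergence} and Corollary \ref{cor: compactness of urqies}, and it is not an off-the-shelf item in the Kapovich--Leeb--Porti package (the Morse lemma is the tool used to prove it, but the statement itself has to be established). Second, since the limit map is again an $(L,M,\Pi)$-quasi-isometric embedding, its boundary map is transverse (Theorem \ref{thm: boundary maps}), and a \emph{transverse} pointwise limit of positive curves is genuinely positive (Lemma \ref{lem: positive connected}); this is the step your proposal skips, and it is exactly what rules out the degenerate limits above. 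Third, the finite non-transversality of a genuinely positive continuous curve in $\Flag(\R^d)$, with a bound depending only on $d$, is the Saldanha--Shapiro--Shapiro theorem (Theorem \ref{thm: Saldanha-Shapiro-Shapiro}); this is a nontrivial recent result tied to Grassmann convexity, not a one-line consequence of the classical variation-diminishing property of totally positive matrices, so it should be invoked as a citation rather than re-derived informally. With these three inputs in place, your argument becomes the paper's proof.
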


When $d = 2$, a $\Pi$-quasi-isometric embedding is the same as a quasi-isometric embedding, and the positivity condition is vacuous. 
Hence, this is exactly the statement of the Schoen conjecture when $d=2$. 
For $d > 2$, our main interest in this theorem is that it shows that a certain universal higher Teichm\"uller space can be described as a space of harmonic maps. 
We describe this further in the following section.

\subsection{A universal higher Teichm\"uller space} \label{sec: intro uhteich}

The universal Teichm\"uller space was originally defined as the space of boundary values of quasi-conformal maps of the disk \cite{Bers}. 
It is universal in the sense that it contains the Teichm\"uller space of any closed surface. 

By work of many people including \cite{BeurlingAhlfors1956quasiconformal,DouadyEarle1986natural,Tukia1985extension, Morse1924lemma}, these are the same as the boundary maps of quasi-isometric embeddings from $\H^2$ to itself, i.e.\ the quasi-symmetric maps. 
We recall that the cross ratio of four points in $\RP^1$ is 
\[
    CR(\lambda_1,\lambda_2,\lambda_3,\lambda_4) = \frac{(\lambda_4 - \lambda_3)(\lambda_2 - \lambda_1)}{(\lambda_3 - \lambda_2)(\lambda_1 - \lambda_4)}
\]
and a map $\phi$ from $\partial_\infty \H^2 \cong \RP^1$ to itself is quasisymmetric if \[
CR(\phi(\lambda_1),\phi(\lambda_2),\phi(\lambda_3),\phi(\lambda_4))
\]
can be bounded uniformly in terms of $CR(\lambda_1,\lambda_2,\lambda_3,\lambda_4)$. 
Another rephrasing of the Schoen conjecture is that the universal Teichm\"uller space admits a description as a space of harmonic maps.

A different sort of generalization of the classical Teichm\"uller space is the notion of a higher Teichm\"uller space as defined in \cite{WienhardInvitation}. 
This is a union of components of the space of representations of a closed surface group consisting entirely of discrete and faithful representations. 
The archetype of a higher Teichm\"uller space is, for each $d \geq 2$, the space of Hitchin representations of a closed surface group into $\mathrm{PGL}_d(\R)$. 
When $d = 2$, these are exactly the Fuchsian representations. 
For $d > 2$, these components were first described by Hitchin \cite{HitchinLieGroups}, and proved to be higher Teichm\"{u}ller spaces in this sense by Labourie \cite{LabourieAnosovFlows} and Fock-Goncharov \cite{fock2006moduli}.
Of relevance to this paper, their first description by Hitchin used equivariant harmonic maps, through the theory of Higgs bundles.

Associated to each Hitchin representation is an equivariant boundary map from $\RP^1$ to the space $\mathrm{Flag}(\R^d)$ of full flags of $\R^d$.
Such a map is a hyperconvex Frenet curve in the language of Labourie, or equivalently, a positive map in the language of Fock-Goncharov. 
This property also characterizes Hitchin representations \cite{Guichard2008hyperconvexes,fock2006moduli}.
With this in mind, we define
\begin{definition} \label{def: UHitd}
    The \emph{universal Hitchin component $\mathrm{UHit}_d$} is the space of $\Pi$-quasi-isometric embeddings from $\H^2$ to $Y_d$ with positive boundary map from $\RP^1$ to $\mathrm{Flag}(\R^d)$, up to bounded distance.
\end{definition}
In these terms our Theorem \ref{thm: main schoen} says that the universal Hitchin component is naturally a space of harmonic maps.

By the discussion preceding the definition, for a fixed $d$, the $d$-Hitchin component of any closed surface group embeds into to $\mathrm{UHit}_d$. 
To complete the analogy between $\mathrm{UHit}_d$ and universal Teichmuller space, and to recast our existence Theorem \ref{thm: main schoen} as an asymptotic Dirichlet problem, it remains to characterize $\mathrm{UHit}_d$ as a space of positive maps from $\RP^1$ to $\mathrm{Flag}(\R^d)$ which are moreover quasi-symmetric in some sense.

Both Fock-Goncharov \cite{fock2006moduli} and Labourie \cite{LabourieAnosovFlows} have suggested defining a universal Hitchin component in terms of positive maps of circles to $\Flag(\R^d)$. 
Each paper describes functions of $k$-tuples of flags that generalize the cross-ratio; using the respective notions of cross ratio one obtains a priori different notion of quasi-symmetric maps from $\RP^1$ to $\mathrm{Flag}(\R^d)$. 
We also remark that Labourie and Toulisse define a slightly different sort of universal higher Teichm\"uller space containing the spaces of maximal representations into $\SO(2,n+1)$ \cite{labourie2022quasicirclesquasiperiodicsurfacespseudohyperbolic}, still in terms of cross ratios. 
Whereas ours consists of quasisymmetric maps, theirs consists of quasi-circles. 
Using results of \cite{LabourieToulisseWolf}, they show that their universal higher Teichm\"{u}ller space parametrizes a space of quasi-isometrically embedded maximal surfaces in the pseudo-Riemannian symmetric space $\mathbb{H}^{2,n}$. 

We define a cross ratio which is similar to, but different than, the edge invariant considered by Fock-Goncharov. Namely, if $n$ and $m^{-1}$ are totally positive unipotent upper triangular matrices, we define $d-1$ cross-ratios of a positive four-tuple of flags in standard position by 
\[
CR_i(m \cdot \sigma_0, \sigma_0, n \cdot \sigma_0, \sigma_\infty) = m_{i,i+1}/n_{i,i+1}
\]
for $1 \leq i \leq d-1$. 
We say a positive map $\phi: \RP^1 \to \mathrm{Flag}(\R^d)$ is quasi-symmetric if each $CR_i$ is controlled by the cross-ratio in the domain. 
We prove
\begin{theorem}[Theorem \ref{thm: from qs map to purqie} and Theorem \ref{thm: purqies with same boundary}] \label{thm: main quasisymmetric} The universal Hitchin component is equal to the space of positive quasisymmetric maps from $\RP^1$ to $\mathrm{Flag}(\R^d)$.
\end{theorem}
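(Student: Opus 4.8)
\emph{Plan.} The theorem amounts to showing that the map $[f] \mapsto \partial f$, sending a $\Pi$-quasi-isometric embedding $\H^2 \to Y_d$ with positive boundary map to that map, is a bijection from $\mathrm{UHit}_d$ onto the set of positive quasi-symmetric maps $\RP^1 \to \mathrm{Flag}(\R^d)$. I would establish three things: the map is \emph{well-defined} (such an embedding has quasi-symmetric boundary map), \emph{injective} (Theorem \ref{thm: purqies with same boundary}), and \emph{surjective} (Theorem \ref{thm: from qs map to purqie}). For well-definedness, let $f$ be such an embedding; by the higher-rank Morse lemma of Kapovich--Leeb--Porti the boundary map $\partial f$ exists and is continuous, and positivity is hypothesized, so only quasi-symmetry remains. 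Given $\lambda_1, \dots, \lambda_4 \in \RP^1$, the four flags $\partial f(\lambda_j)$ form a positive quadruple, and I would express each $\log CR_i(\partial f(\lambda_1), \dots, \partial f(\lambda_4))$ — a difference of Busemann-type functions of the flags evaluated at suitable points — as, up to a bounded error, the pairing of the simple root $\alpha_i$ with the $\mathfrak{a}$-valued distance $\vec{d}_{Y_d}$ between two explicit points lying on regular geodesics asymptotic to these flags. The Morse lemma identifies those regular geodesics, up to bounded distance, with images under $f$ of geodesics in $\H^2$, so this quantity is comparable, via \eqref{eqn: coarse Lipschitz} and the bound \eqref{eqn: intro theta-qie} applied with $\alpha = \alpha_i \in \Pi$, to the corresponding hyperbolic distance, which is $\log|CR(\lambda_1, \dots, \lambda_4)|$ up to an additive constant. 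Reading the estimate both ways gives two-sided control of $CR_i$ by $CR$.

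\emph{Injectivity.} Let $f_1, f_2 \colon \H^2 \to Y_d$ be $\Pi$-quasi-isometric embeddings with $\partial f_1 = \partial f_2 =: \phi$; one must bound $\sup_x d_{Y_d}(f_1(x), f_2(x))$. Fix $x$ and choose three geodesics $\gamma_1, \gamma_2, \gamma_3$ of $\H^2$ passing within a bounded distance of $x$ with six pairwise distinct endpoints in $\RP^1$. For each $k$ the flags $\phi(\gamma_k^-), \phi(\gamma_k^+)$ are transverse, hence span a unique maximal flat $F_k$, which contains every regular geodesic asymptotic to that pair; by the Morse lemma $f_j(\gamma_k)$, and in particular $f_j(x)$, lies within a uniform distance $D$ of $F_k$, for both $j$. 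Thus $f_1(x)$ and $f_2(x)$ both lie in $\bigcap_{k=1}^3 N_D(F_k)$, the intersection of the $D$-neighborhoods, and it suffices to bound its diameter uniformly in $x$. Here positivity is essential: since the six boundary points are images under $\phi$ of endpoints of geodesics passing near an interior point of $\H^2$, their cross-ratios stay away from the degenerate values, so by quasi-symmetry the six flags are in uniformly general position, and three maximal flats spanned by transverse pairs in uniformly general position have uniformly bounded coarse intersection. (Alternatively one may use that both $f_i$ are stable by the criterion of Theorem \ref{thm: intro criterion for stability} and argue via the harmonic maps they track together with Theorem \ref{thm: main uniqueness}.)

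\emph{Surjectivity.} Given a positive quasi-symmetric $\phi \colon \RP^1 \to \mathrm{Flag}(\R^d)$ I would construct a $\Pi$-quasi-isometric embedding $f$ with $\partial f = \phi$. Identify $\partial_\infty \H^2 \cong \RP^1$ and fix the Farey triangulation of $\H^2$; to each ideal triangle with vertices $\xi_1, \xi_2, \xi_3$ assign a point $b(\phi(\xi_1), \phi(\xi_2), \phi(\xi_3)) \in Y_d$ depending continuously and $\mathrm{PGL}_d(\R)$-equivariantly on the positive triple (for instance the barycenter of a natural measure attached to the triple), and interpolate geodesically over edges and faces to get $f \colon \H^2 \to Y_d$. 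Since all edges of the Farey triangulation are equivalent under $\mathrm{PSL}_2(\Z)$, which preserves cross-ratios, the four vertices of the two triangles across any edge have a single cross-ratio value, so by quasi-symmetry every $CR_i$ of the corresponding positive quadruple of flags is uniformly bounded; this keeps the barycenters of adjacent triangles uniformly close, so $f$ is coarse Lipschitz. For the lower bound \eqref{eqn: intro theta-qie} with each $\alpha_i$: a geodesic from $x$ to $z$ crosses a number of triangulation edges comparable to $d_{\H^2}(x,z)$, producing a positive sequence of flags along which the relevant Busemann-type function, i.e.\ $\log CR_i$ summed edge by edge, increases monotonically with each increment bounded below by a positive constant, because positivity forces the configuration to be uniformly regular and quasi-symmetry bounds the increments against the (bounded) hyperbolic cross-ratios across single edges; summing yields $\alpha_i \circ \vec{d}_{Y_d}(f(x), f(z)) \geq c\, d_{\H^2}(x,z) - C$. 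Finally $\partial f = \phi$, since $f$ restricted to a geodesic ray toward $\xi$ stays within bounded distance of a regular geodesic asymptotic to $\phi(\xi)$.

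\emph{Main obstacle.} I expect the crux to be the surjectivity step, specifically the lower bounds \eqref{eqn: intro theta-qie} for \emph{all} simple roots at once: this requires a robust quantitative theory of positive tuples of flags — that each of the $d-1$ minor-ratios $CR_i$ grows at a definite rate along images of geodesics and is faithfully governed by quasi-symmetry — and it is precisely this that forces the restriction to $\Theta = \Pi$ and to positive, rather than merely Anosov-type, boundary maps. The second delicate point is the uniform coarse-rigidity of positive configurations of flats used for injectivity, which genuinely fails without the uniform general position supplied by positivity, since parallel regular geodesics in a common flat share the same pair of endpoint flags.
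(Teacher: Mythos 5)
Your overall structure (well-definedness, injectivity via Theorem \ref{thm: purqies with same boundary}, surjectivity via Theorem \ref{thm: from qs map to purqie}) matches the paper's, but both substantive steps have genuine gaps. For injectivity, all the weight rests on the claim that three maximal flats spanned by transverse flag pairs ``in uniformly general position'' have uniformly bounded coarse intersection, with general position extracted only from cross-ratio bounds. This claim is unproved, and pairwise quantitative transversality of the six flags cannot suffice: in $Y_3$, take bases $(e_1,u_2,u_3)$, $(e_1,v_2,v_3)$, $(e_1,w_2,w_3)$ and in each apartment the opposite chamber pair $([u_2],\langle u_2,e_1\rangle)$, $([u_3],\langle u_3,e_1\rangle)$, etc.; for generic $u,v,w$ all fifteen pairs of flags are transverse, yet the three apartments share the ideal vertex $[e_1]$, so the three flats contain mutually asymptotic rays and no uniform bound on their coarse intersection can follow from transversality data alone. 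Excluding exactly this degeneration is the content of the paper's Lemma \ref{lem: positive triples are stable}, whose proof uses total positivity of the unipotent matrix (positivity of specific right-justified minors) to show that no ideal point lies in the boundary of all three flats of a positive triple, i.e.\ that the sum of the three Busemann functions is proper; uniformity then comes from the compactness Lemmas \ref{lem: compact triples} and \ref{lem: convex proper functions}. Invoking positivity only through ``cross-ratios stay away from degenerate values'' does not capture this mechanism. Your alternative route via stability and Theorem \ref{thm: main uniqueness} is circular: it produces one harmonic map at bounded distance from each $f_i$, but identifying the two harmonic maps requires already knowing that $f_1$ and $f_2$ are at bounded distance, which is the statement to be proved.

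For surjectivity, the lower bound \eqref{eqn: intro theta-qie} does not follow from your edge-by-edge telescoping along the Farey triangulation. The per-edge quantities $cr_i$ of the flag quadruples across Farey edges (whose domain cross-ratio is $-1$) are constrained by $K$-quasi-symmetry only to lie in $[-\log K,\log K]$; they are not bounded below by a positive constant, and positivity does not force monotone growth. Moreover, the quadruples over different edges have different outer flag pairs, so their $cr_i$ do not add up to $\alpha_i\circ\vec{d}_{Y_d}(f(x),f(z))$: by Proposition \ref{prop: cross ratios and projections} additivity holds only for quadruples sharing the same outer pair, i.e.\ for points in a single flat. Finally, the number of Farey edges crossed by a geodesic segment is not comparable to its length (it can be far larger near an ideal vertex of the tessellation). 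The paper's construction avoids all of this: it sets $f = p_d\circ\phi^3\circ s$ using the projection $p_d$ of Definition \ref{def: pd}, normalizes so that the two relevant points lie in the single flat $F(\phi(0),\phi(\infty))$, where their coordinates are exactly $cr_i(\phi(t),\sigma_0,\phi(1),\sigma_\infty)$, and obtains the linear lower bound in every simple root directly from the coarse-linear distortion of log-cross-ratios under quasi-symmetric homeomorphisms (Proposition \ref{prop: cr is coarse linear}); the coarse Lipschitz bound comes from quasi-periodicity (Proposition \ref{prop: compactness of qs maps} and Lemma \ref{lem: compact triples}) rather than from comparing barycenters of adjacent triangles. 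Your well-definedness step is plausible, though the paper handles it more softly by the same compactness argument (Proposition \ref{prop: piqie implies qs}).
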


This allows us to express our Theorem \ref{thm: main schoen} as a solution of an asymptotic Dirichlet problem:
\begin{corollary}
    Every quasisymmetric positive map from $\RP^1$ to $\mathrm{Flag}(\R^d)$ has a unique extension to a $\Pi$-quasi-isometric harmonic map from $\H^2$ to $Y_d$.
\end{corollary}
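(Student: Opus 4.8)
The plan is to deduce the corollary by gluing together Theorem~\ref{thm: main quasisymmetric} and Theorem~\ref{thm: main schoen}. Theorem~\ref{thm: main quasisymmetric} identifies $\mathrm{UHit}_d$ --- the set of $\Pi$-quasi-isometric embeddings $\H^2 \to Y_d$ with positive boundary map, taken up to bounded distance --- with the set of positive quasisymmetric maps $\RP^1 \to \mathrm{Flag}(\R^d)$, the bijection being induced by $f \mapsto \partial f$. In particular, given a positive quasisymmetric $\phi$, there exists a $\Pi$-quasi-isometric embedding $f\colon \H^2 \to Y_d$ with positive boundary map satisfying $\partial f = \phi$, and by the other half of Theorem~\ref{thm: main quasisymmetric} (namely Theorem~\ref{thm: purqies with same boundary}), $f$ is unique up to bounded distance among $\Pi$-quasi-isometric embeddings with boundary map $\phi$.

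First I would feed this $f$ into Theorem~\ref{thm: main schoen}: since $f$ is a $\Pi$-quasi-isometric embedding with positive boundary map, there is a unique harmonic map $h\colon \H^2 \to Y_d$ at bounded distance from $f$. I claim $h$ is the desired extension. Indeed, being at bounded distance from $f$, the map $h$ is again a $\Pi$-quasi-isometric embedding, because the defining inequality (\ref{eqn: intro theta-qie}) is a coarse condition: $\vec d_Y$ is disturbed by only a bounded amount under a bounded perturbation of either argument, so both the coarse Lipschitz bound and the lower bounds $\alpha\circ\vec d_Y$ survive with adjusted constants. Moreover the boundary extension furnished by the Morse lemma of Kapovich--Leeb--Porti \cite{kapovich2018morse} depends only on the bounded-distance class of the map, so $\partial h = \partial f = \phi$; in particular $h$ has positive boundary map, so $h$ represents a point of $\mathrm{UHit}_d$ extending $\phi$.

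For uniqueness, suppose $h'\colon \H^2 \to Y_d$ is any harmonic $\Pi$-quasi-isometric map with $\partial h' = \phi$. Since $\phi$ is positive, $h'$ is a $\Pi$-quasi-isometric embedding with positive boundary map, so by the uniqueness clause of Theorem~\ref{thm: main quasisymmetric} (Theorem~\ref{thm: purqies with same boundary}) it lies at bounded distance from $f$, hence at bounded distance from $h$. As $h$ and $h'$ are then both harmonic maps at bounded distance from $f$, the uniqueness assertion of Theorem~\ref{thm: main schoen} gives $h' = h$. The one point requiring genuine input --- but one that is already isolated in the results being quoted --- is the dictionary between $\Pi$-quasi-isometric embeddings up to bounded distance and their boundary maps, and specifically the direction supplied by Theorem~\ref{thm: purqies with same boundary} that two such embeddings sharing a boundary map must be at bounded distance; granting that, the corollary is a formal consequence of composing this dictionary with the existence-and-uniqueness of harmonic representatives.
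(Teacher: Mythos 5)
Your proposal is correct and follows essentially the same route as the paper, which presents this corollary as an immediate combination of Theorem~\ref{thm: main schoen} with Theorem~\ref{thm: main quasisymmetric} (that is, Theorems~\ref{thm: from qs map to purqie} and~\ref{thm: purqies with same boundary}). The points you spell out --- that the $\Pi$-quasi-isometric embedding condition and the boundary map are invariants of the bounded-distance class, and that uniqueness follows by routing any harmonic extension $h'$ through Theorem~\ref{thm: purqies with same boundary} and then the uniqueness clause of Theorem~\ref{thm: main schoen} --- are exactly the (implicit) bookkeeping the paper relies on.
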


We conclude with a few remarks about this theorem.

First, we note that in contrast to the case of strict negative curvature, the visual boundary of $Y$ does not give the right target with respect to which to pose the asymptotic Dirichlet problem. This is already clear from the case of $\H^2 \times \H^2$, for which the product structure shows that the right target for the boundary map is $\RP^1 \times \RP^1$, which is the space of full flags.

Second, although the existence theorem is much easier in the case that the $\Pi$-quasi-isometric embedding arises from a Hitchin representation, we hope that the method of bounding the distance to the harmonic map will have interesting applications in the equivariant case, for instance to the asympotic geometry of the K\"{a}hler metric on the Hitchin component. It would still take some work however to make our constants explicit.

Finally, there has also been recent progress on harmonic maps from open Riemann surfaces, such as $\H^2$, to symmetric spaces from the perspective of Higgs bundles. In \cite{li2023higgsbundleshitchinsection}, Li and Mochizuki construct a candidate for a universal Hitchin component, parametrized by the space of bounded holomorphic differentials $(q_2, \ldots, q_n)$ on $\H^2$. If one could show that their family of harmonic maps coincides with ours, this would give a non-Abelian Hodge correspondence for the universal Hitchin component.

\subsection{Acknowledgments}

Both authors would like to thank Anna Wienhard for support throughout the project, and Michael Gekhtman for pointing out the references \cite{ShapiroShapiro2000grassmannconvexityRP3,SaldanhaShapiroShapiro2021Grassmannconvexityrevisited}. The second author thanks Vladimir Markovi\'{c} for encouragement and Richard Schoen for pointing out the reference \cite{Freidin}.

\section{Preliminaries}

\subsection{Hadamard spaces} \label{sec: Hadamard}

If $T$ is a geodesic triangle in a metric space $Y$, a planar comparison triangle is a triangle $\hat{T}$ in $\R^2$ with a map to $T$ that is an isometry along each geodesic. A CAT(0) space $Y$ is a metric space in which any two points can be joined by a geodesic segment, such that for any geodesic triangle $T$ in $Y$, the canonical map from a planar comparison triangle to $T$ is 1-Lipschitz. A Hadamard space is a complete CAT(0) space. $Y$ is called proper if metric balls in $Y$ are compact. A standard reference for CAT(0) spaces is \cite{bridsonMetricSpacesNonPositive1999}.

The visual boundary $\partial_\infty Y$ of a Hadamard space $Y$ is the set of geodesic rays $\eta: [0,\infty) \to Y$ up to equivalence, where two rays at bounded distance are considered equivalent. The space $\overline{Y} = Y \cup \partial_\infty Y$ is given a topology, called the visual topology, from its inclusion into the inverse limit of closed balls in $Y$ \cite[Chapter 11.8]{bridsonMetricSpacesNonPositive1999}. 
If $Y$ is proper, then $\overline{Y}$ is compact.

If $\eta \in \partial_\infty Y$, the Busemann cocycle associated to $\eta$ is the function $b_\eta \colon Y\times Y \to \R$ defined by
\[
b_\eta(y_1, y_2) = \lim_{t \to \infty} d_Y(\eta(t), y_2) - d_Y(\eta(t), y_1)
\]
(one checks using the triangle comparison property that this does not depend on the choice of ray in the equivalence class). 
We will usually de-emphasize the point $y_1$, and speak of the Busemann function $b_\eta(y_2)$, which is only well defined up to a constant. 
The CAT(0) condition implies that the distance is convex on $Y \times Y$.
In particular each distance function $d_y \colon Y \to \R$, $d_y(y') = d_Y(y,y')$ is convex as well as each Busemann function. 

\subsubsection{Quadrilateral comparison} 

A useful property of CAT(0) spaces is Reshetnyak's sub-embedding theorem. For any geodesic quadrilateral $Q$ in a metric space $Y$, let $D,D',E,E'$ be its edges, grouped in opposite pairs, and $F,F'$ be its diagonals. 
A planar subembedding of $Q$ is a quadrilateral $\hat{Q}$ in $\R^2$ with a map to $Q$ that is an isometry along each edge, and distance non-increasing along its diagonals.

\begin{theorem}[{\cite{Reshetnyak1968non-expansive}, see also \cite{Korevaar:1993aa}}]
    Every quadrilateral in a CAT(0) space has a planar subembedding.
\end{theorem}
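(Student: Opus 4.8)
The plan is to reduce the statement to the triangle comparison property via a two‑triangle decomposition, and to isolate the one genuinely delicate configuration, which is handled by Alexandrov's Lemma (\cite[I.2.16]{bridsonMetricSpacesNonPositive1999}). Write $p_0,p_1,p_2,p_3$ for the vertices of $Q$ in cyclic order, with edges $D=d_Y(p_0,p_1)$, $E=d_Y(p_1,p_2)$, $D'=d_Y(p_2,p_3)$, $E'=d_Y(p_3,p_0)$ and diagonals $F=d_Y(p_0,p_2)$, $F'=d_Y(p_1,p_3)$. First I would split $Q$ along the diagonal $[p_0,p_2]$ into the geodesic triangles $\triangle(p_0,p_1,p_2)$ and $\triangle(p_0,p_2,p_3)$, take planar comparison triangles $\hat T_1$ and $\hat T_2$, and glue them isometrically along their sides of length $F$, placing them on opposite sides of the common line. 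This produces four points $\hat p_0,\hat p_1,\hat p_2,\hat p_3\in\R^2$ and a quadrilateral $\hat Q$ whose edge lengths agree with those of $Q$ and whose diagonal satisfies $|\hat p_0-\hat p_2|=F=d_Y(p_0,p_2)$. It then remains only to check the other diagonal, i.e.\ that $|\hat p_1-\hat p_3|\geq F'$.

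The easy case is when the interior angles of $\hat Q$ at $\hat p_0$ and at $\hat p_2$ are both at most $\pi$ (the interior angles at $\hat p_1$ and $\hat p_3$ are angles of Euclidean triangles, hence automatically $<\pi$). Then $\hat Q$ is convex, so its diagonals meet in an interior point $\hat o\in[\hat p_0,\hat p_2]\cap[\hat p_1,\hat p_3]$. Let $o$ be the point of the geodesic $[p_0,p_2]\subset Y$ at the same proportional distance from $p_0$; then $\hat o$ is simultaneously the comparison point of $o$ in $\hat T_1$ and in $\hat T_2$, so the CAT(0) comparison inequality applied to $\triangle(p_0,p_1,p_2)$ and to $\triangle(p_0,p_2,p_3)$ gives $d_Y(p_1,o)\leq|\hat p_1-\hat o|$ and $d_Y(p_3,o)\leq|\hat p_3-\hat o|$. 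Adding these and using the triangle inequality in $Y$ together with $\hat o\in[\hat p_1,\hat p_3]$ yields $F'=d_Y(p_1,p_3)\leq|\hat p_1-\hat o|+|\hat o-\hat p_3|=|\hat p_1-\hat p_3|$, so $\hat Q$ is the desired subembedding.

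The remaining case is the substantive one. Since $\hat Q$ is a simple planar quadrilateral, its interior angles sum to $2\pi$, and since those at $\hat p_1,\hat p_3$ are positive, at most one of the angles at $\hat p_0$ and at $\hat p_2$ can exceed $\pi$; say it is the angle at $\hat p_2$, i.e.\ $\tilde\angle_{p_2}(p_1,p_0)+\tilde\angle_{p_2}(p_0,p_3)>\pi$. Here I would invoke Alexandrov's Lemma for the pair $\hat T_1,\hat T_2$ glued along $[\hat p_0,\hat p_2]$: unbending the boundary path $\hat p_1\hat p_2\hat p_3$ so that $\hat p_1,\hat p_2,\hat p_3$ become collinear with $\hat p_2$ between them — keeping all four edge lengths fixed — does not decrease $|\hat p_0-\hat p_2|$. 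The resulting (degenerate) quadrilateral $\hat Q'$ has the correct edge lengths, has one diagonal $|\hat p_0-\hat p_2|\geq F=d_Y(p_0,p_2)$, and has second diagonal $|\hat p_1-\hat p_3|=E+D'=d_Y(p_1,p_2)+d_Y(p_2,p_3)\geq d_Y(p_1,p_3)=F'$, so it is a valid planar subembedding; if a nondegenerate quadrilateral is wanted, one perturbs $\hat Q'$ and passes to a limit, or simply reruns the convex‑case argument on $\hat Q'$. (The mild degeneracies — a degenerate comparison triangle, or $\hat p_1$ or $\hat p_3$ landing on the line $\hat p_0\hat p_2$ — force $d_Y(p_i,p_j)=d_Y(p_i,p_k)+d_Y(p_k,p_j)$ for suitable indices and are trivial.)

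I expect the only real work to be in the non‑convex case, i.e.\ in Alexandrov's Lemma itself, which is precisely the assertion that unbending a folded pair of comparison triangles does not shrink the distance across the fold; everything else is bookkeeping together with two applications of the triangle comparison inequality. In a write‑up one would either reproduce the short planar proof of the lemma or simply cite \cite{bridsonMetricSpacesNonPositive1999}.
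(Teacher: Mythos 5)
Your proof is correct. The paper does not prove this theorem itself --- it simply cites Reshetnyak and Korevaar--Schoen --- and your argument (glue the two planar comparison triangles along the diagonal of length $F$, use the CAT(0) comparison through the intersection point $\hat o$ of the diagonals in the convex case, and apply Alexandrov's Lemma to straighten the path $\hat p_1\hat p_2\hat p_3$ when the glued angle at $\hat p_2$ is reflex, the resulting degenerate quadrilateral being admissible since $E+D'\geq F'$) is essentially the classical proof found in those references.
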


As one of the key ingredients in \cite{Korevaar:1993aa}, Reshetnyak's subembedding theorem is used to transport various versions of the parallelogram inequality from planar quadrilaterals to quadrilaterals in CAT(0) spaces. In the present work, the key ingredient turns out to be instead the Ptolemy inequality. For planar quadrilaterals with pairwise distances $D,D'$, $E,E'$, and $F,F'$, this reads
\[
FF' \leq DD' + EE'.
\]
In this case equality holds if and only if the quadrilateral is inscribed in a circle. It is interesting to note that the Ptolemy inequality and the parallelogram inequality are simply the triangle inequality, applied to two generators of the space of symmetric polynomials on two letters. By the subembedding theorem, as above, we have

\begin{prop}[Ptolemy inequality] \label{prop: Ptolemy}
    For any four points in a CAT(0) space, with pairwise distances $D,D'$, $E,E'$, $F,F'$ grouped in opposite pairs, it holds that
    \[
    FF' \leq DD' + EE'.
    \]
\end{prop}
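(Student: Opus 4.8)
The plan is to derive the inequality directly from Reshetnyak's subembedding theorem, reducing it to the planar case recalled just above. Let $p_1,p_2,p_3,p_4$ be four points in a CAT(0) space $Y$. Choosing a cyclic order on them amounts to designating which opposite pair among the six pairwise distances play the role of the diagonals; since all three groupings ``into opposite pairs'' arise from some cyclic order, it suffices to treat one of them. So write $D = d_Y(p_1,p_2)$, $E = d_Y(p_2,p_3)$, $D' = d_Y(p_3,p_4)$, $E' = d_Y(p_4,p_1)$ for the edges and $F = d_Y(p_1,p_3)$, $F' = d_Y(p_2,p_4)$ for the diagonals. Joining consecutive points by the (unique) geodesic segments produces a geodesic quadrilateral $Q$ in $Y$ with these edges and diagonals. (If two points coincide the quadrilateral degenerates, but then the asserted inequality reduces at once to the triangle inequality or to $0 \le DD'+EE'$, so we may ignore this case.)

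Next I would apply Reshetnyak's subembedding theorem to $Q$. This furnishes a planar quadrilateral $\hat Q$ with vertices $\hat p_1,\hat p_2,\hat p_3,\hat p_4 \in \R^2$ and a map $\hat Q \to Q$ sending $\hat p_i$ to $p_i$, isometric along each edge and non-increasing along each diagonal. Hence
\[
|\hat p_1\hat p_2| = D, \quad |\hat p_2\hat p_3| = E, \quad |\hat p_3\hat p_4| = D', \quad |\hat p_4\hat p_1| = E',
\]
while $|\hat p_1\hat p_3| \ge F$ and $|\hat p_2\hat p_4| \ge F'$.

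Finally, the planar Ptolemy inequality applied to the quadruple $\hat p_1,\hat p_2,\hat p_3,\hat p_4$ (which holds for an arbitrary planar quadruple, whether or not $\hat Q$ is convex or embedded) gives
\[
|\hat p_1\hat p_3|\,|\hat p_2\hat p_4| \le |\hat p_1\hat p_2|\,|\hat p_3\hat p_4| + |\hat p_2\hat p_3|\,|\hat p_4\hat p_1| = DD' + EE',
\]
and since $0 \le F \le |\hat p_1\hat p_3|$ and $0 \le F' \le |\hat p_2\hat p_4|$, multiplying the two diagonal inequalities yields $FF' \le |\hat p_1\hat p_3|\,|\hat p_2\hat p_4|$. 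Chaining the two displays gives $FF' \le DD' + EE'$. The only step one might worry about is the planar Ptolemy inequality itself, but this is classical and has already been recalled in the excerpt — for a self-contained treatment it follows from the standard inversion trick (inverting the plane about $\hat p_1$ turns it into the ordinary triangle inequality for the images of $\hat p_2,\hat p_3,\hat p_4$) — so in fact there is no substantive obstacle: once the notation is set up, the proof is a two-line combination of Reshetnyak's theorem with the Euclidean inequality.
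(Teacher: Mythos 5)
Your proof is correct and follows exactly the paper's route: the paper also obtains the CAT(0) Ptolemy inequality by applying Reshetnyak's subembedding theorem to the quadrilateral and invoking the planar Ptolemy inequality, with edges preserved and diagonals not increased. You have simply spelled out the details (including the harmless degenerate cases) that the paper leaves implicit in its one-line deduction.
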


It is interesting to note that a metric space is CAT(0) if and only if it satisfies the Ptolemy inequality and the distance is a convex function on $Y \times Y$ \cite{LytchakPtolemy}.

\subsection{Lower Ricci curvature bounds}

Recall that $X$ is a complete Riemannian $n$-manifold with Ricci curvature bounded below by $-K$.

\subsubsection{Green's functions and mean value inequalities}

If $\Omega \subset X$ is a bounded domain, we write $G^\Omega(x,z)$ for the Green's function of $\Omega$. The Laplacian comparison theorem and the maximum principle gives the following well-known sharp lower bound for the Green's function of a ball with respect to a point $x$:

\begin{proposition} \label{prop: noncollapsed greens function bound}
    Let $B = B(x,r)$ be a ball in $X$. Then
    \[
    G^B(x,z) \geq \Gamma(d_X(x,z)) - \Gamma(r)
    \]
    where 
    \begin{equation} \label{eqn: Gamma(r)}
    \Gamma(\rho) = \int_0^\rho \sinh(\sqrt{K/(n-1)}s)^{-(n-1)}ds
    \end{equation}
    is the Green's function of the space-form of Ricci curvature $-K$.
\end{proposition}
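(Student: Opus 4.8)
The plan is the standard radial comparison argument indicated just before the statement: I would bound $G^B(x,\cdot)$ from below by the function on $X$ obtained by pulling back, via the distance to $x$, the Green's function of the corresponding ball in the model space form, the comparison being forced by the Laplacian comparison theorem together with the maximum principle.

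First I would fix $x$, set $a=\sqrt{K/(n-1)}$ (so that the model space form of sectional curvature $-a^2$ has Ricci curvature $-K$), write $\rho(\cdot)=d_X(x,\cdot)$, and introduce the radial comparison function $v:=\Gamma(\rho)-\Gamma(r)$ on $B$. By the defining property of $\Gamma$, this $v$ is exactly the pullback under $\rho$ of the (non-negative, radially decreasing) Green's function of the ball $B(o,r)$ in that model space form: it vanishes on $\partial B$, and near $x$ it blows up with the fundamental-solution singularity, with the same normalization as $G^B(x,\cdot)$. Since $\Gamma$ solves the model radial Laplace equation $\Gamma''+(n-1)a\coth(a\rho)\,\Gamma'=0$, the chain rule for $\Delta_X$ together with $|\nabla\rho|\equiv1$ would give, on the open set where $\rho$ is smooth (i.e.\ away from $x$ and the cut locus of $x$),
\[
    \Delta_X v \;=\; \Gamma''(\rho)\,|\nabla\rho|^2+\Gamma'(\rho)\,\Delta_X\rho \;=\; \Gamma'(\rho)\bigl(\Delta_X\rho-(n-1)a\coth(a\rho)\bigr).
\]

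Next I would invoke the Laplacian comparison theorem: the hypothesis $\mathrm{Ric}(X)\ge -Kg_X$ gives $\Delta_X\rho\le(n-1)a\coth(a\rho)$ wherever $\rho$ is smooth, and in the distributional (equivalently, barrier) sense across the cut locus --- where, moreover, the singular part of $\Delta_X\rho$ is non-positive. Combined with the constant sign of $\Gamma'$, namely the one making $v$ radially decreasing, the identity above shows $\Delta_X v\ge0$ on $B\setminus\{x\}$ in the distributional sense, i.e.\ $v$ is subharmonic there, whereas $G^B(x,\cdot)$ is harmonic on $B\setminus\{x\}$. Then $v-G^B(x,\cdot)$ is subharmonic on $B\setminus\{x\}$, vanishes on $\partial B$, and is bounded above near $x$ because the two fundamental-solution singularities cancel; by the removable-singularity theorem for subharmonic functions it extends subharmonically across $x$, and the maximum principle then yields $v-G^B(x,\cdot)\le\max_{\partial B}(v-G^B)=0$ on $B$, which is the asserted bound. (Alternatively one runs the maximum principle on the annuli $B\setminus\overline{B(x,\varepsilon)}$ and lets $\varepsilon\to0$, or uses that $G^B$ is the increasing limit of the Green's functions of an exhaustion of $B$.) Sharpness is immediate, since every inequality becomes an equality when $X$ is itself the model space form.

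The main obstacle --- really the only delicate point --- is the behaviour of $\rho$ along the cut locus of $x$, where $\rho$ is not smooth and Laplacian comparison holds only weakly: I would need to check that the distributional Laplacian of the radial function $\Gamma\circ\rho$ still inherits the inequality. This is legitimate because $\Gamma'$ never changes sign and the singular part of the distribution $\Delta_X\rho$ along the cut locus has the favourable sign (the cut locus having measure zero), which can be made rigorous via Calabi's upper-barrier trick. Beyond that, the argument is the routine interaction of Laplacian comparison with the maximum principle, which is why the estimate is classical.
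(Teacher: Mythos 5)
Your argument is correct and is exactly the route the paper indicates (the paper gives no details beyond citing the Laplacian comparison theorem and the maximum principle): you compare $G^B(x,\cdot)$ with the pulled-back model Green's function $v=\Gamma(\rho)-\Gamma(r)$, get its subharmonicity away from the pole --- including across the cut locus, via the sign of the singular part of $\Delta\rho$ or Calabi's barrier trick --- from Laplacian comparison, and conclude by the maximum principle after matching the pole singularities. The only remark worth making is that you are (rightly) reading $\Gamma$ with the monotonicity and normalization of the model Green's function, i.e.\ effectively $\Gamma(\rho)-\Gamma(r)=c_n\int_\rho^{r}\sinh\bigl(\sqrt{K/(n-1)}\,s\bigr)^{1-n}ds$ with the dimensional constant making the pole singularity agree with that of $G^B$, since the formula as printed in the statement is increasing and divergent at $0$; with that reading your proof is complete and matches the paper's intent.
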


When $X$ is non-collapsed in the sense that the volume of balls of radius 1 in $X$ is uniformly bounded below, the following proposition follows from Proposition \ref{prop: noncollapsed greens function bound}. But it is also true more generally.

\begin{proposition}
[Green's function lower bound] \label{thm: Greens function lower bound}
    Let $B = B(x,r+2)$ be a ball in $X$, and let $G^B(x,z)$ be the Green's function of $B$ centered at $x$. There is a constant $c=c(K,n,r)$ such that for all $z \in B(x,r+1)$,
    \begin{equation} \label{eqn: G lower bound}
    G^B(x,z) \geq \frac{c}{V(x,r+1)}
    \end{equation}
\end{proposition}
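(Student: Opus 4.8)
The plan is to bound $G^B(x,z)$ below by integrating a Gaussian lower bound for the Dirichlet heat kernel over a fixed window of short times, handling first the points $z$ close to $x$ and then propagating outward along a geodesic by Harnack. Write $G^B(x,z)=\int_0^\infty p^B_t(x,z)\,dt$, where $p^B_t$ is the Dirichlet heat kernel of $B=B(x,r+2)$ with pole at $x$; since $p^B_t\ge 0$ it suffices to bound $\int_0^{\tau_0} p^B_t(x,z)\,dt$ from below for a suitable fixed $\tau_0$, the case $z=x$ being trivial.

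Suppose first $d_X(x,z)\le 1$. By domain monotonicity $p^B_t(x,z)\ge p^{B(x,2)}_t(x,z)$, and since $x$ and $z$ both lie in $B(x,1)$, the Gaussian lower bound for the Dirichlet heat kernel of the ball $B(x,2)$ — which follows from the parabolic Harnack inequality and holds under $\mathrm{Ric}\ge -K$ with constants depending only on $n$ and $K$ — gives, for $0<t\le\tau_0$ with $\tau_0=\tau_0(n)\in(0,1]$,
\[
    p^B_t(x,z)\ \ge\ \frac{c_1}{V(x,\sqrt t)}\exp\!\left(-\frac{c_2\,d_X(x,z)^2}{t}\right),
\]
with $c_1,c_2$ depending only on $n,K$. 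Since $\sqrt t\le 1\le r+1$, volume monotonicity gives $V(x,\sqrt t)\le V(x,r+1)$; using also $d_X(x,z)\le 1$ and integrating in $t$,
\[
    G^B(x,z)\ \ge\ \frac{c_1}{V(x,r+1)}\int_0^{\tau_0} e^{-c_2/t}\,dt\ =\ \frac{c_3}{V(x,r+1)},
\]
where $c_3=c_3(n,K)>0$.

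Now suppose $d_X(x,z)>1$, and let $\gamma$ be a unit-speed minimizing geodesic from $x$ to $z$, so $d_X(x,\gamma(s))=s$. For $s\in[1,d_X(x,z)]\subseteq[1,r+1]$ the point $\gamma(s)$ lies at distance at least $1$ from $x$ and from $\partial B$, so $G^B(x,\cdot)$ is positive and harmonic on $B(\gamma(s),\tfrac14)$ and Yau's gradient estimate bounds $\bigl|\nabla\log G^B(x,\cdot)\bigr|\le C_0(n,K)$ at $\gamma(s)$. Integrating along $\gamma$ from $s=1$ to $s=d_X(x,z)$ and applying the first case at $\gamma(1)$,
\[
    G^B(x,z)\ \ge\ e^{-C_0(n,K)(d_X(x,z)-1)}\,G^B(x,\gamma(1))\ \ge\ e^{-C_0(n,K)r}\cdot\frac{c_3}{V(x,r+1)}.
\]
Taking $c(K,n,r)=e^{-C_0(n,K)r}c_3(n,K)$, which also works in the first case, completes the proof. (When $X$ is non-collapsed one can instead argue directly from Proposition~\ref{prop: noncollapsed greens function bound} together with the lower volume bound, as the text remarks.)

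The only substantial ingredient is the Gaussian lower bound for the Dirichlet heat kernel displayed above, in the favorable regime of short times and points well inside the ball. Although classical, it is worth keeping the sharp volume factor $V(x,\sqrt t)$: its blow-up as $t\to0$ compensates when $d_X(x,z)$ is small, and is precisely why the right-hand side must be $c/V(x,r+1)$ rather than a bare constant when $X$ is collapsed. The Harnack step — where the $r$-dependence of the constant enters — and the volume and time-integral estimates are then routine.
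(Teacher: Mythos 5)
Your argument is correct at the same level of rigor as the paper's own sketch, but the route differs in a meaningful way in its second half. The paper works entirely at the parabolic level: Li--Yau Gaussian decay plus stochastic completeness give an on-diagonal lower bound for $V(x,r+1)\,p_t(x,x)$, the parabolic maximum principle transfers this to the Dirichlet kernel $p_t^B(x,x)$, and a second application of the Li--Yau Harnack inequality propagates the bound to $p^B_{2t}(x,z)$ for all $z$ well inside $B$, after which one integrates in $t$. You instead only use the parabolic machinery near the diagonal (your black-boxed Gaussian lower bound for $p^{B(x,2)}_t$ with both points in $B(x,1)$ --- note that the standard proof of exactly this fact is the paper's chain of steps: global lower bound, on-diagonal estimate via stochastic completeness, and the parabolic maximum principle to discard the boundary contribution, so you are not avoiding that content, only citing it), and then you switch to the elliptic level, propagating the resulting bound on $G^B(x,\gamma(1))$ out to $z$ by integrating the Cheng--Yau estimate $\abs{\nabla \log G^B(x,\cdot)}\le C_0(n,K)$ along a minimizing geodesic, which is legitimate since every point of the chain is at distance at least $1$ from both $x$ and $\partial B$. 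What your version buys is a transparent $r$-dependence of the constant, namely $e^{-C_0(n,K)r}$, and a far-field step that needs only the elliptic Harnack/gradient estimate for positive harmonic functions; what the paper's version buys is uniformity of toolkit (everything is the heat kernel plus Li--Yau) and no appeal to the Cheng--Yau gradient estimate. Two small points to tidy: the time cutoff $\tau_0$ should be allowed to depend on $K$ as well as $n$ (the boundary term $\sup_{s\le t,\,w\in\partial B(x,2)}p_s(x,w)$ is beaten by the interior lower bound only for $t$ small depending on $n,K$), and the phrase ``follows from the parabolic Harnack inequality'' should be supplemented by a citation (e.g.\ the same Saloff-Coste reference the paper compares to), since interior Harnack alone does not produce the on-diagonal $c/V(x,\sqrt t)$ factor without a stochastic-completeness or exit-time ingredient.
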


\begin{proof}[Sketch of proof, compare {\cite[Theorem 6.1]{Saloff-Coste2}}]

Let $p_t$ be the heat kernel of $X$ and $p_t^B$ the heat kernel of $B$. The Li-Yau Harnack inequality \cite[Theorem 2.1]{Li-Yau} implies Gaussian decay of $p_t(x,z)$ when $t \ll r = d_X(x,z)$, and by a standard argument using stochastic completeness, this implies a lower bound for $V(x,r+1)p_t(x,x)$. The Gaussian decay, together with the parabolic maximum principle, bounds $p^B_t(x,x)$ from below in terms of $p_t(x,x)$. Finally, another application of the Li-Yau Harnack inequality bounds $p^B_{2t}(x,z)$ from below in terms of $p^B_t(x,x)$ for $z$ in the interior of $B$. Since
\[
G^B(x,z) = \int_0^\infty p^B_t(x,z) dt \geq \int_0^\epsilon p^B_t(x,z) dt
\]
this gives a lower bound for $V(x,r+1)G^B(x,z)$.

\end{proof}

Finally, we will use the following mean value inequality.

\begin{theorem}[{Mean value inequality, \cite[Lemma 1.6]{Li-Tam} and \cite[Theorem 5.1]{Saloff-Coste2}}]\label{thm: MVI}
   Suppose $B(x,r+1) \subset X$, and $u$ is a positive weak subsolution of $\Delta u \geq -K u$ on $B(x,r+1)$. Let $V(x,r+1)$ be the volume of $B(x,r+1)$. There is a constant $C=C(n,K,r)$ such that
   \[
   \sup_{\overline{B(x,r)}} u \leq \frac{C}{V(x,r+1)} \int_{B(x,r+1)} u(z) dz .
   \]
\end{theorem}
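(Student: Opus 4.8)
The plan is to establish this by the classical De Giorgi--Nash--Moser iteration; the one ingredient specific to our geometric hypotheses is a local Sobolev inequality on the ball $B := B(x,r+1)$. This is a standard consequence of the lower Ricci bound: the Bishop--Gromov comparison gives volume doubling on balls of radius $\le r+1$ centered in $B$ with a constant depending only on $n,K,r$, Buser's inequality gives a scale-invariant Neumann--Poincar\'e inequality on the same range of scales, and by the circle of results in \cite[Chapter 5]{Saloff-Coste2} the conjunction of these two facts yields a local Sobolev inequality: there are $\nu>1$ and $C_S$, depending only on $n,K,r$, such that
\[
\left(\int_B |\varphi|^{2\nu}\right)^{1/\nu} \;\le\; \frac{C_S}{V(x,r+1)^{(\nu-1)/\nu}}\int_B\left(|\nabla\varphi|^2 + \varphi^2\right) \qquad\text{for all }\varphi\in C_c^\infty(B),
\]
with $\nu = n/(n-2)$ when $n\ge 3$ and $\nu$ any fixed number in $(1,\infty)$ when $n\le 2$. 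The point to highlight is that this inequality is normalized by the volume $V(x,r+1)$ of $B$ itself, to the power $-(\nu-1)/\nu$; keeping this normalization fixed throughout the iteration is what will produce the factor $V(x,r+1)^{-1}$ in the conclusion, and it is what lets us avoid any non-collapsing assumption on $X$.

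The second step is the iterative estimate. Fix $r\le\rho<\rho'\le r+1$, a Lipschitz cutoff $\eta$ with $\eta\equiv 1$ on $B(x,\rho)$, $\mathrm{supp}\,\eta\subset B(x,\rho')$ and $|\nabla\eta|\le 2/(\rho'-\rho)$, and an exponent $p\ge 2$. Testing the weak subsolution inequality against $\eta^2 u^{p-1}$ (after the routine regularization replacing $u$ by $u+\epsilon$, the truncation $u_\ell=\min(u,\ell)$, and a limiting argument) and absorbing the gradient terms, one obtains a Caccioppoli estimate
\[
\int_B \bigl|\nabla(\eta u^{p/2})\bigr|^2 \;\le\; \frac{Cp}{(\rho'-\rho)^2}\int_{B(x,\rho')} u^p,
\]
with $C=C(n,K,r)$ (using $\rho'-\rho\le 1$ to absorb the zeroth-order term $K\int\eta^2 u^p$). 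Inserting $\varphi=\eta u^{p/2}$ into the Sobolev inequality, and writing $N_p(\Omega):=\bigl(\frac{1}{V(x,r+1)}\int_\Omega u^p\bigr)^{1/p}$ for the $L^p$-norm of $u$ over $\Omega\subset B$ normalized by the \emph{fixed} volume $V(x,r+1)$, this becomes
\[
N_{p\nu}\bigl(B(x,\rho)\bigr) \;\le\; \left(\frac{Cp}{(\rho'-\rho)^2}\right)^{1/p} N_p\bigl(B(x,\rho')\bigr),
\]
the volume factors cancelling because both sides use the same $V(x,r+1)$. Iterating with $p_k=2\nu^k$ and $\rho_k=r+2^{-k}$, so that $\rho_k\downarrow r$ and $\rho_k-\rho_{k+1}=2^{-(k+1)}$, and multiplying the resulting inequalities, gives $N_{p_{k+1}}\bigl(B(x,\rho_{k+1})\bigr)\le\bigl(\prod_{j=0}^k(4^{j+1}Cp_j)^{1/p_j}\bigr)\,N_2\bigl(B(x,r+1)\bigr)$; since $\sum_j p_j^{-1}\log(4^{j+1}Cp_j)<\infty$ the product converges to some $C'=C'(n,K,r)$, and letting $k\to\infty$ --- using $V(x,r+1)^{1/p_{k+1}}\to 1$, the inclusion $B(x,r)\subset B(x,\rho_{k+1})$, and the fact that a uniform bound on $\|u\|_{L^{p_{k+1}}(B(x,r))}$ along $p_{k+1}\to\infty$ forces $u\in L^\infty(B(x,r))$ --- we arrive at
\[
\mathrm{ess\,sup}_{B(x,r)}\, u \;\le\; C'\left(\frac{1}{V(x,r+1)}\int_{B(x,r+1)} u^2\right)^{1/2}.
\]

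To pass from this $L^2$ bound to the $L^1$ bound, run the same iteration for an arbitrary pair $r\le\rho<\rho'\le r+1$ and track the exponent of $\rho'-\rho$: this gives $M(\rho)\le C''(\rho'-\rho)^{-b}\bigl(\frac{1}{V(x,r+1)}\int_{B(x,\rho')}u^2\bigr)^{1/2}$, where $M(\rho):=\mathrm{ess\,sup}_{B(x,\rho)}u$ and $b,C''$ depend only on $n,K,r$. Bounding $\int_{B(x,\rho')}u^2\le M(\rho')\int_{B(x,r+1)}u$ and applying Young's inequality,
\[
M(\rho) \;\le\; \tfrac12\,M(\rho') \;+\; \frac{C'''}{(\rho'-\rho)^{2b}}\cdot\frac{1}{V(x,r+1)}\int_{B(x,r+1)}u .
\]
The standard hole-filling iteration lemma, applied to the bounded function $\rho\mapsto M(\rho)$ on $[r,r+1]$, absorbs the term $\tfrac12 M(\rho')$ and yields $M(r)\le \frac{C}{V(x,r+1)}\int_{B(x,r+1)}u$ with $C=C(n,K,r)$. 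Finally, weak subsolutions of $\Delta u\ge -Ku$ are locally continuous (by the same De Giorgi--Nash--Moser machinery, or by applying the mean value inequality just obtained at interior points), so the essential supremum over the open ball equals the supremum over $\overline{B(x,r)}$, giving the stated inequality.

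The only genuinely substantial ingredient is the volume-normalized local Sobolev inequality of the first step --- this is where the completeness of $X$ and the lower Ricci bound are used, and it is in essence \cite[Theorem 5.1]{Saloff-Coste2} together with the doubling/Poincar\'e reformulation (compare also \cite[Lemma 1.6]{Li-Tam}). Granting it, the rest is the routine Moser machine, and the only points requiring care are: keeping the Sobolev inequality pinned at the single scale $r+1$ and normalizing all $L^p$-norms by the fixed $V(x,r+1)$, so that the volume factor does not accumulate over the infinitely many iteration steps but simply cancels at each step --- this is exactly what makes the conclusion carry the precise normalization $V(x,r+1)^{-1}$ with no non-collapsing hypothesis; the low-dimensional case $n\le 2$, handled by using any fixed subcritical exponent $2\nu>2$ in place of $2n/(n-2)$; and starting the iteration at $p=2$ and recovering the $L^1$ bound by the absorption argument of the third step, rather than pushing the energy estimate down to $p=1$, where the constant degenerates.
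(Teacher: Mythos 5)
The paper does not prove this statement at all: it is quoted from the literature (Li--Tam, Lemma 1.6, and Saloff-Coste, Theorem 5.1), so there is no internal proof to compare against. Your argument is a correct reconstruction of exactly how those references prove it: Bishop--Gromov doubling plus Buser's Neumann--Poincar\'e inequality give the volume-normalized local Sobolev inequality on $B(x,r+1)$ with constants depending only on $n,K,r$, and then Moser iteration with all $L^p$-norms pinned to the single normalization $V(x,r+1)^{-1}$ yields the $L^2$ mean value bound, upgraded to $L^1$ by the interpolation/absorption step. Two small points deserve patching. First, the absorption lemma needs $\rho\mapsto M(\rho)$ finite on the interval you use, and $M(r+1)=\operatorname{ess\,sup}_{B(x,r+1)}u$ need not be finite since $u$ is not controlled up to $\partial B(x,r+1)$; apply the lemma on $[r,r+\tfrac12]$ (say), after noting that the first iteration already gives interior local boundedness, with the $L^1$-norm on the right still taken over $B(x,r+1)$. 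Second, your closing claim that weak subsolutions of $\Delta u\ge -Ku$ are locally continuous is not true (De Giorgi--Nash--Moser continuity is for solutions); the passage from the essential supremum on the open ball to the supremum on $\overline{B(x,r)}$ is instead handled by running the iteration down to a slightly larger radius, e.g.\ obtaining the bound for $\operatorname{ess\,sup}_{B(x,r+1/2)}u$, and reading the left-hand side for the upper semicontinuous (or precise) representative of $u$, which is how the statement is used in the paper. Neither issue affects the substance of the proof.
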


\subsubsection{Mollification}

As a consequence of the Bishop-Gromov volume comparison, we can mollify coarse Lipschitz maps to produce Lipschitz maps in a uniform way, by averaging over balls. Compare \cite[Proposition 3.4]{BH1}.

\begin{prop} \label{prop: Lipchitz}
    Suppose $X$ is a complete $n$-manifold with Ricci curvature bounded below by $-K$, $Y$ a proper Hadamard space, and $f\colon X \to Y$ a $L$-coarsely Lipschitz map. 
    There is a constant $C$ depending only on $n$ and $K$ such that there is a $CL$-Lipschitz map $f^{(1)}$ whose distance from $f$ is bounded by $L$.
\end{prop}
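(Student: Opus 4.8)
The plan is to take $f^{(1)}$ to be a center-of-mass mollification of $f$. Set $\mu_x := V(x,1)^{-1}\,\mathrm{vol}|_{B(x,1)}$, the normalized Riemannian volume on the unit ball, and define $f^{(1)}(x) := \mathrm{bar}(f_{*}\mu_x)$, the barycenter of the pushforward probability measure. This is well-defined: since $f$ is coarse Lipschitz it is locally bounded, so $f_{*}\mu_x$ is a probability measure on the Hadamard space $Y$ supported in $B(f(x),2L)$, and compactly supported measures on a complete $\mathrm{CAT}(0)$ space have a unique barycenter (this is exactly the center of mass used by Korevaar--Schoen). The entire estimate then rests on two tools. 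The first is the variance (quasilinearization) inequality for barycenters: for any probability measure $\nu$ of finite second moment in a $\mathrm{CAT}(0)$ space and any $y$,
\[
\int_Y d_Y(y,w)^2\, d\nu(w) \;\geq\; d_Y(y,\mathrm{bar}\,\nu)^2 + \int_Y d_Y(\mathrm{bar}\,\nu,w)^2\, d\nu(w).
\]
Applying this with $\nu = f_{*}\mu_x$ and $y = f(x)$ gives $d_Y(f(x),f^{(1)}(x))^2 \leq \int_{B(x,1)} d_Y(f(x),f(z))^2\, d\mu_x(z)$, which is bounded by a constant multiple of $L$ by the coarse Lipschitz hypothesis; this is the required bound on $d_Y(f,f^{(1)})$. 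The second tool is a Bishop--Gromov volume comparison controlling how fast $\mu_x$ varies in $x$.

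Now for the Lipschitz bound, write $\delta := d_X(x,x')$, $b := f^{(1)}(x)$, $b' := f^{(1)}(x')$. Assume first $\delta \leq 1$. Comparing the volumes of $B(x,1)$, $B(x',1)$ and of the thin annuli containing their symmetric difference with those of the model space form of curvature $-K/(n-1)$, Bishop--Gromov yields a constant $C_1 = C_1(n,K)$ with $\|\mu_x - \mu_{x'}\|_{\mathrm{TV}} \leq C_1\,\delta$. By the distance bound above (and coarse Lipschitzness of $f$), the points $b,b'$ together with the supports of $f_{*}\mu_x$ and $f_{*}\mu_{x'}$ all lie in a ball of radius $O(L)$ about $f(x)$. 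Applying the variance inequality to $f_{*}\mu_x$ with $y = b'$ and to $f_{*}\mu_{x'}$ with $y = b$ and adding the two, the terms quadratic in $b$ and in $b'$ cancel, leaving
\[
2\,d_Y(b,b')^2 \;\leq\; \int_Y\big(d_Y(b',w)^2 - d_Y(b,w)^2\big)\, d\big(f_{*}\mu_x - f_{*}\mu_{x'}\big)(w).
\]
On the relevant support one has $\lvert d_Y(b',w)^2 - d_Y(b,w)^2\rvert \leq d_Y(b,b')\,\big(d_Y(b,w)+d_Y(b',w)\big) \leq C_2 L\, d_Y(b,b')$ with $C_2$ absolute, while $\|f_{*}\mu_x - f_{*}\mu_{x'}\|_{\mathrm{TV}} \leq \|\mu_x - \mu_{x'}\|_{\mathrm{TV}} \leq C_1\delta$, so the right-hand side is at most $C_1 C_2\, L\,\delta\, d_Y(b,b')$; dividing gives $d_Y(f^{(1)}(x),f^{(1)}(x')) \leq \tfrac12 C_1 C_2\, L\, d_X(x,x')$. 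For $\delta > 1$ the triangle inequality, the distance bound, and coarse Lipschitzness give $d_Y(f^{(1)}(x),f^{(1)}(x')) \leq O(L) + L(\delta+1) \leq O(L)\,\delta$. Combining the two ranges shows $f^{(1)}$ is $CL$-Lipschitz with $C = C(n,K)$.

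The step I expect to be the main obstacle is the small-scale Lipschitz estimate, and in particular recognizing that the obvious approach fails. One would like to push an optimal coupling of $\mu_x$ and $\mu_{x'}$ forward through $f\times f$ and use that the barycenter map is $1$-Lipschitz for the Wasserstein metric; but because $f$ is only \emph{coarse} Lipschitz, even the overlap mass $\min(\mu_x,\mu_{x'})$ can be displaced by $f$ a distance of order $L$, so this bounds $d_Y(f^{(1)}(x),f^{(1)}(x'))$ merely by a constant multiple of $L$, with no decay at all as $x \to x'$ (and only Hölder-$\tfrac12$ decay after optimizing how one splits the two measures). The virtue of the variance-inequality computation above is that it effectively replaces transport cost by ``(mass displaced)$\,\times\,$(diameter of the region the displaced mass occupies)'' $\sim \delta\cdot L$, which is genuinely linear in $\delta$. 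The lower Ricci curvature bound is used only here, through Bishop--Gromov, to supply the linear total-variation estimate $\|\mu_x - \mu_{x'}\|_{\mathrm{TV}} \lesssim_{n,K} d_X(x,x')$.
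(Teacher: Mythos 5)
Your construction coincides with the paper's: mollify by taking the barycenter of the pushforward of the volume measure on unit balls, bound the displacement $d_Y(f,f^{(1)})$ by convexity/variance, and control the variation of the measures in $x$ via Bishop--Gromov. The one genuine divergence is in the key barycenter-displacement estimate: the paper simply quotes Lemma 2.5 of Benoist--Hulin, which gives $d_Y(b,b')\le 8ML/m$ in terms of the total variation $M$ of the difference of the (unnormalized) pushforward measures and their minimal mass $m$, whereas you rederive an equivalent bound from scratch by applying the CAT(0) variance (quasilinearization) inequality to each pushforward measure with the other barycenter as test point, adding so the quadratic terms cancel, bounding the integrand by $O(L)\,d_Y(b,b')$ on the common $O(L)$-ball, and bounding the total variation by $C(n,K)\,\delta$ via Bishop--Gromov. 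This is correct and has the advantage of being self-contained (it is essentially the proof of the cited lemma), and your explicit handling of the regime $\delta>1$ by the triangle inequality plus coarse Lipschitzness fills in a step the paper leaves implicit. One cosmetic point: your displacement bound is $2L$ rather than the stated $L$ (for $z\in B(x,1)$ one only gets $d_Y(f(x),f(z))\le L(d_X(x,z)+1)\le 2L$); the paper's own proof contains the same factor-of-two slip, and nothing downstream depends on it.
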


\begin{proof}
    We construct $f^{(1)}(x)$ by pushing forward a measure by $f$ and taking its center of mass as in \cite[Lemma 2.5]{BH1}. This construction goes back at least to Karcher \cite{Karcher} when $Y$ is a manifold. 

    For each $x \in X$, let $\mu_x$ be the volume measure on the ball of radius 1 around $x$, and define $f^{(1)}(x)$ to be the center of mass of $f_*\mu_x$. Since $f$ is $L$-coarse Lipschitz, the support of $f_*\mu$ is contain in the ball $B_L(f(x)) \subset Y$. By convexity of balls in $Y$, the center of mass is also contained in $B_L(f(x))$, so the distance from $f$ to $f^{(1)}$ is bounded by $L$. 
    
    We now estimate the Lipschitz constant of $f^{(1)}$. Let $x_1$ and $x_2$ be any two points in $X$ such that $d_X(x_1,x_2) = 2\epsilon < \frac{1}{2}$. For $i=1,2$, write $\mu_i = \mu_{x_i}$ and $y_i = f(x_i)$. Also let $x$ be a point in $X$ half-way between $x_1$ and $x_2$, meaning $d_X(x,x_1) = d_X(x,x_2) = \epsilon$.
    
    For $\nu$ any signed measure of bounded variation, write $||\nu||$ for the total variation. By Lemma 2.5 of \cite{BH2} (which clearly works for Hadamard spaces, not just Hadamard manifolds), we can estimate
    \[
    d_Y(y_1,y_2) \leq \frac{8ML}{m}
    \]
    where $m = \mathrm{min}(||f_*\mu_1||, ||f_*\mu_2||)$, $M = ||f_*\mu_1 - f_*\mu_2||$, and $L$ is the coarse Lipschitz constant of $f$.  Since pushforward does not increase total variation, and preserves it for (unsigned) measures, we have
    \[
    m = \min_i\;\mathrm{Vol}(B_1(x_i))
    \]
    \[
    M \leq \mathrm{Vol}(B_1(x_1) \triangle B_1(x_2))
    \]
    where $\triangle$ denotes the symmetric difference. Simple geometry gives
    \[
    \frac{M}{m} \leq \frac{\mathrm{Vol}(B_{1 + \epsilon}(x)) - \mathrm{Vol}(B_{1-\epsilon}(x))}{\mathrm{Vol}(B_{1 - \epsilon}(x))}
    \]
    and the Bishop-Gromov volume comparison theorem bounds this by the corresponding volume ratio in the spaceform of curvature $-K$, which gives
    \[
    \frac{M}{m} \leq \frac{C}{8} \epsilon
    \]
    where $C$ depends on $n,K$, and the fact that $\epsilon < \frac{1}{4}$. Hence $d_Y(f(x_1),f(x_2)) \leq C L d_X(x_1,x_2)$, which is what we were trying to show.
\end{proof}

\subsection{Harmonic maps} \label{sec: harmonic maps}

The basic theory of harmonic maps into Hadamard spaces is developed by Korevaar and Schoen in \cite{Korevaar:1993aa}, and also by Jost in \cite{jostEquilibriumMapsMetric1994}. Sturm \cite{Sturm} also develops this theory in even more generality. We summarize only the relevant facts for us. Let $Y$ be a Hadamard space, $(X,g)$ a Riemannian manifold, and $h: X \to Y$ harmonic in the sense of Korevaar-Schoen.

\begin{prop} \label{prop: K-S}
Given a harmonic map $h: X \to Y$, the Korevaar-Schoen energy density $e(x)$ is a locally $L^1$ function on $X$ with the following properties:
\begin{enumerate}
    \item If $Y$ is a manifold, then $e = \frac{1}{2} |\nabla h|^2$.
    \item If $y \in Y$, and $d_y(z) := d_Y(y,z)$, then 
    \[
    \Delta (d_y \circ h)^2 \geq 4 e
    \]
    weakly on $X$.
    \item If $X$ has a lower Ricci curvature bound, $\mathrm{Ric}_{ij} \geq -K g_{ij}$, then
    \[
    \Delta e \geq -K e
    \]
    weakly on $X$.
\end{enumerate}
\end{prop}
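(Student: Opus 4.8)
The plan is to derive all three assertions from the Korevaar--Schoen regularity theory for harmonic maps into Hadamard spaces \cite{Korevaar:1993aa} (cf.\ \cite{jostEquilibriumMapsMetric1994,Sturm}); the work is in explaining how each is packaged there, not in producing new arguments. I would begin by recalling the construction of the energy density: for a locally energy-minimizing map $h$, the measures whose densities are (a fixed dimensional multiple of) the spherical average of $d_Y(h(x),h(z))^2/\epsilon^2$ over $z\in S(x,\epsilon)$ converge weakly as $\epsilon\to 0$ to an absolutely continuous measure, and $e$ is its $L^1_{\mathrm{loc}}$ density; the normalization is the one making the smooth-target limit equal $\tfrac12|\nabla h|^2$.

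For part (1), the plan is: when $Y$ is a smooth manifold, invoke interior regularity so $h$ is smooth, Taylor expand $d_Y(h(x),h(\exp_x(\epsilon v)))^2=\epsilon^2|dh_x(v)|^2+o(\epsilon^2)$ for unit vectors $v$, and average over $v\in S^{n-1}$; since the average of $|dh_x(v)|^2$ over the sphere is $\tfrac1n|\nabla h|^2$, this identifies the limit with $\tfrac12|\nabla h|^2$ under the Korevaar--Schoen normalization. For part (2), the key geometric input is that $u:=d_y^2$ is $2$-convex on any CAT(0) space: along every unit-speed geodesic $\gamma$ in $Y$ one has $(u\circ\gamma)''\geq 2$, by comparison with the Euclidean quadratic. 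I would feed this into the Korevaar--Schoen target-variation estimate --- comparing $h$ with the competitor obtained by displacing it toward $y$ by a compactly supported amount along geodesics, and using energy minimality --- to obtain the weak inequality $\Delta(d_y\circ h)^2\geq 4e$. In the smooth case this is transparent, since $\Delta(u\circ h)=\mathrm{Hess}\,u(dh,dh)+\langle\nabla u,\tau(h)\rangle$ with $\tau(h)=0$ and $\mathrm{Hess}\,u\geq 2g_Y$, giving $\Delta(u\circ h)\geq 2|dh|^2=4e$.

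For part (3), I would present it as the Bochner inequality of Korevaar--Schoen, the singular analogue of the Eells--Sampson identity
\[
\tfrac12\Delta|\nabla h|^2=|\nabla dh|^2+\langle \mathrm{Ric}^X(dh),dh\rangle-\sum_{i,j}\langle R^Y(dh(e_i),dh(e_j))dh(e_j),dh(e_i)\rangle .
\]
Nonpositivity of the sectional curvature of $Y$ makes the curvature term $\leq 0$, the bound $\mathrm{Ric}^X\geq -Kg$ controls the Ricci term from below by $-K|\nabla h|^2$, and $|\nabla dh|^2\geq 0$; together these give an inequality $\Delta e\geq -\kappa e$ with $\kappa$ a constant multiple of $K$, which is the stated $\Delta e\geq -Ke$ once conventions are fixed. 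I expect this to be the only genuinely hard point: when $Y$ is merely Hadamard there is no smooth $dh$ and $e$ is only $L^1$, so $|\nabla dh|^2$ and the curvature term must be replaced by estimates on directional energy densities together with a blow-up/comparison argument. Since this is carried out in full in \cite{Korevaar:1993aa}, I would cite it rather than reprove it.
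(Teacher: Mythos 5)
Parts (1) and (2) of your proposal are fine and essentially coincide with what the paper does: it simply cites \cite{Korevaar:1993aa} for the normalization in (1), and for (2) notes that the inequality is ``basically'' in \cite{Korevaar:1993aa} (and is a special case of Theorem 1.5 of \cite{LytchakStadler}); your route via $2$-convexity of $d_y^2$ and the target-variation/energy-minimality argument is exactly the intended mechanism, and your smooth-case computation $\Delta(d_y^2\circ h)=\mathrm{Hess}\,d_y^2(dh,dh)\geq 2|dh|^2=4e$ matches the stated normalization.

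The gap is in part (3). You correctly identify the singular Bochner inequality as ``the only genuinely hard point,'' but you then discharge it by asserting that it ``is carried out in full in \cite{Korevaar:1993aa}'' and proposing to cite that paper. It is not there: Korevaar--Schoen prove Sobolev theory, existence for the Dirichlet problem, and Lipschitz interior regularity, but they do not establish an Eells--Sampson type inequality $\Delta e\geq -Ke$ for the energy density of harmonic maps into NPC spaces; obtaining such an inequality for metric-space targets was a later and genuinely nontrivial achievement, and the paper's proof of (3) consists precisely of citing the main theorem of \cite{Freidin} (related results for singular domains are due to Zhang--Zhu). So the hardest assertion of the proposition is, in your write-up, resting on a reference that does not contain it. A secondary, smaller point: your smooth-case Eells--Sampson computation with $\mathrm{Ric}\geq -Kg$ and $e=\tfrac12|\nabla h|^2$ yields $\Delta e\geq -2Ke$, so the phrase ``which is the stated $\Delta e\geq -Ke$ once conventions are fixed'' is papering over a constant; for the way the proposition is used later (a mean value inequality for subsolutions of $\Delta u\geq -Cu$) the precise constant is harmless, but you should either track it or say explicitly that only some constant multiple of $K$ is needed.
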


\begin{proof}
    Property (1) is proved by Korevaar-Schoen in \cite{Korevaar:1993aa}, and is included here simply for normalization. Property (2) is basically proved in \cite{Korevaar:1993aa}, and is also a special case of Theorem 1.5 in \cite{LytchakStadler}. Property (3) is the main theorem of \cite{Freidin}.
\end{proof}
Whether or not $Y$ is a manifold, we will write $|\nabla h|$ for $\sqrt{2e}$.

\medskip

A crude gradient bound for harmonic maps follows from Proposition \ref{prop: K-S}, together with the mean value inequality \ref{thm: MVI} and the Caccioppoli trick. 

\begin{prop}[Compare {\cite[Section IX.4]{SchoenYaubook}}] \label{prop: gradient estimate}
    Let $X$ be an $n$-manifold with Ricci curvature bounded below by $-K$ and $Y$ a Hadamard space, $x\in X$, $y\in Y$. 
    Suppose $h\colon B(x,r+2) \to B(y,R)$ is harmonic. 
    There is a constant $C=C(K,n,r)$ such that
    \[
    \sup_{B(x,r)} |\nabla h| \leq C R.
    \]
\end{prop}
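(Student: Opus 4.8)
The plan is to combine the three facts from Proposition~\ref{prop: K-S} with the mean value inequality via the standard Caccioppoli (cutoff) argument, exactly as in the classical smooth case. The only input particular to the Hadamard-space setting is that we may treat $u := (d_y \circ h)^2$ and $e$ as honest $L^1_{loc}$ functions satisfying weak differential inequalities, which is precisely what Proposition~\ref{prop: K-S} provides.

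First I would observe that the hypothesis $h(B(x,r+2)) \subset B(y,R)$ means $u = (d_y\circ h)^2 \leq R^2$ on $B(x,r+2)$. By part~(3) of Proposition~\ref{prop: K-S}, $e$ is a nonnegative weak subsolution of $\Delta e \geq -Ke$ on $B(x,r+2)$, so the mean value inequality (Theorem~\ref{thm: MVI}) applied on $B(x,r+1)$ gives
\[
\sup_{\overline{B(x,r)}} e \leq \frac{C_1}{V(x,r+1)} \int_{B(x,r+1)} e(z)\, dz
\]
for a constant $C_1 = C_1(n,K,r)$. So it suffices to bound the integral of $e$ over $B(x,r+1)$ by $C\, V(x,r+1)\, R^2$.

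Next comes the Caccioppoli step. Choose a Lipschitz cutoff $\varphi$ supported in $B(x,r+2)$ with $\varphi \equiv 1$ on $B(x,r+1)$ and $|\nabla\varphi| \leq 2$. Test the weak inequality $\Delta u \geq 4e$ from part~(2) against $\varphi^2$: integrating by parts,
\[
4\int \varphi^2 e \leq \int \varphi^2 \Delta u = -\int \nabla(\varphi^2)\cdot \nabla u = -2\int \varphi\, \nabla\varphi\cdot\nabla u.
\]
Here one must be slightly careful since $u$ is only known to be weakly differentiable with the stated inequality; this is handled in Korevaar-Schoen by working with the approximating energies, or one can cite that $d_y\circ h$ lies in the Korevaar-Schoen Sobolev space with $|\nabla(d_y\circ h)| \leq |\nabla h|$ pointwise, so $|\nabla u| \leq 2(d_y\circ h)|\nabla h| \leq 2R\sqrt{2e}$ a.e.\ on $B(x,r+2)$. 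Then Cauchy-Schwarz and the pointwise bound give
\[
4\int \varphi^2 e \leq 4\int \varphi |\nabla\varphi|\, R\sqrt{2e} \leq 4R\sqrt{2}\left(\int|\nabla\varphi|^2\right)^{1/2}\left(\int\varphi^2 e\right)^{1/2},
\]
and absorbing, $\int \varphi^2 e \leq 2R^2 \int |\nabla\varphi|^2 \leq 8 R^2\, V(x,r+2)$. By Bishop-Gromov, $V(x,r+2) \leq C_2(n,K,r)\, V(x,r+1)$, so $\int_{B(x,r+1)} e \leq C_3\, V(x,r+1)\, R^2$. Feeding this into the mean value inequality above yields $\sup_{B(x,r)} e \leq C_4 R^2$, hence $\sup_{B(x,r)}|\nabla h| = \sup \sqrt{2e} \leq C R$ with $C = C(K,n,r)$.

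I expect the main obstacle to be purely technical: justifying the integration by parts against $\varphi^2$ when $u = (d_y\circ h)^2$ is only a weak subsolution and $h$ maps into a possibly singular space. This is exactly the kind of issue Korevaar-Schoen address by approximating the energy functional, and part~(2) of Proposition~\ref{prop: K-S} is stated as a weak inequality precisely so that testing against nonnegative Lipschitz compactly supported functions is legitimate; one should also note $(d_y\circ h)^2$ is bounded hence locally integrable, and $\nabla(d_y\circ h)$ is controlled by $|\nabla h|$ in the Korevaar-Schoen sense. Everything else — the mean value inequality, Bishop-Gromov volume comparison, and the absorbing inequality — is routine once that foundational point is granted.
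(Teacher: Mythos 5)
Your argument is correct and is exactly the route the paper intends: the paper offers no written proof beyond citing Proposition \ref{prop: K-S}, the mean value inequality (Theorem \ref{thm: MVI}), and the Caccioppoli trick, which is precisely your cutoff estimate on $\Delta(d_y\circ h)^2 \geq 4e$ followed by the MVI applied to $e$ via $\Delta e \geq -Ke$. Your handling of the weak-inequality/energy-density technicality (using $|\nabla(d_y\circ h)| \leq |\nabla h|$ in the Korevaar--Schoen sense) is the right justification, and matches how the paper's Lemma \ref{lem: harnack estimate} treats the same objects.
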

We remark that this also follows from our Lemma \ref{lem: harnack estimate}.

\medskip

For $Y$ a Hadamard manifold, the solution of the Dirichlet problem is due to Hamilton \cite{Hamilton}. For Hadamard spaces, it is proved by Korevaar and Schoen.

\begin{proposition}[{Dirichlet problem \cite[Theorem 2.2]{Korevaar:1993aa}}]\label{prop: dirichlet problem}
    Let $Y$ be a CAT(0) space, $\Omega$ a compact Riemannian manifold with boundary, and $f: \overline{\Omega} \to Y$ a Lipschitz map. 
    There is a unique harmonic map $h: \Omega \to Y$ such that $h|_{\partial \Omega} = f|_{\partial \Omega}$.
\end{proposition}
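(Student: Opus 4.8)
The plan is to carry out the direct method in the calculus of variations, which is the argument of Korevaar and Schoen. Write $E$ for the Korevaar--Schoen energy and let $W^{1,2}_f(\Omega,Y)$ denote the set of finite-energy maps $u\colon \Omega \to Y$ whose trace on $\partial\Omega$ equals $f|_{\partial\Omega}$; this set is nonempty since $f$, being Lipschitz on the compact manifold $\overline\Omega$, has finite energy. Set $m = \inf_{u \in W^{1,2}_f(\Omega,Y)} E(u)$ and fix a minimizing sequence $h_k$. The key input for existence is the uniform convexity of $E$ coming from the parallelogram inequality in the CAT(0) target: applying that inequality pointwise to $h_j(x), h_k(x), h_j(z), h_k(z)$ and their geodesic midpoints, one gets, for the midpoint map $w_{jk}(x)$ (which again lies in $W^{1,2}_f(\Omega,Y)$, since midpoints depend continuously on their endpoints and the boundary traces agree),
\[
E(w_{jk}) \le \tfrac12 E(h_j) + \tfrac12 E(h_k) - c \int_\Omega |\nabla\, d_Y(h_j,h_k)|^2
\]
for a fixed $c>0$. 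Since $E(w_{jk}) \ge m$ while $E(h_j),E(h_k) \to m$, the last integral tends to $0$; as $d_Y(h_j,h_k)$ has zero boundary trace, the Poincar\'e inequality on $\Omega$ upgrades this to $d_Y(h_j,h_k) \to 0$ in $L^2$, so by completeness of $Y$ there is a measurable limit $h$ with $d_Y(h_k,h) \to 0$ in $L^2$. Lower semicontinuity of $E$ under $L^2$-convergence gives $E(h) \le m$, and stability of the trace under this convergence gives $h \in W^{1,2}_f(\Omega,Y)$; hence $h$ is an energy minimizer with the prescribed boundary values.

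Next I would invoke the Korevaar--Schoen regularity theory \cite{Korevaar:1993aa}: energy minimizers into CAT(0) spaces are locally Lipschitz in the interior, and with Lipschitz boundary data they are continuous on $\overline\Omega$ with $h|_{\partial\Omega} = f|_{\partial\Omega}$ pointwise. Since an energy minimizer is in particular locally energy-minimizing, it is harmonic in the sense of Korevaar--Schoen, which settles existence. For uniqueness I would run two observations. First, the same uniform convexity estimate shows the energy minimizer in $W^{1,2}_f(\Omega,Y)$ is unique: if $h_0,h_1$ both attain $m$, the midpoint map has energy $\le m - c\int_\Omega |\nabla\, d_Y(h_0,h_1)|^2$, forcing (via Poincar\'e and the vanishing boundary trace) $h_0 = h_1$ a.e. Second, every harmonic map $h'$ with trace $f|_{\partial\Omega}$ is an energy minimizer: for any competitor $u$, letting $h'_t(x)$ be the point dividing the geodesic $[h'(x),u(x)]$ in ratio $t$, one has $h'_t \in W^{1,2}_f(\Omega,Y)$, the function $t \mapsto E(h'_t)$ is convex by the CAT(0) condition, and its right derivative at $t=0$ is nonnegative because $h'$ is harmonic, whence $E(h') \le E(u)$. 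Combining, there is exactly one harmonic map. (Alternatively one can argue directly that if $h_0,h_1$ are harmonic with the same boundary trace then $x \mapsto d_Y(h_0(x),h_1(x))$ is subharmonic on $\Omega$, continuous on $\overline\Omega$ and zero on $\partial\Omega$, hence identically zero by the maximum principle.)

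I expect the substantive obstacle to be not the variational scheme above but the regularity theory that underlies it: interior H\"older/Lipschitz continuity of energy minimizers into CAT(0) targets and continuity up to the boundary, which is exactly what makes "harmonic" meaningful and what promotes the Sobolev trace condition to a pointwise one; the lower semicontinuity, the uniform convexity estimate, and the first-variation/convexity facts are also genuine inputs but belong to the same Korevaar--Schoen package. Since we are free to cite \cite{Korevaar:1993aa}, in practice the proof reduces to assembling these cited facts with the convexity arguments above; I would also remark on the alternative approach via Jost's time-discretized gradient flow \cite{jostEquilibriumMapsMetric1994}, which gives the same existence and uniqueness.
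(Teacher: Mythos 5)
The paper offers no proof of this statement—it is quoted directly as Theorem 2.2 of Korevaar--Schoen \cite{Korevaar:1993aa}—and your proposal is a correct sketch of exactly the argument in that reference: the direct method with uniform convexity of the energy from the CAT(0) quadrilateral/parallelogram comparison, lower semicontinuity and trace stability, the Korevaar--Schoen interior and boundary regularity theory, and uniqueness via convexity (or, as you note, via subharmonicity of $d_Y(h_0,h_1)$ and the maximum principle, which is the cleaner route and the one this paper itself uses in similar situations). So your approach matches the cited source, and no gap needs to be addressed.
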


Note that properness of $Y$ is not assumed. When $Y$ is proper, an easy consequence of this proposition is the precompactness of bounded families of harmonic maps (see \cite{Korevaar:1993aa} Theorem 1.13). We give a slight reformulation suitable to asymptotic Dirichlet problems.

\begin{proposition} \label{prop: asymptotic Dirichlet}
    Let $X$ be a Riemannian manifold, $\Omega_i$ an exhaustion of $X$, $Y$ a proper CAT(0) space, and $h_i$ a sequence of harmonic maps $\Omega_i \to Y$ that are locally bounded, uniformly in $i$. 
    There is a subsequence of the $h_i$ that converges locally uniformly to a harmonic map $h\colon X \to Y$.
\end{proposition}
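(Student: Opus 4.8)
The plan is to realize each $h_i$ as the solution of a Dirichlet problem on a fixed compact domain and then apply a diagonal argument together with the compactness statement already available from Korevaar--Schoen. Fix a relatively compact open set $U \subset X$ with smooth boundary, and choose $i$ large enough that $\overline{U} \subset \Omega_i$. On such $U$, each $h_i$ is harmonic with boundary values $h_i|_{\partial U}$, which lie in a fixed compact ball $B(y_0,R_U) \subset Y$ by the uniform local boundedness hypothesis (here properness of $Y$ enters: closed balls are compact). The interior gradient estimate of Proposition \ref{prop: gradient estimate}, applied on slightly smaller balls inside $U$, then gives a uniform bound on $|\nabla h_i|$ on compact subsets of $U$; equivalently the $h_i$ are uniformly locally Lipschitz on $U$. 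Combined with the uniform bound on their images, the Arzel\`a--Ascoli theorem (using that $\overline{B(y_0,R_U)}$ is compact) yields a subsequence converging locally uniformly on $U$ to a map $h^U \colon U \to Y$.

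Next I would verify that the limit $h^U$ is harmonic. This is exactly the content of the precompactness/closedness statement in \cite[Theorem 1.13]{Korevaar:1993aa}: a locally uniform limit of harmonic maps, with uniformly bounded local energy, is harmonic. The uniform local energy bound follows from the gradient estimate above (or from the Caccioppoli-type bound underlying Proposition \ref{prop: gradient estimate}), so this step is a citation rather than new work. Alternatively one can check directly that $h^U$ minimizes the Korevaar--Schoen energy against compactly supported variations, using lower semicontinuity of energy under locally uniform convergence together with the fact that each $h_i$ is a minimizer.

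Finally I would globalize via a diagonal argument. Take an exhaustion $U_1 \Subset U_2 \Subset \cdots$ of $X$ by relatively compact open sets with smooth boundary. Apply the previous two paragraphs on $U_1$ to extract a subsequence of $(h_i)$ converging locally uniformly on $U_1$ to a harmonic map; pass to a further subsequence converging on $U_2$; and so on. The diagonal subsequence converges locally uniformly on all of $X$ to a map $h \colon X \to Y$ which restricts to a harmonic map on each $U_k$, hence is harmonic on $X$ since harmonicity is local. I expect the only mild subtlety — the ``main obstacle,'' though it is not a serious one — to be bookkeeping the uniformity of the constants: one must make sure that the gradient and energy bounds on a fixed $U_k$ depend only on $K$, $n$, $\mathrm{dist}(U_k,\partial U_{k+1})$ and the uniform image bound $R_{U_{k+1}}$, and in particular not on $i$, so that Arzel\`a--Ascoli applies on each $U_k$ uniformly in the tail of the sequence. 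Everything else is a direct application of Propositions \ref{prop: dirichlet problem} and \ref{prop: gradient estimate} and the Korevaar--Schoen compactness theorem.
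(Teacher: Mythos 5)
Your proposal is correct, and its first half coincides with the paper's argument: apply the interior gradient estimate (Proposition \ref{prop: gradient estimate}) on compact sets, where the Ricci curvature is locally bounded below, to get uniform local Lipschitz bounds, then use properness of $Y$ and Arzel\`a--Ascoli (with a diagonal argument over an exhaustion) to extract a locally uniformly convergent subsequence. Where you genuinely diverge is the verification that the limit is harmonic. You outsource this either to the closedness/precompactness statement of Korevaar--Schoen (their Theorem 1.13) or to a sketched lower-semicontinuity-plus-minimality argument, whereas the paper gives a short self-contained argument from the Dirichlet problem: on a compact domain $K$, let $\phi$ be the uniform limit of $h_i|_{\partial K}$ and $h_\phi$ its harmonic extension (Proposition \ref{prop: dirichlet problem}); since the distance between two harmonic maps is subharmonic, the maximum principle gives $\sup_K d_Y(h_i,h_\phi) \le \sup_{\partial K} d_Y(h_i,h_\phi) \to 0$, so $h|_K = h_\phi$ and $h$ is harmonic. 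Citing Korevaar--Schoen is legitimate (the paper itself presents the proposition as a reformulation of their Theorem 1.13), but two remarks: your opening plan to ``realize each $h_i$ as a Dirichlet solution'' is never actually used in your argument, and your alternative minimization route glosses the one nontrivial point --- to compare the limit's energy against a competitor one must modify the competitor in a thin collar so that it agrees with the boundary values of $h_i$, which requires an interpolation construction. The paper's comparison with $h_\phi$ avoids this entirely and moreover yields the slightly stronger conclusion that on each compact domain the $h_i$ converge uniformly to the harmonic extension of the limiting boundary data.
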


\begin{proof}
    Since the $h_i$ are locally uniformly bounded, and the Ricci curvature of $X$ is locally bounded below, the gradient bound, Proposition \ref{prop: gradient estimate}, implies that they are locally uniformly Lipschitz. Since $Y$ is proper, and the $h_i$ are locally uniformly bounded, it follows from Arzela-Ascoli that the $h_i$ sub-converge locally uniformly to a map $h: X \to Y$. We need only check that $h$ is harmonic. If $K$ is any compact domain in $X$, let $\phi$ be the uniform limit of $h_i|_{\partial K}$, and $h_\phi$ be its harmonic extension, which exists by Proposition \ref{prop: dirichlet problem}. Since the distance between two harmonic maps is convex, and the $h_i$ converge uniformly to $\phi$ on $\partial K_i$, they converge uniformly to $h_\phi$ on $K$, so $h|_K = h_\phi$. Hence $h$ is harmonic.
\end{proof}

\section{Stability} \label{sec: stability}

\subsection{Coarsely strictly subharmonic functions}

\begin{definition}
    We say a measure $\mu_x$ on $X$ is harmonic with respect to a point $x$ if for any subharmonic function $u$ on $X$,
    \[
    \int_X u(z) \mu_x(z) \geq u(x).
    \]
\end{definition}

If $u$ is harmonic, then applying the inequality to $\pm u$ shows that in this case we have equality. Applying the equality to the function $u=1$, one sees that every harmonic measure is a probability measure.

One example of a harmonic measure is the heat kernel measure $p_t(x,z)dz$. Another is the hitting measure of a bounded domain $\Omega$ containing $x$. This is the unique measure that is harmonic with respect to $x$ and is supported on $\partial \Omega$. We will write it as $\mu_x^\Omega$. If the boundary of $\Omega$ is sufficiently regular, Green's formula gives 
\[
    \mu_x^\Omega= -\frac{dG}{dn} dz,
\]
where $dz$ is the area measure on the boundary of $\Omega$ and $dG/dn$ is the normal derivative of the Green's function $G=G^\Omega(x, \cdot)$ of $\Omega$ whose pole lies at $x$. The hitting measure can also be characterized in probabilistic terms as describing, for a subset $U \subset \partial \Omega$, the chance that Brownian motion starting at $x$ will first exit $\Omega$ in the set $U$. 

One can construct more harmonic measures by convolving families of harmonic measures or using the optional stopping theorem. One that we use below is the collected heat kernel measure $\mu_{x,t}^r$ which is defined by stopping Brownian motion at the minimimum of $t$ and the exit time from the ball of radius $r$ around $x$. By Green's formula, if $B = B(x,r)$ and $p^B_t$ the heat kernel of $B$ with Dirichlet boundary conditions,
    \begin{equation}
    \int_X v(z) \mu^r_{x,t}(z) = \int_{B} v(z) p^{B}_t(x,z)dz - \int_{\partial B} v(z) \left(\int_0^t \frac{d p_s^B(x,z)}{dn} ds \right) dz.
\end{equation}

\begin{definition}
    A coarse Lipschitz measurable function $u$ is coarsely strictly subharmonic at scale $r$ if there exists a measurable family of harmonic measures $\mu_x$, each with support on $\overline{B(x,r)}$, such that
    \begin{equation} \label{eqn: coarsely strictly subharmonic}
        \int_X (u(z) - u(x))\mu_x(z) \geq 1.
    \end{equation}
\end{definition}

In other words, $u$ is coarsely strictly subharmonic at scale $r$ if there is an operator $P$ such that
\begin{enumerate}
    \item $P$ is given by convolution with a family of measures,
    \item $P$ has \emph{propogation} at most $r$,
    \item $P w - w \geq 0$ for all subharmonic functions $w$, and
    \item $P u - u \geq 1$.
\end{enumerate}
In this case, we will say $u$ is coarsely strictly subharmonic with respect to the operator $P$, or to the family of measures $\mu_x$.

\medskip

The following proposition says that coarse strict subharmonicity is a coarse property.

\begin{proposition} \label{prop: subharmonicity is coarse}
    If $|u - v| \leq L$, and $u$ is coarsely strictly subharmonic at scale $r$, then $v$ is coarsely strictly subharmonic at scale $2(L + 1)r$.
\end{proposition}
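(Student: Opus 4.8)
The plan is to take the operator $P$ witnessing coarse strict subharmonicity of $u$ and "blow it up" by a controlled factor so that the resulting operator still works for $v$. Concretely, write $\mu_x$ for the family of harmonic measures with support in $\overline{B(x,r)}$ such that $\int(u(z)-u(x))\,\mu_x(z)\geq 1$. Since $|u-v|\leq L$, we immediately get $\int(v(z)-v(x))\,\mu_x(z)\geq 1-2L$, which is useless when $L$ is large. The fix is to iterate: consider the operator $P^k$, which is again given by convolution with a family of harmonic measures (convolution of harmonic measures is harmonic), has propagation at most $kr$, and still satisfies $P^k w - w \geq 0$ for subharmonic $w$. I claim $P^k u - u \geq k$: indeed, since $u$ is coarse Lipschitz and the measures are probability measures, $P^j u$ is finite everywhere, and $P^{j+1}u - P^j u = P^j(Pu - u)\geq P^j(1) = 1$ because $Pu-u\geq 1$ is itself (bounded and) dominated below by the subharmonic function $1$ after subtracting — more carefully, $Pu-u\geq 1$ means $Pu - u - 1\geq 0$, but this need not be subharmonic, so instead one argues directly: $P^{j+1}u-P^ju = P^j(Pu-u)\geq P^j(1)=1$ using only that $P^j$ is an averaging operator by a probability measure and $Pu-u\geq 1$ pointwise. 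Summing, $P^ku - u \geq k$.

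Now set $k = 2(L+1)$ — or the smallest integer that is at least this — and let $\nu_x$ be the family of harmonic measures defining $P^k$. Then the propagation of $P^k$ is at most $kr = 2(L+1)r$, and
\[
\int_X (v(z)-v(x))\,\nu_x(z) \;\geq\; \int_X(u(z)-u(x))\,\nu_x(z) - 2L \;=\; (P^k u - u)(x) - 2L \;\geq\; k - 2L \;\geq\; 2.
\]
So $v$ is coarsely strictly subharmonic with respect to $P^k$ at scale $2(L+1)r$ (with room to spare; one could take a smaller $k$, but this is clean). One should also check that the family $\nu_x$ is measurable in $x$, which follows because convolution of a measurable family of measures is again measurable, and that $v$ is coarse Lipschitz, which is immediate since $v$ is within bounded distance $L$ of the coarse Lipschitz function $u$.

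The main subtlety — and the only place one needs to be slightly careful — is the inequality $P^{j+1}u - P^j u \geq 1$, i.e.\ that the averaging operator $P^j$ preserves the pointwise bound $Pu - u \geq 1$. This is true simply because $P^j$ is convolution with a probability measure, so it is monotone and sends the constant $1$ to $1$; the only thing to verify is that $P^j(Pu - u)$ is well-defined, i.e.\ that $Pu - u$ is locally bounded (it is: both $Pu$ and $u$ are, the former because $u$ is coarse Lipschitz and the measures have bounded propagation). No properness or curvature hypotheses are needed here; this is a purely measure-theoretic argument, and I would expect the author's proof to be essentially this two-line iteration followed by the displayed estimate above.
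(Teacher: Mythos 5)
Your argument is correct and is essentially the paper's proof: iterate $P$ to get $P^k$, which has propagation at most $kr$, still satisfies $P^k w - w \geq 0$ for subharmonic $w$, and satisfies $P^k u - u \geq k$, then absorb the loss of $2L$ coming from $|u-v| \leq L$ (the paper phrases this as $|P^k u - P^k v| \leq P^k|u-v| \leq L$). The only quibble is your choice $k = \lceil 2(L+1) \rceil$, which can exceed $2(L+1)$ when that quantity is not an integer and hence overshoot the stated scale $2(L+1)r$; the paper takes $k = \lceil 2L \rceil + 1 \leq 2(L+1)$, which is exactly the ``smaller $k$'' you note would suffice.
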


\begin{proof}
Let $P$ be such that $u$ is strictly coarsely subharmonic with respect to $P$. For any positive integer $k$, $P^k$ is a convolution operator with propagation at most $kr$, which still satisfies $P^k w - w \geq 0$ for all subharmonic functions $w$, and now satisfies $P^k u - u \geq k$. Since $P^k$ is also convolution with a probability measure, 
\[
|P^k u - P^k v| \leq P^k|u-v| \leq L.
\]
Applying these observations with $k = \lceil 2L \rceil + 1$, we obtain
\[
P^k v \geq P^k u - L \geq u + L + 1 \geq v + 1
\]
and hence $v$ is coarsely strictly subharmonic at scale $2(L+1)r$.
\end{proof}

A natural measure of the degree of coarse strict subharmonicity is given by the drift.

\begin{definition} \label{def: drift}
    The drift of a function $u$ is
    \begin{equation}
        \liminf_{t \to \infty} \inf_{x \in X} \frac{(P_t u)(x) - u(x)}{t}
    \end{equation}
    where $P_t$ is the heat evolution operator.
\end{definition}

\begin{prop} \label{prop: drift implies stable}
    If $u$ has positive drift then it is coarsely strictly subharmonic at some scale.
\end{prop}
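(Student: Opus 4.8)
The plan is to promote the heat kernel measure to a compactly supported harmonic measure. Write $\delta>0$ for the drift of $u$ and, using Definition~\ref{def: drift}, fix once and for all a time $t$ so large that $(P_t u)(x)-u(x)\ge 2$ for every $x\in X$. The heat kernel measure $p_t(x,\cdot)\,dz$ is harmonic with respect to $x$, so this inequality is essentially the desired inequality \eqref{eqn: coarsely strictly subharmonic} with $\mu_x=p_t(x,\cdot)\,dz$; the only defect is that $p_t(x,\cdot)\,dz$ is not supported on any ball $\overline{B(x,r)}$. I would therefore replace it with the collected heat kernel measure $\mu^r_{x,t}$, which is a harmonic probability measure supported on $\overline{B(x,r)}$, and show that for $r$ large the replacement changes $\int u$ by at most $1$, uniformly in $x$.

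To estimate this cost, view $\mu^r_{x,t}$ as the law of $X_{\tau_r\wedge t}$, where $X_s$ is Brownian motion from $x$ and $\tau_r$ its first exit time from $B(x,r)$, and view $p_t(x,\cdot)\,dz$ as the law of $X_t$; then $\int u\,d\mu^r_{x,t}-(P_t u)(x)=\mathbb{E}_x[(u(X_{\tau_r})-u(X_t))\mathbf 1_{\{\tau_r<t\}}]$. Since $u$ is $L$-coarse Lipschitz and $d_X(x,X_{\tau_r})=r$ on $\{\tau_r<t\}$, this difference is at most $L(r+2)\,\mathbb{P}_x[\tau_r<t]+L\,\mathbb{E}_x[d_X(x,X_t)\mathbf 1_{\{\tau_r<t\}}]$ in absolute value. (The same computation can be carried out analytically, starting from the displayed formula for $\int v\,d\mu^r_{x,t}$ in terms of the Dirichlet heat kernel $p^B_t$ and its normal derivative, and writing the discrepancy from $P_t u$ as an integral of $u-u(x)$ against the mass that has leaked out of $B(x,r)$.) Once this error is $\le 1$ for all $x$, we obtain $\int (u(z)-u(x))\,\mu^r_{x,t}(z)\ge (P_t u)(x)-u(x)-1\ge 1$, and since $\{\mu^r_{x,t}\}$ is a measurable family of harmonic probability measures supported on the balls $\overline{B(x,r)}$, this exhibits $u$ as coarsely strictly subharmonic at scale $r$.

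The main obstacle is making these two error terms small \emph{uniformly in} $x$, and this is exactly where the hypothesis $\mathrm{Ric}(X)\ge -Kg_X$ enters. The Li-Yau Harnack inequality yields a Gaussian upper bound for $p_t(x,z)$ with constants depending only on $n$, $K$ and $t$; combined with Bishop-Gromov volume comparison this gives $\int_{X\setminus B(x,\rho)}p_t(x,z)\,dz\le Ce^{-\rho^2/(Ct)}$ with $C=C(n,K,t)$, uniformly in $x$. A routine Markov-property and chaining argument upgrades this to $\mathbb{P}_x[\tau_r<t]\le Ce^{-r^2/(Ct)}$ once $r$ is a sufficiently large multiple of $\sqrt t$, and the truncated first moment $\mathbb{E}_x[d_X(x,X_t)\mathbf 1_{\{\tau_r<t\}}]$ is controlled by splitting on $\{d_X(x,X_t)\ge r/2\}$ and using the uniform second-moment bound for $d_X(x,X_t)$. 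Given these estimates, choosing $r=r(n,K,t,L)$ large enough completes the argument; everything after the uniform heat-kernel tail bound is bookkeeping.
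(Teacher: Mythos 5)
Your proposal is correct and follows essentially the same route as the paper's (sketched) proof: fix $t$ from the positive-drift hypothesis, pass from the heat kernel measure to the collected heat kernel measure $\mu^r_{x,t}$, and use the Li--Yau Gaussian decay (uniform in $x$ thanks to the Ricci lower bound, with Bishop--Gromov handling the volume factors) to show the exit-before-time-$t$ contribution costs less than the accumulated drift once $r \gg t$. Your error estimate via the coarse Lipschitz bound on $u$ and the exit-probability bound is exactly the bookkeeping the paper leaves implicit.
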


\begin{proof}[Proof sketch]
    Although $P_t$ is not of finite propagation, the lower bound on the Ricci curvature of $X$ assures that its kernel has uniform Gaussian decay, 
    \[
    p_t(x,z) \leq \frac{C}{(\mathrm{Vol}(x,\sqrt{t})\mathrm{Vol}(z,\sqrt{t}))^{1/2}} \exp\left(\epsilon t - \frac{r^2}{5t}\right)
    \]
    where $r = d_X(x,z)$ (see e.g.\ \cite[Cor 3.1]{Li-Yau}). This means that if $r \gg t$, a Brownian motion is extremely unlikely to leave the ball of radius $r$ in time $t$, and this is sufficient to show that $u$ is coarsely strictly subharmonic with respect to the collected heat kernel measure $\mu_{x,t}^r$.
\end{proof}

It would be nice to know if the converse of Proposition \ref{prop: drift implies stable} holds as well.

\subsection{Stability of maps}

We recall that $X$ is a complete Riemannian manifold with Ricci curvature bounded below by $-K$, and $Y$ is a proper Hadamard space.

\begin{definition} \label{def: stable at scale r}
    A coarse Lipschitz map $f: X \to Y$ is \emph{stable at scale $r$} if for all $y \in Y$, the function $d_y \circ f$ is coarsely strictly subharmonic at scale $r$ on $X$. We say $f$ is stable if it is stable at some scale.
\end{definition}

We note that the implicit family of measures $\{\mu_x\}$, or equivalently the operator $P$, is allowed to depend on $y$.
By Proposition \ref{prop: subharmonicity is coarse}, stability is a coarse property.

\begin{proposition} \label{prop: Busemann for free}
    If $f$ is stable at scale $r$, then $b_\eta \circ f$ is coarsely strictly subharmonic at scale $r$ for all $\eta \in \partial_\infty Y$.
\end{proposition}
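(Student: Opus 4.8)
The plan is to realize each Busemann function $b_\eta$ as a limit of (normalized) distance functions $d_{y_t}$ along the ray $\eta$, and to transport coarse strict subharmonicity through this limit. Concretely, fix $\eta \in \partial_\infty Y$, represented by a geodesic ray $\eta \colon [0,\infty) \to Y$, and set $y_t = \eta(t)$. The defining property of the Busemann cocycle gives $b_\eta(f(z)) - b_\eta(f(x)) = \lim_{t \to \infty}\bigl(d_{y_t}(f(x)) - d_{y_t}(f(z))\bigr)$ after fixing the basepoint normalization; equivalently $b_\eta \circ f(z) = \lim_{t\to\infty}\bigl(t - d_{y_t}\circ f(z)\bigr)$ up to the usual additive constant. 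The key point is that for each fixed $t$, the function $u_t := d_{y_t}\circ f$ is coarsely strictly subharmonic at scale $r$ by hypothesis, witnessed by a family of harmonic probability measures $\mu^{(t)}_x$ supported on $\overline{B(x,r)}$ satisfying $\int_X (u_t(z) - u_t(x))\,\mu^{(t)}_x(z) \geq 1$.

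First I would establish uniform control needed to pass to the limit. Since $f$ is $L$-coarse Lipschitz, each $u_t$ is $L$-coarse Lipschitz with the same constant, and $|u_t(z) - u_t(x)| \leq L(r+1)$ whenever $z \in \overline{B(x,r)}$, so the integrands $\int_X (u_t(z) - u_t(x))\mu^{(t)}_x(z)$ are uniformly bounded. Next, I would extract a convergent subsequence of the measure families: for each $x$, the measures $\mu^{(t)}_x$ live on the fixed compact set $\overline{B(x,r)}$ (using that $Y$ proper is irrelevant here — $X$'s balls are the relevant ones, but $X$ need not be proper, so instead I would argue measure-theoretically or restrict attention pointwise). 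The cleanest route avoids choosing a limit measure: observe that $b_\eta\circ f - u_t \to 0$ pointwise, in fact locally uniformly on $X$, because on any ball $B(x_0,R)$ the functions $z \mapsto t - d_{y_t}\circ f(z)$ converge uniformly to $b_\eta\circ f(z)$ (the Busemann limit is uniform on bounded sets, a standard fact for CAT(0) spaces, since $d_{y_t}\circ f$ has image in a bounded set and the convergence $d(\eta(t),\cdot) - t \to b_\eta$ is uniform on compacta of $Y$). Combined with $u_t$ being uniformly $L$-Lipschitz, I can choose $t$ large enough that $|b_\eta\circ f - u_t| \leq \tfrac14$ on a suitable neighborhood — but since I need a global statement, I instead apply Proposition \ref{prop: subharmonicity is coarse}: if I can show $|b_\eta\circ f - u_t| \leq L'$ globally for some fixed $t$ and some constant $L'$, then coarse strict subharmonicity of $u_t$ at scale $r$ yields that of $b_\eta\circ f$ at scale $2(L'+1)r$.

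The main obstacle is precisely obtaining a \emph{global} bound $|b_\eta\circ f - u_t| \leq L'$ for a single fixed $t$, since the Busemann convergence is only locally uniform in $Y$ and $f$ need not have bounded image. I would handle this as follows: by convexity of $z \mapsto d_Y(\eta(t), z) - t$ on $Y$ and the general comparison $b_\eta(z) \leq d_Y(\eta(t),z) - t \leq b_\eta(z) + 2t$ — wait, the correct uniform estimate is $0 \leq \bigl(d_Y(\eta(t),z) - t\bigr) - b_\eta(z) \leq \frac{c}{t}\,d_Y(\eta(0),z)^2$ type control, which still depends on $z$. To get around this, rather than fixing $t$ I would run the argument directly with the family $\{\mu^{(t)}_x\}$: for each $x$, pick $t = t(x)$ large enough (depending on $d_Y(\eta(0), f(x))$, hence measurably on $x$) that $|b_\eta\circ f(z) - u_t(z)| \leq \tfrac14$ for all $z \in \overline{B(x,r)}$ — possible because $f(\overline{B(x,r)})$ is a bounded subset of $Y$ on which Busemann convergence is uniform. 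Then $\int_X (b_\eta\circ f(z) - b_\eta\circ f(x))\,\mu^{(t(x))}_x(z) \geq \int_X (u_{t(x)}(z) - u_{t(x)}(x))\,\mu^{(t(x))}_x(z) - \tfrac12 \geq 1 - \tfrac12 = \tfrac12$, and rescaling (replacing $\mu$ by a convolution power $\mu^{*2}$, which is harmonic, has propagation $\leq 2r$, and doubles the deficit) gives the required inequality $\geq 1$ at scale $2r$. Measurability of $x \mapsto \mu^{(t(x))}_x$ follows from measurability of the original families together with measurability of $x \mapsto t(x)$, which can be arranged by choosing $t(x)$ from a countable set via $f$'s coarse Lipschitz bound. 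This completes the proof, with the scale $2r$.
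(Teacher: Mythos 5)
You take essentially the same route as the paper: approximate $b_\eta$ by the normalized distance functions $d_Y(\eta(t),\cdot)-t$ along the ray, observe that $f(\overline{B(x,r)})$ is a bounded subset of $Y$ because $f$ is coarse Lipschitz, so the convergence is uniform there, and transfer the stability inequality \eqref{eqn: coarsely strictly subharmonic} from $d_{\eta(t)}\circ f$ to $b_\eta\circ f$. The paper's proof is exactly this, in three lines, quoting \cite[Lemma 8.18]{bridsonMetricSpacesNonPositive1999} for locally uniform convergence and passing to the limit in the integral for each fixed $x$.

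Two points of difference. First, your opening normalization has the sign backwards: with the paper's convention $b_\eta(y)=\lim_{t\to\infty}\bigl(d_Y(\eta(t),y)-t\bigr)$, one has $b_\eta\circ f(z)-b_\eta\circ f(x)=\lim_{t\to\infty}\bigl(d_{y_t}\circ f(z)-d_{y_t}\circ f(x)\bigr)$, not the reverse; your later, operative comparison $b_\eta\le d_Y(\eta(t),\cdot)-t$ is the correct one, so this is a repairable slip, but as literally written your first identity would flip the inequality you need. Second, because you freeze the approximation error at $1/4$ and then square the operator, you conclude coarse strict subharmonicity of $b_\eta\circ f$ at scale $2r$, whereas the proposition asserts scale $r$; the paper gets scale $r$ by letting $t\to\infty$ in the integral against $\mu_x^{(t)}$ (for fixed $x$ the deficit is at least $1-\epsilon_t$ with $\epsilon_t\to 0$, since the integrand converges uniformly on $\overline{B(x,r)}$). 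Your extra care about the $t$-dependence of the measures is legitimate---to make the paper's last line literal one must still produce a single measure achieving deficit $1$, e.g.\ a weak-$*$ limit of the $\mu_x^{(t)}$ on the compact ball---but the price you pay is only a factor of $2$ in the scale, which is immaterial everywhere the proposition is used, since stability at some scale is all that matters. So: same approach, essentially correct, with a sign slip to fix and a conclusion slightly weaker than the one stated.
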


\begin{proof}
    If $c: [0, \infty) \to Y$ is a geodesic ray, then by \cite[Lemma 8.18]{bridsonMetricSpacesNonPositive1999}, the distance functions $d_{c(s)}$ converge locally uniformly to the corresponding Busemann function $b$. Therefore for each $x \in X$, the integral \eqref{eqn: coarsely strictly subharmonic} for $u = d_{c(s)} \circ f$ converges to the integral for $u = b \circ f$. Hence the same inequality holds for $b \circ f$.
\end{proof}

\begin{proposition} \label{prop: Busemann enough}
    Suppose $Y$ has extendable geodesics. If $b_\eta \circ f$ is coarsely strictly subharmonic at scale $r$ for all $\eta \in \partial_\infty Y$, then $f$ is stable at scale $r$.

\end{proposition}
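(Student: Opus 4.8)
The plan is to reduce the stability of $f$, i.e.\ the coarse strict subharmonicity of each $d_y \circ f$, to the coarse strict subharmonicity of the $b_\eta \circ f$ that we already have. The key geometric input is that in a Hadamard space with extendable geodesics, every distance function $d_y$ is bounded below, locally uniformly, by a Busemann function. Concretely, fix $y \in Y$ and $x \in X$; let $y_0 = f(x)$, and extend the geodesic segment from $y_0$ to $y$ to a geodesic ray $c\colon [0,\infty) \to Y$ with $c(0) = y_0$ passing through $y$ at time $d_Y(y_0,y)$ (this uses the extendability hypothesis). Let $\eta = [c] \in \partial_\infty Y$ and let $b_\eta$ be the associated Busemann function normalized so that $b_\eta(y_0) = 0$, i.e.\ $b_\eta(z) = \lim_{t\to\infty} d_Y(c(t),z) - d_Y(c(t),y_0)$. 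Then by convexity of $d_Y$ along geodesics, one has $b_\eta(z) + d_Y(y_0,y) \leq d_Y(y,z)$ for all $z$, with equality at $z = y_0$ and along the ray; more usefully, the triangle inequality gives $d_y(z) \geq b_\eta(z) + d_Y(y_0,y) = b_\eta(z) + d_y(y_0)$ for all $z$, since $b_\eta(z) \le d_Y(c(t),z) - d_Y(c(t),y_0) \le d_Y(y,z) + d_Y(c(t),y) - d_Y(c(t),y_0) \to d_Y(y,z) - d_Y(y_0,y)$ as $t \to \infty$.

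Now I would simply compute. Let $P = P_{y,x}$ be the operator (convolution with the family $\{\mu_{x'}\}$, propagation at most $r$, monotone on subharmonic functions) witnessing coarse strict subharmonicity of $b_\eta \circ f$, where $\eta$ is the boundary point built from $y$ and $x$ as above. Then using $d_y\circ f \geq b_\eta\circ f + d_y(f(x))$ pointwise, together with $d_y\circ f(x) = b_\eta \circ f(x) + d_y(f(x))$ (equality at $z = f(x)$, since $b_\eta(y_0) = 0$), and the fact that $P$ is convolution with probability measures,
\[
    P(d_y\circ f)(x) - d_y\circ f(x) \;\geq\; P\big(b_\eta\circ f + d_y(f(x))\big)(x) - \big(b_\eta\circ f(x) + d_y(f(x))\big) \;=\; P(b_\eta\circ f)(x) - b_\eta\circ f(x) \;\geq\; 1.
\]
Here the additive constant $d_y(f(x))$ passes through $P$ unchanged because $P\mathbf{1} = \mathbf{1}$, and it cancels. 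Since the inequality $\int_X (d_y\circ f(z) - d_y\circ f(x))\,\mu_x(z) \ge 1$ is exactly what coarse strict subharmonicity at $x$ for the function $d_y \circ f$ demands, and the family $\{\mu_{x'}\}$ has support in $\overline{B(x',r)}$ and is monotone on subharmonic functions, this shows $d_y\circ f$ is coarsely strictly subharmonic at scale $r$. Since $y$ was arbitrary, $f$ is stable at scale $r$.

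The main point requiring care — the "hard part", though it is not really hard — is the measurability of the chosen family of operators in $x$: the definition of coarsely strictly subharmonic at scale $r$ requires a \emph{measurable} family of harmonic measures $\mu_x$, and here the boundary point $\eta$, hence the operator, depends on $x$. One resolves this by noting that the map $x \mapsto \eta(x) \in \partial_\infty Y$ is continuous (it is built from $f(x)$ and $y$ via geodesic extension, and $f$ is measurable while geodesic extension is continuous on a proper Hadamard space with extendable geodesics), and that the Busemann-witnessing operators can be taken to vary measurably — indeed one can even use the collected heat kernel measures $\mu_{x,t}^r$ uniformly, as in the proof of Proposition \ref{prop: drift implies stable}, if the hypothesis is upgraded accordingly, or simply invoke a measurable selection. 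A secondary subtlety is whether one needs extendability to a bi-infinite geodesic or merely a ray; only the forward ray through $y$ is needed, so "extendable geodesics" is more than enough.
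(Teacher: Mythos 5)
Your proposal is correct and is essentially the paper's own argument: extend the segment from $f(x)$ through $y$ to a ray (using extendable geodesics), take its Busemann function, note $d_y\circ f \ge b_\eta\circ f + d_y(f(x))$ with equality at $x$, and conclude that the defining integral for $d_y\circ f$ dominates the one for $b_\eta\circ f$. The measurability-in-$x$ of the resulting family of measures, which you flag, is not addressed in the paper either and is indeed only a routine selection issue (note only that $x\mapsto\eta(x)$ is merely measurable, since $f$ is, and that geodesic extensions need not be unique, so a measurable selection is the right tool).
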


\begin{proof}
    Let $c$ be a geodesic ray in $Y$, and let $b \colon Y \to \R$ be the associated Busemann function, vanishing at $c(0)$. By the triangle inequality, for any $y' \in Y$, $ d(c(s),y') - s $ decreases monotonically to $b(y')$.

    Now suppose $Y$ has extendable geodesics. For any point $x \in X$ and $y \in Y$, we let $c(s)$ be a geodesic ray from $f(x)$ through $y$, with associated Busemann function $b$. Then
    \[
    d_y \circ f(x) = b\circ f(x)
    \]
    and for all $z$,
    \[
    d_y \circ f(z) \geq b\circ f(z).
    \]
    It follows that the corresponding integral \eqref{eqn: coarsely strictly subharmonic} is at least as big for $d_y \circ f$ as it is for $b \circ f$.
\end{proof}

We also define, but do not use in an essential way;

\begin{definition} \label{def: drift of a map}
    The \emph{drift} of a coarse Lipschitz map $f: X \to Y$ is
    \begin{equation}
        \liminf_{t \to \infty} \inf_{x \in X, y \in Y} \frac{P_t(d_y \circ f)(x) - d_y \circ f(x)}{t}
    \end{equation}
    where $P_t$ is the heat evolution operator.
\end{definition}

As in Proposition \ref{prop: drift implies stable}, a coarse Lipschitz map of positive drift is stable at some scale. This shows that Definition \ref{def: intro stable} in the introduction is at least as strong as Definition \ref{def: stable at scale r}.

Definition \ref{def: drift of a map} bears a strong resemblance to the conditions of the multiplicative ergodic theorem of \cite{KarlssonMargulis}.

\section{Existence} \label{sec: existence}

In this section we prove Theorem \ref{thm: main Donaldson-Corlette}. We will construct $h$ by taking a limit of a sequences of solutions to Dirichlet problems. Since $f$ itself is not assumed to be of locally finite energy, as a first step we mollify $f$, using Proposition \ref{prop: Lipchitz}, to obtain a Lipschitz map $f^{(1)}$ that is at distance at most $L$ from $f$.

Then, we construct a suitable exhaustion of $X$ by Lipschitz domains $\Omega_i$, and let $h_i \colon \Omega \to Y$ be the solution to the Dirichlet problem
\begin{equation} \label{eqn: Dirichlet problem}
    \begin{cases}
        \Delta h_i = 0 \\
        h_i |_{\partial \Omega_i} = f^{(1)} |_{\partial \Omega_i}
    \end{cases}
\end{equation}
which exists and is unique by Proposition \ref{prop: dirichlet problem}. According to Proposition \ref{prop: asymptotic Dirichlet}, as long as we can find a distance $R$ such that the uniform $C^0$ bound
\begin{equation} \label{eqn: uniform bound}
    d_Y(h_i(x),f(x)) \leq R
\end{equation}
holds for all $i$ and all $x \in \Omega_i$, the maps $h_i$ will sub-converge to a harmonic map $h$ from $X$ to $Y$, lying at distance at most $R$ from $f$. The key lemma will be the following interior $C^0$ estimate.
Recall that for $y \in Y$ we let $d_y \colon Y \to \R$ denote the distance function $d_y(y')=d_Y(y,y')$. Also recall that we define a harmonic measure on a domain $B$ with respect to a point $x$ to be any measure with the property that $\int_B u(z) \mu(z) \geq u(x)$ for any subharmonic function $u$. The assumption of the following Proposition could be rephrased as saying that $f$ is stable at scale $r$ at the point $x$.

\begin{proposition}[Interior $C^0$ estimate] \label{prop: interior estimate}
    Let $x\in X$. Suppose that $f\colon B(x,r+2) \to Y$ is $L$-coarsely Lipschitz and satisfies for each $y \in Y$,
    \begin{equation} \label{eqn: stability on balls}
        \int_{\overline{B(x,r)}} (d_y \circ f(z) - d_y \circ f(x)) \mu_x^y(z) \geq {1}
    \end{equation}
    for a measure $\mu_x^y$ that is harmonic with respect to $x$ and supported on the closed ball.
    
    There is a constant $R = R(K,n, r, L)$ such that if $h\colon B(x,r+2) \to Y$ is a harmonic map and $d_Y(h(z),f(z))$ is maximized at $z = x$, then
    \[
        d_Y(h(x),f(x)) \leq R.
    \]
\end{proposition}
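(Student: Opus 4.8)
The plan is to argue by contradiction: assume $D := d_Y(h(x),f(x))$ is very large and contradict the stability hypothesis \eqref{eqn: stability on balls}, applied at a well-chosen point $y$ (most naturally $y=h(x)$, or a point on the geodesic $[h(x),f(x)]$, or its endpoint at infinity). Two facts are in tension. On the one hand, the maximality hypothesis pins $d_Y(h(z),f(z))\le D$ on all of $B(x,r+2)$; together with the coarse Lipschitz bound on $f$ this confines the image of $h$ and, crucially, makes $\phi(z):=d_Y(h(z),f(z))^2$ attain its maximum at the interior point $x$. On the other hand, \eqref{eqn: stability on balls} with $y=h(x)$ says that $d_Y(h(x),f(\cdot))$, harmonically averaged over $\overline{B(x,r)}$, exceeds its value $D$ at $x$ by at least $1$. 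I would show the first fact forces $h$ to stay close to $h(x)$ over $\overline{B(x,r)}$, with a bound \emph{independent of $D$}, and then use the Ptolemy inequality to conclude that $d_Y(h(x),f(z))$ cannot exceed $D$ by more than a controlled amount — contradicting the strict increase once $D$ is large enough.

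The first step is an interior energy bound for $h$ that does not degenerate as $D\to\infty$. Since $h$ is harmonic, $(d_{f(x)}\circ h)^2$ is weakly subharmonic with Laplacian $\ge 4e$ (Proposition~\ref{prop: K-S}(2)); but using distance to the \emph{fixed} point $f(x)$ only gives $\int_B G^B(x,z)\,e(z)\,dz = O(D)$ via the Green's representation, which is too weak. Instead one should compare $h$ with $f$ itself (first mollified to a Lipschitz map as in Proposition~\ref{prop: Lipchitz}), obtaining a weak inequality $\Delta\phi\ge 4e-\eta_f$ in which the error $\eta_f$ is controlled by the energy density of $f$, hence by $L$ alone. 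Because $\phi$ is maximized at $x$, the Green's representation on $B=B(x,r+2)$ gives $\int_B G^B(x,z)\,\Delta\phi(z)\,dz=\int_{\partial B}\phi\,d\mu^B_x-\phi(x)\le 0$, whence $\int_B G^B(x,z)\,e(z)\,dz$ is bounded purely in terms of $K,n,r,L$. Plugging this into the Green's function lower bound (Proposition~\ref{thm: Greens function lower bound}), then into the mean value inequality (Theorem~\ref{thm: MVI}) applied to $e$ (which satisfies $\Delta e\ge -Ke$), yields $\sup_{\overline{B(x,r)}}e\le C(K,n,r,L)$, hence a Lipschitz bound on $h|_{\overline{B(x,r)}}$ and so $d_Y(h(x),h(z))\le C(K,n,r,L)$ for all $z\in\overline{B(x,r)}$, independent of $D$.

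Next I would apply the Ptolemy inequality (Proposition~\ref{prop: Ptolemy}) to the four points $h(x),f(x),h(z),f(z)$, with $\{h(x),f(z)\}$ and $\{f(x),h(z)\}$ as the diagonals:
\[
d_Y(h(x),f(z))\cdot d_Y(f(x),h(z))\le d_Y(h(x),f(x))\,d_Y(h(z),f(z))+d_Y(h(x),h(z))\,d_Y(f(x),f(z)).
\]
Using maximality ($d_Y(h(z),f(z))\le D$), the coarse Lipschitz bound ($d_Y(f(x),f(z))\le L(r+1)$ on the ball), and the previous step (so $d_Y(h(x),h(z))$ is bounded and $d_Y(f(x),h(z))\ge D-C$), one divides through to get $d_Y(h(x),f(z))\le D+\varepsilon(z)$ on $\overline{B(x,r)}$, with $\varepsilon$ controlled by $K,n,r,L$. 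Integrating against $\mu_x^{h(x)}$ and invoking \eqref{eqn: stability on balls} gives $1\le\int_{\overline{B(x,r)}}\varepsilon(z)\,\mu_x^{h(x)}(z)$, which forces $D$ to be bounded provided $\varepsilon$ is small enough; this is exactly where the weak non-flatness implicit in stability (the $y\to\pm\infty$ remark after Definition~\ref{def: intro stable}) and the asymmetry between the harmonic map $h$ and the merely coarse Lipschitz $f$ (which is why the relevant inequality is Ptolemy's, not the parallelogram law, and why the governing quadrilateral resembles an inscribed one) become essential.

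I expect the main obstacle to be twofold. First, justifying the weak differential inequality for $\phi=d_Y(h,f)^2$ with an error governed purely by the Lipschitz data of $f$: in the Hadamard-space setting one must argue within the Korevaar–Schoen formalism rather than with a pointwise Bochner identity, and the hypothesis only provides $f$ coarse Lipschitz, so mollification must be carried out carefully (and its effect on the location of the maximum of $\phi$ tracked). Second, and more serious, making the Ptolemy error $\varepsilon(z)$ genuinely small enough to beat the margin $1$ in \eqref{eqn: stability on balls}: a direct application only produces $\varepsilon(z)$ comparable to the $D$-independent constant from the energy step, which is not automatically $<1$, so a sharper version of the comparison — e.g.\ letting $y$ vary along $[h(x),f(x)]$, or passing to Busemann functions via Propositions~\ref{prop: Busemann for free} and \ref{prop: Busemann enough} — will likely be needed to close the estimate.
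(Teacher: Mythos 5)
There is a genuine gap, in fact two. First, your key intermediate claim --- a weak inequality $\Delta\, d_Y(h,f)^2 \geq 4e - \eta_f$ with $\eta_f$ controlled by the Lipschitz/energy data of $f$ alone --- is false. Comparing a harmonic map with an arbitrary Lipschitz map produces a first-order error proportional to the \emph{tension} of $f$ times $d_Y(h,f)\approx D$, not to $|\nabla f|^2$. Concretely, take $Y=\R$, $h\equiv 0$ and $f(z)=D-|z|$ (or its unit-scale mollification, whose second derivative is $O(1)$): then $\phi=d_Y(h,f)^2=(D-|z|)^2$ is maximized at the interior point $z=0$, yet $\Delta\phi$ carries a negative contribution of size comparable to $D$ there, so no bound $\Delta\phi\geq -C(L)$ holds. (This example does not satisfy \eqref{eqn: stability on balls}, but your differential inequality is asserted before stability is used, so it must hold for all such pairs $(h,f)$, and it does not; in the general Hadamard-space setting there is moreover no tension field of $f$ to speak of.) Consequently the Green's-representation step yields only $\int_B G^B(x,\cdot)\,e \leq C\,D$, i.e. $|\nabla h|\leq C\sqrt{D}$ on $B(x,r)$ --- which is exactly the Harnack estimate of Lemma \ref{lem: harnack estimate} --- and your asserted $D$-independent Lipschitz bound on $h$ is not available (and cannot be assumed without circularity, since after the proposition is proved it is trivially true).

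Second, as you yourself note, the Ptolemy step does not close: a one-sided bound $d_Y(h(x),f(z))\leq D+\varepsilon(z)$ gives $\varepsilon$ of constant order (or $O(\sqrt D)$ with the correct gradient bound), which cannot beat the margin $1$ in \eqref{eqn: stability on balls}, and neither varying $y$ along $[h(x),f(x)]$ nor passing to Busemann functions supplies the missing smallness. The idea you are missing is to use \emph{both} diagonals and a cancellation rather than a pointwise bound on $F-D$. Writing $F=d_Y(h(x),f(z))$, $F'=d_Y(f(x),h(z))$, $D'=d_Y(h(z),f(z))$, $E=d_Y(f(x),f(z))$, $E'=d_Y(h(x),h(z))$, the Ptolemy inequality combined with two triangle inequalities gives (Lemma \ref{lem: cross ratio bound frfr})
\[
(F-D)+(F'-D') \;\leq\; \frac{2EE'}{D} \;\leq\; \frac{C}{\sqrt{D}},
\]
where the last step uses $E\leq L(r+3)$ and $E'\leq Cr\sqrt{D}$ from the Harnack estimate. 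Since $d_{f(x)}$ is convex and $h$ is harmonic, $d_{f(x)}\circ h$ is subharmonic, so $\int (F'-D)\,\mu_x^{h(x)}\geq 0$; together with $D'\leq D$ (maximality) and the stability hypothesis at $y=h(x)$, which gives $\int (F-D)\,\mu_x^{h(x)}\geq 1$, one gets $1\leq C/\sqrt{D}$ and hence $D\leq C^2$. So the correct proof needs only the $O(\sqrt D)$ gradient bound, precisely because the sum $(F-D)+(F'-D')$ decays like $E'/\!\sqrt{D}\cdot D^{-1/2}$; your plan substitutes a stronger (and unprovable) gradient bound for this cancellation and still falls short of the required margin.
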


According to this proposition, each map $h_i:\Omega_i \to Y$ will satisfy 
\[
d_Y(h(x),f(x)) \leq R
\]
provided that there is a point $x_i$ in $\Omega$ maximizing $d_Y(h_i(z),f(z))$ \emph{which lies at distance at least $r+2$ from the boundary}. Since the distance is at most $L$ on the boundary, it will be maximized at some point of $\Omega$; to deal with the possibility that this point lies close to $\partial \Omega_i$ we need to choose the exhaustion with a little bit of care. This will be done in the next section, following the proof of the interior estimate.

\subsection{Interior \texorpdfstring{$C^0$}{C0} estimate}

We begin with a variant of the gradient estimate \ref{prop: gradient estimate}, which gives a better estimate when the harmonic map sends a point very close to the boundary of the ball. We refer to it as a Harnack estimate.

\begin{lemma}[Harnack estimate] \label{lem: harnack estimate}
    Let $B(x,r+2) \subset X$, let $y \in Y$, and suppose that $h \colon B(x,r+2) \to Y$ is harmonic and satisfies
    \[
    d_y \circ h(x) = D \geq 1
    \]
    and for all $z \in B(x,r+2)$,
    \[
    d_y \circ h(z) \leq D + E 
    \]
    Then there is a constant $C(E,K,n,r)$ such that $\abs{\nabla h} \leq C \sqrt{D}$ on $B(x,r)$. 
\end{lemma}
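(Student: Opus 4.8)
The goal is a gradient bound $|\nabla h| \leq C\sqrt{D}$ on $B(x,r)$, given that $d_y \circ h$ equals $D \geq 1$ at the center and is at most $D + E$ on the whole ball $B(x,r+2)$. The naive gradient estimate (Proposition \ref{prop: gradient estimate}) only gives $|\nabla h| \lesssim (D+E)$, which is roughly linear in $D$; we want to gain a factor of $\sqrt{D}$. The key observation is that the function $v := d_y \circ h$ is \emph{nonnegative}, satisfies a good differential inequality, and is \emph{pinched between $D$ and $D+E$} on the ball — so its oscillation is bounded independently of $D$, while its size is $\approx D$. This is exactly the situation in which a Caccioppoli / reverse-Poincaré argument gains a power of the function's size.

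**The main steps.** First I would record the relevant differential inequalities from Proposition \ref{prop: K-S}: writing $v = d_y \circ h$ and $e$ the energy density, we have $\Delta v^2 \geq 4e = 2|\nabla h|^2$ weakly, and $\Delta e \geq -Ke$ weakly. Also, since $d_y$ is convex and $1$-Lipschitz and $h$ is harmonic, $v$ itself is subharmonic, $\Delta v \geq 0$; combined with $v \geq D \geq 1 > 0$ this makes $v$ a positive subsolution. Next, the Caccioppoli step: choose a cutoff $\chi$ supported in $B(x,r+1)$, equal to $1$ on $B(x,r)$, with $|\nabla \chi| \leq C$. Testing $\Delta v^2 \geq 2|\nabla h|^2$ against $\chi^2$ and integrating by parts gives
\[
\int \chi^2 |\nabla h|^2 \leq \int \chi^2 \Delta(v^2)/2 = -\int \nabla(\chi^2) \cdot \nabla(v^2)/2 = -\int \chi (\nabla \chi)\cdot \nabla(v^2),
\]
and now I would write $\nabla(v^2) = 2v\,\nabla v$ and use that $|\nabla v| \leq |\nabla h|$ together with $v \leq D+E$ to bound the right side by $2(D+E)\int \chi |\nabla\chi| |\nabla h| \leq \tfrac12 \int \chi^2 |\nabla h|^2 + C(D+E)^2 \int |\nabla \chi|^2$ via Young's inequality. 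Absorbing the first term, this yields
\[
\int_{B(x,r)} |\nabla h|^2 \leq C(D+E)^2 \,\mathrm{Vol}(B(x,r+1)).
\]
That is still quadratic in $D$, so the trick must be sharper: instead of pairing $v\,\nabla v$ with $|\nabla\chi|$ crudely, one should exploit that $v - D \geq 0$ has oscillation $\leq E$ on the ball. Replacing $v^2$ by $(v^2 - D^2) = (v-D)(v+D)$, which is still nonnegative and still satisfies $\Delta(v^2 - D^2) \geq 2|\nabla h|^2$, and noting $0 \leq v^2 - D^2 \leq (2D+E)E \leq C(E)\,D$, I get $\int \chi^2 |\nabla h|^2 \leq -\int \nabla(\chi^2)\cdot\nabla(v^2-D^2)$, and integrating by parts the other way, $= \int \Delta(\chi^2)\,(v^2-D^2) \leq C(E)\,D\int|\Delta(\chi^2)| \leq C(E,n,K,r)\,D\,\mathrm{Vol}(B(x,r+1))$. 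This gives the crucial $L^2$ bound $\int_{B(x,r+1)}|\nabla h|^2 \leq C D\,\mathrm{Vol}$.

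**Upgrading to a pointwise bound.** Finally, to pass from this $L^2$ energy bound to the sup bound $|\nabla h| \leq C\sqrt D$ on $B(x,r)$, apply the mean value inequality (Theorem \ref{thm: MVI}) to the function $e = \tfrac12|\nabla h|^2$, which is a nonnegative weak subsolution of $\Delta e \geq -Ke$ by Proposition \ref{prop: K-S}(3). On a slightly smaller ball this gives
\[
\sup_{B(x,r)} e \leq \frac{C}{\mathrm{Vol}(B(x,r'))}\int_{B(x,r')} e \leq \frac{C}{\mathrm{Vol}(B(x,r'))}\cdot C' D\,\mathrm{Vol}(B(x,r')) = C'' D,
\]
hence $|\nabla h| = \sqrt{2e} \leq C\sqrt D$ on $B(x,r)$, with $C$ depending only on $E, K, n, r$. (A minor bookkeeping point: one needs a chain of slightly shrinking radii $r < r' < r'' < r+2$ so that the cutoff in the Caccioppoli step lives strictly inside where the hypotheses hold and the mean value inequality has room; this costs nothing in the constants.)

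**Main obstacle.** The only real subtlety is making the Caccioppoli computation rigorous in the \emph{weak} (measurable-target, Korevaar–Schoen) setting: the inequalities $\Delta v^2 \geq 2|\nabla h|^2$ and $\Delta e \geq -Ke$ are distributional, and the integrations by parts against $\chi^2$ (and, in the sharper version, the double integration by parts moving $\Delta$ onto the cutoff) must be justified by testing against nonnegative Lipschitz compactly supported functions and using that $v^2 - D^2$ and $e$ are locally $L^1$ — this is exactly the kind of manipulation Korevaar–Schoen and Freidin set up, so it goes through, but it should be stated carefully rather than done formally. The rest is routine once one sees the point: subtract off the constant $D^2$ so that a function of size $\sim D$ but oscillation $O(E)$ feeds into the energy integral, producing the linear-in-$D$ (rather than quadratic) $L^2$ bound that square-roots to the desired estimate.
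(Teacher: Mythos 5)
Your overall architecture (get an integral bound on the energy density $e$ that is \emph{linear} in $D$, then upgrade to a pointwise bound via the mean value inequality applied to $e$ with $\Delta e \geq -Ke$) is exactly the paper's, and the final step is fine. But the key step of your refinement has a genuine gap: you assert that $v - D \geq 0$, i.e.\ $0 \leq v^2 - D^2 \leq (2D+E)E$, on the ball. The hypotheses give only $v(x) = D$ at the single point $x$ and $v \leq D+E$ everywhere; there is no lower bound on $v$ away from $x$, and $v = d_y\circ h$ is merely subharmonic, which gives no such bound. (In the intended application, $y = f(x)$ and how far $v$ can drop below $D$ on $B(x,r)$ is controlled precisely by the gradient bound you are trying to prove, so assuming it is close to circular.) Without the lower bound, $|v^2 - D^2|$ can be of size $D^2$ on the annulus where $\Delta(\chi^2)\neq 0$, the term $\int \Delta(\chi^2)(v^2-D^2)$ is not $O(E D)\,\mathrm{Vol}$, and your refined Caccioppoli step collapses back to the crude $(D+E)^2$ bound you correctly rejected.

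The paper sidesteps exactly this issue by testing $\Delta v^2 \geq 4e$ against the Green's function $G^B(x,\cdot)$ of $B(x,r+2)$ with pole at $x$: Green's representation gives $\int \Delta(v^2)\,G_x = \int_{\partial B} v^2\,\abs{\nabla G_x} - v^2(x) \leq (D+E)^2 - D^2 = 2ED + E^2$, so the subtraction of $D^2$ happens at the one point where the value of $v$ is known exactly, and only the upper bound is needed on $\partial B$; the Green's function lower bound (Proposition \ref{thm: Greens function lower bound}) then converts this into $\int_{B(x,r+1)} e \lesssim D\,V(x,r+1)$. If you want to salvage a cutoff-based version of your argument, you would need to control $\fint (D - v)_+$ on the annulus, e.g.\ by a weak Harnack inequality applied to the nonnegative superharmonic function $D+E-v$ (whose value at $x$ is $E$); but as written, the claimed nonnegativity of $v^2 - D^2$ is unjustified and the proof does not go through. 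A secondary, fixable point: the double integration by parts onto $\Delta(\chi^2)$ requires a cutoff with bounded Laplacian, which under only a lower Ricci bound needs a standard but nontrivial construction.
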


\begin{proof}
    For $z \in B(x,r+2)$, let $d_x(z) = d_X(x,z)$. 
    For $\rho \leq r+2$, let $V(\rho) = \mathrm{Vol}(B(x,\rho))$. 
    Let $G_x(z) = G^B(x,z)$ be the Green's function of $B(x,r+2)$ centered at $x$. 

    Since $d_y \circ h$ is Lipschitz on $B(x,r+2)$, the integral
    \[
    \int_{B(x,r+2)} \Delta (d_y \circ h)^2(z) G_x(z) dz 
    \]
    makes sense weakly, and by Green's theorem
    \begin{align*}
    \int_{B(x,r+2)} \Delta (d_y \circ h)^2(z) G_x(z) dz = &\int_{\partial B(x,r+2)} (d_y \circ h)^2(z) \abs{\nabla G_x(z)} dz - (d_y \circ h)^2(x) \\
    \leq &(D + E)^2 - D^2 \\ 
    = &2ED + E^2.
    \end{align*}
    Here we have used that $d_y \circ h(z) \leq D + E$
    and that $\abs{\nabla G_x(z)} dz$ is a probability measure.

    Next we use the inequality $\Delta (d_y \circ h)^2 \geq 4 e$ of \ref{prop: K-S} part (2) to conclude
    \[
    4 \int_{B(x,r+2)} e(z) G_x(z) dz \leq 2ED + E^2
    \]
    By the Green's function lower bound \ref{thm: Greens function lower bound}, we can find $c$ such that for all $z \in B(x,r+1)$
    \[
    G_x(z) \geq \frac{c}{V(x,r+1)}
    \]
    and so since $e$ is non-negative,
    \[
    \frac{c}{V(x,r+1)} \int_{B(x,r+1)} e(z)dz \leq 2ED + E^2.
    \]
    Finally, since $\Delta e \geq -K e$ by Proposition \ref{prop: K-S} part (3), we can apply the mean value inequality \ref{thm: MVI} to conclude
    \[
    \sup_{B(x,r)} e(z) \leq C D.
    \]
    For some $C=C(n,K,r,E)$, using $D \geq 1$ to absorb the $E^2$ term.
    Since $e(z) = 2 \abs{\nabla h}^2$, we obtain the conclusion after taking square roots of both sides.
\end{proof}

The following lemma is a simple consequence of the Ptolemy inequality for CAT(0) spaces. It may be helpful to refer to Figure \ref{fig:thinking diagram} below.

\begin{lemma} \label{lem: cross ratio bound frfr}
    For any quadrilateral in a CAT(0) space with opposite side lengths $D,D'$, $E,E'$, and $F,F'$,
    \[
    F - D + F' - D' \leq \frac{2EE'}{D}
    \]
\end{lemma}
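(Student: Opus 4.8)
The plan is to apply the Ptolemy inequality (Proposition \ref{prop: Ptolemy}) to the four vertices of the quadrilateral and then massage the resulting product inequality into the claimed additive form. Label the vertices so that the two diagonals have lengths $F$ and $F'$, the first pair of opposite sides have lengths $D, D'$, and the second pair have lengths $E, E'$; the Ptolemy inequality gives $FF' \leq DD' + EE'$. I want to deduce $F - D + F' - D' \leq 2EE'/D$. The natural idea is to add the trivial inequality coming from the triangle inequality in the appropriate configuration — namely that $F$ and $D$, $D'$ sit in a triangle so that $|F - F'|$ or similar quantities are controlled — but more cleanly one observes that $(F-D) + (F'-D') = F + F' - D - D'$, and the key is to bound $F + F'$ in terms of $D + D'$ plus an error.

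The cleanest route: from $FF' \leq DD' + EE'$ we get $FF' - DD' \leq EE'$. Now write $FF' - DD' = F F' - D F' + D F' - D D' = F'(F-D) + D(F'-D')$. Since $F' \geq D'$ is not automatic, I should instead symmetrize. Using that all of $F, F', D, D'$ are positive, and that by the triangle inequality applied in the quadrilateral $F \leq D + (\text{adjacent side})$ etc., we have $F \leq D' + E$ and $F' \leq D + E'$ (choosing the labelling of $E, E'$ appropriately, reading off the figure), hence $F, F'$ are comparable to $D, D'$. Then $F'(F-D) + D(F'-D') \leq EE'$, and bounding $F' \geq D$... no — I need a \emph{lower} bound on the coefficients to turn this into an upper bound on $F - D + F' - D'$. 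The correct manipulation is: $F - D + F' - D' = \frac{F^2 - D^2}{F+D} + \frac{(F')^2 - (D')^2}{F' + D'}$, which is awkward because of the squares.

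I think the right approach is to directly estimate: since $F + F' \geq D + D'$ need not even hold, actually the inequality $F - D + F' - D' \leq 2EE'/D$ is only interesting when the left side is positive, and in that case both $F \geq D$ roughly. Multiply out: we want $(F - D) + (F' - D') \leq 2EE'/D$, equivalently $D(F-D) + D(F'-D') \leq 2EE'$. Now $D(F - D) \leq F(F-D) = F^2 - DF$ only helps in the wrong direction. Instead use $D(F-D) + D(F'-D') = D(F + F') - D^2 - DD'$. From Ptolemy, $EE' \geq FF' - DD'$, so $2EE' \geq 2FF' - 2DD'$, and I need $2FF' - 2DD' \geq D(F+F') - D^2 - DD'$, i.e. $2FF' - DD' + D^2 \geq D(F + F')$, i.e. $2FF' + D^2 \geq D F + DF' + DD' = D(F + F' + D')$. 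By AM-GM, $FF' + D^2 \geq 2D\sqrt{FF'}$ — still not obviously enough.

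The main obstacle is clearly finding the exact algebraic identity that converts the multiplicative Ptolemy bound into the additive bound with the specific constant $2$ and the specific denominator $D$ (not $D'$ or $(D+D')/2$); the asymmetry in the statement (dividing by $D$ and not $D'$) strongly suggests that an additional geometric input beyond pure Ptolemy is used — most likely the triangle inequalities $F \leq E + D'$ and $F' \leq E' + D$ (or the pair with $E, E'$ swapped), read off from Figure \ref{fig:thinking diagram}, which give $F - D' \leq E$ and $F' - D \leq E'$. Combining \emph{these} with Ptolemy should do it: from $F \le E + D'$ we get $F - D \le E + D' - D$, and similarly $F' - D' \le E' + D - D'$, so the sum is $F - D + F' - D' \le E + E'$ — but that has no $1/D$ and the wrong (additive, not the sharper) form, so it is too weak when $E, E'$ are large and $D$ is large. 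Hence the genuine content must come from Ptolemy: I expect the final proof writes $FF' \le DD' + EE'$, then uses $F \ge D - E'$ and $F' \ge D' - E$ type reverse triangle inequalities together with $F, F' \ge$ something proportional to $D$, to extract the factor $1/D$. I would first carefully fix the labelling against the figure so that $D$ and $D'$ are the two sides adjacent to a common vertex through which... — and then grind the two-line algebra; I do not expect it to be hard once the labelling is pinned down, but pinning down which side is which relative to the figure is where the real care lies.

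\textbf{Summary of the plan.} First, fix the vertex labelling so Ptolemy reads $FF' \le DD' + EE'$ with $F, F'$ the diagonals and so that the relevant triangle inequalities from Figure \ref{fig:thinking diagram} take the form $F \le D' + E$ and $F' \le D + E'$ (equivalently the reverse forms bounding $D, D'$ below). Second, combine these linear constraints with the quadratic Ptolemy inequality: substitute the bounds $F \le D' + E$, $F' \le D + E'$ into one factor of $FF'$ where it decreases the expression, keeping the other factor, to get a bound of the form $D \cdot(F - D) + D\cdot(F' - D') \le (\text{terms in } E, E', D, D')$ and identify the right-hand side with $2EE'$ using the remaining Ptolemy slack. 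Third, divide by $D$. I expect step two — the precise algebraic bookkeeping that produces exactly $2EE'/D$ — to be the only real obstacle, and it should be a short computation once the geometry is set up correctly; if the naive substitution overshoots, the fix is to also invoke $FF' \le DD' + EE'$ directly rather than only the linear inequalities.
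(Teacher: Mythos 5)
Your proposal correctly identifies the two ingredients (the Ptolemy inequality and triangle inequalities in the quadrilateral), but it never produces the decisive algebraic step, and you explicitly leave it open ("the precise algebraic bookkeeping that produces exactly $2EE'/D$" is named as an unresolved obstacle). Since that step \emph{is} the entire content of the lemma, the proposal has a genuine gap rather than being a proof. For the record, the missing manipulation is short: multiply the target by $D$ and use Ptolemy in the form $-DD' \leq -FF' + EE'$ to get
\begin{align*}
D(F - D + F' - D') &\leq DF - D^2 + DF' - FF' + EE' = (F-D)(D-F') + EE' \leq 2EE',
\end{align*}
where the last inequality uses $F - D \leq E$ and $D - F' \leq E'$ (and is trivial if either factor is nonpositive). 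You actually walked right past this: your identity $FF' - DD' = F'(F-D) + D(F'-D') \leq EE'$ differs from what is needed by exactly the term $(D - F')(F-D)$, which the triangle inequality controls by $EE'$ — but you abandoned that thread instead of closing it.

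Two further cautions. First, several of your trial routes are dead ends as written: the inequality $2FF' + D^2 \geq D(F+F'+D')$ you reduce to is not a consequence of what you have (take $F,F'$ slightly larger than $D/2$ with $D'$ comparable to $D$), and the summary's plan of "substituting $F \leq D'+E$, $F' \leq D+E'$ into one factor of $FF'$" replaces a quantity you need to keep, not bound, so it does not organize into the claim. Second, your triangle inequalities are mislabeled for the relevant quadrilateral: with $F$ joining $f(z)$ to $h(x)$ as in Figure \ref{fig:thinking diagram}, the two valid bounds are $F \leq D + E$ and $F \leq D' + E'$, not $F \leq D' + E$; the bounds actually used in the correct argument are $F \leq D + E$ and $D \leq F' + E'$. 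So the labeling care you flag as "where the real care lies" was indeed not yet carried out correctly.
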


\begin{proof}
    The Ptolemy inequality \ref{prop: Ptolemy} gives $-DD' \leq - FF'+EE'$. Hence
    \begin{align*}
        D(F - D + F'  - D') &\leq DF  - D^2 + DF' - FF' + EE'\\
        &= (F-D)(D-F') + EE' \\
        &\leq 2EE'
    \end{align*}
    where the last line uses the triangle inequality twice.
\end{proof}

\begin{remark}
    When $Y$ is a manifold, the expression $F + F' - D - D'$ is the symplectic area of any quadrilateral in the cotangent bundle of $Y$ whose edges are the geodesics, $F,D,F',D'$ traversed in opposite directions at unit speed. It has a natural interpretation as the log of a cross-ratio \cite[Section 4.4]{labourie2007crossratios}. 
\end{remark}

We now proceed to the proof of the interior estimate.

\begin{proof}[Proof of Proposition \ref{prop: interior estimate}, interior estimate]
    For any $z \in \overline{B(x,r)}$, consider the quadrilateral in Figure \ref{fig:thinking diagram}. 
    By Lemma \ref{lem: cross ratio bound frfr} we have
    \[
    F - D +F' - D' \leq \frac{2EE'}{D}.
    \]
    Since the distance from $f$ to $h$ is maximized at $x$, we also have $D' \leq D$, whence
    \[
    F - D + F' - D \leq \frac{2EE'}{D}.
    \]

    Since $F' \leq D' + E \leq D + E$, and $E \leq L(r+3)$ since $f$ is coarse Lipschitz, we may apply the Harnack estimate, Lemma \ref{lem: harnack estimate}. We conclude that there is a constant $C$ depending on $n,K$, and $r$, and $L$ such that $|\nabla h| \leq C\sqrt{D}$ on $B(x,r)$. 
    Integrating from $x$ to $z$, we see that for all $z \in B(x,r)$, $E' \leq rC\sqrt{D}$.
    Hence, for such $z$, we have for a new constant $C$,
    \begin{equation} \label{eqn: cross ratio plus harnack}
        F - D + F' - D \leq \frac{C}{\sqrt{D}}.
    \end{equation}

     Now take $y=h(x)$ and consider the harmonic measure $\mu_x^y$ so that the stability inequality \eqref{eqn: stability on balls} is satisfied by $f$. 
     This gives
     \[
        \int_{\overline{B(x,r)}} (F(z) - D) \mu_x^y(z) \geq 1.
    \]
    On the other hand, since $d_{f(x)}$ is a convex function on $Y$ and $h$ is a harmonic map, $d_{f(x)} \circ h$ is subharmonic on $X$. 
    Since $\mu_x^y$ is harmonic with respect to the point $x$, integrating against $\mu_x^y$ gives
    \[
        \int_{\overline{B(x,r)}} (F'(z) - D) \mu_x^y(z) \geq 0.
    \]
    
    Noting that $\mu_x^y$ is a probability measure, and the right hand side of \eqref{eqn: cross ratio plus harnack} is independent of $z$, we conclude
    \[
    1 \leq \frac{C}{\sqrt{D}}
    \]
    in other words
    \[
        D = d_Y(h(x),f(x)) \leq C^2.
    \]

\end{proof}

\begin{figure}
\centering
\begin{tikzpicture}
    \coordinate (fx) at (0,0);
    \coordinate (fz) at (1,0);
    \coordinate (hx) at (0.5,5);
    \coordinate (hz) at (3.5,5);

    \draw (fx) -- (hx) node[pos=0.7, left=2pt]  {$D$};
    \draw (fx) -- (hz) node[pos=0.7, left=2pt]  {$F'$};
    \draw (fz) -- (hx) node[pos=0.7, right=2pt] {$F$};
    \draw (fz) -- (hz) node[pos=0.7, right=2pt] {$D'$};
    \draw (fx) -- (fz) node[pos=0.5, below=1pt] {$E$};
    \draw (hx) -- (hz) node[pos=0.5, above=1pt] {$E'$};

    \node[left=2pt] at (fx) {$f(x)$};
    \fill (fx) circle (1.5pt);

    \node[right=2pt] at (fz) {$f(z)$};
    \fill (fz) circle (1.5pt);

    \node[left=2pt] at (hx) {$h(x)$};
    \fill (hx) circle (1.5pt);

    \node[right=2pt] at (hz) {$h(z)$};
    \fill (hz) circle (1.5pt);

\end{tikzpicture}
\caption{Interior estimate}
\label{fig:thinking diagram}
\end{figure}
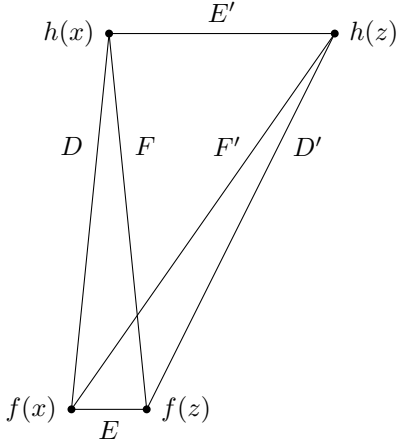

\subsection{Boundary \texorpdfstring{$C^0$}{C0} estimate}

In this section we construct an exhaustion $\Omega_i$ of $X$ for which we can control the distance between $f$ and the solutions $h_i$ to the Dirichlet problems \eqref{eqn: Dirichlet problem} in a fixed neighborhood of the boundary of $\Omega_i$.

\begin{definition}
    A domain $\Omega \subset X$ satisfies the \emph{exterior $\rho$-ball condition} if every point on $\partial\Omega$ lies on the boundary of a ball of radius $\rho$ contained in $X \setminus \Omega$.
\end{definition}

\begin{remark}
    When $X$ is a Hadamard manifold, any ball $B(x,r)$ satisfies the exterior $\rho$-ball condition for any $ \rho \ge 0$.
\end{remark}

It is easy to construct domains satisfying the exterior $\rho$-ball condition.

\begin{lemma} \label{lem: exterior 1 ball}
    If $\Omega$ is a domain in $X$, the domain $\Omega^{(\rho)}$ of points in $\Omega$ at distance at least $\rho$ from the boundary of $\Omega$ satisfies the exterior $\rho$-ball condition.
\end{lemma}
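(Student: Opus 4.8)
\textbf{Proof plan for Lemma \ref{lem: exterior 1 ball}.}

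The plan is to verify the exterior $\rho$-ball condition directly from the definition of $\Omega^{(\rho)}$. Fix a point $p \in \partial\Omega^{(\rho)}$. By definition of $\Omega^{(\rho)}$ as the set of points of $\Omega$ whose distance to $\partial\Omega$ is at least $\rho$, the distance function $z \mapsto d_X(z,\partial\Omega)$ equals exactly $\rho$ at $p$ (it is continuous, is $\geq \rho$ on $\overline{\Omega^{(\rho)}}$, and is $<\rho$ on points of $\Omega$ arbitrarily close to $p$ that lie outside $\Omega^{(\rho)}$; also $\partial\Omega$ is disjoint from $\overline{\Omega^{(\rho)}}$ as long as $\rho>0$, and the $\rho=0$ case is trivial since then $\Omega^{(0)}=\Omega$ and the condition is vacuous). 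Since $X$ is a complete manifold and $\partial\Omega$ is closed, the infimum defining $d_X(p,\partial\Omega)=\rho$ is attained at some $q \in \partial\Omega$, and there is a minimizing geodesic $\gamma\colon[0,\rho]\to X$ from $p$ to $q$.

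The key step is to identify the desired exterior ball. I claim that the ball $B = B(\gamma(\rho),\rho) = B(q,\rho)$ does the job: it has radius $\rho$, its closure contains $p=\gamma(0)$ on the boundary since $d_X(p,q)=\rho$, and I must check $B \subset X\setminus\Omega^{(\rho)}$. Actually the cleaner choice is the ball centered at the \emph{interior} point near $q$ at the right distance — but since we only need $B(q,\rho)$ to miss $\Omega^{(\rho)}$, note that any $z$ with $d_X(z,q)<\rho$ satisfies $d_X(z,\partial\Omega) \leq d_X(z,q) < \rho$, so $z\notin\Omega^{(\rho)}$; hence $B(q,\rho)\cap\Omega^{(\rho)}=\emptyset$. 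Wait — the definition requires the open ball of radius $\rho$ to be contained in $X\setminus\Omega^{(\rho)}$ and $p$ to lie on its boundary; the above shows $B(q,\rho)\subset X\setminus\Omega^{(\rho)}$ and $d_X(p,q)=\rho$ puts $p$ on $\partial B(q,\rho)$, which is exactly what is required.

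The main obstacle — really the only subtlety — is the geometric step verifying $d_X(z,\partial\Omega)\le d_X(z,q)$, which is immediate since $q\in\partial\Omega$, together with confirming that the distance to $\partial\Omega$ is exactly $\rho$ (not merely $\geq\rho$) at boundary points of $\Omega^{(\rho)}$; this uses continuity of the distance-to-$\partial\Omega$ function and that $\Omega^{(\rho)}=\{z\in\Omega: d_X(z,\partial\Omega)\ge\rho\}$ is closed in $X$, so its topological boundary consists of points where the inequality is an equality. I would also remark that one should implicitly assume $\partial\Omega\neq\emptyset$ and $\Omega^{(\rho)}\neq\emptyset$, else there is nothing to prove, and that completeness of $X$ guarantees the minimizing $q$ exists (the Hopf–Rinow theorem). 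No curvature hypotheses are needed for this lemma.
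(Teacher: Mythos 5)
Your proof is correct and follows the same route as the paper: take a closest point $q \in \partial\Omega$ to $p$ (so $d_X(p,q)=\rho$) and observe that $B(q,\rho)$ misses $\Omega^{(\rho)}$ while touching it at $p$. The extra care you take (continuity of the distance function, Hopf--Rinow for the existence of $q$, the trivial $\rho=0$ case) just fills in details the paper leaves implicit.
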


\begin{proof}
    Let $p$ be a point on $\partial \Omega^{(\rho)}$. Then there is a point $q \in \partial \Omega$ such that $d(p,q) = \rho$. The ball $B(q,\rho)$ lies outside of $\Omega^{(\rho)}$ and touches the boundary at $p$.
\end{proof}

The last ingredient in the proof of the main theorem is the following boundary estimate. 

\begin{prop}[Boundary $C^0$ estimate]\label{prop: boundary estimate} 
    Let $\Omega \subset X$ be a Lipschitz domain satisfying the exterior $1$-ball condition. 
    Suppose $f \colon \Omega \to Y$ is an $L$ coarse Lipschitz map, and $h \colon \Omega \to Y$ is a harmonic map whose distance from $f$ is bounded by $L$ on $\partial \Omega$.
    Let $x \in \Omega$ be any point at which $d_Y(f(z),h(z))$ is maximized. 
    For every $r > 0$, there is an $R(K,n,L,r)$, such that 
    \[
    d_X(x,\partial\Omega) \leq r \implies d_Y(f(x),h(x)) \leq R.
    \]    
\end{prop}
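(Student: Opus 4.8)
The plan is to reduce the boundary estimate to the interior estimate (Proposition~\ref{prop: interior estimate}) by exploiting the exterior $1$-ball condition. Suppose $x \in \Omega$ maximizes $d_Y(f(z),h(z))$ and $d_X(x,\partial\Omega) \leq r$. The difficulty is that $x$ may be too close to $\partial\Omega$ for $h$ to be defined on a ball $B(x,r'+2)$; so instead of working with a ball centered at $x$, we compare $f$ and $h$ on a domain that sits inside $\Omega$ but touches $\partial\Omega$ near $x$. The exterior $1$-ball condition supplies, for the nearby boundary point $q \in \partial\Omega$ with $d_X(x,q) = d_X(x,\partial\Omega) \leq r$, a ball $B(p,1) \subset X \setminus \Omega$ with $q \in \partial B(p,1)$. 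Enlarging this to $B(p,\rho)$ for $\rho$ up to some fixed radius (say $\rho = r+3$), the domain $B(p,\rho) \cap \Omega$ is a region on whose boundary we already control $d_Y(f,h)$: on $\partial\Omega$ it is $\leq L$ by hypothesis, and on the part of $\partial B(p,\rho)$ lying in $\Omega$ we will bound it by the interior maximality of $x$.

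First I would set $D = d_Y(f(x),h(x))$, which is by assumption the global maximum of $d_Y(f(z),h(z))$ over $\Omega$. The idea is to build a harmonic measure for $f$ supported on a set where $d_Y(f(z),h(z))$ is genuinely smaller than $D$, which then forces $D$ to be bounded. Concretely: take $y = h(x)$ and, since $d_y$ is convex, $d_y \circ h$ is subharmonic on $\Omega$, hence $d_y \circ h(x) \leq \int d_y \circ h \, d\mu$ for any harmonic measure $\mu$ with respect to $x$ supported in $\Omega$. We want to use a harmonic measure whose support lies on $\partial\Omega$ within distance $\sim r$ of $x$ — namely the hitting measure $\mu_x^{\Omega'}$ of the Lipschitz domain $\Omega' = B(p,r+3) \cap \Omega$; by the Green's function lower bound (Proposition~\ref{thm: Greens function lower bound}) and the exterior ball geometry, a definite fraction of this hitting mass lands on $\partial\Omega \cap B(p,r+3)$, where $d_y \circ f \leq d_y \circ h + L \leq D + L$ while also (using $d_{f(x)}\circ h$ subharmonic and the boundary bound) $d_y \circ h \leq D$ there, so $d_y \circ h \leq \ldots$ is forced close to $D$ on a positive-mass set. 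Combining this with the gradient estimate for $h$ on the interior part of $\partial\Omega'$ (where $h$ extends across since $B(p,r+3)$ meets $X\setminus\Omega$, giving room), one runs the same Ptolemy/Harnack argument as in Proposition~\ref{prop: interior estimate}: apply Lemma~\ref{lem: cross ratio bound frfr} to the quadrilateral $f(x), f(z), h(x), h(z)$ for $z$ in the support of the hitting measure, use Lemma~\ref{lem: harnack estimate} to get $|\nabla h| \leq C\sqrt{D}$ near $x$ (valid since $h$ now has a ball of fixed radius around $x$ inside $\Omega \cup (X\setminus\Omega)$ on which to apply it — the key point being that we only need $h$ harmonic on $B(x, \text{small}) \cap \Omega$ together with a Green's function estimate on a domain), and conclude $1 \leq C/\sqrt{D}$, hence $D \leq C^2 =: R(K,n,L,r)$.

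The main obstacle is making the Green's function / harmonic measure estimate on the truncated domain $\Omega' = B(p,r+3)\cap\Omega$ uniform: unlike a ball, $\Omega'$ has a corner-like structure where $\partial B(p,r+3)$ meets $\partial\Omega$, and one needs that the hitting measure $\mu_x^{\Omega'}$ puts a definite amount of mass on the piece $\partial\Omega \cap B(p,r+3)$ rather than escaping through $\partial B(p,r+3) \cap \Omega$. This is where the exterior $1$-ball condition does the essential work: it guarantees $x$ lies in a ball $B(q', r+2) \subset X\setminus\Omega$-complement geometry that makes Brownian motion from $x$ hit $\partial\Omega$ with probability bounded below, analogously to the standard barrier argument for the Dirichlet problem on Lipschitz domains. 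One then combines the Green's function lower bound (Proposition~\ref{thm: Greens function lower bound}) on an auxiliary ball contained in $\Omega'$ with this escape-probability bound; both are uniform in $K, n, r$ by Bishop--Gromov, so the resulting $R$ depends only on $K, n, L, r$ as claimed. The remaining steps — the Ptolemy inequality and the Harnack/mean-value estimate — are verbatim the same as in the interior case, since $D = d_Y(f(x),h(x))$ is the maximum over all of $\Omega$, so $D' = d_Y(f(z),h(z)) \leq D$ still holds for the relevant $z$.
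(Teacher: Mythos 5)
Your use of the exterior $1$-ball condition as a barrier forcing the hitting measure of the truncated domain to put a definite mass $\alpha>0$ on $\partial\Omega$ is exactly the right role for that hypothesis (it is the content of the paper's Lemma \ref{lemma:exterior ball condition controls measure}). But the way you close the argument has a genuine gap: you finish by ``running the same Ptolemy/Harnack argument as in Proposition \ref{prop: interior estimate}'' to conclude $1\le C/\sqrt{D}$. That final inequality comes from the stability inequality \eqref{eqn: stability on balls}, which is \emph{not} a hypothesis of the boundary estimate (the paper explicitly remarks that stability of $f$ plays no role here); and even if one assumed $f$ stable, the harmonic measures furnished by stability are supported on balls $\overline{B(x,r)}$ in $X$ that may leave $\Omega$, where $h$ is not defined, so the companion inequality $\int (F'-D)\,\mu \ge 0$ (subharmonicity of $d_{f(x)}\circ h$ integrated against that measure) cannot be run. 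Likewise, your application of the Harnack estimate (Lemma \ref{lem: harnack estimate}) near $x$ is unjustified: that lemma requires $h$ to be harmonic on a full ball $B(x,r+2)$, and when $x$ lies within distance $r$ of $\partial\Omega$ there is no gradient control for $h$ at all --- a $C^0$ bound $d_Y(f,h)\le L$ on $\partial\Omega$ gives no Lipschitz bound for $h$ up to the boundary, so ``$|\nabla h|\le C\sqrt{D}$ near $x$'' does not follow from harmonicity on $B(x,\text{small})\cap\Omega$. Finally, your choice $y=h(x)$ makes $d_y\circ h(x)=0$, so the subharmonicity inequality you write down carries no information, and the middle of your argument never actually produces a bound on $D$.

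The paper's proof avoids the interior machinery entirely and is much simpler: set $u(z)=d_{f(x)}\circ h(z)$, so that $u(x)=D$; this is subharmonic on $\Omega$ since $h$ is harmonic and $d_{f(x)}$ is convex. Integrate $u$ against the hitting measure of $\Omega'=B_{2r+3}(x)\cap\Omega$ from $x$. On $\partial\Omega\cap B_{2r+3}(x)$, which carries mass at least $\alpha(n,K,r)$ by the barrier built from the exterior ball, one has $u(z)\le d_Y(f(z),h(z))+d_Y(f(x),f(z))\le L+(2r+4)L$ using the boundary hypothesis and the coarse Lipschitz bound on $f$; on the remaining piece $\Omega\cap\partial B_{2r+3}(x)$ one has $u(z)\le D+(2r+4)L$ using only the maximality of $D$. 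This yields $D\le (1-\alpha)D+(2r+5)L$, hence $D\le (2r+5)L/\alpha$, with no Ptolemy inequality, no Harnack estimate, and no stability input.
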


The proof will involve a lemma to the effect that if $\Omega$ satisfies the exterior ball condition then a Brownian motion that starts near the boundary has a positive probability of hitting the boundary before it travels too far. This is rephrased in the following lemma.
For the following lemma, refer to Figure \ref{fig: boundary estimate}.

\begin{figure}
\begin{center}
\begin{tikzpicture}[
    scale=.8, 
    point/.style={fill, circle, inner sep=1.5pt} 
  ]
    
    \coordinate (x) at (-1.5, 0.5);
    \coordinate (x1) at (0, 0);
    \coordinate (x2) at (1, -0.33);
    
    
    \draw[thick] (1, 3.65) .. controls (-1.5, 2) and (0.5, 1) .. (x1);
    \draw[thick] (x1) .. controls (-0.5, -1) and (1.5, -2) .. (.5, -2.95);
    
    \draw (x2) circle (1.05);
  
    \draw[thick] (x) circle (4);	

    
    \node at (-3, 0) {\LARGE $D$};
    
    \node at (-4.5, 2.5) {$\partial_2 D$};
    \node at (-.7, 2) {$\partial_1 D$};
    
    
    \draw[thick] (x) -- (x1) node[midway, above, sloped] {$\leq r$};
    
    \draw[thick] (x1) -- (x2) node[midway, above, sloped] {$1$};
    
    \node[point, label=above left:{$x$}] at (x) {};
    \node[point, label=below right:{$x_1$}] at (x1) {};
    \node[point, label=above right:{$x_2$}] at (x2) {};

\end{tikzpicture}
\end{center}
\label{fig: boundary estimate}
\caption{Boundary estimate}
\end{figure}
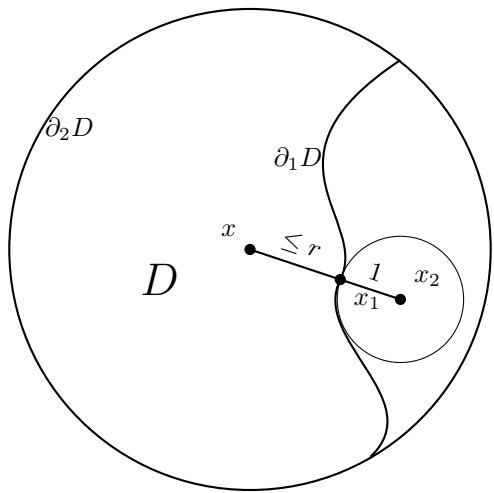

\begin{lemma}\label{lemma:exterior ball condition controls measure}
    Fix $r > 0$. Let $\Omega \subset X$ be a domain satisfying the exterior $1$-ball condition. Let $x \in \Omega$, and suppose $d_X(x,\partial \Omega)\leq r$. Let $D = B_{2r+3}(x) \cap \Omega$, and write
    \[
    \partial D = \partial_1 D + \partial_2 D = \partial \Omega \cap B_{2r+3}(x) + \Omega \cap \partial B_{2r+3}(x).
    \]
    Let $\mu_x$ be the hitting measure on $\partial D$ corresponding to the point $x \in D$. Then there is an $\alpha >0$ depending on $n$,$K$, and $r$ such that $\mu_x(\partial_1D) \geq \alpha$.
\end{lemma}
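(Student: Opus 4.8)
The plan is to prove the lower bound on $\mu_x(\partial_1 D)$ by producing a single point $x_2 \in X \setminus \Omega$ that is not too far from $x$ and sits at the center of a ball of radius $1$ disjoint from $\Omega$, and then comparing the hitting measure of $D$ with the Green's function of a slightly larger ball. Concretely, let $x_1 \in \partial \Omega$ realize $d_X(x, \partial\Omega) \le r$. By the exterior $1$-ball condition, there is a ball $B(x_2, 1) \subset X \setminus \Omega$ whose boundary contains $x_1$, so $d_X(x_1, x_2) = 1$ and $d_X(x, x_2) \le r + 1$; in particular $B(x_2, 1) \subset B(x, 2r+3)$, so $B(x_2,1) \cap D = \varnothing$ while $B(x_2, 1) \cap \partial D \subset \partial_1 D$ (the only part of $\partial D$ that meets $\overline{B(x, 2r+2)}$ lying outside $\Omega$ is in $\partial_1 D$).

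Next I would compare with the Green's function of the ball $B = B(x, 2r+3) \supset D$. The hitting measure $\mu_x$ of $D$ with respect to $x$ is the harmonic measure, and for any Borel set $U \subset \partial D$ the probabilistic interpretation from Section 3.1 identifies $\mu_x(U)$ with the probability that Brownian motion from $x$ first exits $D$ through $U$. The event that the Brownian path exits $D$ through $\partial_1 D$ is implied by the event that the path hits the closed ball $\overline{B(x_2, 1)}$ before exiting $B(x, 2r+3)$, because $\overline{B(x_2,1)}$ is disjoint from $\Omega$ (hence from $D$) and contained in $B(x,2r+3)$, so the path must have crossed $\partial_1 D$ first. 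Thus it suffices to bound below the probability that Brownian motion started at $x \in B(x, 2r+3)$ hits $\overline{B(x_2,1)}$ before the boundary of $B(x,2r+3)$. This is a classical capacity-type estimate: it equals the value at $x$ of the equilibrium potential of $\overline{B(x_2,1)}$ in $B(x, 2r+3)$, which is harmonic off $\overline{B(x_2,1)}$, equal to $1$ on $\overline{B(x_2,1)}$, and $0$ on $\partial B(x,2r+3)$. One can bound this below using the Green's function $G^{B}(x_2, \cdot)$ of $B=B(x, 2r+3)$ centered at $x_2$: the equilibrium potential dominates $G^B(x_2, \cdot)/\sup_{\partial B(x_2,1)} G^B(x_2, \cdot)$, and both the numerator $G^B(x_2, x)$ (via Proposition \ref{thm: Greens function lower bound}, since $d_X(x_2, x) \le r+1 \le (2r+2) + 1$) and the denominator (via an upper bound on the Green's function, or simply via the Harnack inequality relating values of $G^B(x_2,\cdot)$ on the sphere of radius $1$ to an interior value) are controlled in terms of $n, K, r$ and the volume $V(x_2, 2r+2)$, which cancels in the ratio.

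The main obstacle I anticipate is the upper bound on $G^B(x_2, \cdot)$ near $\partial B(x_2,1)$, since the Green's function has a singularity at $x_2$ and the excerpt only provides lower bounds (Proposition \ref{prop: noncollapsed greens function bound} and Proposition \ref{thm: Greens function lower bound}) and the mean value inequality (Theorem \ref{thm: MVI}). I would handle this by avoiding a pointwise upper bound on the Green's function and instead working directly with the capacitary/hitting formulation: the probability of hitting $\overline{B(x_2, 3/2)}$ (a slightly enlarged ball, still inside $D^c \cap B(x,2r+3)$ after adjusting the radius $2r+3$ to $2r+4$) before $\partial B(x, 2r+4)$ is bounded below by applying the parabolic or elliptic Harnack inequality to the equilibrium potential, which is a positive harmonic function on the annular region $B(x, 2r+4) \setminus \overline{B(x_2, 3/2)}$ with boundary values $1$ and $0$; Harnack chaining along a path of controlled length from a point near $\partial B(x_2, 3/2)$ (where the potential is $1$ by... no — rather where a lower bound near the inner ball is clear) to $x$ gives the bound, with the Harnack constant depending only on $n, K, r$ by the lower Ricci bound. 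Alternatively, and perhaps most cleanly, one can cite \cite[Theorem 6.1]{Saloff-Coste2} or the argument behind Proposition \ref{thm: Greens function lower bound} directly, since $\overline{B(x_2,1)}$ has uniformly bounded geometry and the needed estimate is exactly a Green's-function-of-an-annulus lower bound of the same type. In the write-up I would phrase the argument in the probabilistic language of Section 3.1 throughout, since that makes the set-theoretic containment $\{\text{hit } \overline{B(x_2,1)} \text{ before } \partial B\} \subset \{\text{exit } D \text{ through } \partial_1 D\}$ transparent, and reduce the quantitative step to one invocation of the elliptic Harnack inequality on a region of bounded geometry.
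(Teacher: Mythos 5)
Your reduction is sound and matches the paper's: you locate $x_1 \in \partial\Omega$ with $d_X(x,x_1)\le r$, take the exterior ball $B(x_2,1)$ with $d_X(x,x_2)\le r+1$, and observe that $\mu_x(\partial_1 D)$ is bounded below by the probability of hitting $\overline{B(x_2,1)}$ before leaving $B(x,2r+3)$, i.e.\ by the value at $x$ of the corresponding $0/1$ Dirichlet problem. The gap is in the quantitative step, and you have in fact put your finger on it yourself: every route you propose (Green's function ratio, capacity, Harnack chaining on the annulus) requires either an \emph{upper} bound on $G^B(x_2,\cdot)$ on the unit sphere, or equivalently a uniform lower bound on the equilibrium potential at some point a definite distance outside $\overline{B(x_2,1)}$ from which to start the Harnack chain. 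Neither is available from the tools quoted: Proposition \ref{prop: noncollapsed greens function bound}, Proposition \ref{thm: Greens function lower bound} and Theorem \ref{thm: MVI} give only lower bounds, and the interior Harnack inequality cannot supply the seed of the chain, because the potential equals $1$ only on the boundary of the inner ball, where Harnack does not apply; the assertion that ``a lower bound near the inner ball is clear'' is precisely the quantitative boundary estimate that has to be proved (it is equivalent to a barrier or a capacity lower bound), and it is not clear. The fallback of citing Saloff-Coste-type hitting estimates would presumably work, but as written it is an appeal to machinery not contained in the paper rather than an argument, and the ``volumes cancel in the ratio'' step is unjustified without an actual matching upper bound (one would also need Bishop--Gromov to compare $V(x_2,1)$ with $V(x_2,r+1)$, since they do not literally cancel).

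The paper closes this gap with a much more elementary device, which you could substitute directly: an explicit subsolution (barrier) built from the space-form Green's function $\Gamma$ of \eqref{eqn: Gamma(r)}. By the Laplacian comparison theorem, $z\mapsto \Gamma(d_X(x_2,z))$ is subharmonic on $X$ away from $x_2$ (and $x_2\notin\overline{D}$), so
\[
v(z) \;=\; \frac{\Gamma(d_X(x_2,z)) - \Gamma(r+2)}{\Gamma(1)-\Gamma(r+2)}
\]
is a subsolution of the $0/1$ Dirichlet problem on $D$: since $\Gamma$ is decreasing and $d_X(x_2,\partial\Omega)\ge 1$ one has $v\le 1$ on $\partial_1 D$, and since $d_X(x_2,\partial B_{2r+3}(x))\ge r+2$ one has $v\le 0$ on $\partial_2 D$. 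The maximum principle then gives
\[
\mu_x(\partial_1 D) \;\ge\; v(x) \;\ge\; \frac{\Gamma(r+1)-\Gamma(r+2)}{\Gamma(1)-\Gamma(r+2)} \;>\;0,
\]
an explicit $\alpha(n,K,r)$, with no Green's function upper bound, capacity estimate, or Harnack chain needed. I recommend replacing your quantitative step by this barrier argument; your probabilistic framing of the reduction can stay as is.
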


\begin{proof}
    The measure of $\partial_1(D)$ with respect to the hitting measure $\mu_x$ is equal to the value at $x$ of the function $u$ solving the Dirichlet problem
    \begin{equation} \label{eqn: hitting measure}
        \begin{cases}
            \Delta u = 0 \\
            u|_{\partial_1 D} = 1 \textrm{ a.e.} \\
            u|_{\partial_2 D} = 0 \textrm{ a.e.}
        \end{cases}
    \end{equation}
    We bound $u(x)$ from below using a subsolution to \ref{eqn: hitting measure}.

    Let $x_1$ be a point in $\partial \Omega$ such that $d_X(x,x_1) \leq r$, and let $x_2$ be a point in $X \setminus \Omega$ such that $d_X(x_1,x_2) = 1$ and $B_1(x_2)$ is disjoint from $\Omega$. For $z \in X$, let $\rho(z)=d(x_2, z)$.

    Let $\Gamma(\cdot)$ be the radial Green's function of the spaceform, given by equation \eqref{eqn: Gamma(r)}. 
    The Laplacian comparison theorem says that $\Gamma(\rho(z))$ is subharmonic on $X$. We will now renormalize this function to make it a subsolution to \eqref{eqn: hitting measure}. 
    
    For $z \in \overline{D}$, let
    \[
    v(z) = \frac{\Gamma(\rho(z)) - \Gamma(r+2)}{\Gamma(1) - \Gamma(r+2)}.
    \]
    Since $\Gamma(t)$ is strictly decreasing in $t$, and $x_2$ is distance at least $1$ from $\partial \Omega$, $v(z) \leq 1$ on $\partial \Omega$. 
    Also, since $d_X(x_2,x) \leq r + 1$, the triangle inequality gives $d_X(x_2, \partial B_{2r+3}(x)) \geq r + 2$, and so $v(z) \leq 0$ on $\partial B_{2r + 3}(x)$. We conclude that $v$ is indeed a subsolution.
    Therefore, with
    \[
    \alpha = \frac{\Gamma(r+1) - \Gamma(r+2)}{\Gamma(1) - \Gamma(r+2)} > 0
    \]
    we have
    \[
    u(x) \geq v(x) \geq \alpha.
    \]
\end{proof}

We now prove the boundary estimate.

\begin{proof}[Proof of Proposition \ref{prop: boundary estimate}]
    We suppose $d_X(x, \partial \Omega) \leq r$.
    Let $D = B_{2r+3}(x) \cap \Omega$, and let $\mu_x$ be the harmonic measure on $\partial D$ centered at $x$, and let $\alpha = \mu(\partial \Omega  \cap B_{2r+3}(x))$. 
    For $z \in \overline{D}$, let 
    \[
        u(z) = d_Y(f(x),h(z)).
    \]
    By the triangle inequality,
    \[
        u(z) \leq d_Y(f(z),h(z)) + d_Y(f(x),f(z)) \leq d_Y(f(z),h(z)) + (2r+3)L
    \]
    Since $u$ is subharmonic,
    \begin{align*}
        u(x) \leq& \int_{\partial D} u(z)\mu_x(z) \\
        \leq& \int_{\partial D} d_Y(f(z),h(z)) \mu_x(z) + (2r+4)L \\
        =& \int_{\partial_1 D} d_Y(f(z),h(z)) \mu_x(z) \\
        &+ \int_{\partial_2 D} d_Y(f(z),h(z)) \mu_x(z) + (2r+4)L\\
        \leq& L + u(x) (1-\alpha) + (2r+4)L
    \end{align*}
    Here we have used that $d_Y(f(z),h(z)) \leq L$ on $\partial_1 D$ and that $d_Y(f(z),h(z)) \leq u(x)$ on $\partial_2 D$.
    
    Therefore, 
    \[
    u(x)=d_Y(f(x),h(x)) \leq \frac{(2r+5)L}{\alpha}.
    \]

\end{proof}

We note that stability of $f$ does not play a role in the boundary estimate.

\subsection{Conclusion of the proof}

We have now completed all the steps of the proof of the main existence theorem \ref{thm: main Donaldson-Corlette}, which we now restate in a more quantitative form:

\begin{theorem} \label{thm: Donalson-Corlette}
    Let $X$ be a complete $n$-manifold with Ricci curvature bounded below by $-K$, and $Y$ a proper Hadamard space. 
    Let $f\colon X \to Y$ be an $L$-coarsely Lipschitz map which is stable at scale $r > 0$. 
    Then there is a constant $R(n,K,L,r)$ such that there exists a harmonic map $h: X \to Y$ whose distance to $f$ is at most $R$.
\end{theorem}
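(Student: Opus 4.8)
The strategy is to construct $h$ as a locally uniform limit of solutions to Dirichlet problems on an exhaustion of $X$, where the exhaustion is chosen so that the two $C^0$ estimates already established --- the interior estimate (Proposition~\ref{prop: interior estimate}) and the boundary estimate (Proposition~\ref{prop: boundary estimate}) --- between them control $d_Y(h_i, f)$ everywhere on each domain. First, since $f$ need not have locally finite energy, I would replace it by the mollification $f^{(1)}$ from Proposition~\ref{prop: Lipchitz}, which is $CL$-Lipschitz and satisfies $d_Y(f, f^{(1)}) \leq L$; since stability is a coarse property (Proposition~\ref{prop: subharmonicity is coarse}), $f^{(1)}$ is again stable, at some scale $r' = r'(r, L)$, with a bound on $r'$ depending only on $r$ and $L$. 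So it suffices to prove the theorem for the Lipschitz map $f^{(1)}$, and I will rename it $f$.

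**Choice of exhaustion.** Fix any exhaustion $\Omega_i'$ of $X$ by bounded domains, and let $\Omega_i = (\Omega_i')^{(1)}$ be the set of points at distance at least $1$ from $\partial \Omega_i'$, so that by Lemma~\ref{lem: exterior 1 ball} each $\Omega_i$ satisfies the exterior $1$-ball condition, and the $\Omega_i$ still exhaust $X$. One should also arrange the $\Omega_i$ to be Lipschitz domains (e.g. by a small perturbation, or by smoothing the distance function), so that Proposition~\ref{prop: dirichlet problem} applies; let $h_i \colon \Omega_i \to Y$ be the unique harmonic map with $h_i|_{\partial \Omega_i} = f|_{\partial \Omega_i}$, which exists since $f$ is Lipschitz.

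**The uniform $C^0$ bound.** Fix $i$. The function $z \mapsto d_Y(f(z), h_i(z))$ is continuous on the compact set $\overline{\Omega_i}$, hence attains its maximum at some point $x_i \in \overline{\Omega_i}$; since $d_Y(f, h_i) = 0$ on $\partial \Omega_i$ and $f$ is Lipschitz, we may as well take $x_i$ in the interior. There are two cases. If $d_X(x_i, \partial \Omega_i) \leq r' + 2$, then the boundary estimate (Proposition~\ref{prop: boundary estimate}, applied with parameter $r'+2$ in place of $r$) gives $d_Y(f(x_i), h_i(x_i)) \leq R_1(n, K, L, r')$. If instead $d_X(x_i, \partial \Omega_i) > r' + 2$, then the ball $B(x_i, r'+2)$ lies in $\Omega_i$, so $f$ and $h_i$ are both defined on it, $d_Y(f(z), h_i(z))$ is maximized at $x_i$ over this ball, and the stability inequality \eqref{eqn: stability on balls} holds at $x_i$ at scale $r'$; the interior estimate (Proposition~\ref{prop: interior estimate}) then gives $d_Y(f(x_i), h_i(x_i)) \leq R_2(n, K, L, r')$. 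In either case, setting $R = \max(R_1, R_2)$ and using that $x_i$ maximizes the distance, we get $d_Y(f(z), h_i(z)) \leq R$ for all $z \in \Omega_i$, uniformly in $i$.

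**Passing to the limit.** Now the maps $h_i$ are locally uniformly bounded: for $z$ in any fixed compact $K$, once $\Omega_i \supset K$ we have $d_Y(h_i(z), f(z)) \leq R$, and $f$ is continuous hence bounded on $K$. By Proposition~\ref{prop: asymptotic Dirichlet}, a subsequence of the $h_i$ converges locally uniformly to a harmonic map $h \colon X \to Y$. Taking the limit in $d_Y(h_i(z), f(z)) \leq R$ gives $d_Y(h(z), f(z)) \leq R$ for all $z$. Finally, undoing the mollification, $d_Y(h, \text{original } f) \leq R + L$, and the final constant depends only on $n, K, L, r$ as claimed. The only genuinely delicate point is the case split in the uniform bound: one must be sure that the interior estimate's hypotheses --- in particular that the relevant ball lies inside $\Omega_i$ so that the stability inequality at scale $r'$ is available there --- are met precisely when the boundary estimate does not apply, which is why the exhaustion is built with a collar of width exceeding $r' + 2$; everything else is assembly of results already proved.
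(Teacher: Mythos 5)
Your proposal is correct and follows essentially the same route as the paper: mollify $f$, exhaust $X$ by domains satisfying the exterior $1$-ball condition, solve the Dirichlet problems, split at the maximum point of the distance into the interior estimate (Proposition \ref{prop: interior estimate}) and boundary estimate (Proposition \ref{prop: boundary estimate}) cases, and pass to the limit via Proposition \ref{prop: asymptotic Dirichlet}. If anything, you are slightly more careful than the paper in invoking Proposition \ref{prop: subharmonicity is coarse} to transfer stability (at an adjusted scale $r'$) to the mollified map $f^{(1)}$, which is the map actually appearing in the maximization; this is a correct and welcome bit of bookkeeping, not a different argument.
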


We have already explained every element of the proof, but for the reader's convenience we put everything together in the following.

\begin{proof}
Let $\Omega_i$ be an exhaustion of $X$, and let $\Omega_i^{(1)}$ be the exhaustion constructed in Lemma \ref{lem: exterior 1 ball}, which satisfies the exterior 1-ball condition. Let $f^{(1)}$ be the mollification of $f$ constructed in Proposition \ref{prop: Lipchitz}. For each $i$, let $h_i$ be the solution to the Dirichlet problem on $\Omega_i^{(1)}$ which is equal to $f^{(1)}$ on the boundary. For each $i$, let $x_i$ be a point in $\Omega_i^{(1)}$ at which the distance between $h_i$ and $f^{(1)}$ is maximized.

Now let $r$ be such that $f$ is stable at scale $r$. If $d_Y(x_i, \partial \Omega_i^{(1)}) \leq r+2$, then the boundary estimate Proposition \ref{prop: boundary estimate} gives a constant $R$ such that $d_Y(f(x_i),h_i(x_i)) \leq R$. 
On the other hand, if $d_Y(x_i, \partial \Omega_i^{(1)}) \geq r+2$ then $B(x_i, r+2) \subset \Omega_i^{(1)}$, and the interior estimate Proposition \ref{prop: interior estimate} gives a constant $R'$ such that $d_Y(f(x_i),h_i(x_i)) \leq R'$. 
Hence for any $z \in \Omega_i^{(1)}$, 
\[
d_Y(f(z),h_i(z)) \leq d_Y(f(x_i),h_i(x_i)) \leq \mathrm{max}(R,R').
\]
This provides the a priori $C^0$ bound necessary to conclude the existence of a limiting $h$ from Proposition \ref{prop: asymptotic Dirichlet}.
\end{proof}

\begin{remark}
    It would be interesting to reformulate our argument using harmonic map heat flow instead of an asymptotic Dirichlet problem.
\end{remark}

\section{Uniqueness and equivariant maps} \label{sec: uniqueness}

In this section, we prove Theorems \ref{thm: main uniqueness} and \ref{thm: main equivariant}. 
We assume throughout this section that $Y$ is a manifold, and for the theorems, we assume further that $Y$ is a symmetric space. 
Some more generality is possible, but some version of the Jacobi field extension property of Proposition \ref{prop: Y has condition E} for $Y$ is a necessary ingredient. 

We also assume that $X$ is connected and that the isometry group of $X$ acts cocompactly on $X$.
We believe that cocompactness is not an essential assumption, and could be removed by using the theory of metric-measure spaces with lower Ricci curvature bounds, or more quantitative estimates on $Y$.

\subsection{Variations through geodesics}

Let $I$ be an interval of $\R$. We call $H: I \times \Omega \to Y$ a variation through geodesics if $t \mapsto H(t,x)$ is a geodesic for every $x \in \Omega$. If $H$ is a variation through geodesics, the vector fields obtained by differentiating $H$ in directions tangent to $\Omega$ are Jacobi fields. We call $H$ a parallel variation through geodesics if every such Jacobi field is parallel along the geodesic. Note that a Jacobi field is parallel if and only if its norm is constant in $t$. Furthermore, since $Y$ is a Hadamard manifold, the norm of a Jacobi field is a convex function of $t$.

Parallel variations through geodesics arise in the following two situations.

\begin{prop} \label{prop: constant distance}
    Let $h_i\colon \Omega \to Y$, $i = 0,1$ be harmonic maps at constant distance, and let $H\colon [0,1] \times \Omega \to Y$ be the geodesic interpolation between them. 
    Then $H$ is a parallel family of geodesics.
\end{prop}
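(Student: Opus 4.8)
The plan is to show that the Jacobi fields arising from the geodesic interpolation $H$ between $h_0$ and $h_1$ are parallel by showing their norms are constant in $t$. Recall that a Jacobi field along a geodesic in a Hadamard manifold has convex norm, and that it is parallel precisely when this norm is constant. So the whole statement reduces to controlling the function $t \mapsto |\partial_x H(t,x)|$ for each tangent direction at each $x$, and more conveniently, to controlling the energy density $e_t(x)$ of the map $h_t := H(t,\cdot)$.

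First I would observe that since $h_0$ and $h_1$ are at constant distance $c$, the function $\phi(x) = d_Y(h_0(x), h_1(x))$ is constant, hence both sub- and superharmonic. Next, the key analytic input is that the interpolating maps $h_t$ are themselves harmonic for every $t \in [0,1]$: this is a standard consequence of the fact that geodesic interpolation of harmonic maps into a nonpositively curved target is harmonic (one can see this from the second-variation formula for energy, or from the fact that $t \mapsto E(h_t)$ is convex with equal endpoint values being forced, combined with harmonicity of $h_0, h_1$). Actually the cleanest route: the function $(t,x) \mapsto e_t(x)$, the energy density of $h_t$, satisfies a differential inequality, and its integral $E(h_t)$ over any fixed domain is a convex function of $t$ since the distance between harmonic maps is convex fiberwise. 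Because $h_0$ and $h_1$ are at constant distance, one gets that in fact $t \mapsto d_Y(h_t(x), h_0(x))$ is the linear function $tc$ for every $x$, i.e. the variation field in the $t$-direction has constant norm $c$; combined with the already-known convexity of $t \mapsto E(h_t)$ and the fact that the "vertical" energy is constant, a Bochner-type argument forces the "horizontal" Jacobi fields to also have constant norm.

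More precisely, here is the step I would carry out carefully. Write $\psi(t,x) = |\partial_x H(t,x)|^2$ summed appropriately — i.e. $2 e_t(x)$ where $e_t$ is the Korevaar–Schoen energy density of $h_t$. Since each $h_t$ is harmonic, Proposition \ref{prop: K-S}(3) (or its smooth analogue, since $Y$ is a manifold here) gives $\Delta_x e_t \geq -K e_t$ weakly. On the other hand, the standard formula for the second $t$-derivative of $\frac12|\partial_x H|^2$ along a variation through geodesics, using the Jacobi equation and the curvature sign, gives $\partial_t^2 e_t(x) \geq$ (a nonnegative curvature term) $\geq 0$, so $e_t(x)$ is convex in $t$ pointwise. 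Integrating over a fundamental domain $\mathcal{F}$ for the cocompact isometry action (which we may assume acts by isometries commuting with the construction — or simply integrate over all of $X$ using cocompactness to get finiteness), $t \mapsto \int e_t$ is convex. The crucial equality: differentiating the constant function $\phi^2(x) = d_Y(h_0,h_1)^2$ and using harmonicity via Proposition \ref{prop: K-S}(2) applied to each $h_t$, together with $\Delta \phi^2 = 0$, forces the curvature defect terms to vanish identically, which is exactly the condition that every Jacobi field $\partial_x H$ is parallel.

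The main obstacle I anticipate is making the "integration by parts / equality case" argument fully rigorous in the Korevaar–Schoen setting rather than the smooth one — one needs the maps $h_t$ to have enough regularity to justify the Bochner computation and the vanishing of boundary terms. Since the excerpt explicitly restricts to $Y$ a manifold in this section, the harmonic maps $h_i$ are smooth, and so is the interpolation $H$, so the obstacle is mostly bookkeeping: one works on compact exhausting domains, uses cutoff functions, and passes to the limit using cocompactness to absorb boundary contributions. The genuinely delicate point is verifying that equality in the convexity of $t \mapsto \int e_t$ — forced by the constant-distance hypothesis — propagates to pointwise equality in the Jacobi-field convexity, i.e. an absence-of-strict-convexity argument; this is where the nonpositive curvature of $Y$ and the maximum principle on $X$ (using the lower Ricci bound) combine.
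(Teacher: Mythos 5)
Your overall instinct—that this should follow from a second‑variation (Bochner‑type) identity in which constancy of the distance forces nonnegative defect terms to vanish—is the right one, and it is essentially the paper's (sketched) route via the first and second variation formulas for the function $\tfrac12 d^2$ on $Y\times Y$. However, the argument as you lay it out has genuine gaps. The central one is your "key analytic input" that the interpolating maps $h_t$ are harmonic because "geodesic interpolation of harmonic maps into a nonpositively curved target is harmonic." This is false in general: interpolating a constant map with a nonconstant harmonic map into $\H^2$ already gives a non‑harmonic family, since the fiberwise midpoint map of $\H^2$ is not totally geodesic. In the present situation harmonicity of each $h_t$ is a \emph{consequence} of the parallelism being proved, so taking it as input is circular. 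Relatedly, your global strategy—convexity of $t\mapsto E(h_t)$, integration over a fundamental domain or over all of $X$—is not available here: the proposition is stated for an arbitrary domain $\Omega$, the maps are not assumed equivariant, the total energy need not be finite, and even where $E(h_t)$ is defined its convexity with no control on the endpoint energies (nothing forces $E(h_0)=E(h_1)$) yields no equality statement. The lower Ricci bound, cocompactness and the maximum principle that you invoke belong to the uniqueness theorem, not to this local statement.

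The correct mechanism is pointwise and needs no integration over the domain: the pair $(h_0,h_1)\colon\Omega\to Y\times Y$ is harmonic and $\tfrac12 d^2$ is smooth and convex on $Y\times Y$, and the chain‑rule/second‑variation computation (the first‑variation terms cancel precisely because $h_0$ and $h_1$ are harmonic) gives, with $H$ the geodesic interpolation,
\begin{equation*}
\Delta\Bigl(\tfrac12 d^2\bigl(h_0,h_1\bigr)\Bigr)(x)\;\ge\;\int_0^1\sum_i\Bigl(\bigl|\nabla_t\,\partial_{x_i}H\bigr|^2-\bigl\langle R(\partial_{x_i}H,\partial_tH)\partial_tH,\;\partial_{x_i}H\bigr\rangle\Bigr)\,dt\;\ge\;0 .
\end{equation*}
Constancy of the distance makes the left side vanish, so both nonnegative integrands vanish identically; in particular $\nabla_t\,\partial_{x_i}H\equiv 0$, which is exactly parallelism of the variation fields. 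Note also that your final step misidentifies the equality case: vanishing of the curvature terms alone is \emph{not} equivalent to parallelism (in a flat strip $J(t)=tV$ is a Jacobi field with zero curvature term and nonconstant norm); it is the vanishing of the $|\nabla_t J|^2$ term that gives $\nabla_t J=0$. So the proof should be rewritten around the displayed inequality above rather than around harmonicity of $h_t$ and energy convexity.
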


\begin{prop} \label{prop: harmonic busemann function}
    Let $h\colon \Omega \to Y$ be harmonic, and let $b$ be a Busemann function on $Y$. 
    Suppose $b \circ h$ is harmonic on $\Omega$. 
    Let $H\colon [0, \infty) \times \Omega \to Y$ be the variation through geodesics obtained by flowing $h$ by the negative gradient flow of $b$.
    Then $H$ is a parallel family of geodesics. 
\end{prop}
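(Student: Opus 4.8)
The plan is to show that each of the Jacobi fields arising from the variation $H$ has constant norm along the geodesics, since by the remarks preceding the statement this is equivalent to the Jacobi field being parallel. Fix a vector $v \in T_x\Omega$ and let $J_t$ be the Jacobi field along the geodesic $t \mapsto H(t,x)$ obtained by differentiating $H$ in the direction $v$. Since $Y$ is a Hadamard manifold, $t \mapsto |J_t|$ is a convex, non-negative function on $[0,\infty)$, so if it is not constant it must be unbounded as $t \to \infty$. I would therefore aim for a contradiction by establishing a uniform-in-$t$ bound on $|J_t|$ over a fixed compact set.

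The key input is Proposition \ref{prop: gradient estimate} (the gradient estimate for harmonic maps), combined with the fact that each time-slice $h_t := H(t,\cdot)$ is itself harmonic: this is exactly the content of the hypothesis that $b \circ h$ is harmonic. Indeed, the negative gradient flow of a Busemann function $b$ on the Hadamard manifold $Y$ is, for each fixed $t$, an isometry's worth of ``translation towards $\eta$'' only infinitesimally — more precisely, flowing $h$ for time $t$ produces a map $h_t$ and one checks that $\Delta h_t$ is governed by $\nabla^2 b$ evaluated on $dh$, paired with the harmonicity of $h$ and of $b\circ h$. Concretely: $b \circ h$ harmonic means the ``extra'' tension picked up by post-composing with the flow vanishes, so $h_t$ solves the harmonic map equation for all $t$. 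Then $|dh_t| = |J_t|$-type quantities (the tangential derivatives of $H$), and since $b$ is $1$-Lipschitz the maps $h_t$ stay within bounded distance of $h$ on any fixed ball — wait, that is false globally, but on a fundamental domain for the cocompact action we can instead argue as follows: $|dh_t|$ is a family of functions satisfying a uniform Bochner inequality $\Delta |dh_t|^2 \geq -C|dh_t|^2$ (Proposition \ref{prop: K-S}(3)), and the convexity of $t \mapsto |J_t|$ forces $|dh_t|$ to be monotone-ish; if it blows up somewhere it blows up everywhere along the flow line, contradicting that the total energy is controlled.

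The cleaner route, which I would actually pursue, is: by Proposition \ref{prop: constant distance}'s proof technique, reduce to showing the Jacobi fields are bounded, and bound them using that $d_{h(x_0)} \circ h_t$ remains subharmonic with controlled growth because $h_t$ stays at distance $\le t$ from $h$ only in the $\eta$-direction while the transverse displacement — which is what the Jacobi fields measure — is controlled by the gradient estimate applied to $h_t$ on a ball contained in the cocompact quotient. Since the geometry near any point looks the same (cocompactness), $|dh_t|$ is bounded uniformly in $x$ for each $t$ by $C \sup_{B(x,r+2)} d_Y(h_t(\cdot), h_t(x))$, and the latter is bounded independent of $t$ because post-composing with the Busemann flow is $1$-Lipschitz so it does not increase this local oscillation. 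Hence $|J_t|$ is bounded uniformly in $t$, so by convexity it is constant, so $H$ is parallel.

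\textbf{Main obstacle.} The delicate point is justifying that each slice $h_t$ is harmonic and that ``flowing by $-\nabla b$'' does not increase the local oscillation of the map in a way that survives the gradient estimate uniformly in $t$ — i.e.\ getting a bound on $\sup_{B(x,r+2)} d_Y(h_t(z),h_t(x))$ that is genuinely independent of $t$. This is where the hypothesis $b\circ h$ harmonic (not merely subharmonic) is essential, and where one must use that $Y$ is non-positively curved so that the gradient flow of the convex function $b$ is distance non-increasing. I would expect the bulk of the work to be a careful computation, in local coordinates on $Y$, showing $\Delta h_t = 0$ follows from $\Delta h = 0$ and $\Delta(b\circ h)=0$, using $\nabla^2 b \ge 0$.
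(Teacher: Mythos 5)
There is a genuine gap, and it sits at the center of your argument. You reduce to showing that each Jacobi field $J_u(t)=\partial_u H(t,x)$ has constant norm, which is the right reduction, but your mechanism for proving constancy is broken: a convex, non-negative function on $[0,\infty)$ that is bounded need \emph{not} be constant (e.g.\ $e^{-t}$, or any convex non-increasing function); convexity plus boundedness on a half-line only yields that the function is non-increasing. Worse, in this situation the bound you propose to work for is automatic and carries no information: the time-$t$ map of the negative gradient flow of a Busemann function on a Hadamard manifold is $1$-Lipschitz (asymptotic rays have convex, bounded, hence non-increasing distance), so $\abs{J_u(t)}\le\abs{J_u(0)}$ for all $t\ge 0$ with no hypotheses on $h$ at all. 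Hence the entire apparatus of gradient estimates and cocompactness cannot close the argument — and cocompactness is not even available, since the proposition is a local statement about a harmonic map on a domain $\Omega$. A second unjustified step is your claim that the hypothesis ``$b\circ h$ harmonic'' is ``exactly the content'' of each slice $h_t$ being harmonic: the hypothesis is a single scalar identity, while harmonicity of every $h_t$ is a much stronger assertion that is not established in your proposal (it is most easily obtained \emph{after} parallelism, or from the Hessian computation below, not before).

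The missing idea is where the hypothesis actually enters, namely pointwise through the Hessian of $b$, via the first and second variation formulas for the induced metric that the paper invokes. Since $h$ is harmonic, the chain rule gives $\Delta(b\circ h)=\sum_i \nabla^2 b\,(dh(e_i),dh(e_i))\ge 0$, each summand being non-negative because $b$ is convex; harmonicity of $b\circ h$ kills the trace, so $\nabla^2 b\,(dh(u),dh(u))=0$ for \emph{every} tangent vector $u$. The first variation of the induced metric along the flow is $\tfrac{d}{dt}\abs{J_u(t)}^2 = 2\langle \nabla_t J_u, J_u\rangle = -2\,\nabla^2 b\,(dh_t(u),dh_t(u))$, so at $t=0$ this derivative vanishes. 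Since $t\mapsto\abs{J_u(t)}^2$ is convex (second variation plus non-positive curvature), vanishing derivative at $t=0$ makes it non-decreasing, while the $1$-Lipschitz property of the flow (equivalently, the sign of the first variation, as $\nabla^2 b \geq 0$) makes it non-increasing; hence it is constant, and by the remark preceding the proposition (or directly from $0=(\abs{J_u}^2)''\ge 2\abs{\nabla_t J_u}^2$) the field is parallel. No compactness, gradient estimate, or coordinate computation of $\Delta h_t$ is needed; the proof is pointwise, and your proposal never performs the one computation — the vanishing of $\nabla^2 b$ on the image of $dh$ — on which it rests.
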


Both propositions are well-known and follow from the first and second variation formulas for the induced metric along the family. For the first, one uses the negative gradient flow of $\tfrac{1}{2}d^2$ on $Y \times Y$.

\begin{definition}
    We say $Y$ \emph{has extendable parallel Jacobi fields} if every Jacobi field on a geodesic $\gamma\colon \R \to Y$ that is parallel along an interval of $\R$ is parallel along the whole geodesic.
\end{definition}

This is a natural extension of Labourie's condition `sans demies-bandes plates' in \cite{Labourie1991existenceharmoniques}. 

Not every manifold has extendable parallel Jacobi fields; for instance, a non-flat surface with a flat open sub-surface will not. However, since the coefficients of the Jacobi equation of a real analytic manifold are real analytic, we have for instance:

\begin{prop} \label{prop: Y has condition E}
    A real analytic manifold has extendable parallel Jacobi fields.
\end{prop}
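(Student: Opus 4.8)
The plan is to reduce the claim about extendable parallel Jacobi fields to a statement about solutions of a linear ODE with real-analytic coefficients, and then invoke the uniqueness/identity theorem for real-analytic functions. First I would fix a geodesic $\gamma\colon\R\to Y$ parametrized by arclength in the real-analytic manifold $Y$, and recall that along $\gamma$ the Jacobi equation $J'' + R(J,\dot\gamma)\dot\gamma = 0$ is a second-order linear system. Choosing a parallel orthonormal frame $\{e_1(t),\dots,e_n(t)\}$ along $\gamma$ (which exists globally since $\gamma$ is defined on all of $\R$ and parallel transport solves a linear ODE), write $J(t) = \sum_k a_k(t) e_k(t)$; then the Jacobi equation becomes $\ddot a(t) + A(t) a(t) = 0$ where $A(t)_{jk} = \langle R(e_k(t),\dot\gamma(t))\dot\gamma(t), e_j(t)\rangle$. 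The key point is that $A(t)$ is a real-analytic matrix-valued function of $t$: the curvature tensor $R$ is real-analytic as a tensor on $Y$, $\gamma$ and its velocity field are real-analytic (geodesics in a real-analytic manifold are real-analytic, being solutions of a real-analytic ODE), and parallel transport along a real-analytic curve produces real-analytic frame fields (again an analytic-coefficient linear ODE, whose solutions depend analytically on $t$).

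Next I would observe that a Jacobi field $J$ is parallel along $\gamma$ exactly when $|J(t)|$ is constant, equivalently when the coefficient vector $a(t)$ has $\sum_k a_k(t)^2$ constant, equivalently (differentiating twice and using the equations) when $\dot a(t) \equiv 0$ on the relevant set, i.e. $a(t)$ is a constant vector there. So the hypothesis is that on some open interval $I\subset\R$ the coefficient vector $a(t)$ is constant, say $a(t) \equiv c$ on $I$. Since $a$ solves a linear ODE with real-analytic coefficients on all of $\R$, the function $t\mapsto a(t)$ is itself real-analytic on $\R$ (by the standard theory of ODEs with analytic coefficients, or by the Cauchy–Kovalevskaya/majorant argument applied to the first-order system $(a,\dot a)' = (\dot a, -Aa)$). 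The same applies to each component $a_k$ and to $\dot a_k$. An analytic function which vanishes on an open interval vanishes identically, so $\dot a(t) \equiv 0$ on all of $\R$, hence $a(t)\equiv c$ on $\R$, and therefore $J$ is parallel along the entire geodesic. This is exactly the assertion that $Y$ has extendable parallel Jacobi fields.

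The main obstacle — really the only subtlety — is making sure the coefficient matrix $A(t)$ genuinely inherits real-analyticity, since it involves three separate analytic-dependence facts chained together (geodesics are analytic, parallel frames along analytic curves are analytic, and the curvature tensor pulled back along an analytic map is analytic). Each is standard, but one must phrase them in a coordinate chart where the metric is real-analytic and invoke the analytic-dependence-on-the-variable version of the ODE existence theorem rather than merely the smooth version. I would state this as a short lemma: \emph{if $M$ is a real-analytic Riemannian manifold and $c\colon \R\to M$ is a geodesic, then in any adapted setup the Jacobi operator along $c$ has real-analytic coefficients, and consequently every Jacobi field along $c$ is a real-analytic function of the parameter.} Granting that lemma, the proposition follows from the identity theorem for real-analytic functions as above, with no further computation required.
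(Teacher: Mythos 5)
Your argument is correct and is essentially the paper's own (implicit) proof: the paper justifies the proposition with the single remark that the coefficients of the Jacobi equation along a geodesic in a real analytic manifold are real analytic, so Jacobi fields are analytic in the parameter and the identity theorem propagates parallelism from an interval to the whole geodesic. One caveat: your intermediate equivalence ``$J$ parallel iff $|J|$ constant iff $\dot a \equiv 0$'' is false on a general real analytic manifold (on the round sphere, $J = \cos t\, E_1 + \sin t\, E_2$ with $E_1,E_2$ parallel normal fields is a Jacobi field of constant norm that is not parallel); it holds only under nonpositive curvature, which is the setting in which the paper invokes the norm characterization. This does not affect your proof, since the hypothesis that $J$ is parallel on an interval already says $\dot a \equiv 0$ there in your parallel frame, which is all that the analytic continuation step requires.
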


\subsection{Uniqueness}

\begin{theorem}\label{thm: uniqueness}
    Suppose that $X$ is connected with cocompact isometry group and that $Y$ is a symmetric space. 
    Every stable coarsely Lipschitz map from $X$ to $Y$ is at bounded distance from a \emph{unique} harmonic map.
\end{theorem}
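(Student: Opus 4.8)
\textbf{Plan for the proof of Theorem \ref{thm: uniqueness}.} Existence is already provided by Theorem \ref{thm: Donalson-Corlette} (using cocompactness of $\mathrm{Isom}(X)$ to get a uniform Ricci lower bound and hence a stable map in the quantitative sense), so the entire content is uniqueness. Suppose $h_0, h_1 \colon X \to Y$ are two harmonic maps, both at bounded distance from the stable map $f$, hence at bounded distance from each other. The first step is to show that in fact $d_Y(h_0(x), h_1(x))$ is \emph{constant}. The function $\delta(x) = d_Y(h_0(x), h_1(x))$ is subharmonic (distance between harmonic maps into a Hadamard manifold is convex along the geodesic interpolation, so $\delta$ is subharmonic on $X$), and it is bounded. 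On a complete manifold with Ricci bounded below, a bounded subharmonic function need not be constant in general, but here $X$ additionally has a cocompact isometry group; one can then argue that a bounded subharmonic function must attain its supremum, and the strong maximum principle forces $\delta \equiv \sup \delta =: a$. (Alternatively: $\delta$ is $P_t$-monotone and bounded, so its drift is zero, and a Liouville-type argument using cocompactness pins it down.) If $a = 0$ we are done, so assume $a > 0$ and derive a contradiction.

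The second step exploits that $\delta \equiv a$ is constant. By Proposition \ref{prop: constant distance}, the geodesic interpolation $H \colon [0,1] \times X \to Y$ between $h_0$ and $h_1$ is a parallel variation through geodesics: the Jacobi fields $\partial_v H$ are parallel, equivalently have $t$-independent norm. Now pick any unit-speed geodesic ray $c$ in $Y$ starting at $h_0(x_0)$ in the direction of the segment from $h_0(x_0)$ to $h_1(x_0)$, extended to all of $\R$ using that $Y$ is a symmetric space (geodesics are bi-infinite), and let $\eta \in \partial_\infty Y$ be its endpoint with Busemann function $b_\eta$. The key point: by Proposition \ref{prop: Busemann for free}, since $f$ is stable, $b_\eta \circ f$ is coarsely strictly subharmonic, hence has positive drift; since $b_\eta \circ h_0$ is within bounded distance of $b_\eta \circ f$ (as $b_\eta$ is $1$-Lipschitz and $h_0 \sim f$), $b_\eta \circ h_0$ is also strictly subharmonic in the coarse sense and in particular is \emph{not harmonic}. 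More precisely I want a $\eta$ for which I can run the rigidity of Proposition \ref{prop: harmonic busemann function}: the cleanest route is to show that the parallel family $H$ forces $b_\eta \circ h_1 = b_\eta \circ h_0 - a$ for the $\eta$ determined fiberwise, and by the symmetric-space structure the direction field of $H$ is parallel, which will let us flow $h_0$ by $-\nabla b$ of a genuine Busemann function and land on a parallel variation through geodesics (Proposition \ref{prop: harmonic busemann function}), so that $b\circ h_0$ becomes harmonic — contradicting strict subharmonicity from stability.

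The third step is where the hypothesis ``$Y$ is a symmetric space'' (more precisely, has extendable parallel Jacobi fields, Proposition \ref{prop: Y has condition E}, since symmetric spaces are real analytic) is essential. The parallel variation $H$ defined only on $[0,1] \times X$ has Jacobi fields parallel along the \emph{finite} segments from $h_0(x)$ to $h_1(x)$; by extendability these Jacobi fields are parallel along the \emph{entire} bi-infinite geodesics extending those segments. Therefore the geodesic segments extend to a parallel variation through complete geodesics $\tilde H \colon \R \times X \to Y$, whose time-$0$ slice is $h_0$. Reparametrizing $\tilde H$ by arclength and taking $t \to +\infty$ exhibits, for the Busemann function $b$ of the forward endpoint ray at a basepoint, that $\tilde H$ is exactly the negative gradient flow of $b$ applied to $h_0$ (the flow lines are the unit-speed geodesics, and parallelism of the variation is precisely the hypothesis under which this flow stays a parallel variation). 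Running Proposition \ref{prop: harmonic busemann function} in reverse — or rather, its first/second variation computation — the induced energy along $\tilde H$ is then affine in $t$ and bounded, hence constant, which forces $b\circ h_0$ to be harmonic on $X$. This contradicts the conclusion of the second step that $b_\eta \circ h_0$ is coarsely strictly subharmonic with positive drift and therefore cannot be harmonic. Hence $a = 0$ and $h_0 = h_1$.

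\textbf{Main obstacle.} The delicate point is the second step: producing a \emph{single} honest Busemann function $b$ on $Y$ (not just a fiberwise-varying family of directions) such that $b \circ h_0$ is harmonic, so that stability can be contradicted. One must check that the parallel direction field along $H$ — a priori a section over $X$ of the unit tangent bundle of $Y$ — is genuinely parallel/flat enough that it integrates to the gradient field of one Busemann function; this is exactly where the rigidity of parallel variations and the homogeneity of the symmetric space are used, and it is the part of the argument that would require the most care to make precise, especially the interchange of ``flowing $h_0$ by $-\nabla b$'' with ``the geodesic interpolation toward $h_1$ extended to infinity.'' A secondary subtlety is the Liouville step for the bounded subharmonic $\delta$: the cleanest self-contained argument uses that $\delta$ has zero drift (being bounded and $P_t$-monotone) together with the equality case in the interior estimate philosophy, but one could also invoke cocompactness directly to get a maximum and apply the strong maximum principle.
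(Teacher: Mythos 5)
Your overall strategy (constant distance $\Rightarrow$ parallel variation $\Rightarrow$ extend by analyticity $\Rightarrow$ a Busemann function pulled back harmonically, contradicting stability) is the right shape and matches the paper's, but your first step contains a genuine gap. Cocompactness of $\mathrm{Isom}(X)$ does \emph{not} imply that the bounded subharmonic function $\delta(x)=d_Y(h_0(x),h_1(x))$ attains its supremum: $\delta$ is not equivariant under those isometries, and bounded nonconstant subharmonic (even harmonic) functions exist on, say, $\H^2$, whose isometry group is transitive. The parenthetical alternative (``zero drift plus a Liouville-type argument'') fails for the same reason --- bounded harmonic functions have zero drift. The paper's fix is to take a maximizing sequence $x_i$ for $\delta$, recenter by isometries $\gamma_i$ of $X$ \emph{and} isometries $g_i$ of $Y$ (this is a second place where the symmetric-space hypothesis enters, via transitivity of $\mathrm{Isom}(Y)$, not only via real analyticity), extract locally uniform limits $h_\infty, h'_\infty$ using Proposition \ref{prop: asymptotic Dirichlet}, observe that stability survives the locally uniform limit, and only then apply the strong maximum principle --- the sup is attained at the limit point by construction. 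Your subsequent steps must therefore be run on $h_\infty, h'_\infty$, not on $h_0,h_1$; as written, your argument never gets off the ground because constancy of $\delta$ for the original maps is unjustified.

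Concerning what you flag as the ``main obstacle'': your proposed route to harmonicity of $b\circ h_0$ via ``energy affine in $t$ and bounded, hence constant'' is problematic (the total energy of maps from noncompact $X$ is generally infinite, and the first/second variation bookkeeping would have to be localized), and identifying the extended parallel family with the gradient flow of a single Busemann function needs the observation that all the complete geodesics $t\mapsto \tilde H(t,z)$ share the same ideal endpoints --- which does follow from parallelism (constant-norm Jacobi fields give uniformly bounded distance between the geodesics). The paper then closes the argument without any energy consideration: with common endpoints $\eta^\pm$, the function $b_{\eta^+}+b_{\eta^-}$ is constant on the parallel set containing the image of $h_\infty$, so $b_{\eta^+}\circ h_\infty + b_{\eta^-}\circ h_\infty$ is constant; since each summand is subharmonic, $b_{\eta^+}\circ h_\infty$ is both sub- and superharmonic, hence harmonic, and then $\int (b_{\eta^+}\circ h_\infty(z)-b_{\eta^+}\circ h_\infty(x))\,\mu_x(z)=0$ for every harmonic measure, contradicting coarse strict subharmonicity (no appeal to drift is needed). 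You should adopt this two-Busemann-function device to replace your third step, and repair the first step as above.
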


\begin{proof}
    We only need to show uniqueness, since existence is Theorem \ref{thm: main Donaldson-Corlette}.
    
    Let $h$ and $h'$ be two coarse Lipschitz harmonic maps from $X$ to $Y$ at bounded distance from one another, and suppose $h$ is stable at scale $r$. Let $D = \sup_{x \in X} d_Y(h(x), h'(x))$, and suppose for the sake of contradiction that $D > 0$. Let $x_i$ be a sequence of points in $X$ such that $d_Y(h(x_i),h'(x_i))$ approaches $D$. 

    Passing to a subsequence of the $x_i$, use the cocompactness assumption on $X$ to choose a sequence $\gamma_i \in \mathrm{Isom}(X)$ that $ \gamma_i \cdot x_i$ converges to a point $x \in X$. Fix a point $y \in Y$ and choose $g_i \in \mathrm{Isom}(Y)$ such that $g_i \circ h \circ \gamma_i^{-1} (x) = y$.
    
    Let $h_i = g_i \circ h \circ \gamma_i^{-1}$ and $h'_i = g_i \circ h'\circ \gamma_i^{-1}$. Since $h_i$ and $h'_i$ each send $x$ to a $D$-neighborhood of $y$ and are uniformly coarse Lipschitz, by Proposition \ref{prop: asymptotic Dirichlet} they subsequentially converge to harmonic maps $h_\infty$ and $h'_\infty$ from $X$ to $Y$. Since the convergence is locally uniform, $h_\infty$ is still stable at scale $r$. Moreover they remain at distance at most $D$, except that now $d_Y(h_\infty(x), h'_\infty(x)) = D$, so that the maximum is attained. Since the distance between $h_\infty$ and $h'_\infty$ is a subharmonic function on $X$, the strong maximum principle implies that the distance is constant.

    Hence by Propositions \ref{prop: constant distance} and \ref{prop: Y has condition E}, there is a parallel family $h_t, t \in \R$ of harmonic maps from $X$ to $Y$ such that $h_0 = h_\infty$ and $h_D = h'_\infty$. Let $\eta^+$ be the forward endpoint of the geodesic ray $t \mapsto h_t(x)$, and let $\eta^-$ be the backward endpoint. Since the family is parallel, the distance between the geodesics $t \mapsto h_t(x)$ and $t \mapsto h_t(z)$ is uniformly bounded for every $z$ in $X$, so $\eta^{\pm}$ are also the endpoints of each geodesic $t \mapsto h_t(z)$. The function $b_{\eta^+} + b_{\eta^-}$ is constant on the whole parallel set determined by $\eta^+$ and $\eta^-$. Therefore the function $b_{\eta^+}\circ h_\infty(z) + b_{\eta^-} \circ h_\infty(z)$ is constant in $z$. So $b_{\eta^+}\circ h_\infty(z)$ is superharmonic as well as subharmonic, so it is harmonic, and therefore
    \[
    \int_X b_{\eta^+} \circ h_\infty(z) - b_{\eta^+} \circ h_\infty(x) \mu_x(z) = 0
    \]
    for every harmonic measure $\mu_x$. This contradicts the stability of $h_\infty$ at scale $r$, so we must have $D=0$ and $h=h'$.
\end{proof}

\subsection{Stability of equivariant maps}

In this subsection we prove Theorem \ref{thm: main equivariant}. 
\begin{theorem} \label{thm: equivariant}
    Suppose that $G$ is a semisimple Lie group and $Y$ is the associated symmetric space of non-compact type. 
    Suppose $X$ is connected and $\Gamma$ acts cocompactly by isometries on $X$.
    If $f\colon X \to Y$ is equivariant under a Zariski-dense action $\rho$ of $\Gamma$ on $Y$, then $f$ is stable.
\end{theorem}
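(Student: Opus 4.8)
The plan is to reduce to showing that the $\rho$-equivariant harmonic map is stable, and to deduce its stability from a uniform lower bound on $\int_{B(x,1)}\Delta(b_\eta\circ h)\,dv$ whose positivity will follow from Zariski density via a unique continuation argument.

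Since $\rho$ is Zariski-dense, its Zariski closure $G$ is reductive, so by Corlette \cite{Corlette}, Donaldson \cite{Donaldson} and Labourie \cite{Labourie1991existenceharmoniques} there is a $\rho$-equivariant harmonic map $h\colon X\to Y$. As $f$ and $h$ are both $\rho$-equivariant and locally bounded, they lie at bounded distance, so by Proposition \ref{prop: subharmonicity is coarse} it suffices to show that $h$ is stable. Because $h$ is harmonic into a Riemannian manifold it is smooth, and because its energy density is $\Gamma$-invariant (the differential is conjugated by isometries under the equivariance), $h$ has bounded energy density over the cocompact action, hence is globally Lipschitz. By Propositions \ref{prop: Busemann for free} and \ref{prop: Busemann enough} (symmetric spaces have extendable geodesics), $h$ is stable if and only if there is a single scale $r$ at which $b_\eta\circ h$ is coarsely strictly subharmonic for every $\eta\in\partial_\infty Y$. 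Using the composition formula and $\tau(h)=0$, for each $\eta$ the function $\psi_\eta:=\Delta(b_\eta\circ h)=\text{Hess}(b_\eta)(dh,dh)$ is non-negative and continuous (as $b_\eta$ is convex), and $b_\eta\circ h$ is subharmonic.

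Set $\Phi(x,\eta):=\int_{B(x,1)}\psi_\eta\,dv\ge 0$. I claim it suffices to prove $\Phi\ge\delta$ for some $\delta>0$, uniformly in $x\in X$ and $\eta\in\partial_\infty Y$: indeed, combining the Green's function lower bound (Proposition \ref{thm: Greens function lower bound}) with the collected heat kernel measure $\mu^r_{x,t}$ and the Gaussian decay of the heat kernel exactly as in the proof of Proposition \ref{prop: drift implies stable}, a uniform lower bound on $\Phi$ yields, for suitable $t$ and $r\gg t$ independent of $\eta$, that $\int(b_\eta\circ h(z)-b_\eta\circ h(x))\,\mu^r_{x,t}(z)\ge 1$ for all $x$, i.e.\ $b_\eta\circ h$ is coarsely strictly subharmonic at the $\eta$-independent scale $r$. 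Now $\Phi$ is continuous on $X\times\partial_\infty Y$ (using smoothness of $h$, compactness of $\partial_\infty Y$, and continuity of $\eta\mapsto b_\eta$ in $C^\infty_{\mathrm{loc}}$), and using the Busemann cocycle identity $b_\eta(\rho(\gamma)q)=b_{\rho(\gamma)^{-1}\eta}(q)+\mathrm{const}$ together with $h(\gamma x)=\rho(\gamma)h(x)$ and the isometry-invariance of $\Delta$, one checks $\Phi(\gamma x,\rho(\gamma)\eta)=\Phi(x,\eta)$. Hence $\Phi$ descends to the compact space $X\times_\Gamma\partial_\infty Y$ and attains a minimum $\delta\ge 0$; it remains to prove $\delta>0$.

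Suppose $\delta=0$, attained at $(x_0,\eta_0)$, so that $b_{\eta_0}\circ h$ is harmonic on $B:=B(x_0,1)$. Since $\text{Hess}(b_{\eta_0})\ge 0$ on $Y$, the vanishing of $\psi_{\eta_0}$ on $B$ forces $dh_z(T_zX)\subseteq\ker\text{Hess}(b_{\eta_0})_{h(z)}$ for all $z\in B$. By the structure theory of symmetric spaces, $p\mapsto\ker\text{Hess}(b_{\eta_0})_p$ is a smooth $\mathrm{Stab}_G(\eta_0)$-invariant distribution of constant rank $<\dim Y$, integrable, with totally geodesic leaves on each of which $b_{\eta_0}$ is affine (for $\eta_0$ regular these are maximal flats inside a horosphere). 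Thus $h(B)$, being non-constant unless $h$ is globally constant, lies in a single such leaf $L\subsetneq Y$, a proper totally geodesic submanifold. Writing the components of $h$ normal to $L$ in Fermi coordinates, total geodesy of $L$ forces these to satisfy a linear elliptic system $\Delta v = A\,v+B\cdot\nabla v$ with bounded coefficients that vanishes on the open set $B$; Aronszajn's unique continuation principle (as in Sampson's unique continuation theorem for harmonic maps) gives $v\equiv 0$, hence $h(X)\subseteq L_0$, the smallest totally geodesic submanifold through $h(x_0)$, which is still proper. By its canonicity $L_0$ is $\rho(\Gamma)$-invariant, so $\rho(\Gamma)\subseteq\mathrm{Stab}_G(L_0)$, a proper algebraic subgroup of $G$ — contradicting Zariski density. (If $h$ is constant, $\rho$ fixes a point of $Y$, so $\rho(\Gamma)$ lies in a compact subgroup with Zariski closure $\neq G$; and if $G$ is compact then $Y$ is a point and there is nothing to prove.) Therefore $\delta>0$, and $f$ is stable.

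The main obstacle is the last paragraph: pinning down the kernel distribution of $\text{Hess}(b_\eta)$ precisely enough to see that its leaves are totally geodesic and proper, and then globalizing from a ball to all of $X$ by unique continuation, so as to convert a local degeneracy of $h$ into a constraint on $\rho$ that contradicts Zariski density. The reduction in the first three paragraphs, and the passage from the uniform bound $\Phi\ge\delta$ to coarse strict subharmonicity, are essentially bookkeeping on top of Proposition \ref{prop: drift implies stable} and Propositions \ref{prop: Busemann for free}–\ref{prop: Busemann enough}.
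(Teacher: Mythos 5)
Your route is genuinely different from the paper's in the middle (the paper negates coarse strict subharmonicity, extracts sequences $x_k,\eta_k$ and radii $r_k\to\infty$, recenters by cocompactness, and obtains $b_\eta\circ h$ harmonic on \emph{all} of $X$ in the limit, then uses Proposition \ref{prop: harmonic busemann function} plus extendable parallel Jacobi fields; you instead minimize $\Phi$ over the compact quotient, get harmonicity only on a unit ball, and globalize by Sampson-type unique continuation). That alternative middle is viable, and your identification of the degeneracy is essentially right: the kernel of $\mathrm{Hess}\,b_{\eta_0}$ at $p$ is the centralizer in $\mathfrak{p}$ of the unit vector toward $\eta_0$, and the leaf through $h(x_0)$ is precisely the parallel set $P_{\eta_0,\eta'}$ that the paper works with. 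But the final step, where Zariski density is actually used, has a genuine gap as written: you pass to $L_0$, the smallest totally geodesic submanifold containing $h(X)$, and assert that $\mathrm{Stab}_G(L_0)$ is a \emph{proper algebraic} subgroup. Properness is fine, but algebraicity is unjustified and false for general totally geodesic submanifolds: e.g.\ for a geodesic $t\mapsto \exp(tA)\cdot o$ in a maximal flat of the symmetric space of $\PGL_3(\R)$ with $A$ having irrationally related eigenvalues, the setwise stabilizer contains the one-parameter group $\exp(\R A)$, whose Zariski closure is the full diagonal torus, which does not preserve the geodesic; so that stabilizer is not Zariski closed. Since containment of $\rho(\Gamma)$ in a proper but Zariski-dense subset contradicts nothing, your contradiction does not yet go through. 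The repair is essentially the paper's closing move: stay with the leaf $L=P_{\eta_0,\eta'}$ itself (not the minimal $L_0$), observe that a parallel set is a proper \emph{real algebraic} subset of $Y$ (in the symmetric-matrices model it is cut out by a commutation condition), and then the transporter $\{g\in G: g\cdot h(x_0)\in L\}$ is a proper Zariski-closed subset of $G$ containing $\rho(\Gamma)$ (using $h(X)\subseteq L$ from your unique continuation step), which is the desired contradiction.

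Two smaller points you should tighten. First, the claims about the kernel distribution (smooth, constant rank, integrable, totally geodesic leaves on which $b_{\eta_0}$ is affine) are correct but asserted; note also that the parenthetical ``maximal flats inside a horosphere'' is wrong --- for regular $\eta_0$ the leaves are the maximal flats asymptotic to the chamber of $\eta_0$, and $b_{\eta_0}$ is nonconstant affine on them since the kernel contains the gradient direction. Second, the implication ``$\Phi\ge\delta>0$ implies stability at a scale depending only on $(n,K,\delta)$ and the Lipschitz constant'' is not quite ``exactly as in Proposition \ref{prop: drift implies stable}'': applying the single pointwise bound of Proposition \ref{thm: Greens function lower bound} at the outermost radius loses an exponential factor against the exponentially small Green's function near $\partial B(x,r)$. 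You need to sum the per-unit-ball mass $\ge\delta$ against the Green's function (or heat kernel) using elliptic/parabolic Harnack comparability at unit scale, e.g.\ bounding $\int_B G^B(x,z)\,\Delta(b_\eta\circ h)(z)\,dz \gtrsim \delta \int_{B(x,r)\setminus B(x,2)} G^B(x,z)\,dz$, which grows linearly in $r$ by the expected-exit-time interpretation; this gives a uniform scale and the drift route then works. Both of these are fixable, but the algebraicity of the stabilizer is the step that, as stated, would fail.
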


\begin{proof}
    Since $\rho$ is Zariski-dense, its Zariski closure is reductive. Hence by the Donaldson-Corlette theorem, there exists an equivariant harmonic map $h$, necessarily at bounded distance from $f$. Since stability is coarse, it suffices to show that $h$ is stable.

    Suppose not. For a point $x \in X$ and $r>0$, let $\mu_x^r$ be the hitting measure on $\partial B(x,r)$, and let $P_r$ be the corresponding convolution operator. First, we claim that for any $k$ and $r$, there is a point $x\in X$ and a Busemann function $\eta \in \partial_\infty Y$ such that
    \begin{equation} \label{eqn: Pk}
        P_r(b_{\eta} \circ h)(x) - b_{\eta} \circ h(x) < 1/k.
    \end{equation}
    For if this were not true, then using Proposition \ref{prop: Busemann enough} to restrict to Busemann functions, and telescoping as in Proposition \ref{prop: subharmonicity is coarse}, $h$ would be stable at scale $kr$ with respect to the operator $P_r^k$.

    So choose a sequence $r_k \to \infty$, and a sequence $x_k$ and $\eta_k$ such that \eqref{eqn: Pk} holds. Using cocompactness of the action on $X$ and compactness of $\partial_\infty Y$, we may assume up to replacing $(x_k, \eta_k)$ by $(\gamma_k^{-1} x_k, \rho(\gamma_k)\eta_k)$ and passing to a subsequence that $x_k$ converges to $x \in X$ and $\eta_k$ converges to $\eta \in \partial_\infty Y$.

    Since $b_{\eta_k} \circ h$ is subharmonic, we have 
    \[
    P_r(b_{\eta_k} \circ h) \leq P_s(b_{\eta_k} \circ h)
    \]
    for any $r \leq s$. Since $b_{\eta_k}$ converges locally uniformly to $b_\eta$, we can take the limit in \eqref{eqn: Pk}, holding $r$ fixed to conclude that for any $r$,
    \begin{equation} \label{eqn: harmonic}
        P_r (b_{\eta} \circ h)(x) - b_{\eta} \circ h(x) = 0. 
    \end{equation}
    We conclude that $b_\eta \circ h$ is harmonic on the ball of radius $r$ around $x$ for any $r$.
  
    For any $s \in \R$, let $h_s$ be the map at signed distance $s$ from $h$ along the geodesic towards $\eta$. It follows from proposition \ref{prop: Busemann enough} that $H(s,x) = h_s(x)$ is a parallel family of geodesics for $s \geq 0$, and then from proposition \ref{prop: Y has condition E} that $H$ is parallel for all $s \in \R$. Hence, the point $\eta'$ opposite to $\eta$ through $h(x)$ is independent of $x$, and each point $h(x)$ lies on a geodesic from $\eta'$ to $\eta$. But the locus of points $P_{\eta,\eta'}$ with this property, i.e.\ the parallel set of $\eta$ and $\eta'$, is a proper real algebraic subset of $Y$. Hence for any $x_0 \in X$, the condition that $\gamma \cdot h(x_0) \in P$ is a nontrivial real algebraic condition on the group $\Gamma$. We see that $\Gamma$ is not Zariski dense, contrary to assumption. Hence $h$, and likewise $f$, must be stable.
\end{proof}

\begin{remark}\label{rmk: LK}
    If $\Gamma$ acts cocompactly on $X$ and $\rho$ is an action of $\Gamma$ on $Y$, one constructs a locally bounded equivariant map as follows: choose a compact set $K \subset X$ such that $\Gamma \cdot K = X$, and a point $y$ of $Y$, and let $f(x)$ be the circumcenter in $Y$ of the compact set 
    \[
    \{\rho(\gamma) y | \gamma \in \Gamma \textrm{ and } x \in \gamma K\}.
    \]
    Since any two locally bounded equivariant maps are at bounded distance, and stability is a coarse notion, one could ask whether stability of the representation $\rho$ in the sense of Donaldson-Corlette is the same as stability in our sense of any $\rho$-equivariant map.

    The answer is that, while the two notions are related, stability of the equivariant map is stronger. What we have shown above is that the equivariant map is stable in our sense exactly when the harmonic map is unique among harmonic maps at bounded distance; on the other hand the representation $\rho$ is stable in the sense of Donaldson-Corlette exactly when the harmonic map is unique among \emph{equivariant} harmonic maps.
 
    The difference, essentially the only difference up to factoring products, comes from stable representations with compact image. For such representations \emph{any} constant map is a harmonic map at bounded distance, so they cannot be stable in our sense.
\end{remark}

\section{\texorpdfstring{$\Theta$}{Theta}-quasi-isometric embeddings and stability}

In this section we prove a more general version of the criterion for stability in terms of boundary maps, Theorem \ref{thm: intro criterion for stability}. 
We first give the definition of $\Theta$-quasi-isometric embeddings from a metric space to a symmetric space $Y$, and recall the higher rank Morse lemma of \cite{kapovich2018morse}, and the existence of continuous extensions to the flag variety. 
We then prove a slightly stronger version of the extension theorem than we could find in \cite{kapovich2018morse} (Proposition \ref{prop: uniform convergence}), and show that stability of $\Theta$-quasi-isometric embeddings can be deduced as a consequence of stability in the sense of \cite{KapovichLeebMillson2009ConvexFunctionsSymmetric} of certain measures on the visual boundary of the symmetric space. 
In the case that $Y=Y_d$ is the symmetric space of $\PGL_d(\R)$, $\Theta = \Pi$, the set of all simple roots, and $f$ has positive boundary map (Definition \ref{def: positive}), this will allow us to conclude that $f$ is stable.

\subsection{Symmetric space reminders}

Let $G$ be a connected semisimple Lie group with finite center and let $Y$ be the associated symmetric space of non-compact type.
There is a unique $G$-invariant Riemannian metric $g$ on $Y$ up to rescaling on each de Rham factor.
It will be convenient to fix the metric so that the minimium sectional curvature of any tangent $2$-plane is $-1$ in each de Rham factor. 
Fix a point $o\in Y$, and let $K$ be the stabilizer of $o$ in $G$. The Lie algebra $\mathfrak{g}$ of $G$ admits a \emph{Cartan involution} $\theta$ whose fixed point set is precisely $\mathfrak{k}$, the Lie algebra of $K$. 
Let $\mathfrak{p}$ denote the $-1$-eigenspace of $\theta$. For any point $y \in Y$ there exists a unique $v \in \mathfrak{p}$ so that $e^v o = y$.

Let $\mathfrak{a}$ denote a Cartan subspace of $\mathfrak{p}$.
The adjoint action of $\mathfrak{a}$ on $\mathfrak{g}$ induces the restricted root space decomposition of $\mathfrak{g}$ and we let $\Sigma \subset \mathfrak{a}$ denote the set of restricted roots. 
Choose a subset $\Pi \subset \Sigma$ of simple roots and a corresponding positive closed Weyl chamber $\mathfrak{a}^+ \subset \mathfrak{a}$. 
For any $v \in \mathfrak{p}$ there exists a unique $a \in \mathfrak{a}^+$ and a (typically non-unique) element $k \in K$ so that $\Ad(k)(v)=a$. 

In particular, for any pair of points $y_1, y_2$ in $Y$, there exists a unique element $a \in \mathfrak{a}^+$ and an element $g \in G$ so that 
    \[ g(y_1,y_2) = (o,e^a o) ,\]
the element $a \in \mathfrak{a}^+$ is called the \emph{vector-valued distance} of $(y_1,y_2)$.
For a pair of points $y_1,y_2 \in Y$ we will often write $\vec{d}_Y(y_1,y_2)$ for the vector-valued distance.

\subsection{\texorpdfstring{$\Theta$}{Theta}-quasi-isometric embeddings}\label{sec: urqies}

\begin{definition}\label{def: urqie}
    Let $Z$ be a metric space, and let $\Theta$ be a non-empty subset of $\Pi$. 
    A map $f \colon Z \to Y$ is called an \emph{$(L,M,\Theta)$-quasi-isometric embedding} if it is $L$-coarsely Lipschitz and 
        \[ \forall \alpha \in \Theta, \quad \forall z_1,z_2 \in Z, \quad \frac1{M} d_Z(z_1,z_2)  - 1 \le (\alpha \circ \vec{d}_Y)(f(z_1),f(z_2)) .\]
    A map $f \colon Z \to Y$ is called a \emph{$(M,\Theta)$-quasi-isometric embedding} if it is an $(L,M,\Theta)$-quasi-isometric embedding for some $L$, and a \emph{$\Theta$-quasi-isometric embedding} if it an $(M,\Theta)$-quasi-isometric embedding for some $M$.
\end{definition}

$\Theta$-quasi-isometric embeddings are equivalent to $\tau_{mod}$-uniformly regular quasi-isometric embeddings as introduced by Kapovich-Leeb-Porti \cite{kapovich2018morse}. 
They are also strongly related to the uniformly hyperbolic bundles of \cite{bridgeman2025ghostpolygonspoissonbracket}.
When $Y$ is negatively curved (i.e.\ has rank $1$), there is a unique simple restricted root $\Pi=\{\alpha\}$. 
In this case, $\{\alpha\}$-quasi-isometric embeddings are the same as quasi-isometric embeddings. 
In higher rank, every $\Pi$-quasi-isometric embedding is a quasi-isometric embedding but the converse does not hold in general.

Note that the vector-valued distance is not symmetric in general, but instead satisfies $-w_0 \vec{d}_Y(y_1,y_2) = \vec{d}_Y(y_2,y_1)$ where $w_0$ is the longest element of the Weyl group.
The isometry $-w_0 \colon \mathfrak{a} \to \mathfrak{a}$ is called the opposition involution, and it is non-trivial exactly when the root system of $\mathfrak{a}$ is of type $A_n,D_{2n+1}$ or $E_6$.
In any case, interchanging the roles of $z_1$ and $z_2$ in Definition \ref{def: urqie} shows that $(M,\Theta)$-quasi-isometric embeddings are also $(M,\Theta \cup (-w_0 \Theta))$-quasi-isometric embeddings.
So we may, and do from now on, assume that $\Theta$ is closed under the opposition involution.

\subsubsection{Flag topology}

Let $\mathcal{F}_\Theta$ denote the flag manifold associated to $\Theta$.
We need to recall the definition of the \emph{flag topology} on $Y \cup \mathcal{F}_\Theta$, making it into a suitable bordification of $Y$. 

There are several possible descriptions of the flag topology. 
We will give a description as a quotient of a subspace of the visual compactification. 
It can also be described in terms of a Satake compactification or a suitable horofunction compactifications \cite{guivarc2012compactifications}.

The visual boundary of $Y$ has a simplicial structure as a spherical building, inherited from the Coxeter complex structure on the visual boundary of each maximal flat. 
In particular, there is a type map from $\partial_\infty Y$ to the model spherical Weyl chamber $\sigma_{mod} = \mathbb{P}(\mathfrak{a}^+)$. 
The fibers of the type map are precisely the $G$-orbits of $\partial_{\infty} Y$. 
There is a distinguished face $\tau_\Theta$ of $\sigma_{mod}$ corresponding to $\Theta$: it is the intersection of the walls $\ker \alpha$ for $\alpha \in \Pi \setminus \Theta$ with $\sigma_{mod}$. 
The flag manifold $\mathcal{F}_\Theta$ is naturally identified with the set of simplices of $\partial_\infty Y$ of type $\tau_\Theta$.

The $\Theta$-regular part of $\sigma_{mod}$ is the subset where $\alpha$ is positive for each $\alpha \in \Theta$. 
The $\Theta$-regular part of $\partial_\infty Y$, denoted $\partial^{\Theta-reg}_\infty Y$, is the subset of $\partial_\infty Y$ with types in the $\Theta$-regular part of $\sigma_{mod}$.

Fixing $o \in Y$, let 
    \[ o^{\Theta-sing} = \{ y \in Y :  \exists \alpha \in \Theta \text{ s.t. } \alpha \circ \vec{d}_Y(o,y) =0 \} .\]
There is a natural continuous projection 
    \[ \overline{\pi_\Theta} = \pi_\Theta \cup \partial \pi_\Theta \colon (Y \setminus o^{\Theta-sing}) \cup \partial_\infty^{\Theta-reg}Y \to \mathcal{F}_\Theta \]
taking $y \in Y \setminus o^{\Theta-sing}$ to the unique simplex of type $\tau_\Theta$ in any chamber containing $\vec{oy}$. 
If a segment $y_1y_2$ is congruent to $oy$ for $y \in Y\setminus o^{\Theta-sing}$ then it is called \emph{$\Theta$-regular}.
For $\tau \in \mathcal{F}_\Theta$ (respectively $y \in Y \setminus o^\mathrm{\Theta-sing}$), the fiber $\pi_\Theta^{-1}(\tau)$ is called a \emph{Weyl cone}, and denoted $V_\Theta(o,\tau)$ (respectively, $V_\Theta(o,y)$).

Then the flag topology on $Y \cup \mathcal{F}_\Theta$ is the quotient topology under the map 
\[
 \mathrm{Id} \cup \partial \pi_\Theta \colon Y \cup \partial_\infty^{\Theta-reg}Y \to Y \cup \mathcal{F}_\Theta
\] 
which is independent of the basepoint $o \in Y$.

\subsubsection{The higher rank Morse lemma}

As a consequence of Gromov hyperbolicity, quasigeodesics in rank 1 symmetric spaces enjoy the Morse lemma: they lie uniformly close to geodesics.  
Kapovich-Leeb-Porti have proven a generalization of this statement for $\Theta$-quasigeodesics into symmetric spaces of higher rank.

\begin{theorem}[{The higher rank Morse lemma \cite[Theorem 1.3]{kapovich2018morse}, see also \cite[Theorem 7.3]{bochi2019anosov}}]\label{thm: higher rank morse lemma}
    For every $L,M$ there exists $D=D(L,M)$ so that:

    Every $(L,M,\Theta)$-quasigeodesic $q \colon [a,b] \to Y$ is at distance at most $D$ from the Weyl cone $V_\Theta(q(a),q(b))$, as long as $\vec{d}_Y(q(a),q(b))$ is $\Theta$-regular, which holds once $b-a > M$.     
\end{theorem}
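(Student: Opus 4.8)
The plan is to follow the strategy of Kapovich-Leeb-Porti. One first establishes a \emph{local} Morse property for short $\Theta$-quasigeodesic segments, and then promotes it to the global statement by a local-to-global ``straightening'' argument that substitutes for the thin-triangles mechanism available in rank one. The guiding principle is that uniform $\Theta$-regularity forces $q$ to make definite progress in every direction $\alpha \in \Theta$, so that the $\Theta$-flags $\pi_\Theta(q(s),q(t))$ it determines cannot oscillate.

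First, reparametrize and sample $q$ on a mesh comparable to $M$ in order to replace it by a discrete $\Theta$-quasigeodesic $p_0,\dots,p_N$ with consecutive distances in a fixed range and each jump $\vec{d}_Y(p_i,p_{i+1})$ being $\Theta$-regular; the final clause of the theorem (that $\vec{d}_Y(q(a),q(b))$ is $\Theta$-regular once $b-a>M$) is immediate from the estimate $\alpha\circ\vec{d}_Y(q(a),q(b)) \ge \frac{1}{M}(b-a)-1$ of Definition~\ref{def: urqie}. For the local lemma, one shows that a discrete $\Theta$-quasigeodesic of uniformly bounded combinatorial length stays within bounded distance of $V_\Theta(p_0,p_N)$: here one argues with the convexity of $d_Y$ and the geometry of the maximal flat joining $p_0$ and $p_N$, using that straying too far from $V_\Theta(p_0,p_N)$ would, via the triangle inequality, force a loss of $\Theta$-regularity somewhere along the path.

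The crux is the local-to-global step. The key geometric input is that whenever two consecutive long sub-segments $[p_i,p_j]$ and $[p_j,p_k]$ of a uniformly $\Theta$-regular discrete path each fellow-travel their Weyl cones, the flags $\pi_\Theta(p_j,p_i)$ and $\pi_\Theta(p_j,p_k)$ are transverse and bounded away from degenerating, so the path does not backtrack in the flag directions; combined with the fact that nearest-point projections onto Weyl cones are coarsely well-defined and coarsely contracting in uniformly regular directions, an induction on $N$ shows that the whole path lies within a distance depending only on $L$ and $M$ of $V_\Theta(p_0,p_N)$. I expect this patching --- making the ``no backtracking'' statement precise and controlling the accumulated error over arbitrarily many steps without a hyperbolicity hypothesis --- to be the main obstacle; in the discrete, representation-theoretic incarnation of Bochi-Potrie-Sambarino the same conclusion is reached via a cone-contraction (dominated splitting) argument for the Cartan projection, which gives an alternative route.

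Finally, transferring the bound from $p_0,\dots,p_N$ back to the continuous quasigeodesic $q$ costs only the sampling error, controlled by the coarse-Lipschitz constant $L$, which yields the constant $D=D(L,M)$.
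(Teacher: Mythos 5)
This statement is not proved in the paper: it is quoted verbatim from the literature (Kapovich--Leeb--Porti, Theorem 1.3, with the alternative route via Bochi--Potrie--Sambarino, Theorem 7.3), so the only ``proof'' the paper offers is the citation. Measured against the actual proofs in those references, your proposal is an outline of the known strategy rather than a proof, and the gap sits exactly where you flag it. The local-to-global step is the entire content of the theorem: asserting that consecutive fellow-travelling subsegments give transverse, non-degenerating flags (``no backtracking'') and that ``nearest-point projections onto Weyl cones are coarsely well-defined and coarsely contracting in uniformly regular directions'' is essentially assuming a quantitative form of the Morse lemma. In a higher rank symmetric space there is no thin-triangle mechanism, and the contraction you invoke is not an off-the-shelf fact; in Kapovich--Leeb--Porti it is obtained through a substantial apparatus (straight and spaced sequences, convexity of Weyl cones and diamonds, Finsler geodesics, and a quantified local-to-global principle for Morse quasigeodesics), while Bochi--Potrie--Sambarino reach it through dominated splittings for linear cocycles applied to the Cartan projection. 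Neither route is reconstructed in your sketch, and your local lemma is also not a triangle-inequality statement: controlling how far a short uniformly regular path can stray from the Weyl cone already uses the same parallel-set/projection geometry, so the argument as written is circular at its core.

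The only portion that is genuinely verified is the easy final clause: $\alpha\circ\vec{d}_Y(q(a),q(b))\ge \frac{1}{M}(b-a)-1>0$ for $b-a>M$ and all $\alpha\in\Theta$, which indeed gives $\Theta$-regularity of the endpoint segment, and the sampling/discretization bookkeeping is routine. For the purposes of this paper the correct move is simply to cite \cite{kapovich2018morse} (or \cite{bochi2019anosov}), as the authors do; if you want to actually prove the theorem, you would need to either reproduce the KLP local-to-global argument with its quantitative estimates on nested Weyl cones, or carry out the cone-contraction argument of BPS, neither of which is a short patching induction.
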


When $\Theta = \Pi$, a Weyl cone $V_\Pi(y,\sigma)$ is an open Euclidean Weyl chamber with tip at $y$ asymptotic to the ideal Weyl chamber $\sigma \subset \partial_\infty Y$.
That case already covers the material we need in Section \ref{sec: universal hitchin} and for Theorem \ref{thm: main schoen}, and we encourage the reader to keep that special case in mind. 

Kapovich-Leeb-Porti also obtain the following consequence of Theorem \ref{thm: higher rank morse lemma}.
A pair $\tau,\tau' \in \mathcal{F}_\Theta$ is called \emph{transverse} if there exists a geodesic $c \colon \R \to Y$ so that $c(+\infty)$ is in the relative interior of $\tau$ and $c(-\infty)$ is in the relative interior of $\tau'$.

\begin{theorem}[{Boundary maps \cite[Theorem 1.4]{kapovich2018morse}}]\label{thm: boundary maps}
    Let $Z$ be a locally compact geodesic metric space and suppose that $q \colon Z \to Y$ is an $(L,M,\Theta)$-quasi-isometric embedding. 
    Then $Z$ is Gromov hyperbolic and the map $q$ extends continuously to a map 
        \[ \overline{q} = q \cup \partial q \colon Z \cup \partial_\infty Z \to Y \cup \mathcal{F}_\Theta \]
    taking distinct pairs of $\partial_\infty Z$ to transverse pairs in $\mathcal{F}_\Theta$.
\end{theorem}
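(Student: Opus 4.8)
The plan is to prove Theorem~\ref{thm: boundary maps} as a fairly direct consequence of the higher rank Morse lemma (Theorem~\ref{thm: higher rank morse lemma}), following the structure of \cite{kapovich2018morse}. The first task is Gromov hyperbolicity of $Z$: fix a basepoint $z_0 \in Z$ and consider a geodesic triangle with vertices $z_0, z_1, z_2$; its three sides are $(L,M,\Theta)$-quasigeodesics, hence by Theorem~\ref{thm: higher rank morse lemma} each side is within distance $D$ of the corresponding Weyl cone $V_\Theta(q(\cdot),q(\cdot))$ in $Y$ once the endpoints are far enough apart. Weyl cones in $Y$ satisfy a uniform thinness estimate (they sit inside the much better behaved rank-1-like behavior of the associated $\Theta$-regular geometry, or more simply one uses the diamond/thin-triangle estimates for cone triangles proved in \cite{kapovich2018morse}), so the image triangle in $Y$ is uniformly thin, and pulling back via the coarse-bilipschitz estimate in the $\alpha \in \Theta$ directions shows $Z$ has uniformly thin triangles. (One must separately handle the bounded-diameter case when points are not $M$-separated, which only costs additive constants.)

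Next I would construct the boundary extension. For $\xi \in \partial_\infty Z$, pick a geodesic ray $r\colon [0,\infty)\to Z$ representing $\xi$; then $q\circ r$ is a $(L,M,\Theta)$-quasigeodesic ray, and by Theorem~\ref{thm: higher rank morse lemma} applied to the segments $r|_{[0,t]}$, its image stays within $D$ of the nested Weyl cones $V_\Theta(q(r(0)), q(r(t)))$. One shows the simplices $\pi_\Theta(q(r(t)))$ (which make sense once $t > M$, by $\Theta$-regularity) converge in $\mathcal{F}_\Theta$ to a limit simplex $\tau_\xi$; the key point is that the Weyl cones are nested up to bounded error, forcing the tips' directions to stabilize. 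Independence of the choice of ray, and of the representative within the equivalence class, follows from the fact that two rays at bounded Hausdorff distance in $Z$ map to quasigeodesics at bounded distance in $Y$, and bounded subsets of Weyl cones determine the same flag. Define $\partial q(\xi) = \tau_\xi$; continuity of $\overline{q} = q \cup \partial q$ on $Z \cup \partial_\infty Z$ is checked by a standard diagonal argument combining local compactness, the uniform Morse constant $D$, and the definition of the flag topology as a quotient of the $\Theta$-regular part of the visual compactification.

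Finally, transversality of $\partial q(\xi), \partial q(\xi')$ for $\xi \neq \xi'$: choose a bi-infinite geodesic $c\colon \R \to Z$ with $c(-\infty) = \xi'$, $c(+\infty) = \xi$ (which exists since $Z$ is Gromov hyperbolic and proper, up to passing to a quasigeodesic). Then $q\circ c$ is a bi-infinite $(L,M,\Theta)$-quasigeodesic; by the Morse lemma its image is uniformly close to a Weyl cone on each half-line, and one argues that a bi-infinite $\Theta$-regular quasigeodesic is uniformly close to a genuine geodesic line in $Y$ (this is the bi-infinite refinement, proved in \cite{kapovich2018morse} by concatenating the two one-sided estimates and using that parallel Weyl cones pointing at a transverse pair of flags fill out a flat). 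Its two ideal endpoints lie in the interiors of $\partial q(\xi)$ and $\partial q(\xi')$ respectively, which is exactly the definition of a transverse pair. The main obstacle in this program is the bi-infinite Morse statement and the accompanying fact that asymptotic $\Theta$-regular Weyl cones force transversality — this is where the higher-rank geometry genuinely differs from rank one and where the delicate part of \cite{kapovich2018morse} is used; everything else is bookkeeping with the uniform constant $D$ and the flag topology.
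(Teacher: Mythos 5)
The paper does not prove this statement at all: Theorem \ref{thm: boundary maps} is imported verbatim from Kapovich--Leeb--Porti \cite[Theorem 1.4]{kapovich2018morse}, so there is no internal proof to compare yours against. Judged on its own, your write-up is a reasonable roadmap of the KLP argument (Morse lemma, nested Weyl cones, convergence of the cone directions in $\mathcal{F}_\Theta$, transversality via bi-infinite quasigeodesics), but it is not an independent proof: at every genuinely hard juncture --- the thin-triangle/diamond estimates, the bi-infinite Morse statement, and the antipodality of the two asymptotic flags --- you appeal back to \cite{kapovich2018morse}, i.e.\ to the very result being established, so the argument is circular exactly where the content lies. (The flag-convergence part of your sketch is, incidentally, close in spirit to what the present paper does prove on its own in Lemma \ref{lem: dF} and Proposition \ref{prop: uniform convergence}, where a quantitative contraction estimate replaces your qualitative ``the tips stabilize''.)

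Beyond the circularity, the two mechanisms you do spell out are not correct as stated. First, for hyperbolicity of $Z$: Weyl cones are not ``uniformly thin'' --- they contain flats of dimension $\mathrm{rank}(Y)\ge 2$ --- and closeness of the three sides of a triangle to $\Theta$-regular Weyl cones does not force thinness. In a $2$-flat of $\H^2\times\H^2$ (simple roots the two coordinates), the Euclidean triangle with side displacement vectors $s(2,1)$, $s(-1,1)$, $s(-1,-2)$ has all three sides uniformly $\Pi$-regular geodesics, yet is arbitrarily fat as $s\to\infty$; what excludes such configurations is the regularity of \emph{all} pairs $z_1,z_2\in Z$ required by Definition \ref{def: urqie} (here pairs straddling a vertex have nearly singular displacement), and exploiting that is precisely the work done in \cite{kapovich2018morse}. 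Second, your transversality step asserts that a bi-infinite $\Theta$-regular quasigeodesic is uniformly close to a geodesic line in $Y$; this is false in higher rank --- a path in a flat consisting of two rays with directions $(1,1)$ and $(2,1)$ is a uniformly regular bi-infinite quasigeodesic at unbounded distance from every geodesic line. The correct statement is closeness to a union of two opposite Weyl cones inside a parallel set, and the fact that the two limit simplices are transverse is again the delicate part of KLP's theorem, not a consequence of closeness to a line. So as a self-contained proof the proposal has genuine gaps, even though as a summary of the cited proof its architecture is sound.
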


Our next goal is to prove Proposition \ref{prop: uniform convergence}, which states that $L$-Lipschitz $(M,\Theta)$-quasi-isometric embeddings taking some point $x$ to some point $y$ which converge converge locally uniformly on $Z$ actually converge uniformly as maps $Z \cup \partial_\infty Z$ to $Y \cup \mathcal{F}_\Theta$. 
This is more-or-less a direct consequence of the higher rank Morse lemma, but we could not find the precise result we need in the literature.

\medskip

Recall that $\mathfrak{a}^+$ is a closed Euclidean Weyl chamber.
Define $m_\Theta \colon \mathfrak{a}^+ \to \R^{\geq 0}$ by
    \[
        m_\Theta(v) \coloneqq \min_{\alpha \in \Theta} \alpha(v),
    \]
and for brevity write also for $y \in Y$,
\[
    m_\Theta(y) = m_\Theta(\vec{d}_Y(o,y)).
\]
The flag variety $\mathcal{F}_\Theta$ inherits a Riemannian metric $g_\mathcal{F}$ as a quotient of $K$, the stabilizer of $o$ in $G$.
Let $d_\mathcal{F}$ be the corresponding distance function.
Using the projection $\pi_\Theta$ from $Y \setminus o^{\Theta-\mathrm{sing}}$ to $\mathcal{F}_\Theta$, we can also evaluate $d_\mathcal{F}$ on points of $Y \setminus o^{\Theta-\mathrm{sing}}$.

\begin{lemma} \label{lem: dF}
    Let $D=D(L,M)$ be the constant from the higher rank Morse lemma (Theorem \ref{thm: higher rank morse lemma}).
    Let $q \colon [0,r'] \to Y$ be an $(L,M,\Theta)$-quasigeodesic in $Y$ with $q(0) = o$, and suppose $M(D+1) < r \leq r'$. 
    Then $\vec{oq(r)}$ and $\vec{oq(r')}$ are $\Theta$-regular and
    \begin{equation} \label{eqn: contraction}
         d_\mathcal{F}(q(r),q(r')) \leq \frac{D}{\sinh(m_\Theta(q(r)) - D)}
    \end{equation}
\end{lemma}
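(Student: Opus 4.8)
The plan is to deduce this from the higher rank Morse lemma, Theorem~\ref{thm: higher rank morse lemma}, together with a contraction estimate for the projection $\pi_\Theta$, which I would then integrate along a short geodesic.

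First, for the regularity assertion and a quantitative bound needed below, apply the quasigeodesic lower bound of Definition~\ref{def: urqie} to the parameters $0$ and $r$, using $q(0)=o$: for every $\alpha\in\Theta$,
\[
\alpha(\vec{d}_Y(o,q(r))) \geq \tfrac{r}{M}-1 > (D+1)-1 = D \geq 0 .
\]
Hence $\vec{d}_Y(o,q(r))$ is $\Theta$-regular, and in fact $m_\Theta(q(r))>D$; the same computation with $r'$ in place of $r$ (note $r'\geq r>M$) shows that $\vec{d}_Y(o,q(r'))$ is $\Theta$-regular too.

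Next I would locate a point of the Weyl cone of $q(r')$ near $q(r)$. Since $q|_{[0,r']}$ is an $(L,M,\Theta)$-quasigeodesic with $r'>M$, Theorem~\ref{thm: higher rank morse lemma} supplies a point $p\in V_\Theta(o,q(r'))=\pi_\Theta^{-1}(\pi_\Theta(q(r')))$ with $d_Y(q(r),p)\leq D$; in particular $\pi_\Theta(p)=\pi_\Theta(q(r'))$, so it suffices to bound $d_\mathcal{F}(\pi_\Theta(q(r)),\pi_\Theta(p))$. I claim this follows from two facts: the pointwise operator-norm bound
\[
\|d\pi_\Theta\|_y \leq \frac{1}{\sinh(m_\Theta(y))} \qquad\text{for all }y\in Y\setminus o^{\Theta-\mathrm{sing}} ,
\]
and the $1$-Lipschitz continuity of $m_\Theta$ on $Y$. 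Indeed, letting $\sigma\colon[0,\ell]\to Y$ be the unit-speed geodesic from $q(r)$ to $p$ (so $\ell\leq D$), the second fact gives $m_\Theta(\sigma(t))\geq m_\Theta(q(r))-t\geq m_\Theta(q(r))-D>0$, so $\sigma$ stays in the domain of $\pi_\Theta$ and
\[
d_\mathcal{F}(\pi_\Theta(q(r)),\pi_\Theta(p)) \leq \int_0^\ell\|d\pi_\Theta\|_{\sigma(t)}\,dt \leq \int_0^\ell\frac{dt}{\sinh(m_\Theta(q(r))-t)} \leq \frac{D}{\sinh(m_\Theta(q(r))-D)} ,
\]
which is \eqref{eqn: contraction}.

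What remains is the pointwise estimate, and I expect this to be the main obstacle. Using $K$-equivariance of $\pi_\Theta$ and $K$-invariance of $g_\mathcal{F}$, together with the Cartan decomposition, I would reduce to $y=e^{a}o$ with $a$ in the $\Theta$-regular part of $\mathfrak{a}^+$. Since $\pi_\Theta$ is constant on the Weyl cone $V_\Theta(o,y)$, its differential annihilates the directions tangent to that cone; at a $\Pi$-regular $y$ the complementary directions are the root spaces $\mathfrak{p}_\lambda$ with $\lambda$ not vanishing on $\tau_\Theta$, and writing such a $\lambda$ as a nonnegative integer combination of simple roots, some coefficient on a root of $\Theta$ is at least $1$, so $\lambda(a)\geq m_\Theta(a)$. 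The standard Jacobi field computation along the geodesic $t\mapsto e^{(t/|a|)a}o$ then expresses a unit tangent vector at $y$ in the $\mathfrak{p}_\lambda$-direction as the terminal value of a Jacobi field vanishing at $o$ whose norm grows like $\sinh$, bounding the corresponding component of $d\pi_\Theta$ by $(\sinh\lambda(a))^{-1}\leq(\sinh m_\Theta(y))^{-1}$; the $\Pi$-singular locus has codimension at least $1$ and so contributes nothing to the length integral above. The genuinely fiddly part is verifying that the normalizations of $g$ (fixed so that the minimal sectional curvature equals $-1$) and of $g_\mathcal{F}$ make the constant in this estimate exactly $1$, and likewise make $m_\Theta$ precisely $1$-Lipschitz rather than Lipschitz with some root-system-dependent constant; the rest of the argument is routine.
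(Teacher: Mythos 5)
Your proposal is correct and follows essentially the same route as the paper: $\Theta$-regularity from the $(M,\Theta)$ lower bound, the higher rank Morse lemma to place $q(r)$ within $D$ of the Weyl cone $V_\Theta(o,q(r'))$, the contraction estimate $g_Y \geq g_V + \sinh(m_\Theta)^2\,\pi_\Theta^* g_\mathcal{F}$ (your operator-norm bound on $d\pi_\Theta$, justified by the same Killing-field/root-space orthogonality the paper invokes), and integration along the short connecting path using $m_\Theta \geq m_\Theta(q(r)) - D$. Your appeal to the $1$-Lipschitz property of $m_\Theta$ is exactly the vector-valued triangle inequality the paper cites, and the normalization worry you flag is resolved by the paper's choice of metric (minimal sectional curvature $-1$ on each de Rham factor), which makes the roots have dual norm at most $1$.
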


\begin{proof}
    The inequality $M(D + 1) < r$ implies by the $(M,\Theta)$-quasi-isometric embedding property that $m_\Theta(q(r)) > D$. 
    Since $m_\Theta(q(r))$ and $m_\Theta(q(r')) > 0$, both $\vec{oq(r)}$ and $\vec{oq(r')}$ are $\Theta$-regular.
    
    Let $g_Y$ denote the Riemannian metric of $Y$. 
    The fiber bundle $\pi_\Theta \colon Y \setminus o^{\Theta-\mathrm{sing}} \to \mathcal{F}_\Theta$
    is $K$ equivariant with fiber the Weyl cone $V(o,\tau)$ over $\tau \in \mathcal{F}_\Theta$.
    Letting $g_V$ denote the restriction of $g_Y$ to $V(o,\tau)$ we have 
    \begin{equation} \label{eqn: g > gF}
           g_Y \geq g_V + \sinh(m(v))^2 g_\mathcal{F}
    \end{equation}
    which follows from considering the orbit map $K \times V(o,\tau) \to Y \setminus o^{\Theta-sing}$ given by $(k,v) \mapsto kv$. 
    More precisely, if $\mathfrak{k}_\tau$ denotes the infinitesimtal stabilizer of $\tau$ in $\mathfrak{k}=Lie(K)$, consider the fundamental vector fields induced by its perp $\mathfrak{k}^\tau$ (with respect to the restriction of the Killing form of $\mathfrak{g}=Lie(G)$ to $\mathfrak{k}$) at both $y \in Y$ and $\tau \in \mathcal{F}_\Theta$. 
    A key point is that these Killing vector fields are perpendicular to $V(o,\tau)$, so the decomposition above is orthogonal. 
    
    If $q$ is an $(L,M,\Theta)$-quasigeodesic from $y$, and $r \leq r'$, then by the higher rank Morse lemma (Theorem \ref{thm: higher rank morse lemma}), $q(r)$ is at most distance $D$ from $V(o,q(r'))$. 
    Let $\gamma$ be a minimizing path from $q(r)$ to $V(o,q(r'))$. 
    Since the length of the projection of $\gamma$ to $V$ is at most $D$, the lower bound $m(\gamma(t)) \geq m_\Theta(q(r)) - D > 0$ on all of $\gamma$ follows from the vector-valued triangle inequality:
        \[ \alpha(\vec{d}_Y(o,q(r)) - \vec{d}_Y(o,\gamma(t))) \le d(q(r),\gamma(t)) \le D \]
    see for instance \cite[Corollary 3.11]{Riestenberg2025quantified}.
    Since the $g$-length of the projection of $\gamma$ to $\mathcal{F}_\Theta$ is at most $D$, it follows from \eqref{eqn: g > gF} that the distance in $\mathcal{F}_\Theta$ from $q(r)$ to $q(r')$ is at most
    \[
        \frac{D}{\sinh(m_\Theta(q(r)) - D)}.
    \]
\end{proof}

\begin{lemma} \label{lem: triangle inequality}
    Suppose $f_1$ and $f_2$ are $(L,M,\Theta)$-quasi-isometric embeddings from $Z$ to $Y$ with $f_i(x) = o$ and such that for all $z \in B(x,r)$, $d_Y(f_1(z),f_2(z)) \leq E$. 
    Then for all $z' \in Z \setminus B(x,M(D+1))$, $f_1(z')$ and $f_2(z')$ are $\Theta$-regular and
    \begin{equation} \label{eqn: triangle inequality}
        d_\mathcal{F}(f_1(z'), f_2(z')) \leq \frac{2D}{\sinh(m_\Theta(q(r)) - D)} + E.
    \end{equation}
\end{lemma}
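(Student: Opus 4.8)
The plan is to deduce Lemma \ref{lem: triangle inequality} from Lemma \ref{lem: dF} together with the triangle inequality in the flag variety $\mathcal{F}_\Theta$. Fix $z' \in Z \setminus B(x,M(D+1))$, and let $r' = d_Z(x,z')$, so $r' > M(D+1)$. For each $i=1,2$, the map $t \mapsto f_i(\gamma(t))$, where $\gamma$ is a geodesic in $Z$ from $x$ to $z'$ parametrized on $[0,r']$, is an $(L,M,\Theta)$-quasigeodesic in $Y$ starting at $o$. Applying Lemma \ref{lem: dF} with the parameters $r$ and $r'$ (noting that $M(D+1) < r \le r'$ is exactly the hypothesis we need, possibly after shrinking $r$ to $\min(r,r')$, which only weakens the bound) gives that $\vec{of_i(z)}$ is $\Theta$-regular for $z = \gamma(r)$ and $z = \gamma(r') = z'$, and that
\[
    d_\mathcal{F}(f_i(\gamma(r)), f_i(z')) \leq \frac{D}{\sinh(m_\Theta(f_i(\gamma(r))) - D)}.
\]

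Next I would control the middle term $d_\mathcal{F}(f_1(\gamma(r)), f_2(\gamma(r)))$. Writing $z_0 = \gamma(r) \in B(x,r)$, we know $d_Y(f_1(z_0), f_2(z_0)) \leq E$ by hypothesis. Since the projection $\pi_\Theta \colon Y \setminus o^{\Theta\text{-}\mathrm{sing}} \to \mathcal{F}_\Theta$ is $1$-Lipschitz with respect to $g_Y$ and $g_\mathcal{F}$ on the locus where $m_\Theta > 0$ — this is immediate from the orthogonal splitting $g_Y \geq g_V + \sinh(m(v))^2 g_\mathcal{F} \geq g_\mathcal{F}$ used in the proof of Lemma \ref{lem: dF}, valid once $m_\Theta \geq \mathrm{arcsinh}(1)$, or more carefully along a minimizing $Y$-path between the two points — we get $d_\mathcal{F}(f_1(z_0), f_2(z_0)) \leq d_Y(f_1(z_0), f_2(z_0)) \leq E$. (Strictly, one should check the minimizing segment between $f_1(z_0)$ and $f_2(z_0)$ stays $\Theta$-regular; this follows from the vector-valued triangle inequality as in the proof of Lemma \ref{lem: dF}, since both endpoints have $m_\Theta$ bounded below by $m_\Theta(q(r)) - D > 0$ by the quasigeodesic property.) Finally, I would note that $m_\Theta(f_i(\gamma(r)))$ is the same quantity denoted $m_\Theta(q(r))$ in the statement, so the two end-term bounds are each at most $D/\sinh(m_\Theta(q(r)) - D)$.

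Assembling the three pieces via the triangle inequality in $\mathcal{F}_\Theta$:
\[
    d_\mathcal{F}(f_1(z'), f_2(z')) \leq d_\mathcal{F}(f_1(z'), f_1(\gamma(r))) + d_\mathcal{F}(f_1(\gamma(r)), f_2(\gamma(r))) + d_\mathcal{F}(f_2(\gamma(r)), f_2(z'))
\]
\[
    \leq \frac{D}{\sinh(m_\Theta(q(r)) - D)} + E + \frac{D}{\sinh(m_\Theta(q(r)) - D)} = \frac{2D}{\sinh(m_\Theta(q(r)) - D)} + E,
\]
which is the claimed inequality \eqref{eqn: triangle inequality}.

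The main obstacle I anticipate is the bookkeeping around which basepoint and which value of $m_\Theta$ appears in the middle estimate: the hypothesis only bounds the $Y$-distance $d_Y(f_1(z),f_2(z))$ for $z \in B(x,r)$, and to convert that to a flag-distance bound one needs the relevant points to lie in the $\Theta$-regular locus with $m_\Theta$ bounded away from $0$ along a connecting path, which is where the constant $D$ and the lower bound $m_\Theta(q(r)) - D$ re-enter. One should be slightly careful that $r$ itself is large enough that $m_\Theta(q(r)) > D$ — but this is built into the hypotheses, since the conclusion is only asserted for $z' \notin B(x,M(D+1))$ and the estimate is vacuous (the right side is large) unless $m_\Theta(q(r))$ is appreciably bigger than $D$. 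Everything else is a routine application of the $1$-Lipschitz property of $\pi_\Theta$ and the triangle inequality.
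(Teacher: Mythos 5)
Your proof is correct and follows essentially the same route as the paper: the paper likewise interposes the point $z$ at distance $r$ on a geodesic from $x$ to $z'$, applies the triangle inequality in $\mathcal{F}_\Theta$, and bounds the two outer terms by Lemma \ref{lem: dF}. The only difference is that you make explicit why the middle term is at most $E$ (the projection $\pi_\Theta$ is distance non-increasing where $\sinh(m_\Theta)\geq 1$, via \eqref{eqn: g > gF}), a step the paper leaves implicit; your bookkeeping there (along the connecting segment one should really subtract its length $E$, not $D$, when checking $\Theta$-regularity) is no looser than the paper's own statement, which already leaves $q(r)$ and this point informal.
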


\begin{proof}
    This follows from the triangle inequality. We let $z$ be the corresponding point on the boundary of the ball of radius $r$, and we bound
    \[
    d_\mathcal{F}(f_1(z'), f_2(z')) \leq d_\mathcal{F}(f_1(z'), f_1(z)) + d_\mathcal{F}(f_1(z), f_2(z)) + d_\mathcal{F}(f_2(z), f_2(z'))
    \]
    and by bounding the first and third terms by Lemma \ref{lem: dF}, we obtain \eqref{eqn: triangle inequality}.
\end{proof}

\begin{proposition} \label{prop: uniform convergence}
    Suppose $f_n$ are a sequence of $(L,M,\Theta)$-quasi-isometric embeddings that converge locally uniformly to $f$ on $Z$. 
    Then their extensions $f_n \cup \partial f_n$ converge uniformly on $Z\cup \partial_\infty Z$.
\end{proposition}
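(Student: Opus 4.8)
The plan is to partition $Z\cup\partial_\infty Z$ into a large metric ball around a basepoint, on which ordinary local uniform convergence suffices, and the complement of that ball together with $\partial_\infty Z$, on which the higher rank Morse lemma---already packaged as Lemma~\ref{lem: triangle inequality}---controls everything uniformly, with constants depending only on $L,M,\Theta$.

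First I would make a harmless normalization. Fix a basepoint $x_0\in Z$; local uniform convergence gives $f_n(x_0)\to f(x_0)=:o$, so choosing isometries $g_n\in\mathrm{Isom}(Y)$ with $g_nf_n(x_0)=o$ and $g_n\to\mathrm{id}$, the maps $g_nf_n$ still converge to $f$ locally uniformly, now with $g_nf_n(x_0)=f(x_0)=o$. Since $\overline f$ is continuous on the compact space $Z\cup\partial_\infty Z$, its image is compact, and $g_n^{-1}\to\mathrm{id}$ uniformly on a compact neighborhood of that image, so uniform convergence of the normalized extensions $\overline{g_nf_n}$ implies it for the $\overline{f_n}$. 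Thus I may assume $f_n(x_0)=f(x_0)=o$. Let $D=D(L,M)$ be the Morse constant of Theorem~\ref{thm: higher rank morse lemma}. Given $\epsilon>0$, pick $R>M(D+1)$ so large that $\tfrac{2D}{\sinh(R/M-1-D)}<\epsilon$.

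On $\overline{B(x_0,R)}$ all of $f_n,f$ take values in the fixed compact ball $\overline{B(o,L(R+1))}\subset Y$, on which the flag topology is the manifold topology, and $\sup_{d(x_0,z)\le R}d_Y(f_n(z),f(z))\to 0$ by hypothesis; this handles the interior. For the exterior I would apply Lemma~\ref{lem: triangle inequality} with $f_1=f_n$, $f_2=f$, $r=R$, and $E=E_n:=\sup_{d(x_0,z)\le R}d_Y(f_n(z),f(z))$, obtaining that for all $z'\in Z\setminus B(x_0,M(D+1))$ the points $f_n(z'),f(z')$ are $\Theta$-regular with
\[
d_{\mathcal F}(f_n(z'),f(z'))\le\frac{2D}{\sinh(R/M-1-D)}+E_n<\epsilon+E_n,
\]
which is below $2\epsilon$ once $n$ is large; moreover $m_\Theta(f_n(z')),m_\Theta(f(z'))\ge d(x_0,z')/M-1$, so these points are uniformly deep in the bordification as $d(x_0,z')\to\infty$. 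For $w\in\partial_\infty Z$ I would take a geodesic ray $c_w$ from $x_0$ to $w$; by the continuous extension of Theorem~\ref{thm: boundary maps}, $f_n(c_w(t))\to\partial f_n(w)$ and $f(c_w(t))\to\partial f(w)$ in $Y\cup\mathcal F_\Theta$ as $t\to\infty$, so passing to the limit in the displayed inequality gives $d_{\mathcal F}(\partial f_n(w),\partial f(w))\le 2\epsilon$ for $n$ large. Combining the interior bound, the exterior bound, and the boundary bound (the first two overlapping on the annulus $B(x_0,R)\setminus B(x_0,M(D+1))$) and undoing the normalization gives the claim.

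The only genuinely delicate point---and the one I would spell out carefully---is what ``uniform convergence into $Y\cup\mathcal F_\Theta$'' means, since this bordification is not compact. Concretely one fixes a metric $\rho$ on $Y\cup\mathcal F_\Theta$ inducing the flag topology which is ``product-like near $\mathcal F_\Theta$'': bi-Lipschitz to $d_{\mathcal F}$ on $\mathcal F_\Theta$, bi-Lipschitz to $d_Y$ on each bounded subset of $Y$, and satisfying $\rho(y,\pi_\Theta(y))\le\phi(m_\Theta(y))$ for some $\phi$ tending to $0$ at infinity (one also enlarges $R$ so that $\phi(R/M-1)<\epsilon$); the existence of such a $\rho$ is a routine fact about the flag, or horofunction, compactification, see~\cite{kapovich2018morse,guivarc2012compactifications}. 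With respect to such a $\rho$, the three pieces above---$d_Y$-closeness on the big ball, $d_{\mathcal F}$-closeness of $\pi_\Theta$-images combined with large $m_\Theta$ off the big ball, and $d_{\mathcal F}$-closeness on $\partial_\infty Z$---assemble into $\sup_w\rho(\overline{f_n}(w),\overline f(w))\to 0$. All the geometric content is already contained in Lemma~\ref{lem: triangle inequality}, i.e.\ in the higher rank Morse lemma; the rest is bookkeeping.
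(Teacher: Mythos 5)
Your proof is, in substance, the paper's proof: local uniform convergence handles a fixed large ball; Lemma \ref{lem: triangle inequality} (i.e.\ the higher rank Morse lemma via Lemma \ref{lem: dF}) gives uniform $d_{\mathcal F}$-control of the flag direction outside that ball and, letting $z'\to\infty$ along rays, on $\partial_\infty Z$; and the radial direction is handled by the fact that deep, uniformly $\Theta$-regular points are uniformly close to their flags in the bordification, which the paper phrases as the uniform shrinking of $Y'(o,\epsilon,\Theta)\cap(Y\setminus B(o,r))\cap V(o,\tau)$ to $\{\tau\}$ and you phrase as a property of a background metric $\rho$. Your normalization $f_n(x_0)=f(x_0)=o$ is a welcome extra care step, since Lemma \ref{lem: triangle inequality} literally requires a common basepoint, a point the paper glosses over.

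Two caveats, both fixable. First, to undo the normalization you invoke ``$g_n^{-1}\to\mathrm{id}$ uniformly on a compact neighborhood of the image of $\overline f$''; once $\partial_\infty^{\Theta-reg}Y\neq\partial_\infty Y$ (e.g.\ already for $\H^2\times\H^2$), the bordification $Y\cup\mathcal F_\Theta$ is not locally compact at boundary flags—every neighborhood of $\tau$ contains divergent sequences that are not uniformly $\Theta$-regular and hence have no limit in $Y\cup\mathcal F_\Theta$—so such a neighborhood need not exist. The conclusion survives: either check the three regimes directly ($g_n^{-1}\to\mathrm{id}$ uniformly on the bounded ball and on the compact $\mathcal F_\Theta$; for deep regular points, $g_n^{-1}$ changes $m_\Theta$ by at most $d(o,g_no)$ and moves $\pi_\Theta$ by an amount controlled exactly as in Lemma \ref{lem: dF} with $E=d(o,g_no)$), or work inside a compact Satake-type compactification where $g_n\to e$ acts uniformly equicontinuously. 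Second, the property you demand of $\rho$, namely $\rho(y,\pi_\Theta(y))\le\phi(m_\Theta(y))$ with $\rho$ inducing the paper's flag topology, is stronger than what that (quotient) topology supports: such a bound would make every $\Theta$-regularly divergent sequence with converging flags converge, whereas convergence in the quotient topology appears to require uniform regularity, so the asserted ``routine'' metric likely does not exist in that strength. The repair is to require the estimate only on a uniformly regular cone $Y'(o,\epsilon,\Theta)$ (which is the paper's formulation), and this suffices for your argument, since the points you apply it to satisfy $m_\Theta\ge d(x_0,z')/M-1$ while $d(o,\cdot)\le L(d(x_0,z')+1)$, hence are uniformly regular with $\epsilon$ depending only on $L,M$.
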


\begin{proof}
In view of (\ref{eqn: g > gF}), the maps $f_n \cup \partial f_n \colon Z \cup \partial_\infty Z \to Y \cup \mathcal{F}_\Theta$ converge uniformly on any compact subset of $Z$.

It remains to show that the convergence is still uniform on a neighborhood of $\partial_\infty Z$. 
We first prove uniform convergence of $\pi_\Theta \circ f_n$.
For points close to $\partial_\infty Z$, but not on it, this follows from Lemma \ref{lem: triangle inequality}, since we can make both terms on the right of \eqref{eqn: triangle inequality} arbitrarily small.
Let $c$ be a geodesic ray in $Z$ emanating from $x$ and let $r_m \to \infty$. 
The sequence $\pi_\Theta \circ f \circ c(r_m)$ is Cauchy by Lemma \ref{lem: dF}, and converges to $\partial f(c)$. 
To see that the convergence $\partial f_n(c) \to \partial f(c)$ is uniform with respect to $d_\mathcal{F}$, take the limit in (\ref{eqn: triangle inequality}) as $z' \to \infty$. 

Now we consider convergence in the fibers of $\pi_\Theta$.
For $0 < \epsilon < \frac1{ML}$, once $r_0$ is sufficiently large according to an easy explicit bound in terms of $L,M,\epsilon$, all the compositions $f_n\circ c$ on $[r_0,\infty)$ take values in the subset 
    \[ Y'(o,\epsilon,\Theta)=\left\{y \in Y \setminus \{o\} : \forall \alpha \in \Theta, \frac{ \alpha \circ \vec{d}_Y(o,y)}{d(o,y)} \ge \epsilon \right\}  \subset Y \setminus o^{\Theta-sing}\]
forming angle at least $\epsilon$ from the walls corresponding to $\Theta$, as seen from $o$. 
The closures of $Y'(o,\epsilon,\Theta) \cap (Y \setminus B(o,r)) \cap V(o,\tau)$ in the flag topology of $Y \cup \mathcal{F}_\Theta$ converge uniformly in $r$ to the singleton $\{\tau\}$.
In particular, for $r$ large enough, $f_n \circ c (r)$ is uniformly close to $\partial f_n(c)$ in the flag topology, and hence to $\partial f(c)$ and to $f \circ c(r)$ by the previous paragraph.
\end{proof}

\begin{corollary} \label{cor: compactness of urqies}
    The space of boundary maps of $(L,M,\Theta)$-quasi-isometric embeddings taking a point $x \in Z$ to a point $o \in Y$ is compact in the uniform topology.
\end{corollary}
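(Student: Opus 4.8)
The plan is to show that $\mathcal{B}$, the set of boundary maps $\partial f\colon \partial_\infty Z \to \mathcal{F}_\Theta$ arising from $(L,M,\Theta)$-quasi-isometric embeddings $f$ with $f(x)=o$, is a closed and precompact subset of $C(\partial_\infty Z,\mathcal{F}_\Theta)$ in the uniform metric. Since $Z$ is Gromov hyperbolic by Theorem \ref{thm: boundary maps}, its visual boundary $\partial_\infty Z$ is compact, and $\mathcal{F}_\Theta$ is compact; so by Arzela-Ascoli, precompactness of $\mathcal{B}$ reduces to equicontinuity of the family $\mathcal{B}$.

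For equicontinuity I would rerun the estimates from the proof of Proposition \ref{prop: uniform convergence}, now letting $f$ range over all such maps instead of fixing $f$ and sending $n\to\infty$. Given $\xi,\xi'\in\partial_\infty Z$ with Gromov product $R=(\xi\mid\xi')_x$, geodesic rays $c,c'$ from $x$ to $\xi,\xi'$ fellow-travel within a bounded distance (depending only on the hyperbolicity constant of $Z$) up to parameter $R$, so their images $f\circ c,\ f\circ c'$ are $(L,M,\Theta)$-quasigeodesics issuing from $o$ that stay within $O(L)$ of one another up to parameter $R$. Since $m_\Theta(f(c(R)))\ge R/M-1$ is large for $R$ large, Lemma \ref{lem: dF} bounds $d_\mathcal{F}(\pi_\Theta(f(c(R))),\partial f(\xi))$ and the analogous quantity for $\xi'$ by $D(L,M)/\sinh(R/M-1-D(L,M))$, while the comparison $g_Y\ge\sinh(m_\Theta)^2 g_\mathcal{F}$ of \eqref{eqn: g > gF} bounds $d_\mathcal{F}(\pi_\Theta(f(c(R))),\pi_\Theta(f(c'(R))))$ by $O(L)/\sinh(R/M-1-O(L))$. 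Summing, $d_\mathcal{F}(\partial f(\xi),\partial f(\xi'))$ is at most a function of $R$, independent of $f$, that tends to $0$ as $R\to\infty$. This is the required equicontinuity, so $\mathcal{B}$ is precompact.

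For closedness, suppose $\partial f_n\to\phi$ uniformly. Since $Y$ is proper and $f_n(z)\in\overline{B}(o,L(d_Z(x,z)+1))$ for every $z$, a diagonal argument over a countable dense set $D\ni x$ of $Z$ yields a subsequence along which $f_n$ converges pointwise on $D$ to a map $f_0\colon D\to Y$; the non-strict inequalities defining $L$-coarse Lipschitz and the $(M,\Theta)$-quasi-isometric embedding condition survive this limit, and $f_0(x)=o$. I would extend $f_0$ to $f\colon Z\to Y$ by letting $f(z)$, for $z\notin D$, be any subsequential limit of $f_0(z_k)$ along $z_k\in D$ with $z_k\to z$; using the coarse Lipschitz bound on $f_0$ together with joint continuity of $\vec{d}_Y$ one checks that $f$ is again an $(L,M,\Theta)$-quasi-isometric embedding with the same constants and still satisfies $f(x)=o$. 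Finally $\partial f=\phi$: since $f$ agrees with $f_0$ on the dense set $D$, its boundary map is computed from $f_0$, and chaining the $f$-uniform estimate of Lemma \ref{lem: dF}, the pointwise convergence $f_n\to f_0$ on $D$, and continuity of $\pi_\Theta$ away from $o^{\Theta-\mathrm{sing}}$, one gets $\partial f_n\to\partial f$ pointwise, hence uniformly by the equicontinuity just established; so $\phi=\partial f\in\mathcal{B}$.

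The main obstacle is precisely this closedness step. Because an $(L,M,\Theta)$-quasi-isometric embedding need not be equicontinuous, one cannot apply Arzela-Ascoli directly to the maps $f_n$, as one does under the Lipschitz hypothesis of Proposition \ref{prop: uniform convergence}; the workaround is to extract a pointwise limit on a countable dense subset and verify that the extension worsens neither $L$ nor $M$. This is elementary but is where the care lies; alternatively, when $Z$ is a manifold with a lower Ricci bound one could mollify via Proposition \ref{prop: Lipchitz}, at the cost of slightly worsening the constants. Everything else is a repackaging of the higher rank Morse lemma exactly as in the proof of Proposition \ref{prop: uniform convergence}.
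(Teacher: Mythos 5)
Your argument is correct, but it takes a genuinely different route from the paper. The paper's proof is a two-liner given its earlier machinery: it mollifies via Proposition \ref{prop: Lipchitz} to replace the coarse Lipschitz maps by uniformly Lipschitz ones with the same boundary maps, applies Arzela--Ascoli to the \emph{interior} maps (properness of $Y$ plus the normalization $x \mapsto o$) to get compactness in local uniform convergence on $Z$, and then invokes Proposition \ref{prop: uniform convergence} to upgrade to uniform convergence of the extensions on $Z \cup \partial_\infty Z$, hence compactness of the boundary maps. You instead compactify the family of \emph{boundary} maps directly: you extract from Lemma \ref{lem: dF} and the comparison \eqref{eqn: g > gF} a modulus of continuity at infinity that is uniform in $f$ (your middle-term bound implicitly uses, as in the proof of Lemma \ref{lem: dF}, that $m_\Theta$ stays large along the $Y$-geodesic joining $f(c(R))$ to $f(c'(R))$ by $1$-Lipschitzness of $\alpha \circ \vec{d}_Y(o,\cdot)$ -- worth saying explicitly), apply Arzela--Ascoli in $C(\partial_\infty Z, \mathcal{F}_\Theta)$, and then pay for skipping the interior compactness with a separate closedness step, extracting a pointwise limit of the $f_n$ on a countable dense set, extending it, and identifying its boundary map with the uniform limit $\phi$ by chaining the uniform Lemma \ref{lem: dF} estimate with pointwise interior convergence. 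What your route buys is independence from the mollification step: Proposition \ref{prop: Lipchitz} is stated for manifolds with a lower Ricci bound, so the paper's reduction to Lipschitz maps does not literally apply to a general locally compact geodesic $Z$ (it does in the paper's applications, where $Z$ is a rank $1$ symmetric space), and it also avoids the small bookkeeping that mollification slightly worsens the constants $(L,M)$ and moves the basepoint normalization within distance $L$. What the paper's route buys is brevity and reuse of Proposition \ref{prop: uniform convergence} verbatim, whereas you must re-derive its estimates in $f$-uniform form and carry out the dense-set limiting argument, whose verification (stability of the non-strict inequalities under pointwise limits and the approximation of $c(R)$ by points of the dense set when identifying $\partial f = \phi$) is routine but is the part of your sketch that would need to be written out.
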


\begin{proof}
    By Proposition \ref{prop: Lipchitz}, the space of boundary maps of $(L,M,\Theta)$-quasi-isometric embeddings is equal to the space $\mathcal{S}$ of boundary maps of Lipschitz $(L,M,\Theta)$-quasi-isometric embeddings. 
    The subspace taking $x$ to $y$ is compact in the topology of local uniform convergence on $Z$ by Arzela-Ascoli, and by Proposition \ref{prop: uniform convergence} it is also compact in the topology of uniform convergence on $Z \cup \partial_\infty Z$.
\end{proof}

\subsection{Transversality}

In this section we define some notions that we will need in order to state our criterion for stability, Theorem \ref{thm: finitely nontransverse urqies are stable on balls} below. Specifically, we give definitions of almost-everywhere transversality for measures on a flag variety $\mathcal{F}_\theta$ and for maps to $\mathcal{F}_\theta$. We first recall the definition of Kapovich-Leeb-Millson \cite{KapovichLeebMillson2009ConvexFunctionsSymmetric} of stability of measures on $\partial_\infty Y$.

\medskip

For a convex Lipschitz function $b \colon Y \to \R$, the asymptotic slope of $b$ along a geodesic ray $c \colon [0,\infty) \to Y$ is defined by 
    \[ \slope(b,c) \coloneqq \lim_{t \to \infty} \frac{b \circ c(t)}{t} .\]
A pair of asymptotic geodesic rays have the same asymptotic slopes, so the definition descends to a pairing of convex Lipschitz functions with the visual boundary of $Y$. 
The slope is closely related to the \emph{angular metric} on $\partial_\infty Y$, defined by 
    \[ \angle(\xi,\eta) \coloneqq \sup_{y \in Y} \angle_y(\xi,\eta) \]
where $\angle_y(\xi, \eta)$ is the Riemannian angle at $y$ between the geodesic rays emanating from $y$ asymptotic to $\xi,\eta$. 
In a symmetric space, the angular metric is achieved at $y$ when $y, \xi,\eta$ all belong to a common (maximal) flat, and there always exists a maximal flat containing both $\xi$ and $\eta$ in its visual boundary. 
As a consequence, 
\begin{equation}\label{eqn: slope vs angle}
    - \cos \angle(\xi,\eta) = \slope(b_\xi,\eta)
\end{equation}
holds for all $\xi,\eta \in \partial_\infty Y$. 
When the rank of $Y$ is at least $2$, the angular metric coincides with its associated length metric, which is called the \emph{Tits metric}.

Following Kapovich-Leeb-Millson \cite{KapovichLeebMillson2009ConvexFunctionsSymmetric}, we recall 
\begin{definition} \label{def: stable measure}
    A measure $\nu$ on $\partial_\infty Y$ is called \emph{stable} if for any $\eta \in \partial_\infty Y$, 
        \[ \int_{\partial_\infty Y} \slope(b_\eta,\xi) \nu(\xi) >0 .\]
\end{definition}

Recall that $\Theta$ is closed under the opposition involution and that a pair $\tau,\tau' \in \mathcal{F}_\Theta$ is called \emph{transverse} if there exists a geodesic $c \colon \R \to Y$ so that $c(+\infty)$ is in the relative interior of $\tau$ and $c(-\infty)$ is in the relative interior of $\tau'$.
The \emph{star} of a simplex $\tau$ is the union of all closed ideal Weyl chambers $\sigma$ containing $\tau$.

\begin{definition}
    A point $\eta \in \partial_\infty Y$ is called \emph{transverse} to $\tau \in \mathcal{F}_\Theta$ if it is contained in the star of some $\tau' \in \mathcal{F}_\Theta$ where $\tau'$ is transverse to $\tau$.
\end{definition}

We will also use the following invariant of the root system $(\mathfrak{a},\Pi)$ and the subset $\Theta$.

\begin{definition} \label{def: separation}
    The \emph{$\Theta$-separation} is
        \[ \epsilon(\mathfrak{a},\Theta) \coloneqq -\cos \min \{ \angle(x,y) : x \in W_\Theta \mathfrak{a}^+, y \in - \mathfrak{a}^+ \} .\]
\end{definition}

\begin{proposition} \label{prop: separation}
    If a $\Theta$-regular point $\xi \in \partial_\infty Y$ is incident to $\tau$ of type $\tau_\Theta$ and $\eta \in \partial_\infty Y$ is transverse to $\tau$, then
        \[ \slope(b_\eta,\xi) > \epsilon(\mathfrak{a},\Theta) .\]
\end{proposition}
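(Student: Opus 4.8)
The plan is to reduce the statement to a purely combinatorial fact about the angular metric on $\partial_\infty Y$ and then read off the bound from the definition of $\Theta$-separation. First I would recall the key identity \eqref{eqn: slope vs angle}, namely $\slope(b_\eta,\xi) = -\cos\angle(\xi,\eta)$, which lets us rewrite the desired inequality as $\angle(\xi,\eta) > \angle_0$ where $-\cos\angle_0 = \epsilon(\mathfrak{a},\Theta)$; equivalently, it suffices to prove that the angular distance between $\xi$ and $\eta$ is strictly larger than $\min\{\angle(x,y) : x \in W_\Theta\mathfrak{a}^+,\ y \in -\mathfrak{a}^+\}$. So everything comes down to placing $\xi$ and $\eta$ in standard position relative to a flat and a Weyl chamber.

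The main step is the following: since $\eta$ is transverse to $\tau$, by definition there is $\tau'\in\mathcal{F}_\Theta$ transverse to $\tau$ with $\eta$ in the star of $\tau'$. Transversality of $\tau,\tau'$ means there is a geodesic $c\colon\R\to Y$ with $c(+\infty)$ in the interior of $\tau$ and $c(-\infty)$ in the interior of $\tau'$; such a geodesic lies in a maximal flat $F$, and I would use $F$ to fix coordinates, identifying $\partial_\infty F$ with the unit sphere in $\mathfrak{a}$ so that $\tau$ corresponds to (the projectivization of) the open face of $\mathfrak{a}^+$ of type $\tau_\Theta$ and $\tau'$ to the opposite face in $-\mathfrak{a}^+$. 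Now $\xi$ is $\Theta$-regular and incident to $\tau$, so after applying an element of $K$ fixing $\tau$ (which does not change any angular distances) we may assume $\xi\in\partial_\infty F$ lies in the open chamber $\mathfrak{a}^+$ whose closure contains the face $\tau$; thus $\xi$ is represented by a unit vector in the interior of $\mathfrak{a}^+$. Similarly, $\eta$ lies in the star of $\tau'$, i.e.\ in some closed chamber containing $\tau'$; choosing a flat through $\xi$ and $\eta$ (which exists since the angular metric on a symmetric space is realized inside a common flat) and using the $W$-action, $\eta$ is represented by a unit vector in $W_\Theta\cdot(-\mathfrak{a}^+)$, the union of the images of the closed antipodal chamber under the parabolic subgroup $W_\Theta$ fixing $\tau'$. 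Then $\angle(\xi,\eta)$ is exactly the Euclidean angle in $\mathfrak{a}$ between these representatives, and since $\xi$ is an \emph{interior} point of $\mathfrak{a}^+$ while $\eta\in W_\Theta(-\mathfrak{a}^+)$, this angle is strictly greater than the infimum $\min\{\angle(x,y): x\in W_\Theta\mathfrak{a}^+,\ y\in-\mathfrak{a}^+\}$ defining $\epsilon(\mathfrak{a},\Theta)$ — after possibly applying a $W_\Theta$-symmetry to move $\eta$ into $-\mathfrak{a}^+$ and correspondingly $\xi$ into $W_\Theta\mathfrak{a}^+$.

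The one subtlety — which I expect to be the main obstacle — is the strictness of the inequality. The minimum in Definition \ref{def: separation} could a priori be attained only at a $\Theta$-singular point $x$ on the boundary of a $W_\Theta$-chamber, so I need that $\xi$ being a genuinely $\Theta$-regular point (interior to $\mathfrak{a}^+$ up to the star-preserving symmetries) keeps us strictly inside the region where the minimum is not attained; this requires checking that the angular distance from $-\mathfrak{a}^+$ to the $\Theta$-regular locus of $W_\Theta\mathfrak{a}^+$ is a strictly decreasing function of the distance to the $\Theta$-walls, or more simply that the function $y \mapsto \angle(\xi,y)$ on $W_\Theta(-\mathfrak{a}^+)$ is minimized only on the $\Theta$-singular boundary when $\xi$ is $\Theta$-regular. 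This is a finite computation in the Coxeter complex of $W$: it follows from convexity of the angle function on each chamber together with the observation that moving $\xi$ into the interior of $\mathfrak{a}^+$ strictly increases its angle to any fixed point of $-\mathfrak{a}^+$. I would spell this out by reducing to the rank-two case generated by $\xi$, $\eta$ and noting that in a single flat the claim is elementary spherical geometry, then assemble the global statement from the local one using that the angular metric is a length metric realized in flats.
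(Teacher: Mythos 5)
Your overall strategy is the same as the paper's: rewrite $\slope(b_\eta,\xi)=-\cos\angle(\xi,\eta)$, reduce to a spherical angle computation in a model apartment with $\xi$ placed in the star $W_\Theta\mathfrak{a}^+$ and $\eta$ in $W_\Theta(-\mathfrak{a}^+)$, and get strictness from $\Theta$-regularity. But two steps need repair. First, the normalization is not legitimate as written: applying an element of $K$ fixing $\tau$ to $\xi$ alone changes $\angle(\xi,\eta)$, and applying it to both moves $\eta$ out of position, so you cannot "assume $\xi\in\partial_\infty F$" for the flat $F$ you first chose. Moreover, when you then switch to a flat through $\xi$ and $\eta$, the assertion that $\eta$ sits in $W_\Theta(-\mathfrak{a}^+)$ relative to $\tau$ is exactly the nontrivial point: that flat need not contain $\tau'$, so "using the $W$-action" is not an argument. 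The clean fix is building-theoretic: since $\eta\in\mathrm{st}(\tau')$, there is a chamber $\sigma'$ containing both $\eta$ and $\tau'$; take an apartment containing the two simplices $\mathrm{span}(\xi)$ and $\sigma'$. Because $\xi$ is $\Theta$-regular, $\tau$ is a face of $\mathrm{span}(\xi)$, so this apartment contains $\tau$ and $\tau'$, which are then antipodal in it; the spherical distance in any apartment containing $\xi$ and $\eta$ equals $\angle(\xi,\eta)$, and now $\xi,\eta$ occupy the claimed model positions.

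The more serious issue is the strictness step, which you rightly flag as the crux but support with a false claim: it is not true that moving $\xi$ into the interior of $\mathfrak{a}^+$ strictly increases its angle to every fixed point of $-\mathfrak{a}^+$. For instance, in the rank-two reducible model (a circle with four chambers, $\Theta=\Pi$), take $x$ at a wall vertex of $\mathfrak{a}^+$ and $y$ its antipode in $-\mathfrak{a}^+$: moving $x$ into the interior strictly \emph{decreases} $\angle(x,y)$ from $\pi$. Your alternative formulation (that the minimizer in the $\eta$-variable lies on the $\Theta$-singular boundary) is also not the statement you need. What is needed is that the minimum defining $\epsilon(\mathfrak{a},\Theta)$ is never attained at a $\Theta$-regular $x$, provided that minimum lies strictly between $0$ and $\pi$. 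This has a short first-variation proof: $\Theta$-regular points of $W_\Theta\mathfrak{a}^+$ are interior to the star, so if a minimizing pair $(x_0,y_0)$ had $x_0$ $\Theta$-regular and angle in $(0,\pi)$, moving $x_0$ slightly toward $y_0$ along the sphere would stay in the star and strictly decrease the angle, a contradiction; consequently $\min_y\angle(\xi,y)$ for your fixed $\Theta$-regular $\xi$ strictly exceeds the global minimum. (Note that this argument, and the strict inequality itself, degenerates exactly when the minimum equals $\pi$, i.e.\ when the star is a single point as in rank one, where $\slope(b_\eta,\xi)=1=\epsilon(\mathfrak{a},\Pi)$; any correct proof of strictness must exclude or treat that case separately.)
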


\begin{proof}
    The $\Theta$-separation $\epsilon(\mathfrak{a},\Theta)$ is minus cosine of the minimum angle between the star of a simplex $\tau_+$ and the star of any opposite simplex $\tau_-$.
    Since $\xi$ is $\Theta$-regular, its type avoids the walls missing the relative interior of $\tau_\Theta$, so the bound is strict.
\end{proof}

\begin{proposition}\label{prop: separation is nonnegative}
    When $Y$ is irreducible and $\Theta = \Pi$, the diameter of the model Weyl chamber $\sigma_{mod}$ is strictly less than $\frac{\pi}{2}$, so we have the uniformly positive lower bound
    \[ \epsilon(Y) = \epsilon(\mathfrak{a},\Pi) = - \cos(\pi-\diam(\sigma_{mod})) > 0 .\]
    For example $\epsilon(Y_d)=\frac1{d-1}$.

    If $Y$ is not irreducible then $\epsilon(\mathfrak{a},\Pi)=0$ and $\epsilon(\mathfrak{a},\Theta)$ is negative for $\Theta \subsetneq \Pi$. 
\end{proposition}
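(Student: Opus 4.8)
The plan is to rewrite $\epsilon(\mathfrak{a},\Theta)$ purely as (the cosine of) the angular diameter of a star in a model apartment, and then read off all four assertions. By Definition~\ref{def: separation} together with the reformulation established in the proof of Proposition~\ref{prop: separation}, $\epsilon(\mathfrak{a},\Theta) = -\cos\min\{\angle(x,y) : x \in \mathrm{st}(\tau_+),\, y \in \mathrm{st}(\tau_-)\}$ where $\tau_\pm$ are opposite simplices of type $\tau_\Theta$ in a common apartment, and by the discussion around \eqref{eqn: slope vs angle} the angle $\angle$ is computed there as the round metric on $\partial_\infty\mathfrak{a}$. Take $\tau_+ = \tau_\Theta \subset \sigma_{mod}$ and $\tau_- = -\tau_+$, which is again of type $\tau_\Theta$ since $\Theta$ is closed under $-w_0$; since $-\mathrm{id}$ centralizes $W$ (it fixes every reflection $s_\alpha = s_{-\alpha}$), the simplices $\tau_+$ and $\tau_-$ have the same pointwise stabilizer, so $\mathrm{st}(\tau_-) = -\mathrm{st}(\tau_+)$ inside $\partial_\infty\mathfrak{a}$. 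Hence the minimum above equals $\min\{\pi - \angle(x,x') : x,x' \in \mathrm{st}(\tau_\Theta)\} = \pi - \diam(\mathrm{st}(\tau_\Theta))$, and therefore
\[ \epsilon(\mathfrak{a},\Theta) = -\cos\bigl(\pi - \diam(\mathrm{st}(\tau_\Theta))\bigr) = \cos \diam(\mathrm{st}(\tau_\Theta)). \]
So it remains to estimate the angular diameter of $\mathrm{st}(\tau_\Theta)$: for $\Theta = \Pi$ the star is $\sigma_{mod}$ itself, and one must show $\diam(\sigma_{mod}) < \pi/2$ in the irreducible case and $=\pi/2$ in the reducible case (and $\diam = \arccos\tfrac{1}{d-1}$ for $Y_d$), while for $\Theta \subsetneq \Pi$ one must show $\diam(\mathrm{st}(\tau_\Theta)) > \pi/2$.

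For $\Theta = \Pi$, write $\sigma_{mod} = \mathbb{P}(\mathfrak{a}^+)$ as the projectivization of the simplicial cone on the fundamental coweights $\varpi_1^\vee,\dots,\varpi_\ell^\vee$. The essential input is the classical fact that the inverse Cartan matrix of an irreducible root system has all entries strictly positive; equivalently $\langle\varpi_i^\vee,\varpi_j^\vee\rangle > 0$ for all $i,j$, as $\langle\varpi_i^\vee,\varpi_j^\vee\rangle$ is a positive multiple of an entry of the inverse Cartan matrix. Since all pairwise inner products of the cone generators are positive, $\sigma_{mod}$ lies in an open hemisphere, so its angular diameter is attained between two vertices, giving $\diam(\sigma_{mod}) = \max_{i,j}\arccos\frac{\langle\varpi_i^\vee,\varpi_j^\vee\rangle}{|\varpi_i^\vee|\,|\varpi_j^\vee|} < \pi/2$ and hence $\epsilon(Y) = \cos\diam(\sigma_{mod}) > 0$. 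For $Y_d$, computing in $\mathfrak{a} \cong \{v \in \R^d : v_1 + \dots + v_d = 0\}$ one finds $\cos\angle(\varpi_i^\vee,\varpi_j^\vee) = \sqrt{\tfrac{i(d-j)}{j(d-i)}}$ for $i \le j$, whose minimum over $1 \le i \le j \le d-1$ is $\tfrac{1}{d-1}$, attained between $\varpi_1^\vee$ and $\varpi_{d-1}^\vee$ (the dominant weights of the standard representation and its dual); thus $\epsilon(Y_d) = \tfrac{1}{d-1}$.

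For the reducible case, write $\mathfrak{a} = \mathfrak{a}_1 \oplus \dots \oplus \mathfrak{a}_k$ ($k \ge 2$) orthogonally with $\mathfrak{a}^+ = \mathfrak{a}_1^+ \times \dots \times \mathfrak{a}_k^+$; the inverse Cartan matrix is now block-diagonal with positive blocks, so all $\langle\varpi_i^\vee,\varpi_j^\vee\rangle \ge 0$ and $\diam(\sigma_{mod}) \le \pi/2$, while a pair of unit vectors supported in two distinct factors is orthogonal, so $\diam(\sigma_{mod}) = \pi/2$ and $\epsilon(\mathfrak{a},\Pi) = 0$. For $\Theta \subsetneq \Pi$, $\tau_\Theta$ is a proper face of $\sigma_{mod}$, so $\mathrm{st}(\tau_\Theta)$ strictly contains $\sigma_{mod}$ — it contains the reflected chamber $s_\beta\sigma_{mod}$ for any simple root $\beta \in \Pi \setminus \Theta$ — and it suffices to produce two points of $\mathrm{st}(\tau_\Theta)$ at angle $> \pi/2$. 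When $\Theta$ omits all the simple roots of some de Rham factor, $\mathrm{st}(\tau_\Theta)$ contains the entire subsphere $\partial_\infty\mathfrak{a}_i$ and hence antipodal points, so $\diam(\mathrm{st}(\tau_\Theta)) = \pi$ and $\epsilon(\mathfrak{a},\Theta) = -1$; otherwise one factors through the de Rham decomposition, reduces to an irreducible factor in which $\Theta$ is proper, and exhibits the obtuse pair inside $\sigma_{mod}^{(i)} \cup w\,\sigma_{mod}^{(i)}$ for a suitable $w$ fixing $\tau_{\Theta \cap \Pi_i}$, controlling the inner products via positivity of that factor's inverse Cartan matrix.

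The main obstacle is the $\Theta \subsetneq \Pi$ case: the $\Theta = \Pi$ statements fall out at once from the formula $\epsilon(\mathfrak{a},\Theta) = \cos\diam(\mathrm{st}(\tau_\Theta))$ together with positivity of the inverse Cartan matrix, and the value for $Y_d$ is a short linear-algebra computation, but choosing the reflected chamber that forces $\mathrm{st}(\tau_\Theta)$ to reach past angle $\pi/2$ and checking the sign of the resulting inner product is a genuine, if elementary, computation in the Coxeter complex. One should also take care to justify the step $\mathrm{st}(\tau_-) = -\mathrm{st}(\tau_+)$ and — equivalently — that the minimum defining $\epsilon(\mathfrak{a},\Theta)$ over the a priori larger star of the opposite simplex is already realized against the single chamber $-\mathfrak{a}^+$ that appears in Definition~\ref{def: separation}; this follows from the invariance of both stars under the stabilizer of $\tau_\Theta$.
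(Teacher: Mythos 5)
Your reduction $\epsilon(\mathfrak{a},\Theta)=\cos\diam(\mathrm{st}(\tau_\Theta))$ is correct, and the care you take with $\mathrm{st}(\tau_-)=-\mathrm{st}(\tau_+)$ and with replacing $-\mathfrak{a}^+$ by the full star of the opposite simplex via invariance under the pointwise stabilizer of $\tau_\Theta$ is exactly what the paper leaves implicit around Proposition \ref{prop: separation}. For the $\Theta=\Pi$ assertions your route differs from the paper's in one pleasant way: where the paper reduces the diameter to pairs of vertices (fundamental coweights) and then invokes a case-by-case check over the irreducible root systems, you get $\diam(\sigma_{mod})<\pi/2$ uniformly from strict positivity of the inverse Cartan matrix, i.e.\ $\langle\varpi_i^\vee,\varpi_j^\vee\rangle>0$. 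This same positivity also gives vertex attainment cleanly: for unit vectors $x=\sum a_i u_i$, $y=\sum b_j u_j$ with $a_i,b_j\ge 0$ and pairwise nonnegative $\langle u_i,u_j\rangle$ one has $\sum a_i\ge |x|=1$, hence $\langle x,y\rangle\ge \min_{i,j}\langle u_i,u_j\rangle$; you should state this rather than only ``lies in an open hemisphere'', which by itself does not yield attainment at vertices. Your $A_{d-1}$ computation $\cos\angle(\varpi_i^\vee,\varpi_j^\vee)=\sqrt{i(d-j)/(j(d-i))}$ with minimum $1/(d-1)$ at $(i,j)=(1,d-1)$ is correct, and the reducible $\Theta=\Pi$ case (block-diagonal inverse Cartan matrix, orthogonal pair of vertices from two factors) matches the paper's argument.

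The genuine gap is in the last clause, $\Theta\subsetneq\Pi$. Your plan in the sub-case where $\Theta$ meets every de Rham factor --- exhibiting an obtuse pair inside $\sigma_{mod}^{(i)}\cup w\,\sigma_{mod}^{(i)}$ for $w$ fixing $\tau_{\Theta\cap\Pi_i}$ --- fails in general: for an irreducible factor of type $B_2$ the star of a panel is an arc of length $\pi/2$, and for $G_2$ an arc of length $\pi/3$, so no pair in that star is obtuse and the contribution of such a factor to $\cos\diam(\mathrm{st}(\tau_\Theta))$ is nonnegative. Indeed, with your own formula one checks that for $\mathfrak{a}$ of type $G_2\times G_2$ and $\Theta$ consisting of one simple root from each factor, $\diam(\mathrm{st}(\tau_\Theta))=\pi/2$ and hence $\epsilon(\mathfrak{a},\Theta)=0$, not $<0$; so this clause can at best be proved in a weaker form (nonpositivity, or strict negativity when $\Theta$ omits all simple roots of some factor --- that case your argument does handle, giving $\epsilon=-1$). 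The paper's own proof is silent on this clause, and it is not used elsewhere (Corollary \ref{cor: positive separation and finitely non-transverse limits} only needs $\epsilon(\mathfrak{a},\Pi)\ge 0$), so the rest of your argument stands, but as written that step of your proposal would not go through.
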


\begin{proof}
    The angular diameter of $\sigma_{mod}$ is achieved as the angle between two vertices. 
    The vertices of $\sigma_{mod}$ are unit vectors proportional to the fundamental coweights $\omega_i^\vee$, defined by 
        \[ \frac{\langle \omega_i, \alpha_j \rangle}{\langle \alpha_j, \alpha_j \rangle} = \delta_{ij} .\]
    Since $Y$ is irreducible if and only if the root system $(\mathfrak{a},\Pi)$ is irreducible, the stated property can be checked by a straightforward case-by-case calculation.    

    When $Y$ is reducible, the root system splits orthogonally so the diameter of $\sigma_{mod}$ is $\pi/2$. 
\end{proof}

\begin{definition} \label{def: generically transverse}
    A measure $\nu$ on $\mathcal{F}_\Theta$ is called \emph{generically transverse} if, for any $\eta \in \partial_\infty Y$, the set of $\tau \in \mathcal{F}_\Theta$ non-transverse to $\eta$ has $\nu$-measure zero.
\end{definition}

In the case of interest, we aim to deduce stability of a $\Theta$-quasi-isometric embedding purely from properties of its boundary map. 
To this end, the following definition will be useful.

\begin{definition}\label{def: finitely non-transverse}
    A map $\xi \colon \Lambda \to \mathcal{F}_\Theta$ is called \emph{finitely non-transverse} if, for each $\eta \in \partial_\infty Y$, the subset of $\Lambda$ consisting of points $\lambda$ so that $\xi(\lambda)$ is not transverse to $\eta$ is finite. 
\end{definition}

If $\nu$ is a non-atomic finite measure on $\Lambda$ and $\xi$ is finitely non-transverse, then $\xi_*\nu$ is generically transverse.

In particular, finitely-non transverse maps are finite-to-one. 
For us, $\Lambda$ will arise as the visual boundary of a pinched Hadamard space, and $\xi$ will be the boundary map of an $(L,M,\Theta)$-quasi-isometric embedding.

\begin{remark}
    There is a totally geodesic copy of $\H^2 \times \R$ in $Y_3$ and the inclusion of $\H^2 \times \{pt\} \subset \H^2 \times \R \subset Y_3$ is a $\Pi$-quasi-isometric embedding. 
    The boundary map $\xi \colon \partial_\infty \H^2 \to \Flag(\R^3)$ is everywhere non-transverse to either endpoint of $\R$ in $\partial_\infty Y_3$. This shows that finite non-transversality is not automatic for $\Pi$-quasi-isometric embeddings. 
\end{remark}

In the next section, we will extract measures on $\partial_\infty Y$ from $\Theta$-quasi-isometric embeddings. 
We have the following criterion for those measures to be stable, even quantifiably.
Recall the continuous map $\pi_\Theta \colon \partial^{\Theta-reg}_\infty Y \to \mathcal{F}_\Theta$ from the $\Theta$-regular part of the ideal boundary to the flag manifold. 

\begin{lemma}\label{lem: stability of generically transverse measures}
    Let $\nu$ be a locally finite measure on $\partial_\infty Y$ with support in $\partial^{\Theta-reg}_\infty Y$.
    Suppose that the pushforward measure $\pi_\ast \nu$ on $\mathcal{F}_\Theta$ is generically transverse. 
    Then for any $\eta \in \partial_\infty Y$, we have 
        \[ \int_{\partial_\infty Y} \slope(\eta,\xi) \nu(\xi) >  \epsilon(\mathfrak{a},\Theta)\Abs{\nu} .\]
\end{lemma}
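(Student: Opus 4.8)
The plan is to reduce the bound to a pointwise inequality on the slope $\slope(b_\eta, \xi)$ for $\nu$-almost every $\xi$, and then integrate. Fix $\eta \in \partial_\infty Y$ and set $\tau = \pi_\Theta(\xi) \in \mathcal{F}_\Theta$ for $\xi$ in the support of $\nu$; this is well-defined since $\supp\nu \subset \partial_\infty^{\Theta-reg}Y$. By the hypothesis that $\pi_\ast\nu$ is generically transverse (Definition~\ref{def: generically transverse}), the set of $\xi$ for which $\tau = \pi_\Theta(\xi)$ is \emph{not} transverse to $\eta$ has $\nu$-measure zero. So for $\nu$-a.e.\ $\xi$, the point $\eta$ is transverse to $\pi_\Theta(\xi)$, and since $\xi$ is itself $\Theta$-regular and incident to $\pi_\Theta(\xi)$, Proposition~\ref{prop: separation} applies and gives the strict pointwise bound
\[
    \slope(b_\eta,\xi) > \epsilon(\mathfrak{a},\Theta).
\]

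First I would record that $\slope(b_\eta,\xi) = -\cos\angle(\eta,\xi) \in [-1,1]$ by \eqref{eqn: slope vs angle}, so the integrand is bounded and measurable and the integral makes sense (using that $\nu$ is locally finite and, implicitly here, finite — or that one restricts attention to the relevant finite-mass situation; in the application $\nu$ will be a finite measure). Then I would integrate the pointwise inequality: since $\slope(b_\eta,\xi) > \epsilon(\mathfrak{a},\Theta)$ holds off a $\nu$-null set,
\[
    \int_{\partial_\infty Y} \slope(b_\eta,\xi)\,\nu(\xi) \ge \epsilon(\mathfrak{a},\Theta)\,\Abs{\nu}.
\]
To upgrade this to a \emph{strict} inequality, I would argue that $\nu$ cannot be concentrated on the $\nu$-null set where equality could occur: on a set of positive $\nu$-measure we have $\slope(b_\eta,\xi) > \epsilon(\mathfrak{a},\Theta)$, and by inner regularity there is a compact subset $C$ of that set with $\nu(C) > 0$; on $C$ the continuous function $\xi \mapsto \slope(b_\eta,\xi)$ attains a minimum $\epsilon(\mathfrak{a},\Theta) + \delta$ for some $\delta > 0$, whence the integral exceeds $\epsilon(\mathfrak{a},\Theta)\Abs{\nu} + \delta\,\nu(C) > \epsilon(\mathfrak{a},\Theta)\Abs{\nu}$.

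The main subtlety — not so much an obstacle as a point requiring care — is the compactness/continuity step used to extract a uniform gap $\delta$: one needs that $\xi \mapsto \slope(b_\eta,\xi)$ is continuous on $\partial_\infty Y$ (which follows from \eqref{eqn: slope vs angle} and continuity of the angular metric on the visual compactification of a symmetric space) and that $\pi_\Theta$ restricted to the $\Theta$-regular boundary is continuous, so that the non-transverse locus is closed and hence its complement, intersected with $\supp\nu$, is a suitable domain for applying inner regularity of the Radon measure $\nu$. Everything else is a direct concatenation of Proposition~\ref{prop: separation}, Definition~\ref{def: generically transverse}, and integration.
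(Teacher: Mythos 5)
Your proposal follows the paper's proof essentially verbatim: generic transversality of $\pi_\ast\nu$ gives the pointwise strict bound $\slope(b_\eta,\xi) > \epsilon(\mathfrak{a},\Theta)$ for $\nu$-a.e.\ $\xi$ via Proposition~\ref{prop: separation}, and integrating gives the stated strict inequality. Your additional compactness/continuity step for strictness is unnecessary (if a measurable function exceeds $c$ $\nu$-a.e., its integral against a nonzero finite measure exceeds $c\Abs{\nu}$ by elementary measure theory) and is slightly misstated, since $\xi\mapsto\slope(b_\eta,\xi)$ is in general only lower semicontinuous in the visual topology rather than continuous — though lower semicontinuity already suffices for the argument you sketch.
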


\begin{proof}
    Since $(\pi_\Theta)_\ast \nu$ is generically transverse, the subset of $\mathcal{F}_\Theta$ non-transverse to $\eta$ has $(\pi_\Theta)_\ast \nu$-measure zero. 
    Hence $\nu$-almost everywhere on $\partial_\infty Y$, $\mathrm{slope}(\eta,\xi) > \epsilon(\mathfrak{a},\Theta)$ (Proposition \ref{prop: separation}), and so the integral is strictly bounded below by $\epsilon(\mathfrak{a},\Theta)\Abs{\nu}$.
\end{proof}

\subsection{A criterion for stability}

\begin{theorem}\label{thm: finitely nontransverse urqies are stable on balls}
    Let $Y$ be a symmetric space of non-compact type with a non-empty subset $\Theta$ of simple restricted roots, closed under the opposition involution, and let $X$ be a symmetric space of rank $1$.
    Let $\mathcal{QI}_\Theta(L,M)$ be the space of $L$-Lipschitz $(M,\Theta)$-quasi-isometric embeddings from $X$ to $Y$ and let $\mathcal{Q}$ be a closed subspace of $\mathcal{QI}_\Theta(L,M)$, invariant by pre- and post-composition by isometries, such that all $f \in \mathcal{Q}$ have finitely non-transverse boundary map.
    Then
    \begin{equation} \label{eqn: purqies stable on balls}
        \lim_{r_0 \to \infty} \inf_{f,x,\eta, r \geq r_0} \frac{1}{r} \int_{\partial B(x,r)} b_\eta \circ f(z) - b_\eta \circ f (x) \mu_x^r(z) > \frac{\epsilon(\mathfrak{a},\Theta)}{M},
    \end{equation}
    where the infimum is over all $x \in X$, $\eta \in \partial_\infty Y$, and $f \in \mathcal{Q}$, and $\mu_x^r$ is the hitting measure on $\partial B(x,r)$, and $\epsilon(\mathfrak{a},\Theta)$ is the $\Theta$-separation (Definition \ref{def: separation}).
\end{theorem}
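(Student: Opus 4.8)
The plan is to argue by contradiction, reducing \eqref{eqn: purqies stable on balls} to a statement about a single limiting $\Theta$-quasi-isometric embedding, and then to estimate the integrand ray-by-ray using the higher rank Morse lemma together with the separation estimate of Proposition \ref{prop: separation}.

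First I would exploit homogeneity: since $\mathrm{Isom}(X)$ is transitive and $\mathcal{Q}$ is invariant under pre-composition, we may fix $x = o_X$, and since $G$ is transitive on $Y$ and $\mathcal{Q}$ is invariant under post-composition, we may also assume $f(o_X)=o_Y$ for a fixed basepoint. The infimum $I(r_0)$ in \eqref{eqn: purqies stable on balls} is non-decreasing in $r_0$; assuming for contradiction that $\lim_{r_0\to\infty}I(r_0)\le \epsilon(\mathfrak{a},\Theta)/M$, we obtain $r_n\to\infty$, maps $f_n\in\mathcal{Q}$ with $f_n(o_X)=o_Y$, and points $\eta_n\in\partial_\infty Y$ with $\frac1{r_n}\int_{\partial B(o_X,r_n)}(b_{\eta_n}\circ f_n(z)-b_{\eta_n}(o_Y))\,\mu_{o_X}^{r_n}(z)<\epsilon(\mathfrak{a},\Theta)/M+1/n$. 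By Corollary \ref{cor: compactness of urqies} and Proposition \ref{prop: uniform convergence} we may pass to a subsequence with $f_n\to f_\infty\in\mathcal{Q}$ uniformly on $X\cup\partial_\infty X$ (so $\partial f_n\to\partial f_\infty$) and $\eta_n\to\eta_\infty$. Transporting to the ideal boundary, identify $\partial B(o_X,r)$ with $\partial_\infty X$ via $z=c_\xi(r)$, so that $\mu_{o_X}^r$ becomes a probability measure $\tilde\mu^r$ on $\partial_\infty X$; since $X$ is a rank $1$ symmetric space, these have smooth densities converging to that of the non-atomic harmonic measure $\nu$ on $\partial_\infty X$.

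The core estimate is: for every $\xi\in\partial_\infty X$ with $\partial f_\infty(\xi)$ transverse to $\eta_\infty$,
\[
  \liminf_{n\to\infty}\frac1{r_n}\big(b_{\eta_n}\circ f_n(c_\xi(r_n))-b_{\eta_n}(o_Y)\big) > \frac{\epsilon(\mathfrak{a},\Theta)}{M}.
\]
To see this, put $y_n=f_n(c_\xi(r_n))$. Applying the Morse lemma (Theorem \ref{thm: higher rank morse lemma}) to the initial segments of the quasigeodesics $f_n\circ c_\xi$, together with $\partial f_n\to\partial f_\infty$, places $y_n$ within a bounded distance of the Weyl cone $V_\Theta(o_Y,\partial f_\infty(\xi))$; the quasi-isometric embedding inequality forces $m_\Theta(\vec d_Y(o_Y,y_n))\ge r_n/M-1$, so the direction $\zeta_n$ of $[o_Y,y_n)$ is $\Theta$-regular and, along a further subsequence, converges to a $\Theta$-regular point $\zeta_\infty$ incident to $\partial f_\infty(\xi)$ and bounded away from the walls. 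Since $b_{\eta_\infty}$ is convex and $1$-Lipschitz, it grows along $[o_Y,\zeta_n)$ with asymptotic slope $\slope(b_{\eta_\infty},\zeta_n)=-\cos\angle(\eta_\infty,\zeta_n)$ by \eqref{eqn: slope vs angle}, which by Proposition \ref{prop: separation} exceeds $\epsilon(\mathfrak{a},\Theta)$; the gap between this asymptotic slope and the finite-distance Busemann increment is a "deficit" which is bounded uniformly once $\zeta$ ranges over a compact set of regular directions with $o_Y$ and $\eta_\infty$ fixed, since it decays exponentially along the regular ray. Dividing $b_{\eta_n}(y_n)-b_{\eta_n}(o_Y)\ge t_n\,\slope(b_{\eta_\infty},\zeta_n)-O(1)-o(r_n)$ by $r_n$ and using $t_n\,m_\Theta(\mathrm{type}\ \zeta_n)=r_n/M+O(1)$ gives the claimed strict inequality, the strictness coming from Proposition \ref{prop: separation} together with the metric normalization fixed at the start of the section (so that $m_\Theta\le 1$ on unit vectors). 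For the finitely many — hence, $\nu$ being non-atomic, $\nu$-null — directions $\xi$ with $\partial f_\infty(\xi)$ not transverse to $\eta_\infty$, only the trivial bound $\ge -L$ is used.

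Finally, a Fatou-type argument for the weakly converging measures $\tilde\mu^{r_n}\to\nu$ (legitimate since they have densities converging to that of $\nu$), using that the integrands are uniformly bounded below and that the bad set is $\nu$-null, yields $\liminf_n \frac1{r_n}\int_{\partial B(o_X,r_n)}(\cdots)\mu_{o_X}^{r_n}\ge \int_{\partial_\infty X}[\text{a function }>\epsilon(\mathfrak{a},\Theta)/M\text{ a.e.}]\,d\nu>\epsilon(\mathfrak{a},\Theta)/M$, contradicting the choice of $f_n,\eta_n,r_n$. Equivalently, one can package the ray-directions $\zeta_r(\xi)$ into probability measures $(\zeta_r)_*\nu$ on $\partial_\infty^{\Theta\text{-reg}}Y$ whose pushforwards to $\mathcal{F}_\Theta$ converge to the generically transverse measure $(\partial f_\infty)_*\nu$, and invoke Lemma \ref{lem: stability of generically transverse measures} directly. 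The main obstacle is the core estimate above, and within it the passage from the asymptotic slope supplied by Proposition \ref{prop: separation} to a genuine finite-scale lower bound on $b_{\eta_n}\circ f_n(c_\xi(r_n))$; this is precisely where the uniform control of the Busemann deficit along regular rays, and the uniform proximity to a fixed Weyl cone coming from the Morse lemma and Proposition \ref{prop: uniform convergence}, are essential.
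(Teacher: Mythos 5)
Your overall strategy (contradiction, isometric renormalization, passing to a limiting map with transverse boundary values, and separation via Proposition \ref{prop: separation}) runs parallel to the paper's, but there is a genuine gap at exactly the step you flag as the ``core estimate'': converting the asymptotic slope bound into a finite-scale lower bound on $b_{\eta_n}\circ f_n(c_\xi(r_n))-b_{\eta_n}(o_Y)$. Convexity of $t\mapsto b_\eta(c(t))$ says the normalized increment $\bigl(b_\eta(c(t))-b_\eta(c(0))\bigr)/t$ is non-decreasing in $t$ with limit $\slope(b_\eta,c)$, so at every finite $t$ it is bounded \emph{above} by the slope; your inequality $b_{\eta_n}(y_n)-b_{\eta_n}(o_Y)\ge t_n\,\slope(b_{\eta_\infty},\zeta_n)-O(1)-o(r_n)$ goes the other way and needs a quantitative statement that the increment nearly attains its asymptotic value. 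The justification offered --- a deficit that ``decays exponentially along the regular ray'' and is ``bounded uniformly'' for $\zeta$ in a compact set of regular directions with $\eta_\infty$ fixed --- is not correct as stated: the deficit $t\,\slope-\bigl(b_\eta(c(t))-b_\eta(o)\bigr)$ is non-decreasing in $t$, is finite in the limit only under a transversality assumption on the pair, and what you need is uniformity over the \emph{varying} pairs $(\eta_n,\zeta_n)$ evaluated at $t_n\to\infty$, which is precisely the delicate point (the deficit blows up as a pair degenerates toward a non-transverse configuration). Relatedly, the claim that $y_n$ stays within bounded distance of the \emph{fixed} cone $V_\Theta(o_Y,\partial f_\infty(\xi))$ does not follow from $\partial f_n\to\partial f_\infty$, since cones over distinct nearby flags diverge linearly; what is true, and suffices, is that $\pi_\Theta(y_n)\to\partial f_\infty(\xi)$ in $\mathcal{F}_\Theta$ while the types of the directions $\zeta_n$ stay uniformly $\Theta$-regular, so $\zeta_n$ subconverges to a $\Theta$-regular point incident to $\partial f_\infty(\xi)$.

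The paper avoids any deficit analysis by a soft device you do not invoke: it rewrites the integrand as the ratio $\bigl(b_{\eta_n}(w)-b_{\eta_n}(y)\bigr)/d_Y(y,w)$ integrated against the pushforwards $(f_n)_*\nu_n$ of the renormalized measures $\nu_n=\frac{d_Y(f_n(x),f_n(z))}{r_n}\mu_x^{r_n}$, notes that this ratio, as a function on $\partial_\infty Y\times\bigl((Y\setminus\{y\})\cup\partial_\infty Y\bigr)$ extending to $\slope(b,\xi)$ at infinity, is lower semicontinuous (\cite[Lemma 9.16]{bridsonMetricSpacesNonPositive1999}), and then applies Portmanteau together with Lemma \ref{lem: stability of generically transverse measures}. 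The same lower semicontinuity would repair your ray-by-ray version with no deficit estimate at all: since $y_n\to\zeta_\infty$ in the visual compactification and $\eta_n\to\eta_\infty$, it gives $\liminf_n\bigl(b_{\eta_n}(y_n)-b_{\eta_n}(o_Y)\bigr)/d_Y(o_Y,y_n)\ge\slope(b_{\eta_\infty},\zeta_\infty)>\epsilon(\mathfrak{a},\Theta)$, after which your normalization $d_Y(o_Y,y_n)\ge r_n/M-1$ (an inequality, not the equality you wrote; turning it into the stated bound also uses that the slope is positive, i.e.\ $\epsilon(\mathfrak{a},\Theta)\ge 0$) and the Fatou step (clean here because, by symmetry of the rank-one space $X$ about $x$, the radial projection of $\mu_x^r$ to $\partial_\infty X$ is the round measure for every $r$) finish the argument. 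As written, though, the crucial finite-scale estimate is asserted rather than proved, and its stated justification is wrong in direction.
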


\begin{proof}
We take a sequence of $f_n$, $x_n$, $r_n \to \infty$, and $\eta_n$ converging to the liminf. 
Up to pre- and post-composition by isometries, we can suppose the $x_n$ are all equal to some point $x \in X$, and moreover $f_n(x)$ is always equal to some point $y \in Y$.

For each $n$ we have
\begin{align}
    &\frac{1}{r_n} \int_{X} (b_{\eta_n}\circ f_n(z) - b_{\eta_n}\circ f_n(x))\mu_x^{r_n}(z) \\
    =& \int_{X} \frac{b_{\eta_n}\circ f_n(z) - b_{\eta_n}\circ f_n(x)}{d_Y(f_n(x),f_n(z))} \frac{d_Y(f_n(x),f_n(z))}{t_n}\mu^{r_n}_x(z) \\
    =& \int_{Y} \frac{b_{\eta_n}(w) - b_{\eta_n}(y)}{d_Y(y,w)} ((f_n)_* \nu_n)(w) \label{eqn: finite n stability}
\end{align}
where $\nu_n$ is the measure on $X$ defined by
    \[
        \nu_n(z) = \frac{d_Y(f_n(x),f_n(z))}{r_n} \mu_x^{r_n}(z).
    \]
Since $f_n$ is an $L$-Lipschitz $(M,\Pi)$-quasi-isometric embedding, we have
\begin{equation}\label{eqn: bounding meaure from urqie}
   \left(\frac1{M}-\frac1{r_n}\right) \mu_x^{r_n} \leq \nu_n \leq L \mu_x^{r_n}. 
\end{equation}

In particular, $\frac{1}{M} - \frac{1}{r_n} \le \Abs{\nu_n} \le L$, so the sequence $\nu_n$ is weak-* precompact on $X \cup \partial_\infty X$, and letting $\nu$ be a subsequential weak-* limit, $\nu$ has mass at least $\frac{1}{M}$. 
Furthermore, $\mu_x^{r_n}$ limits to the harmonic measure $\mu_x^\infty$ supported on $\partial_\infty X$ so by \eqref{eqn: bounding meaure from urqie}, $\nu$ is also supported on $\partial_\infty X$ and is in the measure class of $\mu_x^\infty$.

Passing to a further subsequence if necessary, assume also that 
\begin{enumerate}
    \item the sequence $(f_n)_\ast \nu_n$ converges weakly to a measure $\nu'$ on $Y \cup \partial_\infty Y$,
    \item $\eta_n$ converges to a point $\eta$ of $\partial_\infty Y$, and
    \item $f_n$ converges uniformly on compacts to an $L$-Lipschitz $(M,\Theta)$-quasi-isometric embedding $f$ in $\mathcal{Q}$.
\end{enumerate}

We now verify that the conditions of Lemma \ref{lem: stability of generically transverse measures} are satisfied. 
Since $f_n$ are all $(L,M,\Theta)$-quasi-isometric embeddings, $\nu'$ has support inside $\partial_\infty^{\Theta-reg} Y$. 
Since the maps
\[
    \overline{f}_n = f_n \cup \partial f_n \colon X \cup \partial_\infty X \to Y \cup \mathcal{F}_\Theta
\]
converge uniformly to $\overline{f}$ by Proposition \ref{prop: uniform convergence}, 
and the map
\[
    \mathrm{Id} \cup \pi \colon Y \cup \partial^\mathrm{\Theta-reg}_\infty Y \to  Y \cup \mathcal{F}_\Theta
\]
is continuous, we have
\[
    \pi_\ast \nu' = (\mathrm{Id} \cup \pi)_\ast \lim ((f_n)_\ast \nu_n) = \lim ((f_n \cup \partial f_n)_\ast \nu_n) = (f \cup \partial f)_\ast \nu = (\partial f)_\ast \nu.
\]
Since $\nu$ is in the measure class of the harmonic (round) measure $\mu_x^\infty$, it has no atoms.
Then its pushforward by the finitely non-transverse map $\partial f$ is generically transverse.
So Lemma \ref{lem: stability of generically transverse measures} applies and yields
\begin{equation} \label{eqn: slope lower bound}
    \int_{\partial_\infty Y} \slope(b_\eta,\xi) \nu'(\xi) > \Abs{\nu'} \epsilon(\mathfrak{a},\Theta) \ge \frac{\epsilon(\mathfrak{a},\Theta)}{M}.
\end{equation}

Finally, we relate this to equation \eqref{eqn: finite n stability}. It is natural to consider the function $\partial_\infty Y \times ((Y\setminus\{y\}) \cup \partial_\infty Y)  \to \R$ defined for $b \in \partial_\infty Y$, $w \in Y$ and $\xi \in \partial_\infty Y$ by 
    \[ (b,w) \mapsto \frac{b(w)-b(y)}{d(y,w)}, \quad (b,\xi) \mapsto \slope(b,\xi) .\]
It is a standard fact that this function is lower semi-continuous, see \cite[Lemma 9.16]{bridsonMetricSpacesNonPositive1999}.
Since $(f_n)_\ast \nu_n$ weakly limits to $\nu'$, this lower semi-continuity implies, by Portmanteau's theorem, that:
\begin{equation} \label{eqn: Portmanteau}
     \liminf_n \int_Y \frac{b_{\eta_n}(w) - b_{\eta_n}(y)}{d_Y(y,w)} ((f_n)_* \nu_n)(w) \ge \int_{\partial_\infty Y} \slope(\eta,\xi)\nu'(\xi) .
\end{equation}
Together with \eqref{eqn: slope lower bound}, this implies the theorem.

\end{proof}

\medskip

Let $f: X \to Y$ be a continuous map between symmetric spaces, and suppose $f(x) = y$ for some $x\in X$ and $y \in Y$. For the following corollary, we call a continuous map $g: X \to Y$ a \emph{limit} of $f$ if there is a sequence of isometries $\gamma_i \in \mathrm{Isom}(X)$ and $\gamma'_i \in \mathrm{Isom}(Y)$ such that $\gamma'_i \circ f \circ \gamma_i (x) = y$ and $\gamma'_i \circ f \circ \gamma_i$ converges locally uniformly to $g$. 

\begin{corollary}\label{cor: positive separation and finitely non-transverse limits}
    There exists $r=r(L,M,X,Y)$ such that if $f \colon X \to Y$ is an $L$-Lipschitz $(M,\Pi)$-quasi-isometric embedding such that every limit of $f$ has finitely non-transverse boundary map, then $f$ is stable at scale $r$ (Definition \ref{def: stable at scale r}).
\end{corollary}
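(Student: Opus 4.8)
The plan is to deduce the statement from the ball‑wise estimate of Theorem~\ref{thm: finitely nontransverse urqies are stable on balls} together with Proposition~\ref{prop: Busemann enough}. Recall that stability at scale $r$ must be checked against all distance functions $d_y$; by Proposition~\ref{prop: Busemann enough}, which applies because a symmetric space has extendable geodesics, it is enough to check that $b_\eta\circ f$ is coarsely strictly subharmonic at scale $r$ for every $\eta\in\partial_\infty Y$, and there the hitting measures $\mu_x^r$ on the spheres $\partial B(x,r)$ will serve as the witnessing family of harmonic measures.

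To invoke Theorem~\ref{thm: finitely nontransverse urqies are stable on balls} I would take $X$ (rank one) and $Y$ as given, $\Theta=\Pi$ (which is closed under the opposition involution), and let $\mathcal{Q}$ be the closure, in the topology of local uniform convergence, of $\{\gamma'\circ f\circ\gamma:\gamma\in\mathrm{Isom}(X),\ \gamma'\in\mathrm{Isom}(Y)\}$. The $L$‑Lipschitz and $(M,\Pi)$‑quasi‑isometric embedding conditions pass to local uniform limits (by continuity of $\vec{d}_Y$) and are preserved under pre‑ and post‑composition with isometries, so $\mathcal{Q}$ is a closed, isometry‑invariant subspace of $\mathcal{QI}_\Pi(L,M)$ containing $f$. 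Moreover any $g\in\mathcal{Q}$ is an isometric image of a limit of $f$ in the sense defined just before the corollary (normalize the recentered maps so that they send $x$ to $y$, using the isometries of $Y$ taking $g(x)$ back to $y$ in the limit), and finite non‑transversality of the boundary map is unaffected by pre‑ and post‑composition with isometries, so the hypothesis on $f$ forces every member of $\mathcal{Q}$ to have finitely non‑transverse boundary map. Theorem~\ref{thm: finitely nontransverse urqies are stable on balls} therefore gives
\[
    \lim_{r_0\to\infty}\ \inf_{g\in\mathcal{Q},\,x,\,\eta,\,r\ge r_0}\ \frac1r\int_{\partial B(x,r)}\bigl(b_\eta\circ g(z)-b_\eta\circ g(x)\bigr)\,\mu_x^r(z)\ >\ \frac{\epsilon(\mathfrak{a},\Pi)}{M}\ \ge\ 0 ,
\]
the last inequality by Proposition~\ref{prop: separation is nonnegative}.

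Since the quantity under the limit is non‑decreasing in $r_0$ and the limit is strictly positive, there are $r_0$ and $\delta>0$ with $\int_{\partial B(x,r)}(b_\eta\circ f(z)-b_\eta\circ f(x))\,\mu_x^r(z)\ge\delta r$ for all $r\ge r_0$, all $x\in X$, and all $\eta\in\partial_\infty Y$. Taking $r=\max(r_0,\lceil 1/\delta\rceil)$ makes the right side at least $1$, so each $b_\eta\circ f$ is coarsely strictly subharmonic at scale $r$ with respect to $\{\mu_x^r\}$, and Proposition~\ref{prop: Busemann enough} then yields that $f$ is stable at scale $r$.

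The only real obstacle is the claim that $r$ depends only on $L,M,Y$ and not on $f$: the argument above extracts $r_0$, hence $r$, from the family $\mathcal{Q}=\mathcal{Q}_f$. To make $r$ uniform I would argue by contradiction and re‑run the compactness proof of Theorem~\ref{thm: finitely nontransverse urqies are stable on balls} over all admissible $f$ simultaneously. If no uniform $r$ worked there would exist $L$‑Lipschitz $(M,\Pi)$‑quasi‑isometric embeddings $f_n$, each with all limits finitely non‑transverse, such that $f_n$ is not stable at scale $n$; testing against $\mu_x^n$, recentering, and passing to a subsequence produces locally uniform limits $f_n\to f_\infty$ and $\eta_n\to\eta$, auxiliary measures $\nu_n=\tfrac{d_Y(f_n(x),f_n(z))}{n}\mu_x^n$ converging weak‑$\ast$ to a non‑atomic measure $\nu$ in the class of the round measure on $\partial_\infty X$, and $\tfrac1n\int(b_{\eta_n}\circ f_n(z)-b_{\eta_n}\circ f_n(x))\,\mu_x^n(z)\to 0$; one then wants $(\partial f_\infty)_\ast\nu$ to be generically transverse so that Lemma~\ref{lem: stability of generically transverse measures} and Portmanteau's theorem force the limit to be strictly larger than $\epsilon(\mathfrak{a},\Pi)\Abs{\nu}/M\ge 0$, contradicting convergence to $0$. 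The delicate step is verifying that $f_\infty$ inherits finite non‑transversality — equivalently, that the class of $(L,M,\Pi)$‑quasi‑isometric embeddings all of whose recentering limits are finitely non‑transverse is closed under local uniform limits — which should follow from the uniform boundary convergence of Proposition~\ref{prop: uniform convergence} together with the slope estimate of Proposition~\ref{prop: separation}, and which is in any case automatic in the application we care about, where the $f_n$ are positive maps and positivity of the boundary map is a closed condition.
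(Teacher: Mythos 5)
Your proposal is correct and follows essentially the same route as the paper: reduce to Busemann functions via Proposition \ref{prop: Busemann enough} with the hitting measures $\mu_x^r$ as witnesses, then apply Theorem \ref{thm: finitely nontransverse urqies are stable on balls} with $\mathcal{Q}$ the closed, isometry-invariant family generated by $f$ (whose members are isometric images of limits of $f$, hence finitely non-transverse) and use $\epsilon(\mathfrak{a},\Pi)\ge 0$; the paper merely phrases this as a contradiction rather than extracting $r_0$ and $\delta$ directly. Your closing worry about making $r$ depend only on $(L,M,Y)$ rather than on the family $\mathcal{Q}_f$ is a legitimate point that the paper's own two-line proof also glosses over, so your explicit sketch (and the remark that closedness of the admissible class is automatic in the positive-boundary application) is extra care rather than a deviation.
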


\begin{proof}
    Suppose that $f$ were not stable. 
    In view of Proposition \ref{prop: Busemann enough}, there are sequences $x_i$, $\eta_i$, and $r_i \to \infty$ such that
    \[
    \int_X b_{\eta_i} \circ f(z) - b_{\eta_i} \circ f(x_i) \mu^{r_i}_{x_i}(z) < 1
    \]
     where $\mu^{r_i}_{x_i}$ is the hitting measure.
     Then the liminf \eqref{eqn: purqies stable on balls} would be less than or equal to zero, which contradicts Theorem \ref{thm: finitely nontransverse urqies are stable on balls}, since $\epsilon(\mathfrak{a},\Pi)$ is nonnegative by Proposition \ref{prop: separation is nonnegative}.
\end{proof}

\begin{remark}
   The same proof applies whenever $\epsilon(\mathfrak{a},\Theta)$ is nonnegative. 
\end{remark}

In our main existence theorem \ref{thm: Donalson-Corlette}, the bound on the distance between an $L$-Lipschitz map $f$ that is stable at scale $r$ and the harmonic map depends only on $L$, $r$, $X$, and $Y$. Hence from Corollary \ref{cor: positive separation and finitely non-transverse limits} we deduce: 

\begin{corollary} \label{cor: James}
    There exists $R = R(L,M,X,Y)$ such that if $f: X \to Y$ is an $L$-Lipschitz $(M,\Pi)$-quasi-isometric embedding such that every limit of $f$ has finitely non-transverse boundary map, then there exists a harmonic map $h: X \to Y$ at distance at most $R$ from $f$.
\end{corollary}

If $Y$ is Gromov hyperbolic, then the issue of transversality between points in the visual boundary of $Y$ simplifies considerably; any two distinct points are transverse. 
Using Theorem \ref{thm: finitely nontransverse urqies are stable on balls}, we recover in this case:

\begin{corollary}[{Benoist-Hulin \cite[Theorem 1.1]{BH1}, Li-Wang \cite[Theorem 2.3]{liHarmonicRoughIsometries1998}}]\label{cor: benoist-hulin}
    Let $X$ and $Y$ be rank $1$ symmetric spaces and let $f \colon X \to Y$ be a quasi-isometric embedding. 
    Then there exists a unique harmonic map $h \colon X \to Y$ at bounded distance from $f$. Moreover the bound depends only on $X,$ $Y,$ and the constants of quasi-isometry.
\end{corollary}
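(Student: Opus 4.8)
The plan is to deduce this from the general existence and uniqueness results by showing that $f$ is stable, via the transversality criterion. First, since $Y$ has rank $1$ there is a single simple restricted root, so a $\Pi$-quasi-isometric embedding is precisely a quasi-isometric embedding, and thus $f$ is a $\Pi$-quasi-isometric embedding. Using Proposition \ref{prop: Lipchitz} we would replace $f$ by its mollification $f^{(1)}$, an $L$-Lipschitz $(M,\Pi)$-quasi-isometric embedding at bounded distance from $f$; since being at bounded distance from a (unique) harmonic map is a coarse notion, it suffices to treat $f^{(1)}$. The hypotheses of Theorem \ref{thm: uniqueness} are satisfied---$X$ is a symmetric space, hence connected with cocompact isometry group, and $Y$ is a symmetric space---so it suffices to show that $f^{(1)}$ is stable.

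The key step is then to verify the hypothesis of Corollary \ref{cor: positive separation and finitely non-transverse limits}: that every limit of $f^{(1)}$, in the sense defined just before that corollary, has finitely non-transverse boundary map. By Corollary \ref{cor: compactness of urqies} any such limit is again an $L$-Lipschitz $(M,\Pi)$-quasi-isometric embedding $g\colon X \to Y$, and by Theorem \ref{thm: boundary maps} it extends to a continuous map $\partial g \colon \partial_\infty X \to \mathcal{F}_\Pi$ taking distinct points to transverse points. When $Y$ has rank $1$ the flag manifold $\mathcal{F}_\Pi$ is just $\partial_\infty Y$, and two points of $\partial_\infty Y$ are transverse if and only if they are distinct; hence $\partial g$ is injective. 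An injective map into $\partial_\infty Y$ is automatically finitely non-transverse, because for each $\eta \in \partial_\infty Y$ the only point of $\partial_\infty Y$ failing to be transverse to $\eta$ is $\eta$ itself, so the non-transverse locus $(\partial g)^{-1}(\eta)$ contains at most one point.

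Since $Y$ is rank $1$ it is irreducible, so $\epsilon(\mathfrak{a},\Pi) \geq 0$ by Proposition \ref{prop: separation is nonnegative}, and Corollary \ref{cor: positive separation and finitely non-transverse limits} then yields a scale $r=r(L,M,Y)$ at which $f^{(1)}$ is stable. Hence $f^{(1)}$, and therefore $f$, is stable, and Theorem \ref{thm: uniqueness} provides the desired unique harmonic map at bounded distance from $f$. There is no genuine obstacle here: all of the analytic content lives in the earlier theorems, and the only rank-$1$-specific ingredient is the elementary observation that transversality in $\partial_\infty Y$ degenerates to distinctness, which forces boundary maps of quasi-isometric embeddings to be injective and a fortiori finitely non-transverse. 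The only points requiring care are matching the ``limit'' notion of Corollary \ref{cor: positive separation and finitely non-transverse limits} with the compactness statement of Corollary \ref{cor: compactness of urqies}, and confirming that the boundary extension of Theorem \ref{thm: boundary maps} is available for each such limit.
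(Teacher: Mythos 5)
Your proof is correct and follows essentially the same route as the paper: in rank $1$ transversality in $\mathcal{F}_\Pi=\partial_\infty Y$ reduces to distinctness, so the (injective) boundary maps coming from the Morse lemma are finitely non-transverse, and stability follows from the transversality criterion (the paper cites Theorem \ref{thm: finitely nontransverse urqies are stable on balls} directly, you route through Corollary \ref{cor: positive separation and finitely non-transverse limits}), with uniqueness from Theorem \ref{thm: uniqueness}. Your extra care about mollifying $f$ and about limits of $f$ being again $(L,M,\Pi)$-quasi-isometric embeddings just makes explicit steps the paper leaves implicit.
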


\begin{proof}
    Let $f'$ be the mollification of $f$ from Proposition \ref{prop: Lipchitz}.
    Note that the quasi-isometry constants of $f$ control the distance from $f$ to $f'$ as well the quasi-isometry constants of $f'$.
    
    The classical Morse lemma implies that $f '\colon X \to Y$ extends to a continuous, injective boundary map $\partial f' \colon \partial_\infty X \to \partial_\infty Y = \mathcal{F}_\Pi$. 
    Since any two distinct points are transverse, this implies the finite non-transversality property for $f'$ and all its limits. 
    Also, in rank 1, $\epsilon(\mathfrak{a}, \Pi) = 1$. Hence existence follows from Theorem \ref{thm: finitely nontransverse urqies are stable on balls}. Uniqueness follows from Theorem \ref{thm: uniqueness}. The dependence of the bound follows from Corollary \ref{cor: James}.
\end{proof}

\section{The universal Hitchin component of \texorpdfstring{$\PGL_d(\R)$}{PGLdR}}\label{sec: universal hitchin}

In this section we study boundary maps of $\Pi$-quasi-isometric embeddings from $\H^2$ to the symmetric space $Y_d$ of $\PGL_d(\R)$.

In the first part of this section, we discuss positivity of configurations of flags, and the finite non-transversality of positive circles in the flag variety. A recent theorem of Saldanha-Shapiro-Shapiro in fact gives an explicit upper bound on the number.

In the remainder of this section, we discuss cross ratios and quasisymmetry of positive configurations of flags, and prove Theorem \ref{thm: main quasisymmetric} that the universal Hitchin component can be described purely in terms of boundary maps.

\subsection{Positivity in \texorpdfstring{$\Flag(\R^d)$}{Flag(Rd)}}

Let $N$ be the subgroup of $\SL_d(\R)$ of strictly upper triangular matrices:
    \[ N \coloneqq \left\{ 
    \begin{bmatrix}
                1 & \ast   & \cdots   & \ast \\
                 & \ddots &  &  \vdots \\
                 &      & 1      & \ast \\
                0 &       &      & 1
    \end{bmatrix} \right\} .\]
An element $n \in N$ is called \emph{totally positive} if all if its minors which can be positive are positive. 
In particular, all the ``right-justified" minors, i.e.\ those involving any $k$ rows and the last $k$ columns, are positive. 

We let 
    \[ \sigma_\infty = \langle e_1 \rangle \subset \langle e_1,e_{2} \rangle \subset \cdots \subset \langle e_1,e_{2}, \dots, e_{d-1} \rangle \]
denote the standard ascending flag, and let
    \[ \sigma_0 = \langle e_d \rangle \subset \langle e_d,e_{d-1} \rangle \subset \cdots \subset \langle e_d,e_{d-1}, \dots, e_{2} \rangle \]
denote the standard descending flag. 
Note that $N$ is the unipotent radical of the stabilizer of $\sigma_\infty$ in $\SL_d(\R)$. 
An arbitrary triple of flags $(\sigma_1,\sigma_2,\sigma_3)$ is called \emph{positive} if there exists $g \in \PGL_d(\R)$ and a totally positive $n \in N$ such that $g(\sigma_1,\sigma_2,\sigma_3) = (\sigma_0, n \sigma_0, \sigma_\infty)$. 
\begin{definition}[{\cite{fock2006moduli}}]\label{def: positive}
    A continuous map $\xi \colon \RP^1 \to \Flag(\R^d)$ is called \emph{positive} if it takes every pairwise distinct triple to a positive triple of flags. 
\end{definition}

The following fact is standard, and the proof works for any flag manifold $\mathcal{F}_\Theta = G/P_\Theta$ supporting a positive structure in the sense of Guichard-Wienhard \cite{gw2025positivity}.
Recall that a continuous map $\xi \colon \RP^1 \to \Flag(\R^d)$ is called transverse when it takes distinct pairs to transverse flags.

\begin{lemma} \label{lem: positive connected}
    Suppose a sequence $\xi_n$ of positive maps $\RP^1 \to \Flag(\R^d)$ limits pointwise to a transverse map $\xi \colon \RP^1 \to \Flag(\R^d)$. 
    Then $\xi$ is positive.
\end{lemma}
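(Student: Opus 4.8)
The plan is to use the $\PGL_d(\R)$-invariance of positivity to reduce the statement to a single triple of flags in standard position, and then to invoke the structure of Lusztig's totally nonnegative semigroup in $N$. Fix pairwise distinct $t_1,t_2,t_3 \in \RP^1$; we must show that $(\xi(t_1),\xi(t_2),\xi(t_3))$ is a positive triple. Since $\xi$ is transverse, $\xi(t_1)$ and $\xi(t_3)$ are transverse flags, and transversality of ordered pairs is an open condition, so $\xi_n(t_1)$ and $\xi_n(t_3)$ are transverse for all large $n$. The group $\PGL_d(\R)$ acts transitively on ordered transverse pairs of flags with isotropy the diagonal subgroup $\bar A$, so there is a continuous local section of this action near $(\xi(t_1),\xi(t_3))$; applying it produces elements $g_n \to g$ in $\PGL_d(\R)$ with $g\,\xi(t_1) = g_n\,\xi_n(t_1) = \sigma_0$ and $g\,\xi(t_3) = g_n\,\xi_n(t_3) = \sigma_\infty$. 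Positivity of a map is preserved by post-composition with $\PGL_d(\R)$, so each $g_n\circ \xi_n$ is again positive.

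Next I extract the relevant unipotents. The flag $g_n\,\xi_n(t_2)$ is transverse to $\sigma_\infty$, and $N$ acts simply transitively on the big cell of flags transverse to $\sigma_\infty$, so we may write $g_n\,\xi_n(t_2) = u_n\,\sigma_0$ with $u_n \in N$ unique. Since $(\sigma_0, u_n\sigma_0, \sigma_\infty)$ is positive and already in standard position, unwinding Definition \ref{def: positive} forces $u_n$ into the totally positive semigroup $N^{>0}$, up to the residual ambiguity of conjugating by one of the finitely many components of $\bar A$ (conjugation by the positive diagonals preserves $N^{>0}$, the other components permute its $2^{d-1}$ ``sign-chamber'' conjugates $(N^{>0})^{\epsilon}$); after passing to a subsequence we may assume all $u_n$ lie in one fixed $(N^{>0})^\epsilon$. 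On the other hand $g_n\,\xi_n(t_2) \to g\,\xi(t_2)$, which is transverse to $\sigma_0 = g\,\xi(t_1)$ and to $\sigma_\infty = g\,\xi(t_3)$ because $g \in \PGL_d(\R)$ preserves transversality and $\xi$ is transverse; hence $g\,\xi(t_2) = u\,\sigma_0$ for a unique $u \in N$, and $u_n \to u$ since the $N$-coordinate is a homeomorphism on the big cell. In particular $u\,\sigma_0$ is transverse to $\sigma_0$.

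It remains to upgrade $u$ from ``in the closure of the chamber $(N^{>0})^\epsilon$'' to ``in $(N^{>0})^\epsilon$'', which is the crux of the proof: it is the Lie-theoretic fact that positivity of triples, though an open condition, is also \emph{closed within the locus of pairwise transverse triples}. Concretely, by Lusztig's cell decomposition of the totally nonnegative part $N^{\geq 0} = \overline{N^{>0}}$ of $N$, the lower strata $N^{\geq 0}\setminus N^{>0}$ lie in the locus where $u\,\sigma_0$ fails to be transverse to $\sigma_0$; equivalently
\[
  \overline{N^{>0}} \cap \{\, u \in N : u\,\sigma_0 \text{ is transverse to } \sigma_0 \,\} = N^{>0},
\]
and likewise for each conjugate $(N^{>0})^\epsilon$ (the transversality condition being invariant under $\bar A$-conjugation). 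For $\SL_2(\R)$ this is just $\{t \ge 0\} \cap \{t \ne 0\} = \{t > 0\}$, and for $\SL_3(\R)$ it reflects the fact that once the superdiagonal entries of a totally nonnegative unipotent vanish, the remaining relevant minor is forced to vanish as well. Applying this with $u_n \in (N^{>0})^\epsilon$, $u_n \to u$, and $u\,\sigma_0$ transverse to $\sigma_0$, we get $u \in (N^{>0})^\epsilon$, so $(\sigma_0, u\,\sigma_0, \sigma_\infty)$ is positive; applying $g^{-1}$, so is $(\xi(t_1),\xi(t_2),\xi(t_3))$. As $t_1,t_2,t_3$ were arbitrary, $\xi$ is positive. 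I expect the only genuine obstacle to be pinning down a clean reference for the displayed identity; failing that, it can be deduced from the description of $N^{>0}$ by the positivity of a suitable finite collection of minors, observing that those are exactly the minors whose non-vanishing cuts out the transversality condition, so being limits of positive numbers they are nonnegative and nonzero, hence positive.
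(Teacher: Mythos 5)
Your route is genuinely different from the paper's, and its skeleton is sound: normalize the pair $(\xi(t_1),\xi(t_3))$ to $(\sigma_0,\sigma_\infty)$ by a continuous local section of the $\PGL_d(\R)$-action on transverse pairs, read off the unipotent coordinate $u_n$ of the middle flag, observe that positivity in standard position forces $u_n$ into one of the $2^{d-1}$ diagonal-conjugate sign chambers of $N^{>0}$, pass to the limit $u_n\to u$, and conclude from the statement that the closure of $N^{>0}$ meets the locus $\{u\sigma_0 \text{ transverse to } \sigma_0\}$ only in $N^{>0}$. The paper instead uses openness of transversality to join $\xi_n(x_i)$ to $\xi(x_i)$ by short paths through pairwise transverse triples and quotes Lemma 3.6 of \cite{guichard2021positivityrepresentationssurfacegroups}, i.e.\ that positivity of a triple is preserved under transversality-preserving deformations; that argument is softer and applies verbatim to any flag manifold carrying a positive structure, whereas yours is specific to $\PGL_d(\R)$ but more explicit. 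Both proofs ultimately outsource the same kind of hard total-positivity input to the literature.

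The gap is exactly at the step you flag as the crux. Your displayed identity is true, but it is a genuine theorem, and your fallback justification is incorrect: transversality of $u\sigma_0$ with $\sigma_0$ pins down only the $d-1$ upper-right corner minors (rows $1,\dots,k$ against the last $k$ columns), while any total-positivity criterion for $N$ involves $d(d-1)/2$ minors, so ``those are exactly the minors whose non-vanishing cuts out the transversality condition'' fails for $d\ge 3$. Even in your $\SL_3$ illustration the conclusion is not ``nonnegative and nonzero, hence positive'': from $a,b,c\ge 0$, $ac-b\ge 0$, $b\ne 0$, $ac-b\ne 0$ you must still deduce $a,c>0$ via $ac>b>0$, and in higher rank the analogous bookkeeping is precisely the nontrivial content. (As a warning, for $u=x_1(a)x_2(b)x_3(c)x_1(p)x_2(q)$ in $\SL_4$ with positive parameters, which is totally nonnegative but not totally positive, the first two corner minors are strictly positive and only the third one vanishes.) The correct statement to invoke is Lusztig's theorem that $N^{\ge 0}\cap B^-wB^- = N^{>0}_w$ for each Weyl group element $w$ \cite{lusztig1994positivity} (equivalently, Rietsch's cell decomposition of the totally nonnegative flag variety); applied with $w=w_0$ it gives exactly your identity, and it transports to each sign chamber by diagonal conjugation as you indicate. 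With that citation your proof is complete; without it, the crucial step is unsupported.
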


\begin{proof}
    Let $x_1,x_2,x_3$ be a pairwise distinct triple in $\RP^1$. 
    Since transversality is an open condition, there exist path-connected neighborhoods $B_i$ of $\xi(x_i)$, for $i=1,2,3$, which are small enough that each element of $B_i$ is transverse to each element of $B_j$ for $i \ne j$. 
    For $n$ large enough, $\xi_n(x_i)$ lies in $B_i$ and we may choose paths $c_i$ in $B_i$ from $\xi_n(x_i)$ to $\xi(x_i)$.
    By \cite[Lemma 3.6]{guichard2021positivityrepresentationssurfacegroups}, the triple $\xi(x_1),\xi(x_2),\xi(x_3)$ is positive.
    If $x_2'$ is in the same component of $\RP^1 \setminus \{x_1,x_3\}$ as $x_2$, then another application of \cite[Lemma 3.6]{guichard2021positivityrepresentationssurfacegroups} allows us to conclude that the triple $\xi(x_1),\xi(x_2'),\xi(x_3)$ is positive. 
    Iterating this procedure at $x_1$ and $x_3$ shows that any pairwise distinct triple of $\RP^1$ in the same cyclic order as $(x_1,x_2,x_3)$ maps to a positive triple by $\xi$.
    Since the positivity of a triple is independent of its ordering, see \cite[Proposition 2.11]{guichard2021positivityrepresentationssurfacegroups}, this implies that $\xi$ is positive.
\end{proof}

The fact that positive maps yield generically transverse measures is a direct consequence of a theorem of Saldanha-Shapiro-Shapiro.
They prove that, if $\Lambda$ is the image of a positive map, then each vertex $v$ of $\partial_\infty Y$ is non-transverse to $\Lambda$ at finitely many points. 
In fact, they give an explicit upper bound.

\begin{theorem}
[{\cite{SaldanhaShapiroShapiro2021Grassmannconvexityrevisited,SaldanhaShapiroShapiro2023finitenessofrank}}]
\label{thm: Saldanha-Shapiro-Shapiro}
    For any $1 \le k \le d$, there exists an explicit $C(k,d)$ so that:

    If $\Lambda \subset \Flag(\R^d)$ is the image of a positive map and $v \in \Gr(k,\R^d)$, then $v$ is non-transverse to at most $C(k,d)$ points of $\Lambda$.
\end{theorem}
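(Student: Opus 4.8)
## Proof plan for Theorem \ref{thm: Saldanha-Shapiro-Shapiro}

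\textbf{Approach.} The statement is quoted from Saldanha--Shapiro--Shapiro, so I will only sketch the structure of their argument rather than reprove it from scratch. The basic idea is to reduce the non-transversality count to a statement about the number of zeros of a Wronskian-type determinant along a curve, and then to invoke the disconjugacy (Chebyshev-system) theory that underlies total positivity. Concretely, given a positive map $\xi\colon \RP^1 \to \Flag(\R^d)$, one can locally lift $\xi$ to a curve of full flags in $\R^d$ and, after a projective reparametrization, to a curve $t \mapsto (\gamma(t), \gamma'(t), \ldots, \gamma^{(d-1)}(t))$ whose osculating flag is $\xi(t)$; positivity of $\xi$ translates into the statement that $\gamma$ is a \emph{convex} (disconjugate) curve, i.e. all the relevant Wronskians $W(\gamma^{(i_1)},\dots,\gamma^{(i_k)})$ have constant sign. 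This is the content of the equivalence between Fock--Goncharov positivity and the Frenet/hyperconvexity property of Labourie.

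\textbf{Key steps.} First I would record the reduction: $v \in \Gr(k,\R^d)$ is non-transverse to the flag $\xi(t)$ precisely when the $(d-k)$-dimensional subspace of $\xi(t)$ meets $v$ nontrivially, which is the vanishing of a single minor, i.e. of a determinant built from a basis of $v$ together with the first $d-k$ vectors of the osculating frame at $t$. Writing this determinant as a function $\Phi_v(t)$, the problem becomes: \emph{bound the number of zeros of $\Phi_v$ on $\RP^1$}. Second, I would observe that $\Phi_v(t)$ is, up to a nowhere-vanishing factor coming from the reparametrization, a linear combination of Wronskians of the convex curve $\gamma$, hence belongs to a finite-dimensional space of functions that form an Extended Chebyshev system on any interval avoiding the (finitely many) singularities. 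By the defining property of such systems, a nonzero element has at most $N-1$ zeros, where $N$ is the dimension of the system; this $N$ depends only on $k$ and $d$ and gives the explicit bound $C(k,d)$. Third, I would handle the passage between local lifts and the global statement on $\RP^1$, and the cyclic-order bookkeeping, by covering $\RP^1$ with finitely many intervals on which the convexity holds and summing the local counts — this is where the explicit constant gets its precise shape, and it is carried out in \cite{SaldanhaShapiroShapiro2021Grassmannconvexityrevisited, SaldanhaShapiroShapiro2023finitenessofrank}.

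\textbf{Main obstacle.} The genuinely hard part is the second step: extracting an \emph{explicit} disconjugacy bound for the relevant combinations of Wronskians, i.e. showing that the functions $\Phi_v$ span an Extended Chebyshev system of controlled dimension and identifying that dimension as a function of $(k,d)$. This is exactly the Grassmann-convexity phenomenon studied by Saldanha--Shapiro--Shapiro; the subtlety is that the naive bound from the dimension of the ambient space of minors is not sharp, and obtaining the correct $C(k,d)$ requires their finer analysis of which Wronskian combinations can actually occur. Since this is precisely their main theorem, for our purposes we simply cite it: the existence of \emph{some} finite $C(k,d)$ — which is all we use in Section \ref{sec: universal hitchin} to conclude finite non-transversality of positive boundary maps — already follows from the Chebyshev-system framework, and the explicit value is their refinement.
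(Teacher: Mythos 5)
The paper gives no proof of this statement at all—it is imported verbatim as a citation to Saldanha--Shapiro--Shapiro—so your decision to defer to the reference is exactly what the paper does, and your sketch (translating positivity into convexity/disconjugacy of an osculating curve and bounding zeros of the resulting Wronskian-type minors, with the sharp constant being precisely their Grassmann-convexity analysis) is a fair account of the cited works and consistent with the paper's remark that the conjectural sharp value is $k(d-k)$.
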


\begin{remark}
    The Grassmann convexity conjecture \cite{ShapiroShapiro2000grassmannconvexityRP3} predicts that $C(k,d)$ can be taken to be $k(d-k)$.
\end{remark}

\subsection{Proof of Theorem \ref{thm: main schoen}}

Let $Y_d$ be the symmetric space of $\PGL_d(\R)$.

\begin{corollary}[Theorem \ref{thm: main schoen}] \label{cor: schoen}
    Every $\Pi$-quasi-isometric embedding $\H^2 \to Y_d$ with positive boundary map is within bounded distance of a unique harmonic map.
\end{corollary}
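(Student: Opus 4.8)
The plan is to deduce the corollary from the stability criterion together with the existence/uniqueness package already proved, the only genuine work being to check that every limit of $f$ has finitely non-transverse boundary map. First I would reduce to a Lipschitz map: given a $\Pi$-quasi-isometric embedding $f \colon \H^2 \to Y_d$ with positive boundary map, I mollify via Proposition \ref{prop: Lipchitz} to get a Lipschitz $f^{(1)}$ within bounded distance of $f$; since being a $\Pi$-quasi-isometric embedding is a coarse condition, $f^{(1)}$ is again an $L$-Lipschitz $(M,\Pi)$-quasi-isometric embedding, and since $\H^2$ is Gromov hyperbolic the boundary map is a coarse invariant, so $\partial f^{(1)} = \partial f$ is positive. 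Once $f^{(1)}$ is shown stable, stability passes to $f$ (it is coarse), and Theorem \ref{thm: uniqueness} --- applicable since $\H^2$ is connected with cocompact isometry group and $Y_d$ is a symmetric space --- yields the unique harmonic map at bounded distance from $f^{(1)}$, hence from $f$.

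To show $f^{(1)}$ is stable I would verify the hypothesis of Corollary \ref{cor: positive separation and finitely non-transverse limits}. Any limit $g = \lim_i \gamma_i' \circ f^{(1)} \circ \gamma_i$ of $f^{(1)}$ is again an $L$-Lipschitz $(M,\Pi)$-quasi-isometric embedding (Corollary \ref{cor: compactness of urqies}), and by Proposition \ref{prop: uniform convergence} the boundary maps $\partial(\gamma_i' \circ f^{(1)} \circ \gamma_i)$ converge uniformly to $\partial g$. Each of these is positive, because positivity of a configuration of flags is invariant under $\PGL_d(\R)$ and, being independent of the ordering of the configuration, also under the action of $\mathrm{Isom}(\H^2)$ on $\RP^1$; and $\partial g$ is transverse by Theorem \ref{thm: boundary maps}. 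Hence $\partial g$ is positive by Lemma \ref{lem: positive connected}. It then remains to see that a positive boundary map is finitely non-transverse. An ideal point $\eta \in \partial_\infty Y_d$ has a support simplex given by a partial flag $W_{i_1} \subset \cdots \subset W_{i_k}$, and $\eta$ is not transverse to a complete flag $\tau=(V_1 \subset \cdots \subset V_{d-1})$ precisely when $W_{i_j} \cap V_{d-i_j}(\tau) \ne 0$ for some $j$; so, writing $\Lambda = \partial g(\RP^1)$,
\[
\{\lambda \in \RP^1 : \partial g(\lambda) \text{ not transverse to } \eta\} = \bigcup_{j=1}^{k}\bigl\{\lambda : W_{i_j} \cap V_{d-i_j}(\partial g(\lambda)) \ne 0\bigr\},
\]
and each set on the right is finite by Theorem \ref{thm: Saldanha-Shapiro-Shapiro} applied to the vertex $W_{i_j} \in \Gr(i_j,\R^d)$ and the positive image $\Lambda$.

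With the hypothesis verified, Corollary \ref{cor: positive separation and finitely non-transverse limits} applies --- using that $Y_d$ is irreducible, so $\epsilon(\mathfrak{a},\Pi)=\epsilon(Y_d)=\tfrac1{d-1}\ge 0$ (Proposition \ref{prop: separation is nonnegative}) --- and gives that $f^{(1)}$, hence $f$, is stable; the reductions of the first paragraph then finish the proof. I expect the main obstacle to be precisely the middle step: correctly identifying, inside the spherical building $\partial_\infty Y_d$, the incidence condition defining transversality of an ideal point with a complete flag and matching it to the Grassmannian incidences addressed by Theorem \ref{thm: Saldanha-Shapiro-Shapiro}, together with controlling that positivity of the boundary map survives the limiting procedure defining a limit of $f$ (where Lemma \ref{lem: positive connected} and the transversality of boundary maps of $\Pi$-quasi-isometric embeddings do the work).
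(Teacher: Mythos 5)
Your proposal is correct and follows essentially the same route as the paper: mollify via Proposition \ref{prop: Lipchitz}, show every limit has a positive boundary map (Lemma \ref{lem: positive connected} together with Corollary \ref{cor: compactness of urqies}/Proposition \ref{prop: uniform convergence}), deduce finite non-transversality from Theorem \ref{thm: Saldanha-Shapiro-Shapiro}, apply Corollary \ref{cor: positive separation and finitely non-transverse limits} to get stability, and conclude with Theorems \ref{thm: Donalson-Corlette} and \ref{thm: uniqueness}. Your explicit reduction of non-transversality of an ideal point with a full flag to vertex incidences in the Grassmannians is a correct fleshing-out of a step the paper leaves implicit when citing Saldanha-Shapiro-Shapiro.
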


\begin{proof} 
    Let $f \colon \H^2 \to Y_d$ be an $L$-coarsely Lipschitz $(M,\Pi)$-quasi-isometric embedding with positive boundary map $\partial f \colon \partial_\infty \H^2 \to \Flag(\R^d)$.     By Proposition \ref{prop: Lipchitz}, $f$ is at bounded distance from an $L$-Lipschitz map $f'$ with the same boundary map, so it suffices to prove that $f'$ is stable.

    By Lemma \ref{lem: positive connected} and Corollary \ref{cor: compactness of urqies}, the boundary maps of all limits of $f$ are still positive. So by Theorem \ref{thm: Saldanha-Shapiro-Shapiro} of Saldanha-Shapiro-Shapiro, $\partial f'$, and all its limits, have the finite non-transversality property, and we may apply Corollary \ref{cor: positive separation and finitely non-transverse limits}.

    Then there exists $r=r(L,M,d)$ so that $f'$ is stable at scale $r$.
    By Theorem \ref{thm: Donalson-Corlette}, there exists a harmonic map $h \colon \H^2 \to Y_d$ at distance $R=R(L,r)$ from $f'$. 
    By Theorem \ref{thm: uniqueness}, the harmonic map $h$ is unique. 
\end{proof}

Recall (Definition \ref{def: UHitd}) that we define the universal Hitchin component $\mathrm{UHit}_d$ to be the space of $\Pi$-quasi-isometric embeddings from $\H^2$ to $Y_d$ with positive boundary map from $\RP^1$ to $\mathrm{Flag}(\R^d)$, up to bounded distance. With this definition, Corollary \ref{thm: main schoen} says:

\begin{corollary}
    The universal Hitchin component is equal to the space of harmonic $\Pi$-quasi-isometric embeddings $\H^2 \to Y_d$ with positive boundary map.
\end{corollary}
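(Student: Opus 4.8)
The plan is to deduce this as an essentially formal consequence of the results already assembled in the excerpt, in particular Corollary~\ref{cor: schoen} (Theorem~\ref{thm: main schoen}) together with the uniqueness statement. The statement to prove is a set equality between two subsets of the space of maps $\H^2 \to Y_d$ modulo bounded distance: on the one side, $\mathrm{UHit}_d$, which by Definition~\ref{def: UHitd} is the collection of bounded-distance classes of $\Pi$-quasi-isometric embeddings with positive boundary map; on the other side, the collection of such classes that contain a harmonic representative (equivalently, the harmonic $\Pi$-quasi-isometric embeddings with positive boundary map). So the proof is simply a double inclusion.

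For the inclusion $\supseteq$: a harmonic $\Pi$-quasi-isometric embedding $\H^2 \to Y_d$ with positive boundary map is in particular a $\Pi$-quasi-isometric embedding with positive boundary map, hence its bounded-distance class lies in $\mathrm{UHit}_d$ by definition. This direction is immediate and requires no input beyond unwinding Definition~\ref{def: UHitd}.

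For the inclusion $\subseteq$: take a class in $\mathrm{UHit}_d$, represented by a $\Pi$-quasi-isometric embedding $f\colon \H^2 \to Y_d$ with positive boundary map $\partial f$. By Corollary~\ref{cor: schoen} there is a (unique) harmonic map $h\colon \H^2 \to Y_d$ at bounded distance from $f$. The two remaining things to check are that $h$ itself is a $\Pi$-quasi-isometric embedding and that its boundary map is positive. Both follow because being a $\Pi$-quasi-isometric embedding with a given boundary map is a bounded-distance invariant: $h$ is coarse Lipschitz (it is at bounded distance from $f$, which is coarse Lipschitz, and the vector-valued-distance inequality~\eqref{eqn: intro theta-qie} is clearly stable under perturbing $f$ by a bounded amount, up to adjusting the constant $M$), so $h$ is again a $\Pi$-quasi-isometric embedding; and by Theorem~\ref{thm: boundary maps} the boundary map of a $\Pi$-quasi-isometric embedding is determined by the map up to bounded distance (indeed the flag-boundary extension is continuous and two maps at bounded distance induce the same boundary map, since corresponding geodesic rays stay close and land in the same Weyl cone limit), so $\partial h = \partial f$ is positive. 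Hence $h$ is a harmonic $\Pi$-quasi-isometric embedding with positive boundary map representing the given class, which establishes $\subseteq$.

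There is no serious obstacle here; the only point requiring a sentence of justification is that $\partial h = \partial f$, i.e.\ that the flag-variety boundary map is a bounded-distance invariant of $\Pi$-quasi-isometric embeddings, which one gets from the higher rank Morse lemma (Theorem~\ref{thm: higher rank morse lemma}) and the continuity of the extension in Theorem~\ref{thm: boundary maps} — or, more directly, by running the argument of Proposition~\ref{prop: uniform convergence} with the constant sequence $f_n = f$ and $h$. One should also remark that $h$, being harmonic and at bounded distance from the Lipschitz mollification of $f$, is genuinely a map in the relevant regularity class, but this is already built into the statements of Theorem~\ref{thm: Donalson-Corlette} and Corollary~\ref{cor: schoen}.
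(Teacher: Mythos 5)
Your proof is correct and is essentially the paper's intended argument: the paper presents this corollary as an immediate reformulation of Corollary \ref{cor: schoen}, and your double inclusion simply makes explicit the routine facts (the $\Pi$-quasi-isometric embedding property and the boundary map are bounded-distance invariants, via the vector-valued triangle inequality and the higher rank Morse lemma) that the paper leaves implicit. No gaps.
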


Using heat kernel estimates on $\H^2$, we also obtain a positive lower bound on the drift (Definition \ref{def: drift of a map}) of an $(M,\Pi)$-quasi-isometric embedding.

\begin{corollary}
    Every $(L,M,\Pi)$-quasi-isometric embedding from $\H^2$ to $Y_d$ with positive boundary map has drift at least $\frac{1}{M(d-1)}$.
\end{corollary}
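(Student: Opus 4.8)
The plan is to combine the criterion for stability on balls (Theorem \ref{thm: finitely nontransverse urqies are stable on balls}) with explicit heat-kernel estimates on $\H^2$, passing from the hitting-measure formulation to the heat-flow formulation of drift. Concretely, let $f$ be an $(L,M,\Pi)$-quasi-isometric embedding from $\H^2$ to $Y_d$ with positive boundary map. As in the proof of Corollary \ref{cor: schoen}, all limits of $f$ have positive, hence (by Theorem \ref{thm: Saldanha-Shapiro-Shapiro}) finitely non-transverse boundary maps, so Theorem \ref{thm: finitely nontransverse urqies are stable on balls} applies with $\Theta = \Pi$ and $\epsilon(\mathfrak{a},\Pi) = \epsilon(Y_d) = \tfrac{1}{d-1}$. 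Thus
\[
    \lim_{r_0 \to \infty} \inf_{f,x,\eta,\, r \geq r_0} \frac{1}{r} \int_{\partial B(x,r)} \big(b_\eta \circ f(z) - b_\eta \circ f(x)\big)\, \mu_x^r(z) \;>\; \frac{1}{M(d-1)}.
\]

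The next step is to translate this statement about hitting measures $\mu_x^r$ on geodesic spheres into a statement about the heat evolution operator $P_t$. First, by Proposition \ref{prop: Busemann for free} (or directly Proposition \ref{prop: Busemann enough}, since symmetric spaces have extendable geodesics), it suffices to bound $P_t(d_y \circ f)(x) - d_y \circ f(x)$ below via the Busemann functions $b_\eta$. The key analytic input is the relation between the exit time of Brownian motion from a ball of radius $r$ in $\H^2$ and a time parameter $t$: the hitting measure $\mu_x^r$ is the distribution of Brownian motion stopped at the exit time $T_r$ from $B(x,r)$, and the heat measure $p_t(x,\cdot)\,dz$ is the distribution at a deterministic time $t$. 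For a subharmonic function $u = b_\eta \circ f$, optional stopping gives that $\mathbb{E}_x[u(B_{t \wedge T_r})]$ is non-decreasing in $t$; combined with the linear-growth bound $u(z) - u(x) \leq L\,d_X(x,z)$ and a lower bound on the drift of the radial process $d_X(x,B_t)$ in $\H^2$ (which grows linearly at rate $\tanh$-type constants, concretely at speed approaching $1$ for the curvature normalization here since the minimal sectional curvature is $-1$), one relates $\tfrac1t(P_t u(x) - u(x))$ to $\tfrac1r(\int u\,\mu_x^r - u(x))$ as $t, r \to \infty$ together with $r \approx \mathbb{E}_x[d_X(x,B_t)]$.

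The cleanest route, and the one I would actually carry out, is to avoid matching $r$ and $t$ precisely and instead argue directly: for Brownian motion $B_s$ on $\H^2$ started at $x$, the process $s \mapsto b_\eta \circ f(B_s)$ is a submartingale, so $P_t(b_\eta \circ f)(x) - b_\eta \circ f(x) = \mathbb{E}_x[b_\eta \circ f(B_t)] - b_\eta \circ f(x)$ is the expected increment of a submartingale which, whenever $B_s$ crosses the sphere $\partial B(x,r)$, has already accumulated increment at least (roughly) $r/(M(d-1))$ in expectation by the ball estimate above. Using that in $\H^2$ Brownian motion is transient with radial escape rate bounded below — specifically $\liminf_{t\to\infty} d_X(x,B_t)/t \geq c_0 > 0$ almost surely, and in fact $\mathbb{E}_x[d_X(x,B_t)] \geq c_0 t - C$ with $c_0 \to 1$ appropriately — one divides by $t$, takes $t \to \infty$, and the Busemann increment rate inherits the lower bound $\tfrac{1}{M(d-1)}$ (the radial escape rate $c_0$ and the spherical-estimate constant combine so that the product is exactly the stated bound in the limit, since both the numerator per unit radius and the radius per unit time contribute). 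Taking the infimum over $x$, $y$, and $\eta$, and invoking Proposition \ref{prop: Busemann enough} to pass from Busemann functions back to all distance functions $d_y$, gives $\mathrm{drift}(f) \geq \tfrac{1}{M(d-1)}$.

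The main obstacle is the precise bookkeeping of constants in the passage from the $r$-parametrized ball estimate to the $t$-parametrized heat estimate: one must verify that no constant worse than $1$ is lost in the radial escape rate of Brownian motion on $\H^2$ under the normalization (minimal sectional curvature $-1$), and that the linear-growth control on $b_\eta \circ f$ together with dominated-convergence / uniform-integrability arguments legitimately allows interchanging the limit $t \to \infty$ with the expectation. A clean way to handle the escape rate is to note that $d_X(x, B_t)$ is itself a submartingale-plus-drift (by the Laplacian comparison / Itô's formula for the radial coordinate in a Cartan-Hadamard manifold with pinched curvature), with drift term $\coth(d_X(x,B_t)) \, dt \geq dt$, so $\mathbb{E}_x[d_X(x,B_t)] \geq t - O(1)$; this is exactly the factor needed. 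Everything else is a soft limiting argument paralleling the proof of Theorem \ref{thm: finitely nontransverse urqies are stable on balls}, and the positivity hypothesis enters only through that theorem's applicability, already established in Corollary \ref{cor: schoen}.
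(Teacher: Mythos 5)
There is a genuine gap at the heart of your transfer from the hitting-measure estimate to the heat-flow formulation of drift: you assert that $s \mapsto b_\eta \circ f(B_s)$ is a submartingale and invoke optional stopping to say that $\mathbb{E}_x[u(B_{t\wedge T_r})]$ is non-decreasing in $t$. This is false here. The function $b_\eta \circ f$ is not subharmonic, because $f$ is merely a ($\Pi$-quasi-isometric, coarse Lipschitz) map and not a harmonic map; the composition of a convex function with $f$ has no one-sided Laplacian bound. Indeed the entire point of the paper's notion of coarse strict subharmonicity is that conclusions like Theorem \ref{thm: finitely nontransverse urqies are stable on balls} only give an average-increase property against specific harmonic measures (hitting measures of large balls, uniformly over centers), which is much weaker than pointwise subharmonicity. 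Without the submartingale property, knowing the expected increment at the exit time $T_r$ from $B(x,r)$ tells you nothing directly about $\mathbb{E}_x[u(B_t)]$ at a fixed time $t$, and the escape-rate input ($\Delta\rho=\coth\rho\ge 1$, which is fine as far as it goes) cannot bridge that gap. A probabilistic repair is possible but is a different argument from what you wrote: one would iterate exit times from balls of a fixed large radius $r_0$ recentered at the current position, using that the estimate of Theorem \ref{thm: finitely nontransverse urqies are stable on balls} is uniform over centers, and control the number of crossings by time $t$; the compensated-sum/Wald-type bookkeeping this requires is exactly what your sketch elides.

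For comparison, the paper's proof does not pass through the sphere estimate at all as a black box: it reruns the weak-$*$ limit argument in the proof of Theorem \ref{thm: finitely nontransverse urqies are stable on balls} with the rescaled heat-kernel measures $\tfrac{d_Y(f(x),f(z))}{t}\,p_t(x,z)\,dz$ in place of $\tfrac{d_Y(f(x),f(z))}{r}\,\mu_x^r$. The Davies--Mandouvalos estimate for the heat kernel on $\H^2$ shows that these measures concentrate at radius $\approx t$ (escape rate $1$ under the curvature $-1$ normalization), so any subsequential limit is supported on $\partial_\infty\H^2$, has mass at least $\tfrac1M$, and lies in the (non-atomic) harmonic measure class; then positivity, finite non-transversality, Lemma \ref{lem: stability of generically transverse measures} with $\epsilon(\mathfrak{a},\Pi)=\tfrac1{d-1}$, and the lower semicontinuity/Portmanteau step go through verbatim, and Proposition \ref{prop: Busemann enough} converts the Busemann bound into the bound on $P_t(d_y\circ f)$ defining the drift. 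Your use of Proposition \ref{prop: Busemann enough} and of the radial Laplacian comparison in place of the explicit heat-kernel asymptotics are both fine ingredients; the missing idea is that the limiting argument must be redone for the heat measures themselves rather than deduced from the ball estimate by martingale reasoning.
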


\begin{proof}
    This follows from Theorem \ref{thm: finitely nontransverse urqies are stable on balls} together with the following estimate for the heat kernel on $\H^2$ \cite[Theorem 3.1]{DaviesMandouvalos1988heatkernelhyperbolic}:
    \[ p_t(\rho) \asymp t^{-1/2} \left(1+\rho\right) \left(1+t+\rho\right)^{-1/2} e^{- \frac1{4} t - \frac12 \rho - \frac{\rho^2}{4t}}
    \]
    uniformly for $0 \le \rho < \infty$ and $0 < t < \infty$.
\end{proof}

Extending our techniques to other higher Teichm\"uller spaces hinges on the following question.

\begin{question}
    Suppose $(G,\Theta)$ has a positive structure in the sense of Guichard-Wienhard \cite{gw2025positivity}.
    Is every continuous positive map $\xi \colon \RP^1 \to \mathcal{F}_\Theta$ finitely non-transverse?
\end{question}

For example, when $G=\SO_0(2,n+1)$, for $n \ge 0$, there is a subset $\Theta$ of $\Pi$ so that $(G,\Theta)$ admits a positive structure and moreover $\mathcal{F}_\Theta = \Ein^{1,n}$, the Einstein universe. When $n \ge 2$, the restricted root system has type $B_2$ and $\Theta$ is the singleton consisting of the long simple root. Using the fact that positive maps $\xi \colon S^1 \to \Ein^{1,n}$ are the same as spacelike maps in the conformal Lorentzian structure, it is not hard to verify that they satisfy the finite non-transversality property. Then, by following the proof of Corollary \ref{cor: schoen} with the previous sentences replacing Theorem \ref{thm: Saldanha-Shapiro-Shapiro}, any $\Theta$-quasi-isometric embedding $f \colon \H^2 \to \SO_0(2,n+1)/\SO(2)\times \SO(n+1)$ with positive boundary map $\partial f \colon \partial_\infty \H^2 \to \Ein^{1,n}$ is at bounded distance from a unique harmonic map.

\subsection{Positive quasi-symmetric maps}

We always consider $\partial_\infty \H^2 = \RP^1$. 

\subsubsection{\texorpdfstring{$\PGL_2(\R)$}{PGL2R}}

The cross ratio of four points in $\RP^1$ is 
\begin{equation} \label{eqn: cross ratio}
    CR(\lambda_1,\lambda_2,\lambda_3,\lambda_4) = \frac{(\lambda_4 - \lambda_3)(\lambda_2 - \lambda_1)}{(\lambda_3 - \lambda_2)(\lambda_1 - \lambda_4)}
\end{equation}
With this convention 
\[
    CR(x,0,1,\infty) = x.
\]
In particular the cross ratio of four dihedrally ordered points is negative. 
Let $(\RP^1)^{(k)}$ denote the space of pairwise distinct $k$-tuples in $\RP^1$, and $(\RP^1)^{(k)+}$ the space of dihedrally ordered pairwise distinct $k$-tuples. 
If $\phi$ is a homeomorphism from $\RP^1$ to $\RP^1$, we write $\phi^k$ for the induced self-homeomorphism of $(\RP^1)^{(k)}$.

A homeomorphism $\phi: \RP^1 \to \RP^1$ is called \emph{$K$-quasisymmetric} if 
\begin{equation} \label{eqn: K quasi-symmetric to RP1}
    \frac1{K} \leq -CR(\phi^4(\tau)) \leq K
\end{equation}
for all $\tau \in (\RP^1)^{(4)}$ such that $-CR(\tau) = 1$.

On the other hand a homeomorphism $\phi: \R \to \R$ is called $K$-quasisymmetric if
\begin{equation} \label{eqn: K quasi-symmetric to R}
    \frac{1}{K} \le \frac{\phi(x+t) - \phi(x)}{\phi(x)-\phi(x-t)} \le K
\end{equation}
Since $CR(x-t,x,x+t,\infty)=-1$, the restriction of a $K$-quasisymmetric self-homeomorphism of $\RP^1$ fixing $\infty$ to a self-homeomorphism of $\R$ is still $K$-quasisymmetric. 
On the other hand, it is a standard lemma in the theory of quasi-symmetric maps that if $\phi\colon \R \to \R$ is $K$-quasisymmetric then the extension to a homeomorphism of $\RP^1$ is $K'$-quasisymmetric for some explicit $K'(K)$.

We also define, for $\tau \in (\RP^1)^{(4)+}$ the log-cross-ratio
\[
    cr(\tau) \coloneqq \log(-CR(\tau)).
\]

\begin{proposition}[e.g.{\cite[Theorem 11.3]{Heinonen2001analysisonmetricspaces}}]\label{prop: cr is coarse linear}
    For every $K$ there exist $L,M$ so that:

    If $\phi \colon \RP^1 \to \RP^1$ is $K$-quasisymmetric, then for every $\tau \in (\RP^1)^{(4)+}$, 
        \[ \frac1{M} cr(\tau) - 1 \le cr(\phi^4(\tau)) \le L cr(\tau) + L .\]
\end{proposition}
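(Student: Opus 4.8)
The plan is to reduce the statement to a comparison between the log-cross-ratio $cr$ and the more classical notion of a quasisymmetry that is quasi-isometrically controlled, and then to use the standard theory of quasisymmetric maps on $\RP^1$. The key observation is that for a dihedrally ordered quadruple $\tau = (\lambda_1,\lambda_2,\lambda_3,\lambda_4)$, the quantity $-CR(\tau)$ ranges over $(0,\infty)$, and $cr(\tau) = \log(-CR(\tau))$ is, up to additive and multiplicative constants, the same as a "hyperbolic distance" quantity: if we normalize so that three of the four points are $0,1,\infty$, then $cr$ measures (coarsely) the hyperbolic distance between two geodesics in $\H^2$. Concretely, I would first reduce to the case where $\phi$ fixes $0,1,\infty$ by pre- and post-composing with elements of $\mathrm{PGL}_2(\R)$, which does not change any cross-ratio, so the hypothesis and conclusion are both $\mathrm{PGL}_2(\R)$-invariant in the appropriate sense.

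First I would record the precise relationship between the two normalizations of quasisymmetry: a $K$-quasisymmetric homeomorphism of $\RP^1$ restricts, after conjugating so that $\infty$ is fixed, to a $K$-quasisymmetric homeomorphism of $\R$ in the sense of \eqref{eqn: K quasi-symmetric to R}, using $CR(x-t,x,x+t,\infty) = -1$; conversely a $K$-quasisymmetric homeomorphism of $\R$ extends to a $K'(K)$-quasisymmetric homeomorphism of $\RP^1$, which is the standard lemma cited just before the statement. So without loss of generality I work with an increasing $K$-quasisymmetric $\phi \colon \R \to \R$ fixing $0$ and $1$. The second step is to express $cr$ of a dihedral quadruple in $\R \cup \{\infty\}$ in terms of ratios of the form appearing in \eqref{eqn: K quasi-symmetric to R}: for $a < b < c < d$ in $\R$ one has $-CR(a,b,c,d) = \frac{(d-c)(b-a)}{(c-b)(d-a)}$, which is a product of two such "distortion" ratios (splitting at intermediate points), so iterating \eqref{eqn: K quasi-symmetric to R} along a chain of dyadic subdivisions gives that $cr(\phi^4(\tau))$ is controlled by $cr(\tau)$. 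Precisely, the classical growth estimates for quasisymmetric maps (e.g. that a $K$-quasisymmetric map distorts a ratio $t$ by at most $\lambda(t)$ where $\lambda$ is submultiplicative with $\lambda(t) \le C t^{1/\beta}$ for $t \le 1$ and $\lambda(t) \le C t^{\beta}$ for $t \ge 1$ for some $\beta = \beta(K) \in (0,1]$) translate directly into the desired two-sided bound $\frac1M cr(\tau) - 1 \le cr(\phi^4(\tau)) \le L\, cr(\tau) + L$: the lower bound comes from applying the upper estimate to $\phi^{-1}$ (which is $K$-quasisymmetric with the same $K$), and the $-1$ and $+L$ additive slack absorb the behavior near $cr(\tau) = 0$, where the multiplicative estimate degenerates. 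This is exactly the content of the cited \cite[Theorem 11.3]{Heinonen2001analysisonmetricspaces}.

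The main obstacle is purely bookkeeping: translating between the four-point cross-ratio formulation and the three-point "distortion ratio" formulation, and tracking how the constants transform under the passage $\R \leftrightarrow \RP^1$ and under the subdivision argument. There is no conceptual difficulty — the statement is essentially a restatement of the well-known fact that quasisymmetric self-homeomorphisms of $\RP^1$ act on the space of dihedral quadruples (equivalently, on the hyperbolic plane via the log-cross-ratio coordinate) by coarsely bi-Lipschitz maps — but one should be slightly careful that the additive constant on the left cannot be removed (a quasisymmetric map need not be bi-Lipschitz for the cross-ratio, only coarsely so near the diagonal), which is why the statement is phrased with a $-1$ rather than a multiplicative bound alone. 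I would therefore present the proof as: (i) reduce to $\phi$ fixing $0,1,\infty$; (ii) cite the $\R \leftrightarrow \RP^1$ normalization lemma; (iii) invoke the standard quasisymmetric distortion function $\lambda_K$ and its polynomial growth bounds; (iv) write $-CR$ as a product of two distortion ratios and conclude.
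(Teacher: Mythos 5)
The paper does not actually prove this proposition: it is quoted as a standard fact from the theory of quasisymmetric maps (the citation to Heinonen), and your outline reconstructs exactly that standard route --- normalize by M\"obius maps, pass between the $\RP^1$ and $\R$ formulations, and invoke the power-type distortion bound $\eta(t)\le C\max(t^{\alpha},t^{1/\alpha})$ together with the dyadic doubling argument. In that sense you and the paper take the same approach, and the overall plan is sound.

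One step, however, fails as literally written. In (iv) you factor $-CR(a,b,c,d)=\frac{(d-c)(b-a)}{(c-b)(d-a)}$ as a product of two distortion ratios and control each factor separately by the quasisymmetry control function. That bounds the logarithm of each image factor by a linear function of the logarithm of the corresponding domain factor, but the sum of the two absolute logarithms is not controlled by $|cr(\tau)|$: for instance the quadruple $0,\,1,\,1+\epsilon,\,(1+\epsilon)/(1-\epsilon)$ has $-CR=1$ while the two factors are $1/\epsilon$ and $\epsilon$, so separate control only yields a bound of size $\epsilon^{\alpha-1/\alpha}\to\infty$, whereas the truth is $-CR(\phi^4\tau)\in[1/K,K]$. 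The fix is the one your step (i) almost performs: use M\"obius invariance of the cross-ratio and of $K$-quasisymmetry to normalize the quadruple as well as the map, sending $(\lambda_2,\lambda_3,\lambda_4)$ and $(\phi\lambda_2,\phi\lambda_3,\phi\lambda_4)$ to $(0,1,\infty)$, so that $-CR(\tau)=|x|$ and $-CR(\phi^4\tau)=|\phi'(x)|$ are single ratios; then the doubling estimate (equivalently the power-type bound applied to $\phi'$ and $(\phi')^{-1}$) gives the two-sided coarse-linear bound. Two smaller remarks: the additive constant in the lower bound must be allowed to depend on $K$ (at $cr(\tau)=0$ one only gets $cr(\phi^4\tau)\ge-\log K$, so the literal ``$-1$'' cannot absorb this when $\log K>1$; this imprecision is in the statement itself and your ``absorb into the slack'' comment inherits it), and since $cr$ takes both signs on dihedrally ordered quadruples the estimate should be read for $cr(\tau)\ge 0$, or with $|cr|$, which is how it is used later in the paper.
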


From this lemma follows both the H\"{o}lder continuity of quasisymmetric maps, and the compactness of the family of normalized $K$-quasisymmetric maps, i.e.\ those satisfying $\phi^3(0,1,\infty) = (0,1,\infty)$.

\subsubsection{\texorpdfstring{$\PGL_d(\R)$}{PGLdR}}

In order to define quasi-symmetric maps to the flag variety $\mathrm{Flag}(\R^d)$, we first generalize the cross-ratio and log-cross-ratio to four-tuples of flags.

A dihedrally ordered $k$-tuple of flags is called positive if it can be extended to a positive map of the circle. Any pairwise-transverse four-tuple of flags is equivalent, up to the action of $\mathrm{PGL}_d(\R)$, to one of the form
\begin{equation} \label{eqn: standard 4-tuple}
    (m \cdot \sigma_0, \sigma_0, n \cdot \sigma_0, \sigma_\infty).
\end{equation}
where $m$ and $n$ are unipotent upper triangular matrices. The four-tuple is positive if $m$ and $n$ can be chosen such that $n$ and $m^{-1}$ are totally positive.

\begin{definition}
    For $i = 1, \ldots, d-1$, we define the \emph{$i$th cross-ratio} of a positive 4-tuple of the form \eqref{eqn: standard 4-tuple} by
    \[
    CR_i(m \cdot \sigma_0, \sigma_0, n \cdot \sigma_0, \sigma_\infty) = m_{i,i+1}/n_{i,i+1}.
    \]
    We extend this to all positive four-tuples by $\mathrm{PGL}_d(\R)$-invariance.
\end{definition}

Noting that $CR_i$ is always negative, we define:

\begin{definition}
    For $i= 1, \ldots, d-1$, the \emph{$i$th log-cross ratio} of a positive 4-tuple $\tau$ is
    \[
    cr_i(\tau) = \log (- CR_i(\tau)).
    \]
\end{definition}

\begin{remark}
    Our cross ratio is not a cross ratio in the sense of Labourie \cite{labourie2007crossratios}.
\end{remark}

We give another interpretation of this cross ratio in terms of projections from triples of flags to the symmetric space. Let $H$ be the Cartan subgroup of diagonal matrices. Then $H$ acts on the space of positive triples $(\sigma_0,n\cdot \sigma_0, \sigma_\infty)$ by conjugation on the matrix $n$. Since $H$ acts simply transitively on the superdiagonal of $n$, there is always a unique representative of the triple such that $n_{i,i+1} = 1$ for all $i$.
\begin{definition}
    We say that the positive triple $(\sigma_0,n\cdot \sigma_0, \sigma_\infty)$ is \emph{normalized} if $n_{i,i+1} = 1$ for all $i$.
\end{definition}

Since the stabilizer in $\mathrm{PGL}_d(\R)$ of a normalized triple of flags is trivial, any positive triple of flags determines a unique element of the group $G$, and so we may choose any $o$ in the flat $F(\sigma_0,\sigma_\infty)$ and define:

\begin{definition} \label{def: pd}
    The \emph{projection $p_d \colon \mathrm{Flag}(\R^d)^{(3)+} \to Y$} associated to the set of cross ratios $CR_i$ is the map sending
    \[
    (g \cdot \sigma_0, g n \cdot \sigma_0, g \cdot \sigma_\infty)
    \]
    to $g \cdot o$ for any normalized triple $(\sigma_0,n\cdot \sigma_0, \sigma_\infty)$.
\end{definition}

Note that $p_d(\sigma,\sigma', \sigma'')$ always lies in the flat determined by $\sigma$ and $\sigma''$. 
Since the connected component $H_0 = \exp(\mathfrak{a})$ of $H$ acts simply transitively on the flat determined by $\sigma_0$ and $\sigma_\infty$, for any pair of positive triples 
\[
(\sigma_0, n \cdot \sigma_0, \sigma_\infty), (\sigma_0, m \cdot \sigma_0, \sigma_\infty)
\]
there is always a unique element $v \in \mathfrak{a}$ such that 
\[
\mathrm{exp}(v) \cdot p(\sigma_0, n \cdot \sigma_0, \sigma_\infty) = p(\sigma_0, m \cdot \sigma_0, \sigma_\infty).
\]
Moreover, when the quadruple 
    \[(m \cdot \sigma_0, \sigma_0, n \cdot \sigma_0, \sigma_\infty) \]
is positive, the relationship with the cross-ratio is given by:
\begin{prop} \label{prop: cross ratios and projections}
    The log cross-ratio $cr_i$ is given by applying the $i$th root $\alpha_i$ to the vector $v$.
\end{prop}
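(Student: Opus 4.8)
The plan is to evaluate $p_d$ on the two positive triples $\tau_n = (\sigma_0, n\cdot\sigma_0, \sigma_\infty)$ and $\tau_m = (\sigma_0, m\cdot\sigma_0, \sigma_\infty)$ as explicit diagonal elements acting on $o$, and then simply apply $\alpha_i$ to the resulting translation $v$. For $\tau_n$, since $n$ is totally positive, writing $\tau_n = (a\cdot\sigma_0, a\tilde n\cdot\sigma_0, a\cdot\sigma_\infty)$ with $\tilde n$ the normalized representative forces $a$ to fix both opposite flags $\sigma_0$ and $\sigma_\infty$, hence $a\in H$; comparing with $a\tilde n a^{-1} = n$ and $\tilde n_{i,i+1}=1$ pins down $a_i/a_{i+1} = n_{i,i+1}>0$, so in fact $a\in H_0$ and $p_d(\tau_n) = a\cdot o$.

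For $\tau_m$ I would first note that it is a positive triple — it is a sub-configuration of the positive quadruple $(m\cdot\sigma_0,\sigma_0,n\cdot\sigma_0,\sigma_\infty)$ and positivity of a configuration does not depend on its cyclic ordering — so $p_d$ is defined on it, and by definition $p_d(\tau_m) = \tilde g\cdot o$ where $\tilde g$ is the unique element with $\tau_m = (\tilde g\cdot\sigma_0, \tilde g n''\cdot\sigma_0, \tilde g\cdot\sigma_\infty)$ and $n''$ normalized totally positive. The same bookkeeping as above — using that $\operatorname{Stab}(\sigma_0)$ is the lower triangular subgroup, which meets the upper triangular subgroup $HN=\operatorname{Stab}(\sigma_\infty)$ exactly in $H$, and uniqueness of the decomposition $HN = N\rtimes H$ — shows $\tilde g\in H$, $\tilde g n''\tilde g^{-1}=m$, hence $n''=\tilde g^{-1}m\tilde g$, and the normalization forces $\tilde g_i/\tilde g_{i+1} = m_{i,i+1}$. (We do not need to verify that $n''$ is totally positive: since $\tau_m$ is positive, a normalized totally positive representative exists, and by the previous computation the group element occurring in it must be exactly this $\tilde g$.)

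Since $m_{i,i+1} = CR_i(\tau) < 0$, this $\tilde g$ lies outside $H_0$; I would factor $\tilde g = \epsilon\, b$ with $b\in H_0$ determined by $b_i/b_{i+1} = -m_{i,i+1}>0$ and $\epsilon$ the diagonal sign matrix with entries $1,-1,1,\dots$. The one step requiring a small argument is that $\epsilon$ acts trivially on the flat: $\epsilon$ stabilizes $F(\sigma_0,\sigma_\infty) = H_0\cdot o$, commutes with $H_0$, and satisfies $\epsilon^2=e$, while $H_0$ acts simply transitively on $F(\sigma_0,\sigma_\infty)$ and is torsion-free, and these force $\epsilon\cdot o = o$; therefore $p_d(\tau_m) = \tilde g\cdot o = b\cdot o$.

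Finally, $v$ is characterized by $\exp(v)\cdot p_d(\tau_n) = p_d(\tau_m)$, i.e.\ $\exp(v)\,a\cdot o = b\cdot o$, so simple transitivity of $H_0$ on the flat gives $\exp(v) = ba^{-1}$, hence $v = \log b - \log a$ and
\[
\alpha_i(v) = \log\frac{b_i}{b_{i+1}} - \log\frac{a_i}{a_{i+1}} = \log(-m_{i,i+1}) - \log n_{i,i+1} = \log\!\left(-\frac{m_{i,i+1}}{n_{i,i+1}}\right) = \log(-CR_i(\tau)) = cr_i(\tau),
\]
as desired. I expect no serious obstacle here: the only delicate points are the sign bookkeeping — in particular checking that the finite-order diagonal matrix $\epsilon$ arising from the normalization of $\tau_m$ does not enter $p_d$ because it fixes the flat — and observing that $\tau_m$ is positive so that $p_d$ applies. (Alternatively, one may first conjugate the whole quadruple by an element of $H_0$ to normalize $n$; this changes neither $v$, as $H_0$ is abelian, nor the $CR_i$, which are $\PGL_d(\R)$-invariant, reducing to the case $p_d(\tau_n)=o$, $CR_i(\tau)=m_{i,i+1}$.)
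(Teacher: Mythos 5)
Your argument is correct. The paper actually states Proposition \ref{prop: cross ratios and projections} without proof, treating it as a direct computation from Definition \ref{def: pd}, so there is no written argument to compare against; what you give is a complete verification of exactly the computation that is implicit there: normalize each triple by a diagonal element, read the consecutive ratios off the superdiagonal via $a\tilde n a^{-1}=n$ (using that an upper-triangular unipotent stabilizing $\sigma_0$ is trivial), and check that the sign matrix $\epsilon$ forced by $m_{i,i+1}<0$ does not move the projection. Two minor simplifications you may find useful: in the model of $Y_d$ as positive definite symmetric matrices mod scalars with $g\cdot S=gSg^{T}$, the flat $F(\sigma_0,\sigma_\infty)$ consists of diagonal classes and $\epsilon$ fixes it \emph{pointwise}, so the torsion-free argument for $\epsilon\cdot o=o$ is not needed and the conclusion is independent of the choice of $o$; and the positivity of the triple $(\sigma_0,m\cdot\sigma_0,\sigma_\infty)$, which you obtain by reordering a subtuple of the positive quadruple, can also be seen directly because $\epsilon m\epsilon^{-1}$ is totally positive whenever $m^{-1}$ is (checkerboard sign pattern of the minors of the inverse), which exhibits the normalized totally positive representative and re-derives $\tilde g=\epsilon b$. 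Finally, your computation uses the convention $\alpha_i\bigl(\log \mathrm{diag}(a_1,\dots,a_d)\bigr)=\log(a_i/a_{i+1})$, i.e.\ the simple roots attached to the upper-triangular Borel stabilizing $\sigma_\infty$; this matches the paper's setup, where $N$ is the unipotent radical of the stabilizer of $\sigma_\infty$.
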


With our generalized cross ratios in hand, the obvious definition of quasisymmetric is:

\begin{definition}
    A positive map $\phi\colon \RP^1 \to \mathrm{Flag}(\R^d)$ is \emph{$K$-quasisymmetric} if for every four-tuple $\tau \in (\RP^1)^{(4)}$ with $CR(\tau) = -1$, 
    \[
    1/K \leq -CR_i(\phi^{4}\tau) \leq K.
    \]
\end{definition}

Let $\phi\colon \RP^1 \to \mathrm{Flag}(\R^d)$ be a normalized positive map. 
Then for all $t\in\R$,
\[
    \phi(t) = n(t)\phi(0)
\]
where $n(t) \in N$, the unipotent radical of the stabilizer of $\sigma_\infty$. 
Writing
    \[ n(t) = 
        \begin{bmatrix}
                1 & n_{1,2}(t) & * & *  \\
                  & \ddots & \ddots & * \\
                  &        & 1 & n_{d-1,d}(t) \\
                  &        & & 1
        \end{bmatrix}, \]
we denote by $n^1(t)$ the superdiagonal, i.e.\ the vector of entries of $n$ one above the diagonal. 
From positivity, one sees that each component $n_{i,i+1}$ of $n^1(t)$ is a monotonic function of $t$, and from continuity, one deduces that in fact each component of $n^1(t)$ is a homeomorphism from $\R$ to $\R$. The map $\phi$ is normalized if and only if each homeomorphism $n_{i,i+1}$ is normalized in the sense that it sends 0 to 0 and 1 to 1.

From the definition of our generalized cross-ratios, one sees immediately that if $\phi$ is $K$-quasisymmetric, then each $n_{i,i+1}$ is a $K$-quasisymmetric homeomorphism from $\R$ to $\R$.

In fact, one can recover a normalized positive map from the $d-1$ homeomorphisms $n_{i,i+1}$. This is the content of the following proposition.

\begin{prop} \label{prop: d-1 qs homeos}
    The space of normalized continuous positive maps $\phi \colon \RP^1 \to \Flag(\R^d)$ is in bijection with the space of $d-1$-tuples $(\phi_1,\ldots, \phi_{d-1})$ of normalized homeomorphisms from $\RP^1$ to $\RP^1$.
\end{prop}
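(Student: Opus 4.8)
The plan is to pass through the totally positive unipotent semigroup and an iterated (product) integral formula. First I would set up the dictionary: a normalized positive map $\phi$ is the same data as a continuous path $n\colon\R\to N$ with $n(0)=I$ such that $n(s)^{-1}n(t)$ is totally positive for all $s<t$, together with the requirement that $n(t)\sigma_0\to\sigma_\infty$ as $t\to\pm\infty$. Indeed $\phi(t)=n(t)\sigma_0$ is forced by the normalization (the only unipotent upper-triangular matrix stabilizing $\sigma_0$ is $I$, so $n(0)=I$), and applying the isometry $n(s)^{-1}$ to a cyclically ordered quadruple $(\phi(r),\phi(s),\phi(t),\phi(\infty))$ with $r<s<t$ gives $(n(s)^{-1}n(r)\cdot\sigma_0,\ \sigma_0,\ n(s)^{-1}n(t)\cdot\sigma_0,\ \sigma_\infty)$, so that positivity of $\phi$ in the sense of Definition \ref{def: positive} is equivalent to total positivity of all the increments $n(s)^{-1}n(t)$. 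The condition at $\infty$ is precisely continuity of $\phi$ at the ideal point, and under the dictionary it corresponds to each superdiagonal homeomorphism $n_{i,i+1}$ extending to a homeomorphism of $\RP^1$ (since the higher Plücker ratios of $\phi^{(k)}(t)$ are dominated by products of the $n_{i,i+1}(t)$). Thus it remains to show that the path $n$ is determined by, and can be freely prescribed by, the $d-1$ superdiagonal functions $\phi_i=n_{i,i+1}$.

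For injectivity I would first prove the local statement that a path $n$ as above on an interval $[t_0,t_0+\delta]$ with $n(t_0)=I$ is determined by $(n_{i,i+1})$, and then conclude by a connectedness argument: the set where two such maps agree is nonempty, closed, and open, openness following from the local statement after renormalizing so the agreed-upon value becomes $\sigma_0$ (this renormalization rescales the superdiagonals of both maps by the \emph{same} diagonal factor, so the hypotheses are preserved). The local statement I would prove by induction on $j-i$: total positivity of $n(s)^{-1}n(t)$ sandwiches the increment $n_{ij}(t)-n_{ij}(s)$ between expressions in the strictly lower-order entries --- positivity of the $(i,j)$ entry itself and of a suitable $2\times2$ minor in which $n_{ij}$ appears linearly give matching one-sided bounds --- and these bounds are respectively the lower and upper Riemann--Stieltjes sums of $\int_{t_0<r_1<\cdots<r_{j-i}<t}d\phi_i(r_1)\cdots d\phi_{j-1}(r_{j-i})$, so refining the partition forces $n_{ij}(t)$ to equal this iterated Stieltjes integral. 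This simultaneously yields the explicit formula $n(t)=\prod_{t_0}^{t}\!\bigl(I+\textstyle\sum_i d\phi_i(r)\,e_{i,i+1}\bigr)$ as a Chen/product integral.

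For surjectivity, given normalized homeomorphisms $\phi_1,\dots,\phi_{d-1}$ of $\RP^1$, I would \emph{define} $n\colon\R\to N$ by that same product integral (it converges because each factor over a fine partition is close to $I$, using that a totally positive unipotent matrix has higher entries dominated by products of its superdiagonal entries), check $n(0)=I$ and $n_{i,i+1}=\phi_i$ by expanding the product, verify that $n(t)\sigma_0\to\sigma_\infty$ at $\pm\infty$ (from the Plücker-ratio bounds above, this holds exactly because each $\phi_i(t)\to\pm\infty$), and --- the crucial point --- show $n(s)^{-1}n(t)$ is totally positive for $s<t$. For the latter, $n(s)^{-1}n(t)$ is a limit of products $\prod_k\prod_i x_{\alpha_i}(\Delta_k\phi_i)$ of elementary Jacobi matrices with positive entries; since each $\phi_i$ is strictly increasing, a fine enough partition contributes a full reduced word for the longest element $w_0$ with positive parameters, so by Lusztig's positivity criterion \cite{lusztig1994positivity} each such product is totally positive, and one checks that the strict positivity persists in the limit because the iterated-integral formula exhibits every required minor of $n(s)^{-1}n(t)$ as a strictly positive combination of iterated integrals against the strictly increasing measures $d\phi_i$ (the same sandwich as in the injectivity step, now with strict inequalities). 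I expect the main obstacle to be exactly this total-positivity-of-the-product-integral step, together with the bookkeeping needed to make the non-differentiable Stieltjes sandwich rigorous and to handle positivity of configurations of flags that wrap around the point $\infty$.
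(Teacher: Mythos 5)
Your proposal is correct and shares its core with the paper's argument: in both, the heart of the matter is reconstructing the unipotent path $n(t)$ from its superdiagonal by the iterated (Chen/ordered-exponential) integral formula, i.e.\ solving the measure-valued Maurer--Cartan equation, so this is the same construction rather than a new one. Where you genuinely diverge is in the two verifications. For positivity of the constructed map, the paper runs a Cauchy--Binet induction directly on the integral formula, exhibiting each minor of the increments as an integral of nonnegative quantities with strictness coming from the initial condition; you instead discretize, apply Lusztig's criterion \cite{lusztig1994positivity} (a positive-parameter word containing a reduced word for $w_0$ lies in $N_{>0}$), and then argue that strict positivity survives the limit --- note that this last step is the only delicate one, and the mechanism you propose for it is essentially the paper's Cauchy--Binet computation, so the Lusztig detour buys conceptual clarity but not a shortcut. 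In the other direction, your Riemann--Stieltjes sandwich (trapping $n_{ij}(t)-n_{ij}(s)$ between lower and upper Stieltjes sums of the iterated integral, using positivity of the $(i,j)$ entry of the increment for the lower bound and of the $2\times 2$ minor $u_{i,i+1}u_{i+1,j}-u_{ij}$ for the upper bound) supplies the injectivity statement --- that a continuous positive map is determined by its superdiagonal --- which the paper's sketch leaves implicit by tacitly assuming the original path satisfies the same integral formula; similarly, your discussion of the point at infinity (properness of each $\phi_i$ versus $n(t)\sigma_0\to\sigma_\infty$, via the bounds $n_{ij}<\prod_k n_{k,k+1}$ together with the increment inequalities) addresses continuity at the ideal point, which the paper's sketch does not touch. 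So your route is a legitimate and somewhat more self-contained variant of the same proof; the places you flag as obstacles (strictness in the limit, the Stieltjes bookkeeping) are exactly where the real work lies, and they are resolvable by the arguments you indicate.
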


For differentiable curves, Fock-Goncharov observed this in \cite[Proposition 7.2]{fock2006moduli}.

\begin{proof}[Sketch of proof]
    From $\phi$, we define $(\phi_1,\ldots,\phi_{d-1}) = n^1$. We obtain the converse by solving the Maurer-Cartan equation
    \[
    n^{-1}dn =         \begin{bmatrix}
                0 & d\phi_1 & 0 & 0  \\
                  & \ddots & \ddots & 0 \\
                  &        & 0 & d\phi_{d-1} \\
                  &        & & 0
        \end{bmatrix} 
    \]
    Note that the differentials $d\phi_i$ are only measures, but this is fine because the solution can be written in integrals, using the ordered exponential:
    \[
    n_{i,i+j+1}(t) = \int_{0 \leq t_0 \leq \cdots \leq t_j \leq t} d\phi_i \otimes d\phi_{i+1} \otimes \cdots \otimes d\phi_{i+j}
    \]
    One then checks positivity of the flag minors of $n(t)$. To see this, one uses the Cauchy-Binet formula
    \[
    d[n]_{i_1,\ldots, i_k}^{j_1,\ldots, j_k} = \sum_{l=1}^k [n]_{i_1,\ldots, i_l + 1, \ldots, i_k}^{j_1,\ldots, j_k} d \phi_{i_l + 1}
    \]
    in which the index $i_l$ has been replaced by $i_l + 1$, and the $[n]$ expressions denote the determinant of minors. Inductively, this expresses each minor as an integral of non-negative functions, which implies non-negativity of the flag minors. Strict positivity comes from the positivity of determinants of diagonal minors at time zero, when $n(0)$ is the identity.

    We have shown that for each $s<t$, the triple $(\phi(s),\phi(t),\sigma_\infty)$ is positive.
    In particular, $\phi$ sends distinct pairs to transverse pairs and sends a pairwise distinct triple to a positive triple. 
    Hence, $\phi$ is positive.
\end{proof}

\subsection{Quasiperiodicity}

\begin{definition}
    A family $\mathcal{F}$ of positive maps from $\RP^1$ to $\mathrm{Flag}(\R^d)$ is \emph{quasiperiodic} if it is invariant under the action of $\mathrm{PGL}_2(\R) \times \mathrm{PGL}_d(\R)$ and the subfamily of normalized maps is compact with respect to uniform convergence.
\end{definition}

By Corollary \ref{cor: compactness of urqies}, the family of positive maps arising as boundary maps of $L$-Lipschitz $(M,\Pi)$-quasi-isometric embeddings is quasiperiodic.

From the continuity of the cross-ratio on the space of positive 4-tuples, one sees immediately that any quasi-periodic family is $K$-quasisymmetric for some $K$. In particular,

\begin{prop} \label{prop: piqie implies qs}
    There is a $K=K(L,M,d)$ such that every positive map $\RP^1 \to \mathrm{Flag}(\R^d)$ arising as the boundary map of an $(L,M,\Pi)$-quasi-isometric embedding $\H^2 \to Y$ is $K$-quasisymmetric.
\end{prop}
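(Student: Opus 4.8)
The plan is to reduce the statement to the compactness of the space of normalized boundary maps, which we have already established. First I would recall from Corollary \ref{cor: compactness of urqies} that the space of boundary maps of $(L,M,\Pi)$-quasi-isometric embeddings $\H^2 \to Y_d$ is compact in the uniform topology; and that by definition the boundary map of such an embedding is positive, since $f$ is assumed to have positive boundary map. Post-composing with an element of $\mathrm{PGL}_d(\R)$ and pre-composing with an element of $\mathrm{PGL}_2(\R)$ preserves the class of $(L,M,\Pi)$-quasi-isometric embeddings (only the base point changes), so the associated family of boundary maps is invariant under $\mathrm{PGL}_2(\R) \times \mathrm{PGL}_d(\R)$. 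Combined with compactness of the normalized subfamily, this shows that the family of positive boundary maps of $(L,M,\Pi)$-quasi-isometric embeddings is quasiperiodic in the sense of the definition just given.

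The second step is to observe that the $i$th cross-ratio $CR_i$ is a continuous function on the space of positive four-tuples of flags. Indeed, writing a positive four-tuple in the standard form $(m\cdot\sigma_0, \sigma_0, n\cdot\sigma_0, \sigma_\infty)$ with $n$ and $m^{-1}$ totally positive, the matrix entries $m_{i,i+1}$ and $n_{i,i+1}$ depend continuously on the four-tuple (the normalization is unique and algebraic), and $n_{i,i+1} > 0$, $m_{i,i+1} < 0$, so $CR_i = m_{i,i+1}/n_{i,i+1}$ is continuous and strictly negative there.

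The third step: let $\mathcal{N}$ be the compact family of normalized positive maps arising as boundary maps. Consider the function that sends a pair $(\phi, \tau)$, with $\phi \in \mathcal{N}$ and $\tau \in (\RP^1)^{(4)+}$ with $CR(\tau) = -1$, to $-CR_i(\phi^4\tau)$. The set of such $\tau$ is itself compact after fixing a normalization (e.g. $\tau = (x,0,1,\infty)$ with $x \in (-\infty,0)$ determined by $CR(\tau)=-1$, i.e.\ $x = -1$; more generally, the $\mathrm{PGL}_2(\R)$-orbit of $(-1,0,1,\infty)$ modulo its stabilizer is compact, or one can just quotient by $\mathrm{PGL}_2(\R)$). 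Since $\phi^4\tau$ is a positive four-tuple for every $\phi \in \mathcal{N}$ and every dihedrally ordered $\tau$, the continuous function $-CR_i$ attains a positive minimum and a finite maximum on this compact set; call the resulting bound $K_i$. Taking $K = \max_i K_i$ and invoking $\mathrm{PGL}_2(\R) \times \mathrm{PGL}_d(\R)$-invariance to remove the normalization assumption on $\phi$ and $\tau$, we conclude that every boundary map of an $(L,M,\Pi)$-quasi-isometric embedding is $K$-quasisymmetric, with $K = K(L,M,d)$.

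The only mild obstacle is verifying the continuity of $CR_i$ on the space of \emph{positive} four-tuples and, relatedly, that $\phi^4\tau$ is genuinely positive whenever $\tau$ is dihedrally ordered — this is where positivity of $\phi$ (Definition \ref{def: positive}, a condition on triples) must be upgraded to a statement about four-tuples, using that a positive map extends positivity to all dihedrally ordered $k$-tuples as in the discussion preceding Proposition \ref{prop: d-1 qs homeos}. Once that is in place, the argument is a routine compactness-of-a-continuous-function statement, exactly parallel to how compactness of normalized $K$-quasisymmetric self-maps of $\RP^1$ follows from Proposition \ref{prop: cr is coarse linear}.
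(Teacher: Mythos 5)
Your proposal is correct and follows essentially the same route as the paper: quasiperiodicity of the family of boundary maps via Corollary \ref{cor: compactness of urqies}, plus continuity of the cross-ratios $CR_i$ on the space of positive 4-tuples, plus a compactness argument to extract a uniform bound $K(L,M,d)$. The paper states this in one line (``one sees immediately that any quasi-periodic family is $K$-quasisymmetric''), and your write-up simply unpacks that compactness step, including the reduction to the single normalized 4-tuple $(-1,0,1,\infty)$ by $\PGL_2(\R)$-invariance.
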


Our other example of a quasi-periodic family is the family of $K$-quasisymmetric positive maps.

\begin{proposition}\label{prop: compactness of qs maps}
    The family of $K$-quasisymmetric positive maps $\RP^1 \to \Flag(\R^d)$ is quasi-periodic.
\end{proposition}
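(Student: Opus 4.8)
The plan is to reduce the statement to Proposition~\ref{prop: d-1 qs homeos}, which identifies normalized positive maps $\phi\colon \RP^1 \to \Flag(\R^d)$ with $(d-1)$-tuples $(\phi_1,\dots,\phi_{d-1})$ of normalized homeomorphisms of $\RP^1$, together with the classical compactness of normalized $K$-quasisymmetric self-homeomorphisms of $\RP^1$ coming from Proposition~\ref{prop: cr is coarse linear}. So the family of $K$-quasisymmetric positive maps is $\mathrm{PGL}_2(\R) \times \mathrm{PGL}_d(\R)$-invariant essentially by construction: precomposition by $\mathrm{PGL}_2(\R)$ preserves the cross ratio $CR$ on $\RP^1$, and postcomposition by $\mathrm{PGL}_d(\R)$ preserves each $CR_i$ since these are defined to be $\mathrm{PGL}_d(\R)$-invariant; both operations preserve positivity. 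It remains to show that the subfamily of normalized $K$-quasisymmetric positive maps is compact in the uniform topology.

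First I would observe that if $\phi$ is normalized $K$-quasisymmetric and positive, then each component $\phi_i = n_{i,i+1}$ of $n^1$ is a normalized $K$-quasisymmetric self-homeomorphism of $\RP^1$; this is exactly the paragraph in the text preceding Proposition~\ref{prop: d-1 qs homeos}. By Proposition~\ref{prop: cr is coarse linear} each such $\phi_i$ is uniformly H\"older, hence the family $\{\phi_i\}$ is equicontinuous and (being normalized, i.e.\ fixing $0,1,\infty$) uniformly bounded away from degeneration; by Arzel\`a--Ascoli the space of normalized $K$-quasisymmetric homeomorphisms of $\RP^1$ is compact. Therefore the space of $(d-1)$-tuples $(\phi_1,\dots,\phi_{d-1})$ of such homeomorphisms is compact in the product of uniform topologies. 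Under the bijection of Proposition~\ref{prop: d-1 qs homeos}, compactness of the tuple space transfers to compactness of the space of normalized continuous positive maps with $K$-quasisymmetric components, provided the bijection and its inverse are continuous for the uniform topologies: the forward map $\phi \mapsto n^1$ is plainly continuous, and the inverse, given by the ordered-exponential integral formula $n_{i,i+j+1}(t) = \int_{0 \le t_0 \le \cdots \le t_j \le t} d\phi_i \otimes \cdots \otimes d\phi_{i+j}$, depends continuously on the $\phi_i$ in the uniform topology by dominated convergence (the $\phi_i$ converge uniformly, hence the Stieltjes measures $d\phi_i$ converge weakly, and the iterated integrals over compact simplices converge).

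The last point to check is that a uniform limit of normalized $K$-quasisymmetric positive maps is again $K$-quasisymmetric and positive, so the limit stays in the family: positivity of the limit follows from Lemma~\ref{lem: positive connected} once we know the limit is transverse, and transversality on $\RP^1$ is exactly the condition that the built matrices $n(t)$ have positive flag minors for $s < t$, which is preserved in the limit since these minors are given by the (closed, non-strict a priori, but strictly positive because the diagonal minors at $t=0$ are $1$) ordered-integral expressions; $K$-quasisymmetry passes to the limit because the cross ratios $CR$ and $CR_i$ are continuous on the space of positive configurations and the defining inequalities $1/K \le -CR_i(\phi^4\tau) \le K$ are closed.

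The main obstacle I expect is the continuity of the inverse bijection of Proposition~\ref{prop: d-1 qs homeos} with respect to the uniform topology: one must argue carefully that uniform convergence $\phi_i^{(n)} \to \phi_i$ forces uniform (not merely pointwise) convergence of the reconstructed curves $n^{(n)}(t) \to n(t)$, and then that the resulting flags converge uniformly on all of $\RP^1$ including at $\infty$. The worry is uniformity near the point $\infty$ where the affine chart degenerates; this is handled the same way it is handled for ordinary quasisymmetric maps---the H\"older bound from Proposition~\ref{prop: cr is coarse linear} controls the modulus of continuity uniformly across the family, and combined with Proposition~\ref{prop: uniform convergence}-style reasoning (or directly, since here the flag topology near $\sigma_\infty$ is governed by the same superdiagonal entries) one gets genuine uniform convergence on the compact space $\RP^1$.
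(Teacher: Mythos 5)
Your overall strategy --- reduce to the superdiagonal components $n_{i,i+1}$, use compactness of normalized $K$-quasisymmetric homeomorphisms, and reconstruct the flag curve via the iterated-integral formula of Proposition \ref{prop: d-1 qs homeos} --- is exactly the paper's argument, and everything you do on the affine chart $\R$ (local uniform convergence of the reconstructed $n(t)$, positivity and $K$-quasisymmetry of the limit) is fine. The genuine gap is precisely at the point you flag as the main obstacle: upgrading to uniform convergence on all of $\RP^1$, i.e.\ uniform control of the flag-valued maps near $\infty$. Neither of your proposed fixes works as stated. Invoking ``Proposition \ref{prop: uniform convergence}-style reasoning'' is not available: that proposition concerns $(L,M,\Theta)$-quasi-isometric embeddings, and the fact that a $K$-quasisymmetric positive map bounds such an embedding is Theorem \ref{thm: from qs map to purqie}, whose proof relies on Proposition \ref{prop: compactness of qs maps}; using it here would be circular. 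And the direct claim that ``the flag topology near $\sigma_\infty$ is governed by the same superdiagonal entries'' is unjustified: already for $d=3$, how close $n(t)\sigma_0$ is to $\sigma_\infty$ is governed by quantities like $n_{1,3}(t)$ (for the line) and $n_{1,3}(t)-n_{1,2}(t)n_{2,3}(t)$ (for the plane), i.e.\ by the higher iterated integrals and their ratios, all of which diverge as $t \to \infty$; uniform H\"older bounds for the $\phi_i$ on compacta of $\R$ give no uniform control of these ratios near $\infty$ across the family, so equicontinuity at $\infty$ does not follow from what you have written.

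The paper closes this gap with a chart-swapping trick that is available to you, since you have already established $\PGL_2(\R)\times\PGL_d(\R)$-invariance of the family: precompose the sequence with the inversion $\lambda \mapsto -1/\lambda$ (renormalizing by postcomposition with $\PGL_d(\R)$), and run the same local-uniform-convergence argument on a further subsequence. This gives convergence of the original maps on a neighborhood of $\infty$, and combined with convergence on the chart $\R$, uniform convergence on the compact space $\RP^1$. Without this step (or a genuine quantitative argument for equicontinuity at $\infty$, say using total positivity to compare the higher matrix entries to the superdiagonal ones), the compactness claim is not proved.
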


\begin{proof}
    Let $\phi_k$ be a sequence of normalized $K$-quasisymmetric positive maps. Using compactness of the space of the space of quasi-symmetric maps from $\R$ to $\R$, we can pass to a subsequence on which the homeomorphisms $n_{i,i+1}: \R \to \R$ all converge locally uniformly. By the construction of Proposition \ref{prop: d-1 qs homeos}, this guarantees that the subsequence of maps to the flag variety also converges locally uniformly on $\R$. On the other hand, applying the same reasoning to $\phi_k(-1/\lambda)$, we can pass to a further subsequence which converges in a neighborhood of infinity. Hence on this further subsequence, we have uniform convergence on all of $\RP^1$.
\end{proof}

We will use the following lemma twice. The proof is obvious, just like the proof of Proposition \ref{prop: piqie implies qs}. 

\begin{lemma} \label{lem: compact triples}
    Let $\mathcal{F}$ be a quasi-periodic family of maps. Let $B$ be a compact subset of $(\RP^1)^{(3)}$. Then the space of triples of flags $\phi^3(\tau)$ where $\phi$ ranges over all normalized maps in $\mathcal{F}$ and $\tau \in B$, is compact in the space of positive triples of flags.
\end{lemma}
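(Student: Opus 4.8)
Lemma \ref{lem: compact triples} asserts that for a quasi-periodic family $\mathcal{F}$ and a compact $B \subset (\RP^1)^{(3)}$, the set of triples $\phi^3(\tau)$ with $\phi$ normalized in $\mathcal{F}$ and $\tau \in B$ is compact in the space of positive triples of flags.

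The plan is to argue by sequential compactness. First I would take an arbitrary sequence $(\phi_k, \tau_k)$ with $\phi_k$ a normalized map in $\mathcal{F}$ and $\tau_k \in B$. By the definition of quasiperiodicity, the subfamily of normalized maps in $\mathcal{F}$ is compact in the uniform topology, so after passing to a subsequence $\phi_k \to \phi$ uniformly, with $\phi$ again a normalized map in $\mathcal{F}$. Since $B$ is compact in $(\RP^1)^{(3)}$, after a further subsequence $\tau_k \to \tau \in B$, so in particular the three coordinates of $\tau$ remain pairwise distinct. Then $\phi_k^3(\tau_k) \to \phi^3(\tau)$ by joint continuity of evaluation (uniform convergence of $\phi_k$ plus convergence of $\tau_k$), and $\phi^3(\tau)$ is a positive triple of flags because $\phi$ is a positive map and the entries of $\tau$ are pairwise distinct. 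This exhibits a convergent subsequence whose limit lies in the claimed set, so the set is compact.

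The only point requiring a word of care is that the limit stays inside the space of \emph{positive} triples rather than merely the closure (pairwise-transverse triples): this is immediate here since the limit is itself of the form $\phi^3(\tau)$ with $\phi \in \mathcal{F}$ positive and $\tau$ a distinct triple, so no boundary degeneration occurs. There is no genuine obstacle; the content is entirely packaged into the definition of quasiperiodicity, exactly as in the proof of Proposition \ref{prop: piqie implies qs}.

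\begin{proof}[Proof of Lemma \ref{lem: compact triples}]
    Let $(\phi_k,\tau_k)$ be a sequence with each $\phi_k$ a normalized map in $\mathcal{F}$ and $\tau_k \in B$. Since $\mathcal{F}$ is quasi-periodic, the subfamily of normalized maps is compact for uniform convergence, so after passing to a subsequence $\phi_k \to \phi$ uniformly, with $\phi$ a normalized map in $\mathcal{F}$, hence positive. Since $B$ is compact in $(\RP^1)^{(3)}$, passing to a further subsequence we may assume $\tau_k \to \tau \in B$; in particular the three coordinates of $\tau$ are pairwise distinct. Evaluation is jointly continuous in the map (uniform topology) and the point, so $\phi_k^3(\tau_k) \to \phi^3(\tau)$. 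Because $\phi$ is positive and $\tau \in (\RP^1)^{(3)}$, the triple $\phi^3(\tau)$ is a positive triple of flags, and it lies in the set under consideration. Thus every sequence in that set has a subsequence converging within it, so it is compact.
\end{proof}
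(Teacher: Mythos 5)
Your proof is correct, and it is exactly the argument the paper has in mind: the paper omits a proof, declaring it obvious because the content is packaged into the compactness clause in the definition of quasiperiodicity, and your sequential-compactness argument (uniform subconvergence of normalized maps, subconvergence in $B$, joint continuity of evaluation, positivity of the limit map) simply makes that explicit.
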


\subsection{Extending quasisymmetric maps}

Recall that $Y_d$ is the symmetric space for $\PGL_d(\R)$. The following two theorems allow us to establish a correspondence between equivalence classes of positive $\Pi$-quasi-isometric embeddings from $\H^2$ to $Y_d$ and their boundary maps.

\begin{theorem}\label{thm: from qs map to purqie}
    For every $K$ there exist $L=L(K)$ and $M=M(K)$ so that:
    
    If $\xi \colon \RP^1 \to \Flag(\R^d)$ is positive $K$-quasisymmetric then there exists $f \colon \H^2 \to Y_d$ which is an $L$-coarsely Lipschitz $(M,\Pi)$-quasi-isometrically embedding and $\partial f = \xi$.
\end{theorem}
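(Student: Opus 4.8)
The plan is to build $f$ directly from the projection map $p_d$ of Definition~\ref{def: pd}, and to control everything through the cross ratios $cr_i$ by way of Proposition~\ref{prop: cross ratios and projections}. Up to the $\PGL_2(\R)\times\PGL_d(\R)$ action we may assume $\xi$ lies in the quasiperiodic family of normalized $K$-quasisymmetric positive maps (Proposition~\ref{prop: compactness of qs maps}), so all constants below will come from compactness via Lemma~\ref{lem: compact triples}. \emph{Construction:} fix $o\in\H^2$ and a positively ordered ideal triple $(\zeta_1,\zeta_2,\zeta_3)$ in $\partial_\infty\H^2=\RP^1$ whose $\mathrm{Stab}(o)$-orbit stays pairwise distinct. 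For $z\in\H^2$ choose $h\in\mathrm{PSL}_2(\R)$ with $h\cdot o=z$ and let $S_z$ be the set of points $p_d(\xi(hk\zeta_1),\xi(hk\zeta_2),\xi(hk\zeta_3))$ as $k$ ranges over $\mathrm{Stab}(o)$; this is independent of $h$ up to reparametrization, and Lemma~\ref{lem: compact triples} together with the $\PGL_d(\R)$-equivariance of $p_d$ bounds its diameter by some $D_0=D_0(K)$. Define $f(z)$ to be the circumcenter of $S_z$ in the Hadamard space $Y_d$; this is well defined and continuous, and for every valid $h$ the ``scaffolding'' point $\hat f_h(z):=p_d(\xi(h\zeta_1),\xi(h\zeta_2),\xi(h\zeta_3))$ lies within $D_0$ of $f(z)$. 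For the coarse Lipschitz bound: if $d_{\H^2}(z,z')\le 1$ take $h'=h\gamma$ with $\gamma$ in the compact set $\{d(o,\gamma o)\le 1\}$; then $h\zeta_\bullet$ and $h'\zeta_\bullet$ lie uniformly close in a fixed compact subset of $(\RP^1)^{(3)+}$, so continuity of $p_d$ on positive triples (and Lemma~\ref{lem: compact triples} again) gives $d_{Y_d}(f(z),f(z'))\le L$ for some $L=L(K)$, and chaining along geodesics makes $f$ $L$-coarsely Lipschitz.

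The heart of the argument is the $(M,\Pi)$-lower bound. Given $z,z'$, normalize the complete geodesic through them to have ideal endpoints $\eta^-,\eta^+$ (with $z'$ further toward $\eta^+$), and choose $h_z,h_{z'}$, valid up to bounded error, with $h_z\zeta_1=h_{z'}\zeta_1=\eta^-$ and $h_z\zeta_3=h_{z'}\zeta_3=\eta^+$; set $w_z=h_z\zeta_2$, $w_{z'}=h_{z'}\zeta_2$, which march monotonically toward $\eta^+$. By the remark following Definition~\ref{def: pd}, $\hat f_{h_z}(z)$ and $\hat f_{h_{z'}}(z')$ both lie in the flat $F(\xi(\eta^-),\xi(\eta^+))$, and Proposition~\ref{prop: cross ratios and projections} identifies the $\alpha_i$-component of their vector distance with $|cr_i|$ of the positive quadruple $(\xi(w_{z'}),\xi(\eta^-),\xi(w_z),\xi(\eta^+))$; because the displacement vector points into a single Weyl chamber the quadruple is $\Pi$-regular and all $\alpha_i(\vec{d})$ are the $|cr_i|$ up to permutation by $-w_0$. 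Writing $\xi$ in normalized form as $(\phi_1,\dots,\phi_{d-1})$ (Proposition~\ref{prop: d-1 qs homeos}) with each $\phi_i$ a $K$-quasisymmetric homeomorphism of $\RP^1$, one identifies $cr_i$ of this flag quadruple with the ordinary log cross ratio of $\phi_i$ applied to $(w_{z'},\eta^-,w_z,\eta^+)$ up to an additive $O(K)$; Proposition~\ref{prop: cr is coarse linear} (equivalently, that a $K$-quasisymmetric map of $\RP^1$ extends to an $(L_0,M_0)$-quasi-isometry of $\H^2$ with $L_0,M_0$ depending only on $K$) bounds this below by $\tfrac1{M_0}|cr(w_{z'},\eta^-,w_z,\eta^+)|-O(K)$, and the last cross ratio is within an additive constant of $d_{\H^2}(z,z')$. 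Taking the minimum over $\alpha\in\Pi$ and absorbing the $O(K)$-errors (from the $cr_i$ identification, from passing between $\hat f$ and $f$, and from the bounded-error choices of $h_z,h_{z'}$) into the additive constant yields $d_{\H^2}(z,z')\le M(\alpha\circ\vec{d}_{Y_d}(f(z),f(z'))+1)$ for all $\alpha\in\Pi$, with $M=M(K)$.

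Finally, since $f$ is an $L$-coarsely Lipschitz $(M,\Pi)$-quasi-isometric embedding, Theorem~\ref{thm: boundary maps} gives a continuous extension $\partial f\colon\RP^1\to\Flag(\R^d)$, and it remains to see $\partial f=\xi$. Fix $x\in\RP^1$ and a geodesic ray $c$ toward $x$; extend it to the complete geodesic with endpoints $x^-$ and $x$, and for $z=c(t)$ use the scaffolding $\hat f(z)=p_d(\xi(x^-),\xi(w_{c(t)}),\xi(x))$ with $w_{c(t)}\to x$ as $t\to\infty$. These points stay in the flat $F(\xi(x^-),\xi(x))$ and run out toward the chamber containing $\xi(x)$, so $\hat f(c(t))\to\xi(x)$ in the flag topology on $Y_d\cup\mathcal F_\Pi$; since $f$ and $\hat f$ remain within bounded distance, $f(c(t))\to\xi(x)$ as well, whence $\partial f(x)=\xi(x)$.

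The main obstacle is the $(M,\Pi)$-lower bound: getting the vector-distance/cross-ratio dictionary of Proposition~\ref{prop: cross ratios and projections} to apply cleanly, i.e. checking that the relevant flag quadruple is $\Pi$-regular with all $\alpha_i(\vec{d})$ positive and simultaneously comparable to $d_{\H^2}(z,z')$, and along the way pinning down the identification of the flag cross ratios $cr_i$ with the ordinary cross ratios of the component homeomorphisms $\phi_i$ of Proposition~\ref{prop: d-1 qs homeos}. Everything else is routine: compactness through Lemma~\ref{lem: compact triples} and Proposition~\ref{prop: compactness of qs maps}, plus classical quasisymmetry theory.
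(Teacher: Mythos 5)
Your proposal is correct and follows essentially the same route as the paper: you build $f$ from the projection $p_d$ applied to triples of boundary flags (your circumcenter over the $\mathrm{Stab}(o)$-orbit is just a canonical choice of the section $s$ of $p_2\colon (\RP^1)^{(3)}\to\H^2$ used in the paper), get the coarse Lipschitz bound from quasiperiodicity via Lemma \ref{lem: compact triples}, and get the $\Pi$-lower bound by normalizing $z,z'$ to a common flat $F(\xi(\eta^-),\xi(\eta^+))$ and combining Proposition \ref{prop: cross ratios and projections} with the $K$-quasisymmetry of the component homeomorphisms $\phi_i$ and Proposition \ref{prop: cr is coarse linear}. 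The only differences are cosmetic: you verify $\partial f=\xi$ explicitly (the paper leaves this implicit), and your additive $O(K)$ in identifying $cr_i$ with the ordinary log cross ratio of $\phi_i$ is not even needed, since for a normalized map the identification is exact.
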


Such maps are unique up to bounded distance:

\begin{theorem}\label{thm: purqies with same boundary}
    For every $L,M$ there exists $D=D(L,M)$ so that:
    If $f_1,f_2 \colon \H^2 \to Y_d$ are $L$-coarsely Lipschitz $(M,\Pi)$-quasi-isometrically embeddings with the same positive boundary map, then the distance from $f_1$ to $f_2$ is bounded by $D$.
\end{theorem}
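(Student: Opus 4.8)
The plan is to argue by contradiction via a recentering argument, using the higher rank Morse lemma to pin the two maps near a common family of flats, and then the finite non-transversality of positive curves (Saldanha--Shapiro--Shapiro) to get a contradiction.

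First, by Proposition~\ref{prop: Lipchitz} we may assume $f_1, f_2$ are Lipschitz, with Lipschitz and $(M,\Pi)$-constants depending only on the original $L,M$ (and $d$); mollification moves a map a bounded distance, so the boundary map is unchanged. Suppose the conclusion fails for some fixed $L,M$: then there are $L$-Lipschitz $(M,\Pi)$-quasi-isometric embeddings $f_1^n, f_2^n \colon \H^2 \to Y_d$ with a common positive boundary map, and $x_n \in \H^2$ with $d_Y(f_1^n(x_n), f_2^n(x_n)) \to \infty$. Fix basepoints $o_X \in \H^2$, $o \in Y_d$ and replace $f_i^n$ by $\beta_n \circ f_i^n \circ \alpha_n$ with $\alpha_n \in \mathrm{Isom}(\H^2)$, $\beta_n \in \mathrm{Isom}(Y_d)$ chosen so $\alpha_n(o_X) = x_n$ and $\beta_n f_1^n(x_n) = o$; isometries preserve positivity of flag configurations, so the hypotheses persist, and now $f_1^n(o_X) = o$ while $d_Y(o, f_2^n(o_X)) \to \infty$. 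Passing to a subsequence, $f_1^n \to g_1$ locally uniformly by Arzel\`a--Ascoli; by Proposition~\ref{prop: uniform convergence} (cf.\ Corollary~\ref{cor: compactness of urqies}) the boundary maps converge uniformly, $\partial f_1^n \to \xi_\infty := \partial g_1$, which is transverse by Theorem~\ref{thm: boundary maps} and hence positive by Lemma~\ref{lem: positive connected}. Passing to a further subsequence, $f_2^n(o_X) \to \zeta$ for some $\zeta \in \partial_\infty Y_d$. It suffices to produce a contradiction from the existence of $\zeta$.

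The geometric heart is to locate $\zeta$. Let $c$ be any complete geodesic through $o_X$, with distinct ideal endpoints $a_\pm$, so $\partial f_i^n(a_+), \partial f_i^n(a_-)$ are transverse full flags spanning a unique maximal flat $F_n := F(\partial f_1^n(a_+), \partial f_1^n(a_-))$. Each $f_i^n \circ c$ is a biinfinite $\Pi$-quasigeodesic with limit chambers $\partial f_i^n(a_\pm)$, so the higher rank Morse lemma (Theorem~\ref{thm: higher rank morse lemma}, applied to exhausting subsegments; cf.\ \cite{kapovich2018morse}) shows it lies within a bounded distance $D' = D'(L,M)$ of $F_n$. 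Since $f_1^n(o_X) = o$, the flats $F_n$ all meet $\overline{B(o,D')}$, and since $\partial f_1^n(a_\pm) \to \xi_\infty(a_\pm)$ and the flat through a transverse pair of flags depends continuously on the pair, $F_n \to F_\infty := F(\xi_\infty(a_+), \xi_\infty(a_-))$. Applying the Morse lemma to $f_2^n$, the point $f_2^n(o_X) = f_2^n(c(0))$ lies within $D'$ of $F_n$; projecting to $F_n$ and passing to a subsequence, the direction of the projection converges inside $F_\infty$, whence $\zeta \in \partial_\infty F_\infty$. As $c$ ranges over all geodesics through $o_X$, we conclude that $\zeta \in \partial_\infty F(\xi_\infty(a), \xi_\infty(b))$ for every pair of distinct $a,b \in \RP^1$.

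Finally, let $(W_{k_1} \subsetneq \cdots \subsetneq W_{k_r})$ be the partial flag supporting $\zeta$ in the spherical building $\partial_\infty Y_d$, and put $W := W_{k_1}$, $k := k_1 \in \{1,\dots,d-1\}$. For each $a \ne b$, the apartment $\partial_\infty F(\xi_\infty(a), \xi_\infty(b))$ is the Coxeter complex of the splitting $\R^d = \bigoplus_i L_i(a,b)$ determined by the transverse pair $(\xi_\infty(a), \xi_\infty(b))$, so $W$ is a $k$-dimensional coordinate subspace of this splitting. A short computation with the splitting shows that a $k$-dimensional coordinate subspace transverse to $\xi_\infty(a)$ must equal $\xi_\infty(b)^{(k)}$; thus if $W$ were transverse to $\xi_\infty(a)$ then $W = \xi_\infty(b)^{(k)}$ for every $b \ne a$, which is impossible since $b \mapsto \xi_\infty(b)^{(k)}$ is injective (transversality of $\xi_\infty$ forces $\xi_\infty(b)^{(k)} \ne \xi_\infty(b')^{(k)}$ for $b \ne b'$). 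Hence $W$ is non-transverse to $\xi_\infty(a)$ for every $a \in \RP^1$, contradicting Theorem~\ref{thm: Saldanha-Shapiro-Shapiro} applied to the positive curve $\xi_\infty$, which permits $W \in \Gr(k,\R^d)$ to be non-transverse to only finitely many points of its image. This contradiction completes the proof. I expect the main obstacle to be the second paragraph: establishing the biinfinite version of the higher rank Morse lemma (closeness to a single maximal flat, not merely a Weyl cone) and then verifying that the escaping points $f_2^n(o_X)$ converge into the ideal boundary apartment of the limiting flat $F_\infty$ --- this requires the convergence $F_n \to F_\infty$ to be uniform enough near infinity, for which the normalization $f_1^n(o_X) = o$ (pinning each $F_n$ near $o$) is what makes it work.
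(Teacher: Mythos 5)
Your proof is correct in its essential structure, but it takes a genuinely different route from the paper. The paper argues directly and without contradiction: it fixes the positive triple of flags $(\phi(0),\phi(1),\phi(\infty))$ coming from the common boundary map, forms the convex function $s=b_{\rho(\phi(0))}+b_{\rho(\phi(1))}+b_{\rho(\phi(\infty))}$, proves $s$ is proper using only an elementary linear-algebra consequence of total positivity (Lemma \ref{lem: positive triples are stable}), upgrades this to uniform properness over the compact family of such triples (Lemmas \ref{lem: compact triples} and \ref{lem: convex proper functions}), and then bounds $s(f_2(x_0))$ by the distances from $f_2(x_0)$ to the three flats $F(\phi(i),\phi(j))$ via the higher rank Morse lemma. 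Your argument instead recenters, extracts a limiting positive transverse boundary map $\xi_\infty$ and an escape point $\zeta\in\partial_\infty Y_d$, uses the Morse lemma along \emph{every} geodesic through the basepoint to place $\zeta$ in $\partial_\infty F(\xi_\infty(a),\xi_\infty(b))$ for all pairs, and then the coordinate-subspace computation you sketch is indeed correct: if the bottom piece $W\in\Gr(k,\R^d)$ of the support of $\zeta$ met $\xi_\infty(a)^{(d-k)}$ trivially it would have to equal $\xi_\infty(b)^{(k)}$ for every $b\ne a$, which is impossible, so $W$ is non-transverse to every point of the curve, contradicting Theorem \ref{thm: Saldanha-Shapiro-Shapiro}. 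The two points you flag as delicate do work out: the biinfinite form of the Morse lemma (closeness to the maximal flat spanned by the two limit chambers) is in Kapovich--Leeb--Porti and is in fact exactly what the paper's own proof also invokes when it bounds $d(f_2(x_0),F(\partial f_2(0),\partial f_2(1)))$; and the statement $\zeta\in\partial_\infty F_\infty$ follows cleanly by writing $F_n=g_nF_{std}$ with $g_n\to g_\infty$ (the flats are pinned near $o$ and the transverse flag pairs converge) and using joint continuity of the $\PGL_d(\R)$-action on the visual compactification, since the nearest points $p_n\in F_n$ to $f_2^n(o_X)$ also converge to $\zeta$. The trade-off: the paper's proof uses positivity only through its self-contained Lemma \ref{lem: positive triples are stable}, whereas yours imports the much heavier Saldanha--Shapiro--Shapiro finiteness theorem (already used elsewhere in the paper, so no new ingredient globally, but heavier for this theorem in isolation); on the other hand your argument needs no cross-ratio normalization or Busemann bookkeeping. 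Neither proof produces an effective $D(L,M)$, as both rest on a compactness step.
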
 

\subsubsection{Uniqueness}

We begin with uniqueness, Theorem \ref{thm: purqies with same boundary}. 
We prove two lemmas first. 

Let $\rho$ be the half-sum of all positive roots: 
    \[ \rho = \frac12 \sum_{\alpha \in \Pi} \alpha ,\]
sometimes called the Weyl vector, which lies in the interior of $\mathfrak{a}^+$ and is invariant by the opposition involution (these are the only two properties of $\rho$ we will use).

For $\sigma \in \Flag(\R^d)$ we write $\rho(\sigma)$ for the element of $\partial_\infty Y$ with type $\rho$ in the chamber corresponding to $\sigma$. 
For $\xi \in \partial_\infty Y$ we write $\delta_\xi$ for the Dirac measure of $\xi$ on $\partial_\infty Y$. 
We may sometimes suppress $\rho$, and write $b_\sigma = b_\rho(\sigma)$ for the Busemann function associated to $\rho(\sigma) \in \partial_\infty Y$.

\begin{lemma}\label{lem: positive triples are stable}
    Let $(\sigma_1,\sigma_2,\sigma_3)$ be a positive triple of flags. 
    Then the sum $\delta_{\rho(\sigma_1)} + \delta_{\rho(\sigma_2)} + \delta_{\rho(\sigma_3)}$ is a stable measure on $\partial_\infty Y$ in the sense of Definition \ref{def: stable measure}. 
    In particular, $b_{\rho(\sigma_1)} + b_{\rho(\sigma_2)} + b_{\rho(\sigma_3)}$ is proper and convex.
\end{lemma}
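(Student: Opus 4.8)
The plan is to unwind Definition \ref{def: stable measure} via the identity \eqref{eqn: slope vs angle}, $\slope(b_\eta,\xi) = -\cos\angle(\eta,\xi)$, so that stability of $\nu := \delta_{\rho(\sigma_1)} + \delta_{\rho(\sigma_2)} + \delta_{\rho(\sigma_3)}$ becomes the inequality
\[
    \cos\angle(\eta,\rho(\sigma_1)) + \cos\angle(\eta,\rho(\sigma_2)) + \cos\angle(\eta,\rho(\sigma_3)) < 0
\]
for every $\eta \in \partial_\infty Y_d$. The first ingredient I would establish is that the three points $\rho(\sigma_i)$ are pairwise antipodal in the angular (Tits) metric: for any two transverse flags $\sigma,\sigma'$, the pair $\rho(\sigma),\rho(\sigma')$ lies in the common apartment $\partial_\infty F(\sigma,\sigma')$, in which the chamber of $\sigma'$ is opposite to that of $\sigma$; since the opposition involution $-w_0$ fixes $\rho$, the point $\rho(\sigma')$ is the antipode of $\rho(\sigma)$ in that apartment, so $\angle(\rho(\sigma),\rho(\sigma')) = \pi$. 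A positive triple is in particular pairwise transverse, so $\angle(\rho(\sigma_i),\rho(\sigma_j)) = \pi$ for $i\neq j$.

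Given $\eta$, I would set $\theta = \min_i \angle(\eta,\rho(\sigma_i))$, attained at some $i_0$, and use the triangle inequality for the angular metric: for $i\neq i_0$, $\angle(\eta,\rho(\sigma_i)) \geq \angle(\rho(\sigma_{i_0}),\rho(\sigma_i)) - \angle(\eta,\rho(\sigma_{i_0})) = \pi - \theta$. Monotonicity of cosine on $[0,\pi]$ then gives $\sum_i \cos\angle(\eta,\rho(\sigma_i)) \leq \cos\theta - 2\cos\theta = -\cos\theta$. If $\theta<\pi/2$ this is already negative; if $\theta>\pi/2$ all three cosines are individually negative; and if the sum fails to be negative one is forced into $\theta=\pi/2$ with $\angle(\eta,\rho(\sigma_i)) = \pi/2$ for all three $i$. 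So the whole statement reduces to excluding this degenerate configuration. Here I would first invoke Proposition \ref{prop: separation} with $\Theta=\Pi$, noting $\epsilon(\mathfrak a,\Pi) = \tfrac1{d-1} > 0$ by Proposition \ref{prop: separation is nonnegative}: if $\eta$ were transverse to $\sigma_i$ then $-\cos\angle(\eta,\rho(\sigma_i)) = \slope(b_\eta,\rho(\sigma_i)) > \tfrac1{d-1}$, contradicting $\angle(\eta,\rho(\sigma_i)) = \tfrac\pi2$. Hence such an $\eta$ would have to be non-transverse to all three flags $\sigma_1,\sigma_2,\sigma_3$ simultaneously.

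The main obstacle is ruling out this last possibility, and this is precisely where positivity, as opposed to mere pairwise transversality, is essential: for a pairwise transverse but non-positive triple in $\R^3$ whose three hyperplanes share a common line $\langle v\rangle$, the type-$\omega_1^\vee$ boundary point determined by $\langle v\rangle$ is non-transverse to all three flags and sits at angle $\tfrac\pi2$ from each $\rho(\sigma_i)$, making $\nu$ only semistable. To exclude this for a positive triple I would translate ``$\eta$ non-transverse to all three $\sigma_i$'' into a linear-algebraic incidence between a subspace (or partial flag) underlying $\eta$ and the three full flags, and contradict it using the general-position properties of positive configurations — concretely, the strict positivity of the relevant minors of a totally positive representative, or equivalently the finite non-transversality of the positive curve through $\sigma_1,\sigma_2,\sigma_3$ supplied by Theorem \ref{thm: Saldanha-Shapiro-Shapiro}, together with the rigidity of the incidence forced by the equality $\angle(\eta,\rho(\sigma_i))=\tfrac\pi2$. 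This is the step I expect to require the most care.

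Finally, the ``in particular'' clause is a formality. Convexity of $b_{\rho(\sigma_1)} + b_{\rho(\sigma_2)} + b_{\rho(\sigma_3)}$ is immediate since each summand is convex. For properness, if this convex function were not proper some sublevel set would contain a geodesic ray towards a point $\eta\in\partial_\infty Y_d$, along which the function is bounded above, hence non-increasing, so $\slope(b_{\rho(\sigma_1)}+b_{\rho(\sigma_2)}+b_{\rho(\sigma_3)},\eta) = \sum_i \slope(b_{\rho(\sigma_i)},\eta) \leq 0$, contradicting the stability of $\nu$; alternatively one may cite the Kapovich-Leeb-Millson equivalence between stability of a measure and properness of the associated sum of Busemann functions.
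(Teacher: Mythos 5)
The first half of your argument is sound and runs parallel to the paper's: pairwise antipodality of the $\rho(\sigma_i)$, the triangle inequality in the angular metric, and a reduction to the degenerate case $\angle(\eta,\rho(\sigma_i))=\tfrac{\pi}{2}$ for all $i$ (the paper gets there by averaging the three pairwise slope sums rather than via the minimal angle, but the outcome is the same). The ``in particular'' clause is also handled correctly. The genuine gap is the exclusion of the degenerate case, which is the heart of the lemma and the only place positivity enters; you do not carry it out, and the concrete routes you propose would not work. From the equality you only extract that $\eta$ is non-transverse to all three flags, and you then hope to contradict this using strict positivity of minors or Theorem \ref{thm: Saldanha-Shapiro-Shapiro}. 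But simultaneous non-transversality to the three flags of a positive triple is \emph{not} contradictory once $d\ge 4$: for the standard positive triple $(\sigma_0,n\sigma_0,\sigma_\infty)$ in $\R^4$, take $w\in n\langle e_3,e_4\rangle$ generic and split it as $w=w_{34}+w_{12}$ with $w_{34}\in\langle e_3,e_4\rangle$, $w_{12}\in\langle e_1,e_2\rangle$; the $2$-plane $\langle w_{34},w_{12}\rangle$ meets all three middle $2$-planes nontrivially, so the corresponding boundary point is non-transverse to all three flags. Likewise Theorem \ref{thm: Saldanha-Shapiro-Shapiro} only bounds the number of non-transverse points on a positive curve by $C(k,d)\ge 3$, so it can never rule out non-transversality at three marked points. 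You gesture at ``the rigidity of the incidence forced by the equality,'' but that is precisely the missing argument, not a detail.

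What the equality case actually gives, and what the paper uses, is much stronger than non-transversality plus right angles: equality in each triangle inequality means $\eta$ lies on a Tits geodesic between the antipodal points $\rho(\sigma_i)$ and $\rho(\sigma_j)$, and since $\rho$ is a regular type such a geodesic lies in the visual boundary of the unique maximal flat $F(\sigma_i,\sigma_j)$; hence $\eta\in\partial_\infty F(\sigma_1,\sigma_2)\cap\partial_\infty F(\sigma_2,\sigma_3)\cap\partial_\infty F(\sigma_1,\sigma_3)$. The contradiction is then a vertex argument: normalizing $(\sigma_1,\sigma_2,\sigma_3)=(\sigma_0,n\sigma_0,\sigma_\infty)$ with $n$ totally positive, any vertex $v$ of the simplex spanned by $\eta$ in $\partial_\infty F(\sigma_0,\sigma_\infty)$ is a coordinate subspace; the nonvanishing of the right-justified minors of $n$ forces $v$ to be opposite to a vertex of $\sigma_2$, while inside $\partial_\infty F(\sigma_1,\sigma_2)$ every vertex opposite to $\sigma_2$ is incident to $\sigma_1$, and the symmetric argument makes $v$ also opposite to $\sigma_1$ --- a contradiction. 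Without identifying the membership of $\eta$ in the boundaries of the three flats (or some equivalent rigidity coming from the equality), your plan cannot close, so as written the proof is incomplete at its decisive step.
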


\begin{proof}
    Write $b_i = b_{\rho(\sigma_i)}$ for $i=1,2,3$.
    We need to show that for each $\eta \in \partial Y$, 
        \[ \slope(b_1 + b_2 + b_3,\eta) >0. \]

    As a first step, consider the slope of $b_1 + b_2$ along $\eta$. 
    Since $\rho(\sigma_1)$ and $\rho(\sigma_2)$ are opposite, the triangle inequality gives
    \begin{equation*}
        \pi = \angle(\rho(\sigma_1),\rho(\sigma_2)) \le \angle(\rho(\sigma_1),\eta) + \angle(\eta, \rho(\sigma_2))
    \end{equation*}
    which implies     
    \begin{equation*}
        \slope(b_1+b_2,\eta)  =  - \cos \angle(\rho(\sigma_1),\eta) - \cos \angle(\eta,\rho(\sigma_2)) \ge 0
    \end{equation*}
    with equality exactly when $\eta$ lies on a geodesic segment from $\rho(\sigma_1)$ to $\rho(\sigma_2)$ and hence lies in the boundary of the unique maximal flat $F(\sigma_1,\sigma_2)$ containing the chambers corresponding to $\sigma_1$ and $\sigma_2$.
    The same holds with $\{1,2\}$ replaced by $\{1,3\}$ or $\{2,3\}$.
        
    Then the sum
        \[ \slope(b_1+b_2+b_3,\eta) = \frac12 \slope\left((b_1+b_2)+(b_2+b_3)+(b_1+b_3),\eta\right) \ge 0. \]
    Suppose for the sake of contradiction that this sum is $0$. 
    The previous paragraph implies each of the terms is zero and $\eta$ lies in the boundary of all three flats $F(\sigma_1,\sigma_3),F(\sigma_2,\sigma_3),F(\sigma_1,\sigma_2)$.  

    Up to applying an element $g \in \PGL_d(\R)$, we may assume that $\sigma_1= \sigma_0$, $\sigma_2=n \sigma_0,$ and $\sigma_3=\sigma_\infty$ where $n$ is totally positive, and now $\eta$ lies in the boundary of the standard maximal flat $F_{std} = F(\sigma_0,\sigma_\infty)$. 
    Then $\eta$ spans a simplex $\tau$ in the boundary of $F_{std}$ and we may consider $v$ any vertex of $\tau$. 
    Since $\eta$ also belongs to the boundary of $F_{std}$, the same holds for $v$. 
    
    We claim that $v$ is opposite to a vertex of $\sigma_2$.
    To see this, observe that $v$ corresponds to a $(d-k)$-plane spanned by $(d-k)$ standard basis vectors $\{e_{i_1},\dots,e_{i_{d-k}}\}$. 
    The assertion that $v$ is opposite to a vertex of $\sigma_2=n \sigma_0$ amounts to saying that the minor consisting of the last $k$ columns of $n$ and the $k$ rows $\{1,\dots,d\} \setminus \{i_1,\dots,i_{d-k}\}$ is nonzero; but this is immediate from the total positivity of $n$.
    
    On the other hand, each vertex opposite to $\sigma_2$ in $F(\sigma_1,\sigma_2)$ is incident to $\sigma_1$. 
    Running the same argument with $\{1,3\}$ replaced by $\{2,3\}$ shows that $v$ is opposite to $\sigma_1$, yielding a contradiction.
\end{proof}

The second lemma is quite general.

\begin{lemma}\label{lem: convex proper functions}
    Let $\mathcal{S}$ be a set of convex $\R^{\ge 0}$-valued functions on a proper geodesic space $Y$, which is compact with respect to local uniform convergence. 
    Suppose that each member of $S$ is proper. 
    Then the total function on $\mathcal{S} \times Y$ is proper. 
\end{lemma}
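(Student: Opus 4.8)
The plan is to show that the total function $F \colon \mathcal{S}\times Y \to \R_{\ge 0}$, $F(b,y)=b(y)$, has $F^{-1}([0,R])$ compact for every $R>0$. First observe that each $b\in\mathcal{S}$ is lower semicontinuous: since $b$ takes values in $\R^{\ge 0}$, its sublevel set $\{b\le R\}$ equals $b^{-1}([0,R])$, the preimage of a compact interval under a proper map, hence compact, hence closed. Because the members of $\mathcal{S}$ vary in the topology of local uniform convergence, $F$ itself is then lower semicontinuous, so $F^{-1}([0,R])$ is closed in $\mathcal{S}\times Y$. As $\mathcal{S}$ is compact and $Y$ is proper, it therefore suffices to show that the image of $F^{-1}([0,R])$ under the projection to $Y$, namely $U_R:=\bigcup_{b\in\mathcal{S}}b^{-1}([0,R])$, is bounded: then $F^{-1}([0,R])$ is a closed subset of the compact set $\mathcal{S}\times B$ for a sufficiently large closed ball $B\subset Y$, hence compact.

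To prove $U_R$ bounded, fix a basepoint $y_0\in Y$ and argue by contradiction: suppose there are $b_n\in\mathcal{S}$ and $y_n\in Y$ with $b_n(y_n)\le R$ while $\rho_n:=d(y_0,y_n)\to\infty$. By compactness of $\mathcal{S}$ (sequential, since $Y$ is $\sigma$-compact and the local-uniform topology is then metrizable), after passing to a subsequence $b_n\to b$ locally uniformly for some $b\in\mathcal{S}$; in particular $b_n(y_0)\to b(y_0)$, so $C:=\max\big(R,\,\sup_n b_n(y_0)\big)<\infty$. Let $\gamma_n\colon[0,\rho_n]\to Y$ be a unit-speed geodesic from $y_0$ to $y_n$. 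Convexity of $b_n$ along $\gamma_n$ gives, for every $t\in[0,\rho_n]$,
\[
b_n(\gamma_n(t)) \le \Big(1-\tfrac{t}{\rho_n}\Big)\,b_n(y_0) + \tfrac{t}{\rho_n}\,b_n(y_n) \le C.
\]

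Now fix any $\rho>0$. For all $n$ with $\rho_n\ge\rho$, the point $z_n:=\gamma_n(\rho)$ lies on the sphere of radius $\rho$ about $y_0$ and satisfies $b_n(z_n)\le C$. Since $Y$ is proper that sphere is compact, so after a further subsequence $z_n\to z$ with $d(y_0,z)=\rho$; using that $b_n\to b$ uniformly on $\overline{B(y_0,\rho)}$ together with lower semicontinuity of $b$, we obtain $b(z)\le\liminf_n b_n(z_n)\le C$. As $\rho>0$ was arbitrary, the sublevel set $\{b\le C\}$ contains points arbitrarily far from $y_0$, contradicting the properness of $b$ (which forces $b^{-1}([0,C])$ to be compact, hence bounded). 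Therefore $U_R$ is bounded, and the proof is complete.

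There is no real obstacle here; the only point requiring care is the bookkeeping of the two compactness inputs. Compactness of $\mathcal{S}$ is used once, to collapse the hypothetical failure of uniform properness onto a single limit function $b$; then the properness of $Y$ is used, separately for each radius $\rho$, to extract from the geodesics $\gamma_n$ a point at distance $\rho$ from $y_0$ on which $b$ remains bounded by $C$, which defeats the properness of $b$. Both extractions are legitimate because $\mathcal{S}$ in the local-uniform topology and closed bounded subsets of the proper space $Y$ are sequentially compact, and the inequality $b(z)\le C$ passes to the limit thanks to the lower semicontinuity that properness already guarantees.
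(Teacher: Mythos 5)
Your proof is correct and follows essentially the same route as the paper's: argue by contradiction with a divergent sequence $(b_n,y_n)$ in a sublevel set, pass to a locally uniform limit $b\in\mathcal{S}$, use convexity along geodesics from the basepoint to push the bound $C$ onto every sphere of radius $\rho$, and extract limit points there to contradict the properness of $b$. The extra bookkeeping you include (lower semicontinuity to get closedness of the sublevel set, metrizability for sequential compactness) only makes explicit steps the paper leaves implicit.
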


\begin{proof}
    Fix $y \in Y$, and $C \geq \sup_{s \in S} s(y)$. Suppose there were a divergent sequence $\{s_n, y_n\}$ with $s_n(y_n) \leq C$. Let $s$ be a subsequential limit of $s_n$. We will contradict the properness of $s$.

    Fix any $r > 0$. For $n$ large enough that $d_Y(y,y_n) \geq r$, we can find a point $w_n$ on $\partial B_r(y)$ such that $s_n(w_n) \leq C$, namely a point on a geodesic segment from $y$ to $y_n$. Let $w_r$ be a subsequential limit of $w_n$. Since $s_n \to s$ locally uniformly, $s_n(w_n) \to s(w_r)$. Hence $s(w_r) \leq C$. Hence the sublevel set of $s$ meets $\partial B_r(y)$ for each $r$, so it is not compact in $Y$.
\end{proof}

\begin{proof}[Proof of Theorem \ref{thm: purqies with same boundary}]

    Let $x_0$ be a point in $\H^2$ at distance at most 2 from the three geodesics $[0,\infty]$, $[0,1]$, and $[1,\infty]$. Up to isometry, we may assume that $f_1(x_0) = o$ for a given point $o \in Y$, and that $d_Y(f_1(x_0),f_2(x_0))$ is within 1 of its maximum. Let $\mathcal{S}$ be the space of all possible triples of flags $(\phi(0), \phi(1), \phi(\infty))$, where $\phi$ ranges over the boundary maps of all $(L,M,\Pi)$-quasi-isometric embeddings taking $x_0$ to $o$. By Lemma \ref{lem: compact triples}, $\mathcal{S}$ is a compact subset of the space of all positive triples of flags. Using the Weyl vector $\rho$, we may identify $\mathcal{S}$ with the corresponding set of convex functions
    \[
    s(y) = b_{\rho(\phi(0))}(y) + b_{\rho(\phi(1))}(y) + b_{\rho(\phi(\infty))}(y).
    \]
    Up until now, we have not made explicit the choice of additive constant for the sum of Buseman functions. In order to apply Lemma \ref{lem: convex proper functions}, we now make a convenient choice. Specifically, for each pair $i,j \in \{0,1,\infty\}$, we define $b_{ij} = b_{\rho(\phi(i))} + b_{\rho(\phi(j))}$ normalized such that its minimum value is zero, and we define
    \[
    s = \frac{1}{2}(b_{01} + b_{1\infty} + b_{0\infty}).
    \]
    Each such function $s$ is proper on $Y$ by Lemma \ref{lem: positive triples are stable}. Hence by Lemma \ref{lem: convex proper functions}, for any $C \in \R$, the set of $\{s \in \mathcal{S}, w \in Y\}$ such that $s(w) \leq C$ is compact. Since $\mathcal{S}$ is compact as well, the set of $w \in Y$ such that there exists some $s \in \mathcal{S}$ with $s(w) \leq C$ is also compact in $Y$.
    
    Since Busemann functions are 1-Lipschitz, and $b_{\rho(\phi(0))}(y) + b_{\rho(\phi(1))}(y)$ vanishes for $y$ in the flat $F(\phi(0),\phi(1))$, we have
    \[
    b_{\rho(\phi(0))}(y) + b_{\rho(\phi(1))}(y) \leq 2 d_Y(y,F(\phi(0),\phi(1)).
    \]
    Summing over the three pairs of flags gives
    \[
    s(y) \leq d_Y(y,F(\phi(0),\phi(1)) + d_Y(y,F(\phi(1),\phi(\infty)) + d_Y(y,F(\phi(1),\phi(\infty)).
    \]
    But the higher rank Morse lemma bounds the distance of $f_2(x_0)$ to each of the three flats e.g.\ $F(\partial f_2(0), \partial f_2(1))$ by $D + L(2+1)$. Together with the previous inequality, this bounds $s(f_2(x_0))$, and hence $f_2(x_0)$ must lie in a fixed compact subset of $Y$.

\end{proof}

\subsubsection{Existence}

Let $\phi: \RP^1 \to \mathrm{Flag}(\R^d)$ be a positive map. 
The $\PGL_d(\R)$-equivariant projection $p_d$ (Definition \ref{def: pd}) induces a map 
\[
p_d \circ \phi^3: (\RP^1)^{(3)} \to \mathrm{Flag}(\R^d)^{(3)+} \to Y.
\]

On the other hand, $p_2: (\RP^1)^{(3)} \to \H^2$ naturally identifies $(\RP^1)^{(3)}$ with the unit tangent bundle of $\H^2$, and we metrize $(\RP^1)^{(3)}$ correspondingly. 

\begin{lemma}
    There is an $L(d,K)$ such that if $\phi$ is $K$-quasisymmetric, then $p_d \circ \phi^3$ is $L$-coarse Lipschitz.
\end{lemma}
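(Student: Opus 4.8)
The plan is to reduce the coarse Lipschitz bound for $p_d \circ \phi^3$ to a purely infinitesimal estimate on the projection $p_d$, using the compactness coming from quasiperiodicity. Recall that $p_2 \colon (\RP^1)^{(3)} \to \H^2$ identifies the space of pairwise distinct triples with the unit tangent bundle $T^1\H^2$, equivariantly for the $\PGL_2(\R)$-action, and we put on $(\RP^1)^{(3)}$ the corresponding $\PGL_2(\R)$-invariant metric. Since $p_d \circ \phi^3$ is $\PGL_d(\R)$-equivariant in the flag variety and the construction of $p_d$ uses only the cross-ratio data, precomposing $\phi$ with a M\"obius transformation and postcomposing with an element of $\PGL_d(\R)$ does not change the Lipschitz constant. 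So it suffices to bound, uniformly over normalized $K$-quasisymmetric positive maps $\phi$, the ratio
\[
\frac{d_Y\big(p_d\circ\phi^3(\tau_1),\, p_d\circ\phi^3(\tau_2)\big)}{d_{(\RP^1)^{(3)}}(\tau_1,\tau_2)}
\]
for $\tau_1,\tau_2$ at bounded distance, say at distance at most $1$, in a fixed fundamental domain for the $\PGL_2(\R)$-action on $(\RP^1)^{(3)}$.

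First I would fix such a compact fundamental domain $B \subset (\RP^1)^{(3)}$ and let $B'$ be its $1$-neighborhood, which is still compact. By Lemma \ref{lem: compact triples}, as $\phi$ ranges over normalized $K$-quasisymmetric positive maps (which form a quasiperiodic family by Proposition \ref{prop: compactness of qs maps}) and $\tau$ ranges over $B'$, the triples $\phi^3(\tau)$ range over a compact subset $\mathcal{T}$ of the space of positive triples of flags. The projection $p_d$ is a continuous (indeed smooth) map on the space of positive triples, so its restriction to $\mathcal{T}$ is Lipschitz with some constant $C_1$ with respect to any fixed metric on the space of triples of flags. Second, I would observe that $\phi^3$ restricted to $B'$, while not Lipschitz a priori for an arbitrary homeomorphism, is $L_1$-Lipschitz with $L_1 = L_1(K)$ for $K$-quasisymmetric maps: this is exactly the H\"older/Lipschitz-type control on quasisymmetric maps recorded after Proposition \ref{prop: cr is coarse linear} (H\"older continuity on compacts, which on a bounded region gives a Lipschitz bound once one also uses Proposition \ref{prop: d-1 qs homeos} to see $\phi$ is governed by the $d-1$ quasisymmetric homeomorphisms $n_{i,i+1}$, each of which is uniformly H\"older on compacts). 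Actually, since we only need a coarse Lipschitz bound, H\"older continuity on $B'$ already suffices: a $C$-H\"older map with exponent $\alpha$ on a bounded set is $(C')$-coarsely Lipschitz there. Composing, $p_d \circ \phi^3$ is coarsely Lipschitz on $B$ with constant depending only on $d$ and $K$.

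Finally I would globalize: given any two points $\tau_1, \tau_2 \in (\RP^1)^{(3)}$ with $d(\tau_1,\tau_2) \le 1$, choose $g \in \PGL_2(\R)$ with $g\tau_1 \in B$; then $g\tau_2 \in B'$, and $\phi \circ g^{-1}$ (after renormalizing by a suitable element of $\PGL_d(\R)$ to make it a normalized positive map, which is still $K$-quasisymmetric) falls into the compact family above, so the estimate on $B$ applies and gives $d_Y(p_d\circ\phi^3(\tau_1), p_d\circ\phi^3(\tau_2)) \le L$ by equivariance. For $\tau_1,\tau_2$ at arbitrary distance, chain along a geodesic in $(\RP^1)^{(3)} \cong T^1\H^2$ in unit steps to get the coarse Lipschitz inequality $d_Y(p_d\circ\phi^3(\tau_1), p_d\circ\phi^3(\tau_2)) \le L(d_{(\RP^1)^{(3)}}(\tau_1,\tau_2)+1)$. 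The main obstacle is making the local Lipschitz/H\"older bound on $\phi^3$ genuinely uniform in $\phi$ over the $K$-quasisymmetric family; this is where quasiperiodicity (Proposition \ref{prop: compactness of qs maps}) and the standard quasisymmetric distortion estimates (Proposition \ref{prop: cr is coarse linear} and its consequences) do the work, together with the fact that the cross-ratio on positive triples is continuous so that $p_d$ itself contributes only a uniform constant on the compact set $\mathcal{T}$.
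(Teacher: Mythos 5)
Your proposal is correct and follows essentially the same route as the paper: reduce to pairs at distance at most $1$, normalize by the $\PGL_2(\R)\times\PGL_d(\R)$ action, use quasiperiodicity of the $K$-quasisymmetric family (Proposition \ref{prop: compactness of qs maps}) together with Lemma \ref{lem: compact triples} to confine the image triples to a compact set of positive triples, apply continuity of $p_d$, and chain via the triangle inequality. The intermediate Lipschitz bound on $p_d$ and the uniform H\"older control of $\phi^3$ are superfluous, since (as in the paper) a uniform bound on image distances for unit-distance pairs already follows from compactness and continuity alone.
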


\begin{proof}
    By the triangle inequality, a map $f: X \to Y$ between geodesic metric spaces is $L$-coarse Lipschitz as long as
    \begin{equation} \label{eqn: coarse Lipschitz 1}
    d_Y(f(x),f(z)) \leq L \textrm{ whenever } d_X(x,z) \leq 1.
    \end{equation}
    Let $x$ and $z$ be two points of $(\RP^1)^{(3)}$ at distance at most 1. Up to the action of $\mathrm{PGL}_2(\R) \times \mathrm{PGL}_d(\R)$, we may suppose that $x = (0,1,\infty)$ and $\phi$ is normalized. Since $K$-quasisymmetric maps are quasiperiodic by Proposition \ref{prop: compactness of qs maps}, Lemma \ref{lem: compact triples} gives that $\phi^3(z)$ lies in a given compact subspace of the space of flags, and since $p_d$ is continuous this establishes \eqref{eqn: coarse Lipschitz 1} for some $L$.
\end{proof}

Since the fibers of $p_2$ are bounded diameter, $p_2$ is a quasi-isometric embedding. Therefore, any section $s: \H^2 \to (\RP^1)^{(3)}$ gives a coarse inverse, and the composition $p_d \circ \phi^3 \circ s$ is still $L$-coarse Lipschitz for a (possibly new) $L$ not depending on $s$. Fix any $s$.

\begin{lemma}
    There exist $L=L(K,d)$ and $M=M(K,d)$ depending only on $K$ and $d$ such that the map $p_d \circ \phi^3 \circ s$ is an $(L,M,\Pi)$-quasi-isometric embedding.
\end{lemma}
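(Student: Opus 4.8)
The plan is to establish the quasi-isometric lower bound; the coarse Lipschitz upper bound was already proven in the previous lemma. Concretely, we must show that for a universal $M$ (depending on $K$ and $d$), each simple root $\alpha_i\in\Pi$, and all $x,z\in\H^2$,
\[
\alpha_i\circ\vec d_Y\big(p_d\phi^3 s(x),\,p_d\phi^3 s(z)\big)\ \ge\ \tfrac1M\, d_{\H^2}(x,z)-1.
\]
Since $p_2$ is a quasi-isometric embedding and $p_d\circ\phi^3$ is coarse Lipschitz (previous lemma), and since the vector-valued distance is coarsely Lipschitz in its endpoints (the vector-valued triangle inequality), we are free to replace $s(x),s(z)$ by any triples in the $p_2$-fibres over $x,z$ and to pre/post-compose by isometries, at the cost of an additive constant $C=C(K,d)$ to be absorbed. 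The idea is to choose these triples adapted to the geodesic through $x$ and $z$, then read off the displacement of the two images via the cross-ratio formula of Proposition \ref{prop: cross ratios and projections}, and control it through Proposition \ref{prop: cr is coarse linear} applied to the $d-1$ superdiagonal homeomorphisms $n_{i,i+1}$.

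First I would put the configuration in normal form. Let $\gamma$ be the geodesic of $\H^2$ through $x$ and $z$. Applying an element of $\PGL_2(\R)$, and the inversion $w\mapsto-1/w$ if necessary, arrange that $\gamma$ is the imaginary axis, that $x=p_2(0,1,\infty)$, and that $z=p_2(0,-e^{D},\infty)$ with $D=d_{\H^2}(x,z)$; for the natural identification $(\RP^1)^{(3)}\cong T^1\H^2$ one checks $p_2(0,\mu,\infty)$ lies on the imaginary axis with $d_{\H^2}\big(p_2(0,1,\infty),p_2(0,\mu,\infty)\big)=\big|\log|\mu|\big|$, so these triples do project to $x$ and $z$. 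Next post-compose $\phi$ by an element of $\PGL_d(\R)$ so that $\phi(0)=\sigma_0$ and $\phi(\infty)=\sigma_\infty$, and conjugate by the Cartan $H$ so that $n(1)_{i,i+1}=1$ for all $i$; then $\phi$ is normalized, still positive and $K$-quasisymmetric, and $\phi(t)=n(t)\sigma_0$ with each $n_{i,i+1}$ a normalized increasing homeomorphism, inducing a $K'(K)$-quasisymmetric self-homeomorphism of $\RP^1$.

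Now $p_d\phi^3(0,1,\infty)=p_d(\sigma_0,n(1)\sigma_0,\sigma_\infty)$ and $p_d\phi^3(0,-e^{D},\infty)=p_d(\sigma_0,n(-e^{D})\sigma_0,\sigma_\infty)$ both lie in the standard flat $F(\sigma_0,\sigma_\infty)$, so there is a unique $v\in\mathfrak a$ with $\exp(v)\cdot p_d\phi^3(0,1,\infty)=p_d\phi^3(0,-e^{D},\infty)$. The four-tuple of flags $(\phi(-e^{D}),\phi(0),\phi(1),\phi(\infty))$ is positive because $(-e^{D},0,1,\infty)$ is dihedrally ordered and $\phi$ is positive, so Proposition \ref{prop: cross ratios and projections} gives $\alpha_i(v)=cr_i(\phi(-e^{D}),\phi(0),\phi(1),\phi(\infty))=\log\big(-n_{i,i+1}(-e^{D})\big)$. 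Using $CR(\lambda,0,1,\infty)=\lambda$, this equals $cr\big(n_{i,i+1}(-e^{D}),0,1,\infty\big)=cr\big(n_{i,i+1}^{\,4}(\tau)\big)$ where $\tau=(-e^{D},0,1,\infty)$ and $cr(\tau)=\log(-CR(-e^{D},0,1,\infty))=D$. Since $n_{i,i+1}$ is quasisymmetric with constant depending only on $K$, Proposition \ref{prop: cr is coarse linear} yields $M_0=M_0(K)$ with $\alpha_i(v)=cr\big(n_{i,i+1}^{\,4}(\tau)\big)\ge\frac1{M_0}cr(\tau)-1=\frac1{M_0}D-1$. When $D>M_0$ this is positive for every $i$, so $v$ lies in the open positive Weyl chamber, whence $\vec d_Y\big(p_d\phi^3(0,1,\infty),p_d\phi^3(0,-e^{D},\infty)\big)=v$ and $\alpha_i\circ\vec d_Y(\cdots)=\alpha_i(v)\ge\frac1{M_0}D-1$. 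Undoing the replacement of $s$ costs at most $C=C(K,d)$, and the range $D\le M_0$ is handled trivially since $\alpha_i\circ\vec d_Y\ge0$; enlarging $M$ to absorb $C$ and $M_0$ gives the bound with $M=M(K,d)$.

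The point that requires care — the main obstacle — is the bookkeeping in the normal form: one must choose the auxiliary boundary point $-e^{D}$ (as opposed to $e^{D}$) precisely so that the relevant four-tuple of flags is positively (cyclically) ordered, which is exactly what makes $cr_i$ defined and Proposition \ref{prop: cross ratios and projections} applicable, while simultaneously keeping $\big|\log|\mu|\big|$ equal to $d_{\H^2}(x,z)$; and one uses the inversion $w\mapsto-1/w$ to ensure $v$ lands in $\mathfrak a^+$ rather than $-\mathfrak a^+$, which avoids carrying the opposition involution $-w_0$ through the final estimate. Everything else is a routine combination of the two cited propositions with the quasi-isometry $p_2$.
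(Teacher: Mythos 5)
Your proof is correct and follows essentially the same route as the paper: normalize via $\PGL_2(\R)\times\PGL_d(\R)$ so both image points lie in the standard flat, read off the displacement through Proposition \ref{prop: cross ratios and projections}, apply Proposition \ref{prop: cr is coarse linear} to the superdiagonal homeomorphisms, and absorb the bounded error coming from the section $s$. You simply make explicit some details the paper leaves implicit (the choice $t=-e^{D}$ so the four-tuple is positively ordered, and the verification that the displacement vector lies in $\mathfrak{a}^+$ so it equals $\vec d_Y$).
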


\begin{proof}
    Let $x_1$ and $x_2$ be any two distinct points of $\H^2$. 
    Up to the action of $\PGL_2(\R) \times \PGL_d(\R)$, we may assume that $x_1 = p_2(0,1,\infty)$, $x_2 = p_2(0,t,\infty)$ for some $t$ with $-\infty < t < -1$, and $\phi$ is normalized.

    Then $p_d(\sigma_0,\phi(1),\sigma_\infty)$ and $p_d(\sigma_0,\phi(t),\sigma_\infty)$ lie in the standard flat $F$ of $Y$. 
    We introduce coordinates $F \cong \R^{d-1}$ by first identifying $F$ with $\mathfrak{a}$ by the exponential, and then applying the $d-1$ simple roots to $\mathfrak{a}$.

    Then by Proposition \ref{prop: cross ratios and projections}, in these coordinates,
    \[
        p_d(\sigma_0,\phi(t),\sigma_\infty) = [cr_i(\phi(t), \sigma_0,\phi(1),\sigma_\infty)]_{i=1,\ldots, d-1}
    \]
    By Proposition \ref{prop: cr is coarse linear}, each of these is coarsely linear of positive slope.
    By the previous lemma, any choice of section $s$ only disturbs this growth by a bounded amount.
\end{proof}

This completes the proof of existence.

\bibliographystyle{plain}
\bibliography{bibliography}

\end{document}